\newtheorem{theorem}{Theorem}[subsection]
\newtheorem{proposition}[theorem]{Proposition}
\newtheorem{corollary}[theorem]{Corollary}
\newtheorem{lemma}[theorem]{Lemma}
\theoremstyle{remark}
\newtheorem{remark}[theorem]{Remark}
\theoremstyle{definition}
\newtheorem{definition}[theorem]{Definition}
\newtheorem{fact}[theorem]{Fact}
\newtheorem{assumption}[theorem]{Assumption}
\newtheorem{notation}[theorem]{Notation}
\numberwithin{equation}{subsection}
\DeclareMathOperator{\Aut}{Aut}
\DeclareMathOperator{\I}{\mathbb I}
\DeclareMathOperator{\id}{id}
\newcommand{\B}{{\rm B}}
\newcommand{\C}{\mathbb C}
\newcommand{\Ha}{\mathbb H}
\renewcommand{\star}{*}
\newcommand{\M}{\mathcal M}
\newcommand{\pglM}{\hat{\M}}
\newcommand{\slM}{\check{\M}}
\newcommand{\Z}{\mathbb Z}
\newcommand{\D}{\mathbb D}
\newcommand{\mmu}{{\mu}}
\DeclareMathOperator{\tr}{tr}
\font\Bbb = msbm7.tfm
\def\beq{\begin{eqnarray}}
\def\eeq{\end{eqnarray}}
\def\bes{\begin{eqnarray*}}
\def\ees{\end{eqnarray*}}
\def\C{\mathbb{C}}
\def\P{\mathbb{P}}
\def\Su{\mathbb{S}}
\def\JJ{\mathbb{J}}
\def\Z{\mathbb{Z}}
\newcommand{\bino}[2]{\mbox{ $#1 \choose #2$}}
\newcommand{\nc}{\newcommand}
\nc{\op}[1]{\mathop{\mathchoice{\mbox{\rm #1}}{\mbox{\rm #1}}
{\mbox{\rm \scriptsize #1}}{\mbox{\rm \tiny #1}}}\nolimits}
\nc{\ep}{\varepsilon}
\nc{\Ga}{\Gamma}
\nc{\La}{\Lambda}
\nc{\si}{\sigma}
\nc{\Sig}{{\Gamma}}
\nc{\Om}{\Omega}
\nc{\om}{\omega}
\nc{\SL}{{\rm SL}}
\nc{\GL}{{\rm GL}}
\nc{\PGL}{{\rm PGL}}
\nc{\G}{{\rm G}}
\nc{\Pic}{{\op{Pic}}}
\nc{\Jac}{{\op{Pic}}}
\nc{\Prym}{{\op{Prym}}}
\nc{\cpt}{{\op{cpt}}}
\nc{\Dol}{{\op{Dol}}}
\nc{\DR}{{\op{DR}}}
\nc{\Triv}{\op{Triv}}
\nc{\Hod}{{\op{Hod}}}
\nc{\Est}{E_{\op{st}}}
\nc{\Hst}{H_{\op{st}}}
\nc{\Left}[1]{\hbox{$\left#1\vbox to
   10.5pt{}\right.\nulldelimiterspace=0pt \mathsurround=0pt$}}
\nc{\Right}[1]{\hbox{$\left.\vbox to
   10.5pt{}\right#1\nulldelimiterspace=0pt \mathsurround=0pt$}}
\nc{\LEFT}[1]{\hbox{$\left#1\vbox to
   15.5pt{}\right.\nulldelimiterspace=0pt \mathsurround=0pt$}}
\nc{\RIGHT}[1]{\hbox{$\left.\vbox to
   15.5pt{}\right#1\nulldelimiterspace=0pt \mathsurround=0pt$}}
\nc{\bee}{{\bf E}}
\nc{\bphi}{{\bf \Phi}}
\newcommand{\hi}{h}
\newcommand{\ff}{f}
\newcommand{\mi}{M}
\newcommand{\ai}{A}
\newcommand{\dai}{a}
\newcommand{\mdpgld}{\hat{\mathcal M}_{\Dol}}
\newcommand{\mdsld}{\check{\mathcal M}_{\Dol}}
\newcommand{\higgsbu}{ {\mathcal M}  }
\newcommand{\higgsbud}{ {\mathcal M}_{\Dol}  }  
\newcommand{\higgsbusm}{ {\mathcal M}_{{\rm reg}}  }  
\newcommand{\higgsbugood} { {\mathcal M}_{{\rm ell}}  }  
\newcommand{\hitmap}{\chi}
\newcommand{\hitmaph}{\hat\chi}
\newcommand{\hitmapc}{\check\chi}
\newcommand{\mdpgl}{\hat{\mathcal M}}
\newcommand{\mdsl}{\check{\mathcal M}}
\newcommand{\mdslsm}{\check{\mathcal M}_{{\rm reg}}}
\newcommand{\mdpglsm}{\hat{\mathcal M}_{{\rm reg}}}
\newcommand{\mdgl}{{\mathcal M}}
\newcommand{\extal}[1]{\bigwedge^{\bullet}{#1}}
\newcommand{\slt}{\mathfrak{sl}_2(\rat)}
\def\lorw{\longrightarrow}
\newcommand\n{\noindent}
\newcommand\ci{\cite}
\newcommand\rat{{\mathbb Q}}
\newcommand\comp{{\mathbb C}}
\newcommand\real{{\mathbb R}}
\newcommand\zed{{\mathbb Z}}
\newcommand\nat{{\mathbb N}}
\newcommand\pn[1]{{\Bbb P}^{#1}}
\renewcommand\blacksquare{{\hspace*{\fill} $\fbox{}$}}
\newcommand{\phix}[2]{ \,^{\mathfrak p}\!{\mathcal H}^{#1}(#2)}
\newcommand{\im}{ \hbox{\rm Im} }
\newcommand{\ke}{ \hbox{\rm Ker} }
\newcommand{\ptd}[1]{ \,^{\mathfrak p}\!\tau_{ \leq {#1} } }
\newcommand{\ptu}[1]{ \,^{\mathfrak p}\!\tau_{ \geq {#1} } }
\def\lorw{\longrightarrow}
\renewcommand\n{\noindent}
\newcommand{\hitmapsmooth}{{\chi_{\rm reg}}}    
\newcommand{\curv}{C}                         
\newcommand{\gencurv}{{\mathcal C}}               
\newcommand{\totspa}{{\mathbb V}(D)}         
\newcommand{\pro}{\pi}                    
\newcommand{\jac}[1]{{\op {Pic}}_{#1}}                  
\newcommand{\prym}{{\op {Prym}}}           
\newcommand{\compjac}[1]{\overline{{\op{Pic}}_{#1}}} 
\newcommand{\base}{{\mathcal A}}
\newcommand{\baseo}{{{\mathcal A}^0}}
\newcommand{\baseosm}{{{\mathcal A}_{\rm reg}^0}}
\newcommand{\basesm}{{\base}_{\rm reg}}
\newcommand{\basegood}{{\base}_{\rm ell}}
\newcommand{\baseogood}{{\base}^0_{\rm ell}}
\newcommand{\rjmap}{p}
\newcommand{\reljac}{{{\mathscr P }}_{\rm reg}}
\newcommand{\spectfam}{u}                 
\newcommand{\spectfamsmooth}{{u_{\rm reg}}} 
\newcommand{\shf}{{\mathcal F}}     
\newcommand{\dist}{{\mathcal N}}    
\newcommand{\nop}{{\omega}}
\newcommand{\nep}{{\epsilon}}
\newcommand{\pos}{{\mathfrak S}}
\newcommand{\cl}[2]{\,^{#1 \!}\lambda_{#2  }} 
\begin{document} 
\title{Topology of Hitchin systems and \\ Hodge theory of character varieties:\\ the case $A_1$}
\author{ Mark Andrea A.  de Cataldo\thanks{ Partially supported by N.S.A. and N.S.F.} \\{\it  Stony Brook University}\\ {\tt mde@math.sunysb.edu} \and Tam\'as Hausel\thanks{Supported by a Royal Society University Research Fellowship}
\\ {\it University of Oxford}
\\{\tt hausel@maths.ox.ac.uk}\and Luca Migliorini\thanks{Partially supported by PRIN 2007 project ``Spazi di moduli e teoria di Lie''} \\{\it Universit\`a di Bologna} \\{\tt migliori@dm.unibo.it} \\
}

		\maketitle
\begin{abstract} For $\G=\GL_2,\PGL_2, \SL_2$ we prove that the perverse filtration associated with the Hitchin map on the rational  cohomology of the moduli space of twisted $\G$-Higgs  bundles
	on a compact Riemann surface $C$ agrees with the weight filtration on the rational cohomology of the twisted $\G$ character variety of $C$, when the cohomologies are identified via 
	non-Abelian Hodge theory. The proof is accomplished by means of a study of the topology of the Hitchin map over the locus of integral spectral curves. 
	\end{abstract} 
	\newpage
		\tableofcontents
	\newpage
	\section{Introduction}
\label{intro}
\stepcounter{subsection}
Starting with the paper of
Weil \ci{weil}, and its commentary by Grothendieck \ci{groth},
the moduli space of  holomorphic vector bundles on a projective curve has become the focus of much 
important work in mathematics, and there is now an extensive literature concerning its construction and properties.
As is well known, the construction of this moduli space  via geometric invariant theory is naturally  paired with
the notions of {\em stable} and {\em semistable} vector bundle.

A central result is the theorem of Narasimhan and 
Seshadri \ci{narasesha}, which asserts  that, 
roughly speaking, the semistable vector bundles of degree zero on a complex nonsingular projective curve $\curv$
(which we  assume to be of genus $g\geq 2$)
are precisely the ones  associated with unitary representations of the fundamental group
of $\curv$ or, if we consider bundles with non zero degree on
$\curv$, of the punctured curve $\curv \setminus p$, 
with a prescribed  scalar monodromy 
around the puncture. 
Let us  spell the theorem out  in the case of rank two bundles:
the fundamental group $\pi_1(\curv \setminus p)$ has free generators $\{a_1, \dots , a_g, b_1, \dots , b_g\}$ such that 
$a_1^{-1} b_1^{-1} a_1 b_1 \dots a_g^{-1} b_g^{-1} a_g b_g $  is the homotopy class of a loop around $p$. 
Unitary local systems of rank $2$  on $\curv \setminus p$ with local monodromy $-{\rm I}$ around $p$ are automatically 
irreducible, and the set of their isomorphism classes is
$$
{\mathcal N}_B:= \{ A_1,B_1,\dots, A_g,B_g \in {\rm U}(2)\ | \
		 A_1^{-1} B_1^{-1} A_1 B_1 \dots A_g^{-1} B_g^{-1} A_g B_g = - {\rm I} \}/{\rm U}(2),
$$
where the unitary group ${\rm U}(2)$ acts by conjugation on the matrices $A_i,B_i$; this action factors through a free action of 
${\rm PU}(2)$, hence the quotient ${\mathcal N}_B$
is a real analytic variety.
The theorem of Narasimhan and Seshadri
states that there is a canonical diffeomorphism
${\mathcal N}_B \simeq {\mathcal N}$, where ${\mathcal N}$ is the 
moduli space of stable rank two vector bundles  of degree one
on $\curv$.
 
\medskip   
A ``complexified''  version of this set-up, 
taking into consideration the analogue of the variety ${\mathcal N}_B$ 
obtained by  replacing the unitary group ${\rm U}(2)$ with its complexification 
$\GL_2(\comp)$ (or, more generally, with any complex reductive group $\G$, in 
which case the matrix  $-{\rm I}$ is replaced by a
suitable element in the center of $\G$), 
arose in the work of Hitchin \ci{hit, hitduke}.
Even though this paper considers  the variants of this construction for the 
complex algebraic groups groups $\SL_2(\comp)$ and $\PGL_2(\comp)$,
in this introduction we focus on the group $\GL_2(\comp)$;  
more details can be found in Section \ref{tamas}.

The representations of  $\pi_1(\curv \setminus p)$ into $\GL_2(\comp)$
with monodromy $-{\rm I}$ around $p$ are automatically irreducible,
and their isomorphism classes are parametrized by the {\em twisted character variety}  
$$
\M_\B:= \left\{ A_1,B_1,\dots, A_g,B_g \in \GL_2(\comp)\ | \
		 A_1^{-1} B_1^{-1} A_1 B_1 \dots A_g^{-1} B_g^{-1} A_g B_g = - {\rm I} \right\}/\!/\GL_2(\comp),
$$ 
where the quotient is taken in the sense of geometric invariant theory. As in the unitary picture, the action of $\GL_2(\comp)$
factors through a free action of ${\rm PGL}_2(\comp)$, and $\M_\B$ is a nonsingular irreducible complex affine variety
of dimension $8g-6$.

The {\em non-Abelian Hodge theorem} states that,
just as in the Narasimhan-Seshadri correspondence, $\M_\B$ is naturally 
diffeomorphic to another quasi-projective variety, i.e. the {\em moduli space of semistable Higgs bundles} $\higgsbud$ 
 parametrizing stable pairs  $(E,\phi)$ consisting of a degree one 
 rank two vector bundle $E$ on $C$ together with a Higgs field $\phi \in H^0(C,{\rm End}\,( E )\otimes K_{\curv})$, subject to a natural  condition of stability. 
If $E$ itself is a stable vector bundle, then $\phi$ is in a natural way a cotangent vector at the point 
$[{\rm E}] \in {\mathcal N}$.  It follows that  $\higgsbud$ contains
the cotangent bundle of ${\mathcal N}$ as a Zariski open subset, which turns out to be dense.

The variety  $\higgsbud$ has a rich geometry: it has a natural hyperk\"ahler metric, 
an $S^1$-action by isometries and,  importantly,  it carries a projective map $\hitmap: \higgsbud \lorw \base$,
the {\em Hitchin fibration}, where  the target $\base$ is (non canonically) isomorphic to $\comp^{4g-3}$
and 
the fibre of $\hitmap $ over a  general point $s \in \base$ is isomorphic to
the Jacobian of  a branched double covering of $\curv$ associated with $s$, the {\em spectral curve} $\curv_s$. 
This description of $\higgsbud$ is usually referred to as {\em abelianization} since it 
reduces, to some extent,  the study of Higgs bundles on $\curv$ to that 
of line bundles on the spectral curves.

While the algebraic varieties  $\M_\B$ and $\higgsbud$ are diffeomorphic, they
 are not biholomorphic:
  the former is affine and  the latter is foliated by the fibers of the Hitchin map
 which are  compact $(4g-3)$-dimensional 
algebraic subvarieties, Lagrangian with respect to the natural holomorphic symplectic structure 
associated with the hyperk\"ahler  metric on $\higgsbud$.
 Furthermore, just as in the case of ${\mathcal N}_B$ and $\mathcal{N}$,  the variety $\M_\B$
does not depend on the complex structure of $\curv$, whereas $\higgsbud$  does.

It is  natural to investigate the relation between some of the invariants
of  $\M_\B$ and $\higgsbud$.
This  paper takes a  step in this direction.

The paper \ci{hausel-villegas} investigates in depth one of the important algebro-geometric invariants
of $\M_\B$, namely the mixed Hodge structure on its cohomology groups. 
 In view of \ci[Corollary 4.1.11]{hausel-villegas}, 
the mixed Hodge structure of $H^*(\M_\B)$ is of Hodge-Tate type: 
the quotient pure Hodge structures ${\rm Gr}^W_{i}$ satisfy
\begin{equation}
\label{uazzisHT}
{\rm Gr}^W_{2i+1}H^*(\M_\B)=0\, \, \mbox{for all }i, \mbox{ and }
{\rm Gr}^W_{2i}H^*(\M_\B) \mbox{ is of type }\,(i,i).
\end{equation}
 
The weight filtration $W_{\bullet}$ has a natural splitting, and it is nontrivial in certain cohomological degrees: 
for instance $H^4(\M_\B)$ contains classes of type $(2,2)$ and $(4,4)$. A remarkable property 
of  $W_{\bullet}$ is the ``curious hard Lefschetz theorem": 
there is a cohomology class $\tilde{\alpha} \in H^2(\M_\B)$, of type $(2,2)$,
such that the map given by  iterated cup products with  $\tilde{\alpha}$ defines isomorphisms:
\begin{equation} 
\label{curhardlef}
\cup \tilde{\alpha} ^{l}:{\rm Gr}^W_{8g-6-2l}H^*(\M_\B)\stackrel{\cong}{\lorw} {\rm Gr}^W_{8g-6+2l}H^{*+2l}(\M_\B).
\end{equation} 
Note that the class $\tilde{\alpha}$
raises the cohomological degree by two and the weight type by four, and that
 $ \M_\B$ is affine; both facts are in contrast with the hypotheses of the classical hard Lefschetz theorem,
 hence the "curiousity" of (\ref{curhardlef}).

On the other hand, the Hodge structure  on $H^*(\higgsbud)$ is pure, i.e.
its weight filtration $W_{\bullet}$ is trivial in every cohomological degree. The class $\tilde{\alpha} \in H^2(\higgsbud)$ has pure type $(1,1)$.
This raises the following question: what is the meaning of the weight filtration $W_{\bullet}$ of $H^*(\M_\B)$
when viewed in $H^*(\higgsbud)$ via the diffeomorphism $\M_\B \simeq \higgsbud$ coming from the 
non-Abelian Hodge theorem? The answer we give in this paper  brings into the picture the 
perverse Leray filtration $P$ of $H^*(\higgsbud)$   which is naturally associated with the  Hitchin map $\hitmap:
\higgsbud \to \base$.  

The perverse Leray  filtration 
has been implicitly introduced in \ci{bbd}, and it has been studied and employed  in \ci{htam,decmigseattle,pflht,dec1,dec2}.
This filtration is the abutment of  the {\em perverse Leray spectral sequence} which, in turn, 
is a variant    of the classical  Leray spectral sequence.
In the case of proper, but not necessarily smooth maps, e.g. our Hitchin map $\hitmap$, this variant   is  better behaved than the classical Leray one. In fact,    
it always degenerates at $E_2$, and the graded pieces of the abutted  perverse Leray filtration 
satisfy a 
version,  called the relative hard Lefschetz, of the hard  Lefschetz  theorem, 
involving   the operation of cupping with the first  Chern class of a line bundle 
which is relatively ample with respect to the proper map.
Both the Leray and the perverse Leray filtration originate from  filtrations of
the  derived direct image complex of sheaves $\hitmap_*\rat$ on  $\base$.

Since the target $\base$ of the map $\hitmap$ is  affine, the perverse filtration has the following  simple geometric 
characterization (see \ci{pflht}, where a different numbering convention is used).
Let $s\geq 0$ and let $\Lambda^s \subseteq \base$ be 
a general  $s$-dimensional linear section of $\base$ relative to a chosen identification of $\base$ with $\comp^{4g-3}$; then
\beq \label{restrict}
P_pH^d (\higgsbud)= \ke \,\{H^d (\higgsbud) \lorw H^d (\hitmap^{-1}(\Lambda^{d-p-1}))\}.
\eeq 

The main result of this paper is that, up to a trivial renumbering of the filtrations, 
{\em the weight filtration $W_{\bullet}$ on $H^*(\M_\B)$ coincides with the perverse Leray filtration
$P_{\bullet}$ on $H^*(\higgsbud)$}:

\begin{theorem}
\label{mava} {\rm (``P=W")}
In terms  of the isomorphism 
$H^*(\M_\B) \stackrel{\simeq}{\lorw} H^*(\higgsbud) $
induced by the diffeomorphism $\M_\B \stackrel{\simeq}{\lorw} \higgsbud$ stemming from the non-Abelian Hodge theorem, 
we have
$$
W_{2k}H^*(\M_\B)=W_{2k+1}H^*(\M_\B)=P_k H^*(\higgsbud).
$$
\end{theorem}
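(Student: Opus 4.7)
The plan is to prove $W_{2k}=W_{2k+1}=P_k$ by identifying both filtrations on a set of natural multiplicative generators of $H^*(\higgsbud)$ and then extending the match via multiplicativity of both filtrations. Since the abstract emphasizes integral spectral curves, I expect the geometric core to be a detailed analysis of the Hitchin map $\chi$ over the open locus $\baseo \subset \base$ parametrising such curves.

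First, reduce to the $\SL_2/\PGL_2$ case, since the $\GL_2$ case differs from it only by a Jacobian factor on which $P=W$ is trivial. Then fix a set of tautological generators of $H^*(\mdsld)$: classes $\alpha \in H^2$, $\psi_i \in H^3$ for $1\le i\le 2g$, and $\beta \in H^4$. By \ci{hausel-villegas} these classes are of Hodge-Tate type and carry weights $4, 4, 8$ respectively on the $\M_\B$-side, so the objective becomes the statement that, under non-Abelian Hodge, $\alpha, \psi_i \in P_2\setminus P_1$ and $\beta \in P_4\setminus P_3$. The containments can be checked directly from the linear-section description \eqref{restrict} by exhibiting linear sections of $\base$ of the right codimension on whose preimage the generator vanishes; the strict non-containments are sharper and require the geometric analysis of $\chi$ alluded to above.

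The main technical input, and the main obstacle, is a description of $R\chi_*\rat$ via the decomposition theorem. Over $\baseo$ the Hitchin map is a family of compactified Jacobians of integral spectral curves, and Ng\^o's support theorem together with explicit rank-two geometry should force $R\chi_*\rat|_{\baseo}$ to split into intersection cohomology complexes of Tate local systems of the relative Jacobian and its Prym. This pins down the perverse degrees of the restrictions of $\alpha, \psi_i, \beta$ to $\chi^{-1}(\baseo)$. One must then argue that every simple summand of $R\chi_*\rat$ has full support $\base$, so that restriction to $\chi^{-1}(\baseo)$ loses no perverse information in the relevant range, and the perverse degrees of the generators computed over $\baseo$ lift to $\higgsbud$. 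This ``no small supports'' statement, on the boundary $\base\setminus\baseo$ where spectral curves degenerate, is precisely the topological analysis of $\chi$ over the integral locus advertised in the abstract, and is where I expect the hardest work to lie.

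Finally, combine the matching on generators with multiplicativity. The weight filtration is automatically multiplicative; multiplicativity $P_p\cdot P_q\subseteq P_{p+q}$ for the perverse filtration of our proper map from a smooth variety with affine target follows from the techniques of \ci{pflht,dec1,dec2} and the linear-section description \eqref{restrict}. Together these yield the inclusion $W_{2k}\subseteq P_k$. The reverse inclusion is then forced by a dimension count comparing the mixed Hodge numbers of $\M_\B$, known from \ci{hausel-villegas}, against the perverse Betti numbers extracted from the decomposition-theorem analysis; as a consistency check, one can match the curious hard Lefschetz \eqref{curhardlef} on the $W$-side with the relative hard Lefschetz for $\chi$ on the $P$-side, both of which are driven by the same class $\tilde{\alpha}$.
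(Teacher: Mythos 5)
Your overall strategy (match the filtrations on generators, then extend by multiplicativity) is close in spirit to the paper's, but it rests on two assumptions that fail, and together they hide the hardest parts of the proof.

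The central problem is your appeal to strong multiplicativity of the perverse Leray filtration, ``$P_p\cdot P_q\subseteq P_{p+q}$\ldots{} follows from the techniques of \ci{pflht,dec1,dec2}.'' This is not a general fact. For a proper map of relative dimension $f$, the only a priori estimate is the crude $P_p\cdot P_q\subseteq P_{p+q+f}$ (Lemma~\ref{fax}); strong multiplicativity is a property of the \emph{ordinary} Leray filtration, not the perverse one. The entire geometric core of the paper exists to manufacture a usable substitute: Theorem~\ref{rcmd}/Corollary~\ref{icareshfongl} shows that over the elliptic locus $\basegood$ the direct image $\hitmap_*\rat$ splits as a sum of shifted \emph{sheaves} (not merely complexes of Tate type, as you write --- that distinction is exactly what fails already for $\GL_3$), so Assumption~\ref{ipot} holds there and perverse Leray coincides with Leray. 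Combined with the codimension estimate of Lemma~\ref{smingo} and a transversality argument with generic linear sections (Proposition~\ref{testpr}), one obtains multiplicativity only for those cup products whose ``defect'' $d-p-1$ is smaller than $\mathrm{codim}\,(\base\setminus\basegood)$; the residual monomials $\gamma^r\beta^{g-1-r}$ in the $D=K_C$ case escape this bound and require a genuinely separate argument. Without this, your ``combine with multiplicativity'' step has no support.

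The second problem is the target perversity of $\beta$. By \ci{hausel-villegas} all three of $\alpha$, $\psi_i$, $\beta$ have homogeneous weight $2$ (i.e.\ lie in $W_4\setminus W_2$), so the goal is $\alpha,\psi_i,\beta\in P_2\setminus P_1$, not $\beta\in P_4\setminus P_3$. The difference is substantial: $\beta\in H^4$ is automatically in $P_4$ (Remark~\ref{cbop}), whereas $\beta\in P_2$ is the hardest of the three generator estimates --- it requires Thaddeus' restriction computation (Proposition~\ref{restrunivcls}), the decomposition-theorem analysis, and even then fails to cover the case $g=2$, $D=K_C$, which needs Proposition~\ref{betaisfine}. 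Finally, for the reverse inclusion your ``dimension count'' is not sharp enough by itself: the paper proves the isobaric and Deligne $Q$-decompositions coincide on the primitive parts of the $\slt$-module $H^*(\mdpgld)$, and the one recalcitrant class $\beta^{g-1}$ is handled by the deep vanishing of the intersection form on $H^{6g-6}(\mdsld)$ (Theorem~\ref{tamasinterform}, Corollary~\ref{nozeroperv}), a distinct global input your outline does not account for.
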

Since
 the class $\tilde{\alpha} \in H^*(\higgsbud)$ is relatively ample with respect to   the Hitchin map, 
the curious hard Lefschetz theorem for $\tilde{\alpha}$ on $(H^*(\M_\B), W)$ can
thus  be explained in terms
of the relative hard Lefschetz theorem for $\tilde{\alpha}$ on  $(H^*(\higgsbud),P)$.

One may say informally that the weight filtration on  $H^*(\M_\B)$ keeps track of  certain topological properties of the Hitchin map on $\higgsbud$.
This is even more remarkable in view of the fact that the structure of algebraic variety
on $\M_\B$, and thus the shape of $W_{\bullet}$, depend only on the topology of the curve $\curv$, while
the complex/algebraic structure of the Higgs moduli space $\higgsbud$ and thus the Hitchin map
depend on the complex structure of $\curv$.

\medskip
In fact, as far as $P=W$ goes, 
we prove a more precise result. 
There are natural splittings (constructed by Deligne in \ci{deligneseattle})
of the perverse Leray filtration of $H^*(\higgsbud)$. The splittings 
induced on  $H^*(\higgsbud)$  are equal, and they  
coincide with the  splitting mentioned above of the 
weight filtration of $H^*(\M_\B)$. We also prove that these results hold  for the varieties associated with
$\SL_2(\comp)$ and $\PGL_2(\comp)$.
In \S \ref{dbiggerchi} we also prove   a version of the main theorem ``$P= W$"  for the  moduli spaces of Higgs bundles with poles
on $\curv$, namely when the canonical bundle is replaced by a different line bundle of high enough degree.
In this case, there is no character variety $\M_\B$ to be compared with $\higgsbud$. However, 
the cohomology ring $H^*(\higgsbud)$ admits yet another filtration
 which is quite visible in terms of generators and relations.
 We prove that this third filtration  coincides with the perverse Leray filtration associated with the Hitchin map (which is also defined in the context of poles). In the case where there are no poles,
 this third filtration coincides,  after a  simple renumbering, with the weight filtration.

Finally,  as we need it in the course of our proof of $P=W$ in the case when $\G=\SL_2$, in Remark~\ref{description} we give a description of the cohomology ring for $\G=\SL_2$ which does not 
appear in the literature. 
This ring had been earlier determined by M.Thaddeus in unpublished work.

\bigskip
Since the proof of our main result is lengthy, 
we  sketch below the main steps leading to it. Of course, 
for the sake of clarity, we  do so
by overlooking 
many technical issues.

\bigskip
The  ring  structure of $H^*(\higgsbud)$ is known in terms of generators and relations;
see \ci{hausel-thaddeus1,hausel-thaddeus2}.
By using a result of  M.Thaddeus', we prove   that 
the  place of the multiplicative generators  in the perverse
Leray filtration of $H^*(\higgsbud)$ is the same as in the weight filtration
of $H^*(\M_\B)$ (Theorem \ref{eccolo}). 
If the perverse Leray filtration were compatible
with cup products, then we 
could infer the same conclusion for the other cohomology classes.

However only the weaker  compatibility 
$$P_iH(\higgsbud) \cup P_jH(\higgsbud) \longrightarrow  P_{i+j+d}H(\higgsbud)$$ 
holds a priori for the perverse Leray filtration (see Proposition \ref{mult}, and \ci[Theorem 6.1]{decberkeley}), 
where $d$ is the relative dimension of the map $\hitmap$.
In contrast,  the compatibility in the strong form holds for the ordinary Leray filtration.

The Leray filtration is contained in the perverse Leray filtration. 
At the level of  the direct image complex, the two filtrations
coincide on the open subset  of regular values on the target of the map.

One key to our approach is that we prove that, for the Hitchin map, 
there is  a significantly larger open set of $\base$ where the Leray and the perverse Leray
 filtration coincide
on the direct image complex.
We define the ``elliptic'' locus $\basegood \subseteq \base$ to be the subset of 
points $s \in \base$
for which the corresponding  spectral curve $\curv_s$ is integral.  Let $\basesm \subseteq \basegood$ be the set of regular values for the Hitchin map $\hitmap$;
the corresponding spectral curves are irreducible nonsingular. 
The key result is then the following, which  we believe to be of independent interest:

\begin{theorem}
\label{rcmd}
Let $j: \basesm \lorw \basegood$ be  the inclusion and, for $l \geq 0$, 
let  $R^l$ denote the local system 
$s \mapsto H^l(\hitmap^{-1}(s))$ on $\basesm$.
Then, there is an isomorphism in the derived category of  sheaves on $\basegood$:
\[ (\hitmap_* \rat )_{| \hitmap^{-1}(\basegood) }\simeq \bigoplus_l R^0j_*\,R^l\,[-l].
\]
\end{theorem}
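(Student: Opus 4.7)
The plan is to combine the BBD decomposition theorem with a support-theoretic constraint, and then to identify the simple summands with ordinary pushforwards using the geometry of compactified Jacobians of integral spectral curves.

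First, I would apply the BBD decomposition theorem to the projective morphism obtained by restricting $\hitmap$ to $\hitmap^{-1}(\basegood) \to \basegood$. This expresses $(\hitmap_* \rat)_{|\hitmap^{-1}(\basegood)}$ as a direct sum of shifts of intersection cohomology complexes $IC(\overline{Z_\alpha}, L_\alpha)$ with $L_\alpha$ a simple local system on a dense open subset of an irreducible closed $Z_\alpha \subseteq \basegood$. Next, over the regular locus $\basesm$, the map $\hitmap$ is smooth and proper, the fibers being Jacobians (resp.\ Prym varieties) of smooth spectral curves; Deligne's degeneration theorem then gives $(\hitmap_*\rat)_{|\basesm} \simeq \bigoplus_l R^l[-l]$. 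This pins down the local systems appearing as summands whose support meets $\basesm$: they must be the $R^l$, placed in appropriate cohomological degrees.

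The crucial second step is to rule out summands whose support is strictly contained in $\basegood$. This is precisely the content of Ng\^o's support theorem: over the elliptic locus, where spectral curves are integral, the Hitchin fibration has the structure of a $\delta$-regular weak abelian fibration (the compactified Jacobians of the integral spectral curves act on the fibers, and the $\delta$-invariant matches the codimension of the relevant strata). Ng\^o's theorem then forces every simple summand of the decomposition to have full support $\basegood$. Combined with the previous step, this means $(\hitmap_* \rat)_{|\hitmap^{-1}(\basegood)}$ is a direct sum of shifted intermediate extensions $IC(\basegood, R^l)[-l]$.

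The final and most delicate step is to identify $IC(\basegood, R^l)$ with $R^0 j_* R^l$ as a sheaf (up to the appropriate shift). This amounts to proving that for each $s \in \basegood \setminus \basesm$, the local cohomology sheaves $R^q j_* R^l$ for $q \geq 1$ do not contribute to the intermediate extension, which in turn reduces to a statement comparing the cohomology of the compactified Jacobian $\overline{\Pic}(\curv_s)$ of a singular integral spectral curve with the invariants of the nearby-cycle monodromy on the cohomology of smooth nearby Jacobians. One can exploit the fact that for integral curves with planar singularities (which is our setting, since spectral curves sit in the total space of a line bundle on the smooth curve $\curv$), the compactified Jacobian is irreducible and the specialization map on cohomology is well behaved, so that $H^l(\overline{\Pic}(\curv_s)) \cong H^l(\hitmap^{-1}(s))$ injects onto the invariants of the local monodromy on $R^l|_{\basesm}$, which is exactly the stalk of $R^0 j_* R^l$ at $s$.

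The main obstacle is this last step: verifying that ordinary pushforward gives the intermediate extension. The BBD/Ng\^o input only tells us \emph{some} intermediate extension is involved, but to reach the sharper statement with $R^0 j_*$ one needs a geometric input specific to compactified Jacobians of integral curves, together with a splitting of the resulting sheaf complex. For the latter, I would argue that each summand is a pure sheaf in a single degree, so there are no nontrivial extensions to worry about in the derived category; alternatively, one can inherit a splitting from the Deligne splitting over $\basesm$ and extend it by continuity given that all summands are intermediate extensions from $\basesm$.
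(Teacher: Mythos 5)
Your outline correctly identifies the three-part structure (decomposition theorem, ruling out small supports, identifying the $IC$-summands with ordinary pushforwards), and appealing to Ng\^o's support theorem for the second part is a legitimate alternative to the paper's self-contained argument (the paper even notes this is a special case of Ng\^o's theorem 7.1.13). But there is a genuine gap at the third step, which is where all the work actually lies.

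You reduce the statement $IC_{\basegood}(R^l) \simeq R^0 j_*R^l[\dim \base]$ to the claim that $H^l(\compjac{\curv_s})$ ``injects onto'' the local monodromy invariants, i.e.\ that $b_l(\hitmap^{-1}(s)) = \dim (R^0 j_* R^l)_s$ for all $s \in \basegood$, and you justify this by saying the specialization map is ``well behaved'' for integral planar curves, or that each summand must be ``a pure sheaf in a single degree.'' Neither of these is a known fact you can invoke: this equality is precisely the content of Theorem~\ref{final}, which is the heart of Section~\ref{prfmaintm}. The paper proves it via a genuine pinching argument: an upper bound on $\sum_l b_l(\compjac{\curv_s})$ coming from the stratification of the compactified Jacobian by partial normalizations (Theorem~\ref{estbncj}), a matching lower bound on $\sum_l \dim (R^0 j_* R^l)_s$ via an explicit Picard--Lefschetz computation of the local monodromy of families of double covers (Theorem~\ref{mainmonthm}), and the observation that the decomposition theorem forces the two to coincide. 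That this is far from automatic is emphasized in the introduction: the analogous statement \emph{fails} already for $\GL_3$, where the $IC$-complexes over the elliptic locus are genuinely not shifted sheaves. So the ``most delicate step'' you flag is exactly where a new idea is needed, and your proposal does not supply it.

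A smaller point: once one knows every summand is a shifted sheaf with full support, the decomposition theorem already yields the direct-sum splitting you want; there is no need to ``inherit a splitting from the Deligne splitting over $\basesm$ and extend by continuity.''
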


This theorem  contains two distinct statements:

\begin{enumerate}
\item
\label{quifunza}
the perverse sheaves  on $\basegood$ appearing in the decomposition theorem  (\ci{bbd})
for the Hitchin map restricted over the open set $\basegood$
are  supported on the whole $\basegood$; 
This is a special case of Ng\^o's support theorem 7.1.13 in \ci{ngo}, 
which holds for the Hitchin fibration of any group $G$;

\item
\label{pureli}
these perverse sheaves, which are the intersection cohomology complexes 
on $\basegood$ of the local systems $R^l$ on the smooth locus $\basesm$,
are  ordinary sheaves, as opposed to complexes; up to a shift,
they agree with the  higher direct images appearing in the Leray spectral sequence.

\end{enumerate}

The theorem implies   that the classical and the perverse Leray filtrations
   coincide on $\basegood$.
This puts us  in a position to compute the ``perversity'' of 
most monomials generators of $H^*(\higgsbud)$; 
see Lemma \ref{betapsieasy}.
As explained above in \eqref{restrict}, the perversity of a class is tested
by restricting it to the inverse image of  generic linear sections of $\base$.
The algebraic subset $\base \setminus \basegood$ is of  high codimension in $\base$. 
It follows that, in a certain range of dimensions,  
the general linear section can  be chosen  to lie entirely  in $\basegood$, where we know, by  Theorem \ref{rcmd},
that the perverse Leray  filtration 
is compatible with the cup product since it coincides with the Leray filtration.

At this point, we can conclude in the case of Higgs bundles for poles;
see section \ref{dbiggerchi}. In  the geometrically 
more significant case where there are  no poles,
some monomials are not covered by the above line of reasoning,
for the corresponding linear sections must meet $\base \setminus \basegood$ by simple reasons of dimension.
We treat these remaining classes using an ad hoc argument 
based on the properties of the Deligne splitting mentioned above; see Section \ref{dequalschi}.

In order to prove  Theorem \ref{rcmd}  above
we first determine an  upper bound  (see Theorem \ref{estbncj}) for the Betti numbers of the fibres of the Hitchin map  over
$\basegood$.  In the case when  $\G=\GL_2$,
these fibres are  the compactified Jacobians of the spectral curves, 
which, being double coverings of a nonsingular curve, have
singularities analytically isomorphic to $y^2-x^k=0$,
a fact we use in an essential way in our computations.
Next, we give a lower bound (see Theorem \ref{mainmonthm})
 for the dimension of the  stalks of the intersection cohomology complexes.   This bound is  
based  on the computation of the local monodromy
of the family of nonsingular spectral curves
 around a singular  integral spectral curve.
 It is achieved by a repeated use of the Picard-Lefschetz formula. 
Since the upper and lower bounds coincide, the 
 decomposition theorem (\ci{bbd}) gives the wanted result.

We see at least two
difficulties to extend the results in this paper for complex reductive groups of higher rank:
the monodromy computation of Theorem \ref{mainest} 
which leads to the proof of  Theorem  \ref{mainmonthm}  would be 
more complicated  and we do not know enough about   
compactified Jacobians of curves with singularities which 
are not double points.  Already for the group $\GL_3$,  \ref{pureli}. above fails, and the intersection cohomology complexes 
are not  shifed sheaves.

On the other hand, a curious hard Lefschetz theorem is conjectured in \cite[Conjecture 4.2.7]{hausel-villegas}
to hold for the character variety for $\PGL_n$ which would of course follow, if $P=W$, from 
the relative  hard Lefschetz theorem. Additionally, in a recent work of physicists 
Chuang-Diaconescu-Pan \ci{diaconescu-etal} a certain refined Gopakumar-Vafa conjecture for local curves in a Calabi-Yau $3$-fold  leads to a conjecture on the dimension of the graded pieces of the perverse filtration on the cohomology of the moduli space of twisted $\GL_n$-Higgs bundles on $C$. Their conjecture agrees with
the conjectured \cite[Conjecture 4.2.1]{hausel-villegas} dimension of the graded pieces of the weight filtration on the cohomology of the twisted $\GL_n$-character variety. The compatibility of these two conjectures maybe considered the strongest indication so far that
$P=W$ should hold for higher rank Higgs bundles as well.

In the paper \ci{dehami} we prove that a result
analogous to our main theorem $P=W$ holds in a situation which is expected to be closely related to the moduli space of certain parabolic Higgs bundles
of rank n on a genus one curve. 
Interestingly, in this case, the coincidence of the two filtrations
holds, whereas  the result \ref{quifunza}. above, concerning the supports of the perverse sheaves being
maximal on a large open set,  fails, due to the fact that every 
new stratum contributes a new direct summand sheaf. 

While property \ref{pureli}. above seems to hold only  for Hitchin fibrations associated with groups of type $A_1$, the case studied in the present paper, and  property \ref{quifunza}.  may not  hold  for parabolic Higgs bundles, we expect that the $P=W$ phenomenon should
be  a general feature of non-Abelian Hodge theory for curves.  More generally, in \ci{dehami}, \S 4.4, we also conjecture that this exchange of filtration phenomenon should hold 
 for holomorphic symplectic varieties with a $\comp^*$-action, that, roughly speaking, behave like an algebraically completely integrable system.

\bigskip
\bigskip
\n
{\bf Acknowledgements.}

\smallskip
\n
The authors would like to thank D.E. Diaconescu, N. Hitchin, M.N. Kumar, G. Laumon, E. Markman,  B.C. Ng\^o and A. Vistoli
for useful conversations and suggestions. We thank the anonymous referees for the careful reading and for s providing very useful suggestions
which substantially improved the paper.

The first-named author dedicates this paper to his family:
Mikki, Caterina, Amelie and Dylan. The second-named author dedicates this paper to his mother Hausel P\'aln\'e D\'egi Ir\'en.  The third-named author dedicates this paper to Fiamma, Enrico and Gaia.
		
	\begin{subsection}{Cohomology of moduli spaces}
		\label{tamas}
	\begin{subsubsection}{Character variety}
	\label{charactvrty}
	
In this section, we recall some definitions and results from \cite{hausel-villegas}. 
Throughout the paper,  the singular  homology and cohomology groups
are taken with  rational coefficients.

		Let $\Sigma$ be a closed Riemann surface of genus $g\geq 2$ and let $\G$ be a complex reductive group. In this paper, we consider only the cases 
$\G= \GL_2, \PGL_2$ and $\SL_2$. 
		We are interested
		in the variety parameterizing  certain twisted 
		representations of the fundamental group $\pi_1(\Sigma)$ into $\G$ modulo isomorphism. Specifically,
		we consider the {\em $\GL_2$-character variety}:
		\bes{\M}_\B :=\{ A_1,B_1,\dots, A_g,B_g \in \GL_2\ |\ 
		 A_1^{-1} B_1^{-1} A_1 B_1 \dots A_g^{-1} B_g^{-1} A_g B_g = - {\rm I} \}/\! /\PGL_2,
		\ees 
		i.e. the affine GIT quotient by the diagonal adjoint action of 
$\PGL_2$  on the matrices $A_i,B_i$. We also define the {\em $\SL_2$-character variety}: 
		\bes{{\slM}}_\B :=\{ A_1,B_1,\dots, A_g,B_g \in {\SL}_2\ | \
		 A_1^{-1} B_1^{-1} A_1 B_1 \dots A_g^{-1} B_g^{-1} A_g B_g = - {\rm I} \}/\!/{\rm PGL}_2.
		\ees The torus $\GL_1^{2g}$ acts on $\GL_2^{2g}$ by coordinate-wise multiplication
		and this yields  an action of $\GL_1^{2g}$ on $\M_\B$. Similarly, the finite subgroup of order $2$
		 elements $\mmu_2^{2g}\subset \GL_1^{2g}$, with $\mmu_2:=\{\pm 1\}\subset \GL_1$, acts on $\SL_2^{2g}$ by coordinate-wise multiplication and we  define the {\em $\PGL_2$-character variety} as:
		\begin{equation}\label{quotient}{\pglM}_\B :={\M}_{\B}/\!/ \GL_1^{2g}=\slM_\B/\mmu_2^{2g}.\end{equation}  The
		surjective group homomorphism $\SL_2\times \GL_1\to \GL_2$ 
		with finite kernel $\mmu_2$ induces a covering \begin{equation}\slM_\B\times \GL_1^{2g}\to \M_\B\label{covering}\end{equation} with covering group $\mmu_2^{2g}$.
		
		  The varieties  $\M_\B$, $\slM_\B$ are non-singular  and affine (cf.
		  \cite[\S 2.2]{hausel-villegas}), whereas 
		   $\pglM_\B$ is  affine with finite quotient singularities, and parameterizes the representations of $\pi_1(\Sigma)$ to $\PGL_2$ 
		   which do not admit a lift to representations of $\SL_2$.  We have  $\dim\, \M_\B=8g-6$ and $\dim\, \slM_\B =\dim\, \pglM_\B =6g-6$.  In view of \eqref{quotient}, we have that $$H^*(\pglM_\B)=H^*(\slM_\B)^{\mmu_2^{2g}},$$ 
		 the subring of $\mmu_2^{2g}$ invariants,  while \eqref{covering} implies that  
		   \begin{equation} 
		   \label{gl2-split}
		   H^*(\M_\B)=H^*(\GL_1^{2g})\otimes H^*(\slM_\B)^{\mmu_2^{2g}}=H^*(\GL_1^{2g})\otimes H^*(\pglM_\B).
		   \end{equation}
		
		 The cohomology ring $H^*(\M_\B)$ is generated by certain 
		universal classes $\epsilon_i\in H^1(\GL_1^{2g})\subset H^1(\M_\B)$ for $i=1,\dots,2g$, $\alpha\in H^2(\slM_\B)^{\mmu_2^{2g}}\subset H^2(\M_\B)$, $\psi_i \in H^3(\slM_\B)^{\mmu_2^{2g}}\subset H^3(\M_\B)$ for $i=1,\dots,2g$, and $\beta\in H^4(\slM_\B)^{\mmu_2^{2g}}\subset H^4(\M_\B)$.
		The proof can be found in \cite{hausel-thaddeus1} (generators) and in its sequel
		\cite{hausel-thaddeus2} (relations).  The construction of these
	 universal classes is explained in  \cite[\S 4.1]{hausel-villegas}.
		 The paper \cite{hausel-villegas} used this
		information to determine the mixed Hodge structure on $H^*(\M_\B)$. 
For use in this paper, we summarize these  results  as follows. 
Let $(H,W_{\bullet}, F^{\bullet})$ be a mixed Hodge structure (see the textbook \cite{petstee} for a comprehensive treatment of mixed Hodge theory).

A class $\sigma \in H$ is said to be of {\em homogeneous weight  } $k$ 
(\cite[Definition 4.1.6]{hausel-villegas}) if 
its image in $H_{\comp}$, still denoted by $\sigma$, satisfies
\begin{equation}
\label{hwk}
\sigma \in W_{2k} H_{\comp}\cap F^k H_{\comp}.
\end{equation}
Note that if $\sigma$ has {\em homogeneous weight  } $k$, then its image in 
${\rm Gr}^W_{2k}H_{\comp}$ is of type $(k,k)$. 

\bigskip
The natural mixed Hodge structure on $H^{i}(\M_\B)$  satisfies 
$W_kH^{i}(\M_\B)=H^{i}(\M_\B)$ for $k\geq 2i$, and,
as $\M_\B$ is nonsingular,  $W_kH^{i}(\M_\B)=0$ for $k\leq i-1$. The 
following is proved in \cite[Theorem 4.1.8]{hausel-villegas}:
		
\begin{theorem} 
The cohomology classes $\epsilon_i \in H^1(\M_\B)$ have homogenous weight $1$, 
while the classes $\alpha\in  H^2(\M_\B)$, $\psi_i\in  H^3(\M_\B)$ for $i=1,\dots,2g$, and $\beta\in H^4(\M_\B)$  have homogenous weight $2$. 
\end{theorem}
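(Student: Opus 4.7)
The plan is to exploit the K\"unneth decomposition of mixed Hodge structures
$$H^*(\M_\B) \,\cong\, H^*(\GL_1^{2g})\otimes H^*(\pglM_\B)$$
from \eqref{gl2-split}, treating the torus factor for the $\epsilon_i$ and the $\pglM_\B$ factor for $\alpha,\psi_i,\beta$ separately.

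For the $\epsilon_i$: the algebraic torus $\GL_1^{2g}$ is a smooth affine group whose first cohomology carries the Tate mixed Hodge structure $H^1(\GL_1^{2g},\Q)\cong \Q(-1)^{\oplus 2g}$, that is, pure of classical weight $2$, or equivalently of homogeneous weight $1$ in the sense of \eqref{hwk}. Since each $\epsilon_i$ is the image in $H^1(\M_\B)$, under the K\"unneth embedding, of one of the tautological generators of $H^1(\GL_1^{2g})$, it inherits homogeneous weight $1$.

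For $\alpha,\psi_i,\beta$ in the factor $H^*(\pglM_\B) = H^*(\slM_\B)^{\mmu_2^{2g}}$: the strategy is to realize them as K\"unneth components of the second Chern class $c_2(\mathrm{ad}(\mathbb E))$ of the universal adjoint $\PGL_2$-bundle $\mathbb E$ on $\curv\times\pglM_\B$. With respect to a symplectic K\"unneth basis $\{1,e_i,[\curv]\}$ of $H^*(\curv)$ one writes
$$c_2(\mathrm{ad}(\mathbb E)) \,=\, 1\otimes\beta \,+\, \sum_i e_i\otimes\psi_i \,+\, [\curv]\otimes\alpha ,$$
and observes that $c_2$ is a Hodge class on the smooth quasi-projective variety $\curv\times\pglM_\B$: it lies in $F^2\cap W_4$, hence has homogeneous weight $2$.

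The main obstacle is to promote this constraint on the single class $c_2$ to the statement that each of $\alpha,\psi_i,\beta$ is individually of homogeneous weight $2$: the tensor-product weight filtration on $H^*(\curv)\otimes H^*(\pglM_\B)$ only bounds the K\"unneth components from above, and gives weaker information on their lower bounds. To close the gap I would combine three inputs: the Hodge-Tate property of $H^*(\M_\B)$ recorded in \eqref{uazzisHT} and established in \ci[Corollary 4.1.11]{hausel-villegas}, which forces all weights to be even; the general vanishing $W_k H^i(\slM_\B,\C) = 0$ for $k<i$ from the smoothness of $\slM_\B$; and the arithmetic $E$-polynomial of $\pglM_\B$ computed in \ci{hausel-villegas}, which under Hodge-Tateness equals the weight polynomial. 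Matching the graded dimensions read off this polynomial against the ring presentation of \ci{hausel-thaddeus1,hausel-thaddeus2} then pins down the homogeneous weight of each multiplicative generator uniquely. The combinatorial bookkeeping in this last step -- reconciling the dimension counts with the minimal generating set -- is the essential content of the argument.
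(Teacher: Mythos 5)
The paper itself does not prove this theorem: it is quoted from \ci[Theorem 4.1.8]{hausel-villegas}, so there is no internal argument to compare against. Your discussion of the $\epsilon_i$ via the torus factor in \eqref{gl2-split} is fine. Your argument for $\alpha,\psi_i,\beta$, however, fails at its central step, and not merely by leaving a gap: the claim that $c_2(\mathrm{ad}\,\mathbb E)$ is a Hodge class on $\curv\times\pglM_\B$ is incompatible with the very conclusion you are trying to establish. In the first place, there is no algebraic universal $\PGL_2$-bundle on $\curv\times\pglM_\B$; on the Betti side one only has a topological local system, and the honest algebraic universal bundle lives on $\curv\times\pglM_\Dol$, so the Hodge-class assertion has no justification to begin with.

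More to the point, the assertion is \emph{false}. The K\"unneth isomorphism $H^4(\curv\times\pglM_\B)\cong\bigoplus_{a+b=4}H^a(\curv)\otimes H^b(\pglM_\B)$ is an isomorphism of mixed Hodge structures, with $H^1(\curv)$ pure of weight $1$ and $H^2(\curv)$ pure of weight $2$. Hence $W_4\bigl(H^1(\curv)\otimes H^3(\pglM_\B)\bigr)=H^1(\curv)\otimes W_3 H^3(\pglM_\B)$ and $W_4\bigl(H^2(\curv)\otimes H^2(\pglM_\B)\bigr)=H^2(\curv)\otimes W_2 H^2(\pglM_\B)$. Since $\pglM_\B$ is nonsingular, $W_2 H^3(\pglM_\B)=0$, and by Hodge--Tateness $W_3 H^3 = W_2 H^3$. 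So if $c_2$ were in $W_4$, the $\psi_i$ would be forced to vanish and $\alpha$ would lie in $W_2 H^2(\pglM_\B)$, i.e.\ have weight $2$ rather than weight $4$. This is the opposite of what the theorem asserts: the whole ``curious'' phenomenon is precisely that $\alpha,\psi_i,\beta$ sit in the top possible weight for their respective degrees, which forces the topological class $c_2$ to have weight $6$ (from the $[\curv]\otimes\alpha$ component), not $4$. Your concluding ``combinatorial bookkeeping'' paragraph -- matching the proven mixed Hodge polynomial of \ci{hausel-villegas} against the minimal generating set -- is much closer to a viable route, but it is presented as a patch to the $c_2$-step, and that step must be discarded rather than patched.
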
 
It follows that  homogenous elements generate  $H^*(\M_\B)$.  
The following is Corollary 4.1.11. in \ci{hausel-villegas}:
\begin{theorem}
\label{hdgttin}
The weight filtration $W_{\bullet}H^*(\M_\B)$ satisfies: 
\begin{enumerate}
\item
$W_{2k}H^*(\M_\B)=W_{2k+1}H^*(\M_\B)$ for all $k$.
\item
$\left({\rm Gr}^W_{2k}H^*(\M_\B)_{\comp}\right)^{pq}=0$ if  $(p,q) \neq (k,k)$.
\end{enumerate}
\end{theorem}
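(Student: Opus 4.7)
Plan: The preceding theorem identifies explicit ring generators $\epsilon_i, \alpha, \psi_j, \beta$ of $H^*(\M_\B)$, all of homogeneous weight ($1, 2, 2, 2$ respectively). The first observation is that the collection of classes of homogeneous weight is closed under cup product: if $\sigma\in W_{2k}\cap F^k$ and $\tau\in W_{2l}\cap F^l$, then $\sigma\cup\tau\in W_{2(k+l)}\cap F^{k+l}$, since the cup product on the mixed Hodge structure of $H^*(\M_\B)$ is compatible with both filtrations. Consequently every monomial in the listed generators is itself of homogeneous weight, and the rational span of such monomials is all of $H^*(\M_\B)$. I will also repeatedly use the fact noted in the excerpt: if $\sigma$ is a \emph{rational} class of homogeneous weight $k$, then $\sigma=\bar\sigma\in F^k\cap\overline{F^k}$, so its image in the pure Hodge structure $\mathrm{Gr}^W_{2k}H^*_{\comp}$ lies in the $(k,k)$ summand.

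To prove item (1), fix $\sigma\in W_{2k+1}H^i(\M_\B)$ and expand $\sigma=\sum_j c_jm_j$ with each $m_j$ a monomial of homogeneous weight $\ell_j$. Split $\sigma=\sigma_{\mathrm{low}}+\sigma_{\mathrm{hi}}$ according to whether $\ell_j\leq k$ or $\ell_j\geq k+1$. Plainly $\sigma_{\mathrm{low}}\in W_{2k}$, so $\sigma_{\mathrm{hi}}=\sigma-\sigma_{\mathrm{low}}\in W_{2k+1}$. On the other hand $\sigma_{\mathrm{hi}}$ is a rational sum of classes in $F^{k+1}$, hence lies in $W_{2k+1}\cap F^{k+1}\cap\overline{F^{k+1}}$. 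The pure Hodge structure $\mathrm{Gr}^W_{2k+1}$ satisfies $F^{k+1}\cap\overline{F^{k+1}}=0$, since $p+q=2k+1$ precludes having $p,q\geq k+1$ simultaneously. Hence $\sigma_{\mathrm{hi}}$ maps to $0$ in $\mathrm{Gr}^W_{2k+1}$, so $\sigma_{\mathrm{hi}}\in W_{2k}$ and $\sigma\in W_{2k}$, giving (1).

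For item (2), given $\sigma\in W_{2k}H^i(\M_\B)$, decompose it analogously as $\sigma_{\mathrm{low}}+\sigma_{\mathrm{mid}}+\sigma_{\mathrm{hi}}$ corresponding to $\ell_j<k$, $\ell_j=k$, $\ell_j>k$. Using (1), $\sigma_{\mathrm{low}}\in W_{2k-2}$ contributes zero to $\mathrm{Gr}^W_{2k}$; each summand of $\sigma_{\mathrm{mid}}$ maps to a class of type $(k,k)$ by the basic observation above; and the same rationality argument shows that $\sigma_{\mathrm{hi}}=\sigma-\sigma_{\mathrm{low}}-\sigma_{\mathrm{mid}}\in W_{2k}\cap F^{k+1}\cap\overline{F^{k+1}}$ has zero image in $\mathrm{Gr}^W_{2k}$, because on weight $2k$ the intersection $F^{k+1}\cap\overline{F^{k+1}}$ is again trivial (as $p+q=2k$ precludes $p,q\geq k+1$). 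Thus the image of $\sigma$ in $\mathrm{Gr}^W_{2k}$ coincides with that of $\sigma_{\mathrm{mid}}$, and is of Hodge type $(k,k)$.

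The substantive inputs are the preceding theorem and the Hausel--Thaddeus presentation of the ring; the present statement then follows by the elementary arguments above. The point to watch is that monomials of distinct homogeneous weights may share the same cohomological degree, so decomposing by homogeneous weight is \emph{not} automatically a direct sum decomposition of $H^i(\M_\B)$; the argument above sidesteps this by working filtration by filtration and exploiting the Hodge-theoretic rigidity provided by the vanishing of $F^{k+1}\cap\overline{F^{k+1}}$ on the relevant graded pieces, rather than producing a global splitting up front.
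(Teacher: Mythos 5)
Your proof is correct. The paper itself does not prove this statement --- it simply cites it as Corollary 4.1.11 of [hausel-villegas] --- so strictly there is no in-paper argument to compare with, but your argument is the expected one: expand a class into monomials in the generators $\epsilon_i,\alpha,\psi_j,\beta$ (each of homogeneous weight by the preceding theorem, and homogeneous weight is multiplicative since the cup product is a morphism of MHS), then exploit the rigidity $F^{k+1}\cap\overline{F^{k+1}}=0$ in a pure Hodge structure of weight $\leq 2k+1$ to kill the contribution of the high-weight monomials. All the intermediate claims check out, including the handling of rationality (so that $\sigma_{\mathrm{hi}}\in \overline{F^{k+1}}$ as well as $F^{k+1}$).

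One small remark on organization rather than correctness: the paper's surrounding text (equation \eqref{weightsplitting}) records a \emph{direct-sum} decomposition $H^d(\M_\B)=\bigoplus_k W^d_k(\M_\B)$ by homogeneous weight, which is what [hausel-villegas] presumably establishes en route. You correctly flag that this direct-sum property is not automatic, and your argument deliberately avoids needing it by working filtration level by filtration level. That is a legitimate shortcut; the direct sum itself follows from the same rigidity argument by a descending induction on the weight filtration (using $W_mH^i=0$ for $m\ll 0$), but your version is leaner since the theorem as stated only asks for facts about the filtration and its graded pieces.
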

\n
Denoting  by $W^d_k(\M_\B)\subset H^d(\M_\B)$ the subspace of degree $d$ homogenous weight $k$ cohomology classes, 
we have the following splittings: 
\begin{equation} 
\label{weightsplitting} 
H^d(\M_\B)= \bigoplus_{k} W^d_k(\M_\B) \qquad W_{2k}H^d(\M_\B)= \bigoplus_{i\leq k} W^d_i(\M_\B).
\end{equation} 
Theorem 1.1.3 of \cite{hausel-villegas} gives a formula for
the mixed Hodge polynomials of $\pglM_\B$ and $\M_\B$ which implies the curious symmetries 
$$
\dim {\rm Gr}^W_{\dim \M_\B-2k}H^{*}(\M_\B)=\dim {\rm Gr}^W_{\dim \M_\B+2k}H^{*+2k}(\M_\B)
$$
 and 
$$
\dim {\rm Gr}^W_{\dim \pglM_\B-2k}H^{*}(\pglM_\B)=\dim {\rm Gr}^W_{\dim \pglM_\B+2k}H^{*+2k}(\pglM_\B).
$$

\bigskip
These equalities, called {\em curious Poincar\'e duality} in  \cite{hausel-villegas}, are made more precise and 
significant by the {\em curious hard Lefschetz } theorems. 
 Consider the class $\alpha \in H^2(\pglM_\B)$ introduced above, and the class 
$\tilde{\alpha} \in H^2(\M_\B)$ defined in terms of the isomorphism  (\ref{gl2-split})
by 
\begin{equation}
\label{defalpfatld}
\tilde{\alpha}:=1 \otimes \alpha + \left(\sum_{i=1}^g \epsilon_i\epsilon_{i+g}\right)\otimes 1.
\end{equation}
We then have ({\rm \cite[Theorem 1.1.5]{hausel-villegas}})
\begin{theorem}{\rm (Curious hard Lefschetz)}
\label{curioushrdlf}
The map given by  iterated cup product with  $\tilde{\alpha}$  induces isomorphisms:
\begin{equation} 
\label{cchhll}
\cup \tilde{\alpha} ^{k}:{\rm Gr}^W_{\dim \M_\B-2k}H^*(\M_\B)\stackrel{\cong}{\lorw} {\rm Gr}^W_{\dim \M_\B+2k}H^{*+2k}(\M_\B), \quad\forall k \geq 0.
\end{equation} 
Similarly, cupping with $\alpha$ defines isomorphisms
\begin{equation} 
\label{cchhll2}
\cup \alpha^{k}:{\rm Gr}^W_{\dim \pglM_\B-2k}H^*(\pglM_\B)\stackrel{\cong}{\lorw} {\rm Gr}^W_{\dim \pglM_\B+2k}H^{*+2k }(\pglM_\B), \quad\forall k \geq 0.
\end{equation} 
\end{theorem}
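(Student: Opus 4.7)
The plan is to derive the statement from the main $P=W$ theorem of this paper (Theorem~1.1.1) combined with the relative hard Lefschetz theorem for the Hitchin fibration.

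First, I would reduce the $\tilde\alpha$-statement on $\M_\B$ to the $\alpha$-statement on $\pglM_\B$. The Künneth splitting \eqref{gl2-split} gives $H^*(\M_\B) \cong H^*(\GL_1^{2g}) \otimes H^*(\pglM_\B)$, and the weight filtration respects this decomposition: the $\epsilon_i$ contribute weight $1$ on the torus factor while $\alpha$ sits in weight $2$ on the other. Under the formula \eqref{defalpfatld}, $\tilde\alpha$ splits as a sum of two commuting cup-product operators: the standard hard Lefschetz class $\sum_i \epsilon_i\epsilon_{i+g}$ on the torus $\GL_1^{2g}$, whose Lefschetz property is classical, and cup product with $\alpha$ on the $\pglM_\B$ factor. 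A standard commuting-$\mathfrak{sl}_2$ argument then reduces the curious hard Lefschetz for $\tilde\alpha$ on $\M_\B$ to the corresponding statement for $\alpha$ on $\pglM_\B$, i.e.\ to \eqref{cchhll2}.

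Second, I would invoke Theorem~1.1.1: under non-abelian Hodge theory, the weight filtration $W$ on $H^*(\pglM_\B)$ corresponds, after the indexing shift $W_{2k} \leftrightarrow P_k$, to the perverse Leray filtration $P$ on $H^*(\mdpgld)$ associated with the Hitchin map $\hat\chi : \mdpgld \to \hat{\base}$. Under this correspondence, the class $\alpha$ matches the first Chern class of a line bundle on $\mdpgld$ which is relatively ample with respect to $\hat\chi$; this uses the determinant-type construction of $\alpha$, whose restriction to a generic Hitchin fibre is a positive multiple of the natural theta polarization on the corresponding Prym variety.

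Third, the relative hard Lefschetz theorem of Beilinson-Bernstein-Deligne, applied to the proper morphism $\hat\chi$ and the relatively ample class $\alpha$, yields isomorphisms
\[
\cup \alpha^k : {\rm Gr}^P_{r-k} H^*(\mdpgld) \xrightarrow{\cong} {\rm Gr}^P_{r+k} H^{*+2k}(\mdpgld)
\]
on graded pieces of the perverse filtration, where $r$ is the relative dimension of $\hat\chi$, which equals $(\dim \pglM_\B)/2$. Transporting via $P=W$ produces the desired isomorphisms \eqref{cchhll2}. The main obstacle is bookkeeping of normalizations: one must verify that the shift by $r$ in $P$ translates precisely to the shift by $\dim \pglM_\B$ in $W$ under the convention of Theorem~1.1.1, and confirm that $\alpha$ is genuinely relatively ample rather than merely relatively nef. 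By contrast, the original proof in \cite{hausel-villegas} is independent of the Hitchin fibration: it proceeds by computing the mixed Hodge polynomials of $\M_\B$ and $\pglM_\B$ explicitly and verifying the isomorphism combinatorially using the Hausel-Thaddeus presentation of the cohomology rings, a route which is self-contained but substantially heavier.
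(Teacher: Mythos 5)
Your proposal does not work as a proof within the logic of this paper: it is circular. The theorem in question is not proved in the paper at all; it is cited verbatim as \cite[Theorem~1.1.5]{hausel-villegas}, where it is established by direct combinatorial analysis of the Hausel--Thaddeus presentation of $H^*(\pglM_\B)$, with no reference to the Hitchin fibration. You propose instead to deduce it from the paper's main theorem ($P=W$) together with the relative hard Lefschetz theorem for the Hitchin map. But the paper's proof of $P=W$ takes the curious hard Lefschetz theorem as an essential \emph{input}: it is what supplies the bi-graded $\slt$-module structure on $\Ha=\bigoplus_d H^d(\mdpgl)$ set up in \S\ref{bigsm} and invoked at the start of \S\ref{WequalsP} (via Theorem~\ref{chlwithpoles}, whose case $D=K_{\curv}$ is exactly~\eqref{cchhll2}). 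This structure underlies the primitive decomposition~\eqref{primideco}, the projection $\Pi$, the non-mixing Lemma~\ref{nomix}, Proposition~\ref{luck}, Lemma~\ref{getsmart}, Lemma~\ref{ndosta}, and hence Proposition~\ref{betaisfine}. Deducing curious hard Lefschetz from $P=W$ as the paper proves it is therefore begging the question.

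The auxiliary parts of your argument are fine and match the paper's bookkeeping: the reduction from $\M_\B$ to $\pglM_\B$ via the Künneth splitting~\eqref{gl2-split} and commuting Lefschetz operators is correct (cf.~\eqref{fe1} and Lemma~\ref{fax}); $\alpha$ is indeed relatively ample for $\hitmaph$ (Corollary~\ref{alphample}); and the normalization check $r=3g-3=\tfrac{1}{2}\dim\pglM_\B$ matches Theorem~\ref{mava}. What you have reconstructed is precisely the ``conceptual explanation'' the paper offers \emph{after} establishing $P=W$ — namely, that once the filtrations are identified, curious hard Lefschetz for the weight filtration becomes equivalent to relative hard Lefschetz for the Hitchin map. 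That is an interpretation, not an independent proof. To actually prove the theorem you would either reproduce the argument of \cite{hausel-villegas} (mixed Hodge polynomial computation plus the Hausel--Thaddeus relations), or first give a proof of $P=W$ that does not route through curious hard Lefschetz — which the present paper does not do.
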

The present paper was  partly motivated by the desire of  giving a more conceptual explanation  for these curious hard Lefschetz theorems. 
	\end{subsubsection}
	\begin{subsubsection}{Moduli of Higgs bundles and their cohomology ring} 
\label{modhiggsbnd}
Let $C$ be a smooth complex projective curve  of genus $g \geq 2$. A Higgs bundle is a pair $(E,\phi)$ of a vector bundle $E$ on $C$ and a Higgs field 
$\phi\in H^0(C,{\rm End}\, E \otimes K_{\curv})$. Let $\M_\Dol$  denote the {\em $\GL_2$-Higgs  moduli space},  i.e. the moduli space of  stable Higgs bundles of rank $2$ and 
	degree $1$. It is a non-singular quasi-projective variety with $\dim \, \M_\Dol=8g-6$.
		
		Let us fix a degree 1 line bundle
		$\Lambda$ on $C$. Let $\slM_\Dol$ be the {\em $\SL_2$-Higgs moduli space} of stable Higgs bundles $(E,\phi)$ or rank $2$, with determinant $\det(E) \simeq \Lambda$ and trace-free ${\tr}(\phi)=0$ Higgs field. The moduli space $\slM_\Dol$ is a non-singular quasi-projective variety 
		with $\dim{\slM_\Dol}= 6g-6$. 
Defining the  map 
$$ \lambda_{Dol}:\M_{Dol}\mapsto \Jac^1_C\times H^0(C,K_{\curv}), \qquad  \lambda_{Dol}(E,\phi):=({\det}(E),{\rm tr}(\phi)),$$
 we have  $$\slM_{Dol}=\lambda_{Dol}^{-1}((\Lambda,0)).$$
Let $\M_\Dol ^0 \subseteq \M_\Dol$ be the subset of stable Higgs bundles with traceless
Higgs field:
$$
\M_\Dol ^0 = \{(E,\phi ) \hbox { with }{\rm tr}(\phi)=0 \}.
$$
The group $\Jac^0_C$ of degree $0$ holomorphic line bundles on $C$ acts on $\M_\Dol ^0$
 as follows:  $L\in \Jac^0_C$ sends $(E,\phi)$ to 
$(E\otimes L,\phi \otimes {\rm Id}_L)$. The group
$\Gamma:= \Jac^0_C[2]\cong \Z_2^{2g}$   of order $2$ line bundles  on $\curv$
 acts naturally on $\slM_\Dol$ in the same way.
The  two resulting  quotients
are easily seen to be isomorphic.  We call  the  resulting variety
the {\em $\PGL_2$-Higgs moduli space} and denote it by 
$$\pglM_\Dol=\M^0_\Dol/ \Jac^0_C =\slM_\Dol/\Gamma.$$ 
It is a quasi-projective $(6g-6)$-dimensional algebraic variety with finite quotient singularities.

The fundamental theorem of non-Abelian Hodge theory on the  curve $\curv$ for the
groups $\G=\GL_2, \SL_2$ and $\PGL_2$ under consideration can be stated as follows  
(\cite{hit,simpson,donaldson,corlette}):
\begin{theorem}[Non-Abelian Hodge theorem] 
There are canonical diffeomorphisms:
	$$\M_\B\cong \M_\Dol, \,\,\,\slM_\B\cong \slM_\Dol, \,\,\, \pglM_\B\cong \pglM_\Dol. $$
\end{theorem}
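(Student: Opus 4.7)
The plan is to factor each of the three claimed diffeomorphisms through an intermediate de Rham moduli space $\M_\DR$ of pairs $(E,\nabla)$, where $E$ is a rank $2$ holomorphic bundle on $\curv$ (with the appropriate degree, determinant, and trace normalizations matching the $\GL_2$, $\SL_2$, $\PGL_2$ cases) and $\nabla$ is a meromorphic flat connection having a single simple pole at a chosen point $p\in\curv$ whose residue produces monodromy $-{\rm I}$ around $p$. The identification $\M_\B\cong\M_\DR$ is then the Riemann--Hilbert correspondence: taking monodromy yields a complex-analytic biholomorphism between isomorphism classes of such logarithmic flat connections and the twisted representations of $\pi_1(\curv\setminus p)$ used to define $\M_\B$, $\slM_\B$, and $\pglM_\B$.

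The substantive part is the diffeomorphism $\M_\DR\cong\M_\Dol$, which I would establish via harmonic bundles. Given a stable Higgs bundle $(E,\phi)$, invoke the Hitchin--Simpson theorem to produce a Hermitian metric $h$ solving the self-duality equation $F_h+[\phi,\phi^*_h]=0$; the connection $\nabla_h+\phi+\phi^*_h$ is then flat, producing a map $\M_\Dol\to\M_\DR$. Conversely, the Donaldson--Corlette theorem endows any flat connection with semisimple monodromy with a harmonic metric $h$; decomposing the flat connection into its $h$-unitary and $h$-Hermitian parts and then taking suitable $(1,0)$ and $(0,1)$ pieces yields a Higgs bundle, inverting the first construction. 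Real analyticity of both maps follows from the real analyticity of the underlying elliptic PDEs and smooth dependence of their solutions on initial data. The $\SL_2$ version is cut out by requiring $h$ to be compatible with a fixed metric on $\det E\cong\Lambda$ and $\phi$ to be traceless, conditions which pass cleanly through the correspondence. The $\PGL_2$ version descends from the $\SL_2$ version via parallel $\mmu_2^{2g}$-quotients, with $\Jac^0_\curv[2]\cong\Hom(\pi_1(\curv),\mmu_2)$ identifying the two group actions.

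The central obstacles are purely analytic: the existence and uniqueness statements of Hitchin--Simpson and Donaldson--Corlette both require genuinely nontrivial work on nonlinear elliptic systems on $\curv$, involving \emph{a priori} estimates and gauge-theoretic compactness arguments on an infinite-dimensional configuration space. A secondary technical hurdle lies in making the twist at $p$ precise on both sides: one must work with logarithmic connections whose residue has trace $1$ at $p$ and with parabolic Higgs data of parabolic weight $1/2$, and verify that the stability notions, the harmonic metric construction, and the finite group actions used in all three cases ($\GL_2$, $\SL_2$, $\PGL_2$) remain compatible with this parabolic framework throughout.
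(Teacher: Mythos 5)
The paper offers no proof of this statement; it is stated as background with a citation to \cite{hit,simpson,donaldson,corlette}, so the only comparison available is against the strategy in those references (chiefly Hitchin's original treatment of the rank $2$, odd degree case).

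Your overall architecture---Riemann--Hilbert composed with the harmonic-bundle correspondence of Hitchin--Simpson and Donaldson--Corlette---is the right shape, and the $\GL_2 \to \SL_2 \to \PGL_2$ reductions via fixed determinant and the $\Gamma \cong \mmu_2^{2g}$-quotient are handled correctly. However, the central step as you wrote it does not actually apply in the situation of the paper. The Higgs bundles here have rank $2$ and \emph{degree $1$}, and the self-duality equation you invoke, $F_h + [\phi,\phi^*_h] = 0$, has no solution in that case: integrating $\tr F_h$ over $\curv$ gives $-2\pi i \deg E \neq 0$, while $\tr[\phi,\phi^*_h]$ integrates to zero. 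The correct equation carries a central term, $F_h + [\phi,\phi^*_h] = -\pi i\,\omega\otimes\mathrm{Id}$ for a normalized K\"ahler form $\omega$, and the resulting connection $\nabla_h + \phi + \phi^*_h$ is only \emph{projectively} flat. Its holonomy is a representation of a central extension of $\pi_1(\curv)$, equivalently a representation of $\pi_1(\curv\setminus p)$ with monodromy $-\mathrm{I}$ about $p$---and this constant-curvature mechanism, not a logarithmic pole, is what produces the twist in $\M_\B$. This is Hitchin's route, and it is the one that directly matches the paper's moduli spaces on both sides.

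Your alternative route through logarithmic connections on $\curv\setminus p$ and parabolic weight $1/2$ at $p$ is legitimate (it is Simpson's noncompact theory), but it lands you on a \emph{parabolic} Higgs moduli space, not the ordinary degree-$1$ moduli space $\M_\Dol$ that appears in the statement. To complete that route you would need a further isomorphism---a Hecke modification or twist by a square root of $\odix{\curv}(p)$, together with the compatibility of stability conditions and the $\Gamma$-action across that identification---and this step is absent from the proposal. Either fix the self-duality equation to include the central curvature term and drop the logarithmic/parabolic apparatus entirely, or keep the parabolic framework but supply the missing identification of moduli spaces. As written, the degree-$0$ equation together with the parabolic normalization data are two incompatible halves of two different proofs.
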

At the level of cohomology, the non-Abelian Hodge theorem yields canonical isomorphisms \begin{equation} \label{naht} H^*(\M_\B)\cong H^*(\M_\Dol), \,\,\,H^*(\slM_\B)\cong H^*(\slM_\Dol), \,\,\, H^*(\pglM_\B)\cong H^*(\pglM_\Dol). \end{equation}

\begin{remark}
{\rm The Hodge structure on the cohomology of these Higgs moduli spaces is  pure, and  its Hodge polynomial is known, see 
Conjecture 5.6 in  \ci{hausel}, which also proposes a conjectural formula for any rank.} 
\end{remark}

\medskip
Given a line bundle $D$ on $\curv$, we  can consider, more generally,  the moduli space of 
stable pairs $(E,\phi)$, where $E$ is a rank $2$ degree $1$ bundle on $\curv$ and $\phi \in H^0(\curv, {\rm End}\, E\otimes D).$ 
The corresponding moduli space is connected by Theorem 7.5 in  \ci{nitsure}, and,
if $\deg D>2g-2$, or if $D=K_{\curv}$, nonsingular (\ci{nitsure}  Proposition 7.4).  

\begin{notation}
\label{imptnt}
{\rm For the sake of notational simplicity, this moduli space
is denoted by $\higgsbu$ in the sequel of the paper, without mentioning its dependence on the line bundle $D$, always
meant to satisfy $\deg D>2g-2$, or  $D=K_{\curv}$. 
Whenever we talk specifically of the case $D=K_{\curv}$ we denote the corresponding moduli space 
by $\higgsbud$. 
}
\end{notation}

We still have the map 
$\lambda_{D}: \higgsbu \lorw \Jac_{\curv}^1 \times H^0(C,D)$ defined by  $\lambda_{D}(E,\phi) = ({\det}(E),{\rm tr}(\phi)).$ We set
$\mdsl:=\lambda_{D}^{-1}((\Lambda,0)), \, \mdpgl:=\higgsbu^0 /\Jac^0_C=
\mdsl/\Gamma$, where, as above,
$$
\higgsbu^0= \{(E,\phi )\in \higgsbu \hbox { with }{\rm tr}(\phi)=0 \},
$$
and $\Gamma= \Jac^0_C[2]\cong \Z_2^{2g}$ is the group of order $2$ line bundles on $\curv$, 
see \S \ref{comp3gra}. 

\bigskip

It is proved in 
\cite[(4.4)]{hausel-thaddeus1} that there is a  Higgs bundle $({\mathbb E}, {\bf \Phi})$ on 
$ \higgsbu \times C$ with the property that, for every family  of Higgs bundles $({\mathbb E}_S, {\bf \Phi}_S)$ parametrized by an algebraic variety $S$, there is a unique map 
$a : S \lorw \higgsbu $ and an isomorphism 
$$
({\mathbb E}_S, {\bf \Phi}_S) \simeq {\mathcal L} \otimes (a \times {\rm Id})^*({\mathbb E}, {\bf \Phi}) 
$$
for a uniquely determined line bundle $\mathcal L$ on $S$.

\begin{remark}
{\rm
The vector bundle ${\mathbb E}$ with the universal property stated above 
is determined up to twisting with a line bundle pulled back from $\higgsbu$; hence,  
given two different choices  ${\mathbb E}, {\mathbb E}^\prime$, we have a canonical isomorphism
of the associated endomorphisms bundles ${\rm End }\,\mathbb E \simeq {\rm End }\,\mathbb E^\prime$.
The vector bundle ${\rm End }\,\mathbb E$ on $\higgsbu \times C$ is thus unambiguously defined.}
\end{remark}
Let $e_1,\dots,e_{2g}$ be a symplectic basis of $H^1(C)$ and $\omega\in H^2(C)$ be the Poincar\'e dual of the class of a point.
The K\"unneth decomposition of the second Chern class of 
${\rm End }\,\mathbb E$
\begin{equation}
\label{defabps}
-c_2({\rm End }{\mathbb E})=\alpha \otimes \omega + \sum_{i=1}^{2g} \psi_i \otimes e_i + \beta \otimes 1 \in H^*(\higgsbu)\otimes H^*(C) 
\end{equation}
defines the classes $\alpha\in H^2(\higgsbu)$, $\psi_i\in H^3(\higgsbu)$ and $\beta\in H^4(\higgsbu)$. 
These classes define also classes in $H^*(\mdsl)$ by restriction, and in $H^*(\mdpgl)$ by restriction and $\Jac^0_C[2]$-invariance. They  will be denoted with the same letters.
In the case $D=K_{\curv}$, these classes coincide, via the isomorphisms \ref{naht}, with the classes in $H^*(\M_\B)$, denoted by the same symbols, 
defined in \S \ref{charactvrty}.

The generators of $H^*(\Jac^1_C)$ pull back to the classes
$\epsilon_1, \dots, \epsilon_{2g} \in H^1(\higgsbu)$ via the morphism 
$\higgsbu \to \Jac^1_C$ given by $(E, \phi) \mapsto \det(E)$.
 
The paper  \cite{hausel-thaddeus1} shows that the universal classes $\{
\epsilon_1, \ldots, \epsilon_{2g}, 
\alpha, \psi_1, \ldots, \psi_{2g},\beta \}$ are a set of  multiplicative generators for  $H^*(\higgsbu)$; 
the relations among these universal classes were determined in \cite{hausel-thaddeus2}.
Due to the role these relations play in this paper we summarize the  main result of 
\cite{hausel-thaddeus2}.

Because of the isomorphism $H^*( \higgsbu)\simeq H^*(\mdpgl)\otimes H^*(\Jac^0_C)$,  it is enough to describe
the ring $H^*(\mdpgl)$. 
We introduce the element
$$
\gamma:= -2 \sum_i \psi_i \psi_{i+g},
$$
we set $\Psi:={\rm Span}( \psi_i) \subseteq H^3(\mdpgl)$, and we define
$$
\Lambda_0^k:= {\rm Ker} \left\{ \gamma^{g+1-k}:\bigwedge^k \Psi \lorw \bigwedge^{2g+2-k}\Psi \right\}\,\, \hbox{ for } \, 0 \leq k \leq g \,\,\hbox{ and }\, \Lambda_0^k=0 \,\hbox{ for } k>g.
$$
By the standard representation theory of the symplectic group, 
there is a direct sum decomposition
$$
\bigwedge^k \Psi=\bigoplus_i \gamma^i \Lambda_0^{k-2i}. 
$$

\begin{definition}
Given two integers $a,b \geq 0$, we define $I^a_b$ to be the ideal of $\rat[\alpha, \beta, \gamma]$ generated by $\gamma^{a+1}$ and 
\begin{equation}
\label{relaht}
\rho_{r,s,t}^c:= \sum_{i=0}^{{\rm min}(c,r,s)}
\frac{\alpha^{r-i}}{(r-i)!}
\frac{\beta^{s-i}}{(s-i)!} \frac{(2\gamma)^{t+i}}{i!},
\end{equation}
where $c:= r+3s+2t-2a+2-b$, for all the $r,s,t\geq 0$ such that
\begin{equation}
\label{indexrange}
r+3s+3t>3a-3+b, \hbox{ and } \,r+2s+2t\geq 2a-2+b.
\end{equation}
\end{definition}

\begin{remark}
\label{verytrivial}
{\rm If $r=0$ and $b>0$, then the second inequality in (\ref{indexrange}) is strictly stronger 
than  the first.}
\end{remark}

\n
The main result of \ci{hausel-thaddeus1} is then 
\begin{theorem}
\label{hauthaddmntm}
The cohomology ring of $\mdpgl$ has the presentation
$$
H^*(\mdpgl)=\sum_{k=0}^g \Lambda_0^k(\psi)\otimes \left(\rat[\alpha, \beta,\gamma]/I^{g-k}_{\deg D+2-2g+k}\right).
$$
\end{theorem}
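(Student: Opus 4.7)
My plan is to prove the theorem in four stages. First, I reduce from $\higgsbu$ to $\mdpgl$: the natural product decomposition $H^*(\higgsbu)\cong H^*(\mdpgl)\otimes H^*(\Jac^0_C)$, which absorbs the classes $\epsilon_1,\dots,\epsilon_{2g}$ into the Jacobian factor, reduces the full presentation to a presentation of the $\PGL_2$-ring. Because the ring $\rat[\alpha,\beta,\gamma]$ and the subspaces $\Lambda_0^k\subseteq \bigwedge^k \Psi$ are intrinsic to $\mdpgl$, this is essentially a bookkeeping reduction.

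Next, I would establish that $\alpha,\beta,\gamma$ together with the $\psi_i$ generate $H^*(\mdpgl)$ as a $\rat$-algebra. For the closed moduli space $\mathcal{N}$ of stable bundles this is the classical Atiyah--Bott theorem; for the Higgs moduli space I would deduce generation by Morse-theoretic methods applied to the Hitchin Hamiltonian $\|\phi\|^2$, whose critical submanifolds are built from $\mathcal{N}$ and symmetric products of $\curv$, and to each of which the universal generators restrict to Atiyah--Bott generators. Taking $\Jac^0_C[2]$-invariants via a Kirwan-type surjectivity then shows that the tautological map from $\rat[\alpha,\beta,\gamma]\otimes \bigwedge^*\Psi$ onto $H^*(\mdpgl)$ is indeed surjective.

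The essential content of the theorem is the identification of the kernel. The vanishing $\gamma^{a+1}\Lambda_0^k=0$ with $a=g-k$, together with the $\mathrm{Sp}(2g,\rat)$-isotypical decomposition $\bigwedge^k\Psi=\bigoplus_i \gamma^i \Lambda_0^{k-2i}$, organizes the ring into the direct sum indexed by $k$ and gives the $\gamma^{a+1}$-part of $I^{g-k}_{\deg D+2-2g+k}$. The relations $\rho^c_{r,s,t}$ I would derive from a Grothendieck--Riemann--Roch / Porteous-type analysis of the universal endomorphism bundle $\mathrm{End}\,\mathbb{E}$ twisted by $D$: for suitable $(r,s,t)$ the higher Chern classes of an associated direct image sheaf on $\mdpgl$ must vanish once one passes the rank of the sheaf, and translating these vanishings through \eqref{defabps} produces precisely the polynomials $\rho^c_{r,s,t}$, with the dependence on $\deg D$ entering through the Riemann--Roch formula on $\curv$. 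The inequalities \eqref{indexrange} should correspond to the exact range of degrees in which this Chern class vanishing is nontrivial.

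Completeness of the relations follows from a Poincar\'e polynomial comparison: the Betti numbers of $\mdpgl$ can be computed independently by Morse theory of $\|\phi\|^2$ (which provides an equivariantly perfect stratification), and matching ranks graded piece by graded piece shows that no relations beyond $I^{g-k}_{\deg D+2-2g+k}$ are required. I expect the main obstacle to be the explicit derivation of the $\rho^c_{r,s,t}$ and the verification that \eqref{indexrange} cuts out exactly the independent new relations; both steps amount to delicate Chern-class bookkeeping on the universal family, with the dependence on $\deg D$ tracked carefully. The reduction to the $\PGL_2$ ring and the generation statement are, by comparison, routine.
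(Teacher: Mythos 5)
This theorem is not proved in the paper under review: it is imported from Hausel--Thaddeus, and the paper only states and then uses it. (There is also a slip in the attribution line ``The main result of \ci{hausel-thaddeus1} is then'' --- \ci{hausel-thaddeus1} establishes the generators, whereas the relations, and hence the full presentation, are the content of \ci{hausel-thaddeus2}.) There is therefore no proof in the present paper against which to compare yours.

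With that caveat, your sketch does reconstruct, at the level of strategy, roughly the route of the cited papers: generation via the perfect Morse function $\|\phi\|^2$, whose critical submanifolds are $\mathcal N$ and loci fibered over symmetric products of $\curv$; a Mumford-ideal / Porteous-type vanishing producing the $\rho^c_{r,s,t}$; and a Poincar\'e-polynomial comparison to establish completeness. Two precision points. First, $\gamma=-2\sum_i\psi_i\psi_{i+g}$ is a polynomial in the $\psi$'s, not an independent generator, so the generation statement is for $\alpha,\beta,\psi_1,\dots,\psi_{2g}$ (and the $\epsilon_i$ on the $\GL_2$ side); the $\gamma^{g-k+1}$ part of $I^{g-k}_b$ is, as you say, essentially formal from the $\mathfrak{sl}_2$-structure on $\bigwedge^*\Psi$. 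Second, the assertions that a GRR/Porteous vanishing of Chern classes for a direct image sheaf on the \emph{noncompact} moduli space yields exactly the $\rho^c_{r,s,t}$ in the range \eqref{indexrange}, and that the resulting quotient ring's Poincar\'e polynomial matches the known one, are precisely the technical core of \ci{hausel-thaddeus2}; your outline is compatible with that argument but replaces its hard computations with a heuristic, so as written it is a roadmap rather than a proof.
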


\medskip
\n
The form of the relations (\ref{relaht}) affords the following
\begin{definition}
\label{abstrwfiltr}
We define  the grading $w$ on $H^*(\mdpgl)$ by setting
$$
w(\alpha)=w(\beta)=w(\psi_i)=2, 
$$
and extending it by multiplicativity. This grading is well-defined since
the relations of Theorem \ref{hauthaddmntm} are homogenous 
with respect to this grading. We denote by $W'_{\bullet}$ the increasing filtration associated to this grading. 
\end{definition}
\medskip
If  $D=K_{\curv}$, 
thanks to the results of \ci{hausel-villegas} described in \S \ref{charactvrty},
$W'_{\bullet}$ on $H^*(\mdpgl)$ coincides, up to a simple renumbering
and via (\ref{naht}), with the weight filtration associated with 
the mixed Hodge structure on $H^*(\pglM_\B)$.
As mentioned in \cite[Remark 5.2.3]{hausel-villegas},  for a general $D$,  even though there is no associated Betti moduli space,
the filtration $W'_{\bullet}$ on $H^*(\mdpgl)$ turns out to have the same formal 
properties of the weight filtration   on $H^*(\pglM_\B)$
described in \S \ref{charactvrty}.  In particular,  \cite[Lemma 5.3.3]{hausel-villegas} implies that it satisfies 
the following curious hard Lefschetz property completely analogous to (\ref{cchhll2}) 
of Theorem \ref{curioushrdlf}: \begin{theorem}\label{chlwithpoles} For $\alpha\in H^2(\pglM)$ and we have the isomorphisms:\begin{equation} 
\cup \alpha^{k}:{\rm Gr}^W_{\dim \pglM-2k}H^*(\pglM)\stackrel{\cong}{\lorw} {\rm Gr}^W_{\dim \pglM+2k}H^{*+2k }(\pglM), \quad\forall k \geq 0.
\end{equation} 
\end{theorem}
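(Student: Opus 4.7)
The plan is to derive the theorem directly from the explicit presentation of Theorem \ref{hauthaddmntm}. Using the decomposition
\[
H^*(\mdpgl) = \bigoplus_{k=0}^{g} \Lambda_0^k(\psi) \otimes R_k, \qquad R_k := \rat[\alpha, \beta, \gamma]/I^{g-k}_{\deg D + 2 - 2g + k},
\]
I would first verify that this splitting is compatible with the filtration $W'$ of Definition \ref{abstrwfiltr}: the subspace $\Lambda_0^k(\psi) \subseteq \bigwedge^k \Psi$ lies in pure cohomological degree $3k$ and pure $w$-weight $2k$, while $R_k$ inherits a bigrading from $\rat[\alpha,\beta,\gamma]$ with $\deg\alpha=2$, $\deg\beta=4$, $\deg\gamma=6$ and $w(\alpha)=w(\beta)=2$, $w(\gamma)=4$; the relations $\rho^c_{r,s,t}$ of \eqref{relaht} are manifestly homogeneous for both gradings. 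Since multiplication by $\alpha$ acts only on the second factor, the theorem on $H^*(\mdpgl)$ decouples into a family of hard Lefschetz statements, one for each $R_k$, with the shifts coming from the fixed degrees of the $\Lambda_0^k(\psi)$ factor.

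The next step is to establish the purely algebraic statement that, for each $a,b \geq 0$, multiplication by $\alpha^j$ induces an isomorphism between the $w$-graded pieces of $R = \rat[\alpha, \beta, \gamma]/I^a_b$ placed symmetrically about the top $w$-degree, separately in each cohomological degree. This is the algebraic content of \cite[Lemma 5.3.3]{hausel-villegas}: its proof exploits the uniform shape of the relations $\rho^c_{r,s,t}$ and the index range \eqref{indexrange}, together with Remark \ref{verytrivial}, to exhibit $R$ as a graded Gorenstein ring whose Lefschetz symmetry can be read off from the defining equations. The parameter $b$ enters only as a truncation of the index range that preserves this symmetry, which is precisely why the statement for general $\deg D > 2g-2$ has exactly the same form as the case $D = K_{\curv}$ covered by \eqref{cchhll2}.

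The main obstacle is this last algebraic step. When $D = K_{\curv}$ one has access to the character variety $\pglM_\B$ and the hard Lefschetz isomorphism \eqref{cchhll2} is obtained as a Hodge-theoretic consequence of \cite[Theorem 1.1.5]{hausel-villegas}; for general $D$, however, there is no Betti model and no underlying mixed Hodge structure, so the argument must be combinatorial, tracking the kernel and cokernel of $\cup\alpha^j$ on a monomial basis of $\rat[\alpha,\beta,\gamma]/I^a_b$ and using the relations $\rho^c_{r,s,t}$ to pair up elements on opposite sides of the middle $w$-degree. A useful sanity check is that at $\deg D = 2g-2$ one recovers Theorem \ref{curioushrdlf}, and that increasing $\deg D$ only widens the range \eqref{indexrange}, which shifts the top $w$-degree of each $R_k$ while preserving the symmetry underlying the Lefschetz isomorphism.
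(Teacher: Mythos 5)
Your proposal takes essentially the same route as the paper: the paper obtains Theorem~\ref{chlwithpoles} by citing \cite[Lemma~5.3.3]{hausel-villegas}, which is exactly the algebraic hard Lefschetz statement on $\rat[\alpha,\beta,\gamma]/I^a_b$ that you isolate as the crux, applied summand-by-summand to the presentation of Theorem~\ref{hauthaddmntm}. Your elaboration of the reduction — the bigraded splitting into $\Lambda_0^k(\psi)\otimes R_k$, the observation that $\cup\alpha$ acts only on the $R_k$ factor, and the resulting decoupling into shifted Lefschetz statements on each $R_k$ — is correct and simply makes explicit what the paper leaves implicit when it defers to that reference.
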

Finally, setting $w(\epsilon_i)=1$, we get a grading and an associated filtration on $H^*(\higgsbu)$,
and all the discussion above goes through without any change.
\end{subsubsection}	 
\end{subsection}

\subsection{The Hitchin fibration and spectral curves}
\label{luca}

\subsubsection{The Hitchin fibration ($\G=\GL_2$)}
\label{hitchmap}

Given a Higgs field 
$\phi \in H^0(\curv, {\rm End}\,E \otimes D)$,
we have ${\rm tr}(\phi)\in H^0(\curv, D)$ and ${\rm det}(\phi) \in  H^0(\curv, 2D).$
The Hitchin map, 
 $\hitmap: \higgsbu \lorw \base$ assigns
\beq
\label{ppooii}
\higgsbu \ni  (E, \phi) \longmapsto
({\rm tr}(\phi), {\rm det}(\phi) )\in \base := H^0(\curv, D)\times H^0(\curv, 2D).
\eeq

Note that we don't indicate the dependence on the line bundle $D$ in the notation for the target 
$\base$ of the Hitchin map (cfr. the conventions introduced in Notation \ref{imptnt}). 
It follows from Theorem 6.1 in \ci{nitsure} that the  map $\hitmap$ is proper.

In the case of $\mdsl \subseteq \higgsbu$, the corresponding Hitchin fibration  $\hitmapc$
is just the restriction of $\hitmap$ to $\mdsl$. Since, by definition, if $(E, \phi) \in \mdsl$, 
then ${\rm tr}(\phi)=0$, we have  
\begin{equation}
\hitmapc: \mdsl \lorw \baseo:=H^0(\curv, 2D) \subseteq \base.
\end{equation} 
The map descends to the quotient $\mdpgl=\mdsl/\Gamma$, and we have
\begin{equation}
\hitmaph: \mdpgl \lorw \baseo.
\end{equation} 
In the rest of this section, we  concentrate on the map $\hitmap$.
The necessary changes  for dealing with the cases of $\hitmaph$ and $\hitmapc$ 
are discussed in Section \ref{comp3gra}. 

\subsubsection{The spectral curve construction}
\label{spctrcrv}
Let $\pi_D: \totspa \lorw \curv$  be  the total space of the 
line bundle $D$.  
For  $s:=(s_1,s_2) \in \base$ as in (\ref{ppooii}), 
the {\em spectral curve } $ \curv_s$     is 
the curve on $ \totspa$
defined by the  equation

\begin{equation}
\label{equaspectcur}
\left\{ y \in \totspa \,:\, y^2- \pi_D^*(s_1)y+ \pi_D^*(s_2)=0 \right\}.
\end{equation}
Spectral curves can be  singular, reducible, even non-reduced; they are locally planar, and, 
in force of our assumptions on the genus of $\curv$ and the degree of $D$, connected.
The restriction $\pro_s: \curv_s \lorw \curv$  
of the projection $\pi_D:   \totspa \lorw \curv$
exhibits $\curv_s$ as  a double cover of $\curv$. 
The equation (\ref{equaspectcur}) in $\totspa  \times
\base$ defines the  flat family $u$ of spectral curves 
\begin{equation}
\label{spettrale}
\xymatrix{
{{\mathscr C}}_{\mathcal A}\ar[rr]^{\pro} \ar[rd]^{\spectfam}&  & \curv \times \base \ar[ld]^{p_2 } \\ 
   &      \base &
}
\end{equation} 
where $\spectfam^{-1}(s)=\curv_s$, for all $ s \in \base$.
The family is equipped with the involution $\iota: {{\mathscr C}}_{\mathcal A} \lorw {{\mathscr C}}_{\mathcal A}$
over $\curv \times \base$  exchanging the two sheets of the covering. 

\medskip
The restriction of the relative Picard scheme  of the family $u$ to 
the smooth locus 
$$\basesm : = \{ s \in \base \hbox{ such that } \curv_s 
\,\hbox{is smooth} \},$$  is the disjoint union over $l\in \zed$ of the proper families   
$\rjmap^{\,l}: \reljac^{\,l} \lorw \basesm $
such that $(\rjmap^{\,l})^{-1}(s)=\jac{\curv_s}^{\, l}$
is the  component of the Picard variety of $\curv_s$
parametrizing
degree $l$ line bundles on $\curv_s$.
  
\begin{remark}
\label{oddeven}
{\rm Fix a  degree one line bundle ${\mathcal L}$  on $\curv$.
The operation of tensoring line bundles
of fixed degree with $\pro^*{\mathcal L}$ defines isomorphisms  
$\reljac^{\,l} \lorw \reljac^{\,l+2}$ of schemes over $\basesm$.
It follows that, up to isomorphisms,
there are only two such families, the abelian scheme $\reljac^{\,0}$ 
and the $\reljac^{\,0}$-torsor $\reljac^{\,1}$.
Sending a point $\widehat{c}\in \curv_s$ to the line bundle
$\mathcal O_{\curv_s}(\widehat{c})$ defines  an Abel-Jacobi-type $\basesm$-map ${\mathscr C}_{\basesm} \lorw \reljac^{\,1}$. }
\end{remark}

\medskip
The  Riemann-Hurwitz formula and  (\ref{equaspectcur}) imply at once the following
\begin{proposition} 
\label{singspcrv}
Let $s =(s_1, s_2) \in \base$. 
Assume $s_1^2-4s_2 \neq 0 \in  H^0(\curv, 2D)$.
\begin{enumerate}
\item
The spectral curve $\curv_s$ is reduced, and 
the covering $\pro_s: \curv_s \lorw \curv$ 
is branched at the zeros of $s_1^2-4s_2 $.
The point $s=(s_1,s_2) \in \basesm$ if and only if $s_1^2-4s_2$ 
has simple zeros, in which case $g(\curv_s) =2g-1+ \deg D$. 

\item
If  $s_1^2-4s_2$ vanishes to  finite order $k \geq 2$ at a point 
$ c\in \curv$, then the spectral curve $\curv_s$ 
has a planar singularity at the point   
$\pro_s^{-1}(c)$ which is  locally  analytically
isomorphic to $\{y^2-x^{k}=0\} \subseteq \comp^2$.
\end{enumerate}
\end{proposition}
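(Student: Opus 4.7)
The plan is to reduce the whole proposition to a local analytic computation in a neighborhood of a point $c\in \curv$. I would choose a local holomorphic coordinate $x$ on $\curv$ centered at $c$ together with a local trivialization of $D$ on a neighborhood of $c$. Together these identify a neighborhood of $\pi_D^{-1}(c)$ inside $\totspa$ with an open set of $\comp^2$ with coordinates $(x,y)$, where $y$ is the fiber coordinate, and the defining equation \eqref{equaspectcur} of $\curv_s$ becomes a quadratic in $y$:
\[ y^2 - a(x)\,y + b(x) = 0, \]
with $a,b$ the local expressions of $s_1,s_2$. The substitution $y \mapsto y + a(x)/2$ (valid in characteristic zero) eliminates the linear term and gives the normal form
\[ y^2 \;=\; \tfrac{1}{4}\,\Delta(x),\qquad \Delta := a^2 - 4b, \]
where $\Delta$ is the local representative of the discriminant $s_1^2 - 4s_2 \in H^0(\curv,2D)$. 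All assertions of the proposition will be read off from this model.

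For part~(1), when $s_1^2 - 4s_2 \not\equiv 0$ the polynomial $y^2 - \Delta(x)/4$ is squarefree in $y$, so $\curv_s$ is reduced; the double cover $\pro_s$ is ramified exactly where the discriminant $\Delta$ vanishes, giving the branch locus as the zero set of $s_1^2 - 4s_2$. Smoothness of $\curv_s$ at a point of $\pro_s^{-1}(c)$ is equivalent, from the normal form, to $\Delta$ having at most a simple zero at $c$, so $\curv_s$ is smooth iff $s_1^2 - 4s_2$ has only simple zeros. In that case, Riemann--Hurwitz applied to the degree-$2$ map $\pro_s:\curv_s\lorw\curv$ gives $2g(\curv_s) - 2 = 2(2g-2) + \deg R$, with $\deg R$ equal to the number of zeros of $s_1^2 - 4s_2 \in H^0(\curv,2D)$, namely $2\deg D$; this yields $g(\curv_s) = 2g - 1 + \deg D$.

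For part~(2), assume $\Delta$ vanishes to order exactly $k\geq 2$ at $c$ and write $\Delta(x) = u(x)\,x^k$ with $u(0)\neq 0$. Because $u$ is a nowhere-vanishing holomorphic germ at $0$, it admits a holomorphic $k$th root $v$ in a neighborhood of $0$; the function $x' := v(x)\,x$ is then a new local coordinate at $c$ in which $\Delta = (x')^k$, and rescaling $y$ by a constant absorbs the factor $1/4$, converting the equation to $y^2 = (x')^k$. This is precisely the asserted local analytic model, with the singular point at $\pro_s^{-1}(c)$ corresponding to the origin.

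No step poses a real obstacle: the entire argument is a routine local analysis, and the only sanity check worth flagging is independence from the choices. A different local trivialization of $D$ rescales $y$ and $\Delta$ by a non-vanishing holomorphic unit, which preserves both the order of vanishing of $\Delta$ at $c$ and the isomorphism class of the plane curve singularity $y^2 = x^k$; hence the conclusions of both parts are intrinsic.
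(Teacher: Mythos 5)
Your proposal is correct and fills in exactly the argument the paper compresses into a single line ("The Riemann-Hurwitz formula and the defining equation of the spectral curve imply at once the following"): complete the square locally to get $y^2=\Delta(x)/4$, read off reducedness, branching, and smoothness from this normal form, apply Riemann--Hurwitz with $\deg R = 2\deg D$ for the genus, and normalize $\Delta = u(x)x^k$ via a holomorphic $k$th root of $u$ to obtain the model $y^2=x^k$. All steps check out, including the change of coordinate $x'=v(x)x$ and the independence from the choice of trivialization.
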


\begin{remark}
\label{classmap}
{\rm  Associating with $s=(s_1,s_2) \in \base$ its discriminant divisor
$(s_1^2-4s_2) \in C^{(2r)},$ where $r:= \deg D$ and  $C^{(2r)}$ is    
the  $2r$-th symmetric product of $C$, gives a map $\Theta: \base \lorw C^{(2r)}$. }
\end{remark}

\bigskip
 We recall that if 
 $\mathcal F$ is a torsion-free sheaf on 
 an integral curve $\gencurv$, the rank of $\mathcal F$ is the dimension of its stalk at the  generic point of $\gencurv$, and the degree $\deg {\mathcal F}$ is defined as $\deg {\mathcal F}:=\chi(\gencurv, {\mathcal F})-{\rm rank}({\mathcal F})\chi(\gencurv, {\mathcal O}_{\gencurv})$.
For $l \in \zed$, then
the {\em compactified Jacobian $\compjac{\gencurv}^{\,l}$ of degree $l$}  
parameterizes torsion-free sheaves  of  rank $1$ and degree $l$ on $\gencurv$ (see \ci{ds, ak}).
Tensoring with a line bundle of degree $l$ gives an isomorphism 
$\compjac{\gencurv}^{\,0} \simeq \compjac{\gencurv}^{\,l}$.
If $\gencurv$ is smooth, then every rank $1$ torsion free sheaf is 
locally free and $\compjac{\gencurv}^{\,l}=\jac{\gencurv}^{\,l}$.

The following theorem (\ci{bnr}, Proposition 3.6) 
describes the fibres of the Hitchin map over a rather large open subset
of the base $\base$. Recall  Remark \ref{oddeven}
and that we are considering Higgs bundles of {\em odd} degree. 

\begin{theorem}
\label{fibres}
Let $s \in \base$ be  such that the spectral curve $\curv_s$ is integral. 
There is an isomorphism of varieties
$\hitmap^{-1}(s)\simeq \compjac{\curv_s}^a$, with $a=0$ if $\deg D$ is odd, and $a=1$ if $\deg D$ is even.
\end{theorem}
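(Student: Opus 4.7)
The plan is to set up, following the classical Beauville-Narasimhan-Ramanan spectral correspondence of \ci{bnr}, an isomorphism of varieties between the fibre $\hitmap^{-1}(s)$ and the compactified Jacobian $\compjac{\curv_s}^{1+\deg D}$, and then to identify the latter with $\compjac{\curv_s}^a$ by parity. Given a stable Higgs pair $(E,\phi)$ with $\hitmap(E,\phi)=s=(s_1,s_2)$, the Cayley-Hamilton identity $\phi^2-s_1\phi+s_2=0$ turns the $\mathcal O_\curv$-algebra $\pro_{s,*}\mathcal O_{\curv_s}=\mathcal O_\curv\oplus D^{-1}$ into a ring acting on $E$, with the tautological section $y$ acting as $\phi$. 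This endows $E$ with the structure of an $\mathcal O_{\curv_s}$-module, producing a coherent sheaf $\mathcal F$ on $\curv_s$ with $\pro_{s,*}\mathcal F=E$; equivalently, $\mathcal F$ is realised as the cokernel of $y\cdot\mathrm{Id}-\pi_D^*\phi$ on $\totspa$. A local calculation at the generic point shows $\mathcal F$ has rank one, and torsion-freeness follows from the fact that $E=\pro_{s,*}\mathcal F$ is locally free.

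Conversely, given a torsion-free rank-one sheaf $\mathcal F$ on $\curv_s$, define $E:=\pro_{s,*}\mathcal F$ and take $\phi\in H^0(\curv,\mathrm{End}(E)\otimes D)$ to be the pushforward of multiplication by the tautological section $y:\mathcal F\to\mathcal F\otimes\pro_s^*D$. The key local input is that, at a singular point of $\curv_s$ with local equation $y^2-x^k=0$ (Proposition \ref{singspcrv}.(2)), the complete local ring $R=\widehat{\mathcal O}_{\curv_s}=k[[x,y]]/(y^2-x^k)$ is $k[[x]]$-flat, since a direct check shows $x$ is not a zero-divisor in $R$; hence any rank-one torsion-free $R$-module $\mathcal F$ is $k[[x]]$-torsion-free and, being finitely generated over the DVR $k[[x]]$, is $k[[x]]$-free of rank two. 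This shows that $E$ is locally free of rank two on $\curv$. By construction $\phi$ satisfies the spectral equation, so $\hitmap(E,\phi)=s$, and stability is automatic: any $\phi$-invariant subbundle $E'\subsetneq E$ would correspond to a proper non-empty closed subscheme of the integral curve $\curv_s$ of pure dimension one, contradicting integrality.

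To identify the component of $\compjac{\curv_s}$ to which $\mathcal F$ belongs, one runs a Riemann-Roch computation. Since $\pro_s$ is finite, $\chi(\curv_s,\mathcal F)=\chi(\curv,E)$; combined with $\deg E=1$ and the genus formula $g(\curv_s)=2g-1+\deg D$ (Proposition \ref{singspcrv}.(1), extended to integral $\curv_s$ via $\pro_{s,*}\mathcal O_{\curv_s}=\mathcal O_\curv\oplus D^{-1}$), one finds $\deg\mathcal F=1+\deg D$. Since pullback along the double cover $\pro_s$ doubles degrees, translation identifies $\compjac{\curv_s}^{1+\deg D}$ canonically with $\compjac{\curv_s}^0$ when $\deg D$ is odd and with $\compjac{\curv_s}^1$ when $\deg D$ is even, matching the stated $a$.

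The main obstacle is upgrading the pointwise bijection to an isomorphism of varieties, which requires carrying out the two constructions in families. This is done using the universal Higgs pair $({\mathbb E},{\bf \Phi})$ on $\higgsbu\times\curv$ and the flat family of spectral curves \eqref{spettrale}, producing a universal spectral sheaf over $\hitmap^{-1}(s)\times\curv_s$ on one side and a family of Higgs bundles over $\compjac{\curv_s}^{1+\deg D}$ on the other, then invoking the universal property of the Higgs moduli space and of the relative compactified Jacobian to conclude that these assignments define mutually inverse morphisms of varieties.
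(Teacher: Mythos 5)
Your proof is correct and follows essentially the same approach the paper takes, namely citing the BNR spectral correspondence and then pinning down the component of the compactified Jacobian by the same Riemann--Roch computation $\chi(\curv_s,\mathcal F)=\chi(\curv,\pro_{s*}\mathcal F)$ with $\chi(\curv_s,\mathcal O_{\curv_s})=2(1-g)-\deg D$. One minor slip: the translation identifying $\compjac{\curv_s}^{1+\deg D}$ with $\compjac{\curv_s}^a$ is \emph{non-canonical} (it requires choosing a line bundle of the appropriate degree, as noted in Remark~\ref{oddeven}), not canonical as you write; and your explicit $k[[x]]$-flatness check, while fine, is more than is needed, since the pushforward of a torsion-free sheaf along a finite dominant map to a smooth curve is automatically torsion-free hence locally free, regardless of the singularity type.
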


The isomorphisms assigns to a torsion free sheaf $\shf$ of degree $a$ 
on $\curv_s$ its direct image ${\pro_{s }}_* \shf$, which is a 
 torsion free ${\mathcal O}_C$-module  on $\curv$. Since $C$ is smooth,
 the sheaf  ${\pro_s}_* \shf$ is locally free of rank two, in view of the fact
 that  $\pro_s$ has degree $2$.
Since the map $\pro_s$ is finite, there are no higher cohomology sheaves, and 
$\chi(\curv_s,\shf)= \chi(\curv, \pro_{s*} \shf).$
The Riemann-Roch theorem
$$
\deg \pro_{s*} \shf + 2(1-g)= \chi(\curv, \pro_* \shf)=\chi(\curv_s,\shf)= \deg \shf +\chi(\curv_s,{\mathcal O}_{\curv_s})=    \deg \shf +2(1-g)-\deg D,
$$
implies that
$\pro_{s*} \shf$ has odd degree if $\deg \shf -\deg D$ is odd. 
The Higgs field arises  as multiplication by $y$ (see (\ref{equaspectcur}))
in view of   the natural structure of 
 $\pro_{s*} {\mathcal O}_{\curv_s}$-module on
$\pro_{s*} \shf$ (see \ci{bnr}, \S 3  for details).

In particular,  for every  $s \in \basesm$ the fiber $\hitmap^{-1} (s)$  can be identified,  noncanonically, 
with the Jacobian variety of the smooth spectral curve $\curv_s$. 
In fact, the  Abelian scheme $\reljac^{\,0}$ acts on 
$\higgsbusm:= \hitmap^{-1} (\basesm)$ making it into a torsor (see \ci{ngo}, Section 4.3).

The following is  well-known:

\begin{lemma}
\label{locsystors}
Let $\alpha: A \lorw S$ be an Abelian scheme,  let $\tau :P \lorw S$ be an $A$-torsor
and let $j \geq 0$.
Then there are natural isomorphisms of local systems
$$ 
R^j \tau_* \rat_P \simeq  R^j \alpha_* \rat_A \simeq   
\bigwedge^j R^1 \alpha_* \rat_A .
$$
\end{lemma}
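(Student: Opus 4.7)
The plan is to prove the two isomorphisms separately. We first establish the right isomorphism $R^j\alpha_*\rat_A \simeq \bigwedge^j R^1\alpha_*\rat_A$, which is a family version of the classical cohomology computation for abelian varieties, and then the left isomorphism $R^j\tau_*\rat_P \simeq R^j\alpha_*\rat_A$, which follows from local triviality of the torsor combined with the homotopy invariance of translations on abelian varieties.

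For the right isomorphism, the sheaf-theoretic cup product provides a natural morphism of local systems
$$\bigwedge^j R^1\alpha_*\rat_A \lorw R^j\alpha_*\rat_A,$$
whose stalk at any $s \in S$ is the cup product map $\bigwedge^j H^1(A_s,\rat) \to H^j(A_s,\rat)$. By the classical description of the rational cohomology ring of an abelian variety as the exterior algebra on $H^1$, this stalkwise map is an isomorphism; hence the morphism of local systems is an isomorphism.

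For the left isomorphism, the torsor $\tau$ is locally trivial in the analytic topology. Pick an open cover $\{U_i\}$ of $S$ together with $A|_{U_i}$-equivariant trivializations $\phi_i : \tau^{-1}(U_i) \xrightarrow{\sim} \alpha^{-1}(U_i)$. Each $\phi_i$ induces a local isomorphism $R^j\tau_*\rat_P|_{U_i} \simeq R^j\alpha_*\rat_A|_{U_i}$, and on overlaps $U_i \cap U_j$ the transition $\phi_j\phi_i^{-1}$ is translation $T_{t_{ij}}$ by a section $t_{ij} \in A(U_i \cap U_j)$. Gluing to a global isomorphism thus amounts to checking that the induced endomorphism $T_{t_{ij}}^*$ of $R^j\alpha_*\rat_A|_{U_i \cap U_j}$ is the identity.

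The main (and essentially only) technical point is that translations act trivially on the rational cohomology of an abelian variety: for any $a \in A_s$, the translation $T_a: A_s \to A_s$ is homotopic to the identity via any path from $0$ to $a$ in the connected group $A_s$, so $T_a^*$ is the identity on $H^*(A_s, \rat)$. Since the endomorphism of $R^j\alpha_*\rat_A|_{U_i \cap U_j}$ induced by $T_{t_{ij}}$ is a map of local systems which is the identity on every stalk, it is itself the identity. This completes the gluing step and with it the proof; we do not foresee any serious obstacle beyond this standard ingredient.
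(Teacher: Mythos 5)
Your proof is correct and follows essentially the same route as the paper: both arguments rest on the observation that translations on the (connected) fibers act trivially on cohomology, so the local trivializations of the torsor glue, and both identify $R^j\alpha_*\rat_A$ with $\bigwedge^j R^1\alpha_*\rat_A$ via the exterior-algebra description of the cohomology of a real torus. Your explicit use of the sheaf-level cup-product map to get the second isomorphism is a minor stylistic elaboration of the paper's appeal to K\"unneth, not a different approach.
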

\n
{\em Proof.}
Since the fibers of $A$ are connected, the action by translation on the cohomology of the fibers of $P$ is trivial.
Hence, the isomorphism of local systems
$ (R^j \tau_* \rat_P)_{|U} \simeq ( R^j \alpha_* \rat_A)_{|U} $ associated with a local trivialization of $P$
does not depend on the chosen trivialization.  
Consequently, the isomorphisms associated to a trivializing cover $\{U_i\}$ of $S$ 
glue to a global isomorphism of local systems.
The second isomorphism follows from the  K\"unneth isomorphism 
$H^l((S^1)^a)\simeq \bigwedge^l H^1((S^1)^a)$. 
\blacksquare

\begin{corollary}
\label{locsys}
There are canonical isomorphisms of local systems on $\basesm$:
\[ R^{\, l} \hitmapsmooth_* \rat_{\higgsbusm} \simeq  R^{\,l}\rjmap_* \rat_{\reljac^{\,1}} \simeq \bigwedge^l 
R^1 \rjmap_* \rat_{\reljac^{\,1}} \simeq \bigwedge^l R^1 \spectfamsmooth_* 
\rat_{{\mathscr C}_{\basesm}}.\]
\end{corollary}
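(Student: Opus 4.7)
The plan is to derive each of the three isomorphisms from a result that is essentially already in hand, assembling them in sequence.

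First, I would establish that the restriction of the Hitchin map $\hitmapsmooth : \higgsbusm \to \basesm$ is a torsor under the Abelian scheme $\reljac^{\,0}$. Over $\basesm$, Theorem~\ref{fibres} identifies the fibers of $\hitmap$ with $\jac{\curv_s}^a$ for a fixed integer $a\in\{0,1\}$ depending on the parity of $\deg D$; tensoring with a fixed degree-$a$ line bundle on each spectral curve shows these fibers are $\jac{\curv_s}^{\,0}$-torsors, and Section~\ref{spctrcrv} (with the reference to \cite{ngo}, \S 4.3) makes this global on $\basesm$. Likewise, $\rjmap : \reljac^{\,1} \to \basesm$ is an $\reljac^{\,0}$-torsor by definition.

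Second, I would apply Lemma~\ref{locsystors} to the Abelian scheme $\rjmap^{\,0}:\reljac^{\,0}\to\basesm$, taking once the torsor $\tau=\hitmapsmooth$ and once the torsor $\tau=\rjmap$. This yields
\[
R^l\hitmapsmooth_*\rat_{\higgsbusm}\;\simeq\;\bigwedge^l R^1\rjmap^{\,0}_*\rat_{\reljac^{\,0}}\;\simeq\;R^l\rjmap_*\rat_{\reljac^{\,1}},
\]
which gives the first equality in the statement and also explains the middle equality, since applying the lemma again to the torsor $\rjmap$ itself produces $R^l\rjmap_*\rat_{\reljac^{\,1}}\simeq\bigwedge^l R^1\rjmap_*\rat_{\reljac^{\,1}}$ (the isomorphism $R^1\rjmap^{\,0}_*\rat\simeq R^1\rjmap_*\rat_{\reljac^{\,1}}$ being itself a case of the lemma applied in degree $1$).

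Third, for the final isomorphism I would invoke the classical fact that, for a smooth projective curve, the Abel--Jacobi map induces an isomorphism on $H^1$. The Abel--Jacobi-type morphism ${\mathscr C}_{\basesm}\to \reljac^{\,1}$ of Remark~\ref{oddeven} is a $\basesm$-map which, fiberwise over $s\in\basesm$, is the classical Abel--Jacobi embedding $\curv_s\hookrightarrow \jac{\curv_s}^{\,1}$. Proper base change plus the smoothness of both families implies that the induced map of local systems $R^1\rjmap_*\rat_{\reljac^{\,1}}\to R^1\spectfamsmooth_*\rat_{{\mathscr C}_{\basesm}}$ is an isomorphism, since this is so fiberwise. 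Taking $l$-th exterior powers completes the chain.

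No step is genuinely hard here, as the arguments reduce to two well-known inputs (triviality of translation action on cohomology of an abelian variety, and the Abel--Jacobi isomorphism on $H^1$) plus Theorem~\ref{fibres}. The only delicate point to be careful with is the naturality: one must check that the identifications do not depend on the local trivializations chosen for the torsors, which is exactly what Lemma~\ref{locsystors} provides. I would therefore present the proof as a short sequence of three invocations, with the torsor identification of $\higgsbusm$ being the only piece that requires a brief remark beyond quoting previously established results.
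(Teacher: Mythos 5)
Your proposal follows essentially the same path as the paper: the first two isomorphisms come from Lemma~\ref{locsystors} applied to the $\reljac^{\,0}$-torsor $\higgsbusm$ (and to $\reljac^{\,1}$), and the third comes from the Abel--Jacobi $\basesm$-map of Remark~\ref{oddeven}, which is a stalkwise isomorphism on $R^1$. The extra care you take in spelling out the torsor structure and naturality is sound but does not change the argument.
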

\n{\em Proof.}
The first and second isomorphism follow by applying Lemma \ref{locsystors} to 
the $\reljac^{\,0}$-torsor $\higgsbusm$.
The  Abel-Jacobi $\basesm$-map ${\mathscr C}_{\basesm} \lorw \reljac^{\,1}$ 
of  Remark \ref{oddeven} induces via pullback a map of local systems
$R^1\rjmap_*^1 \rat \lorw R^1\spectfamsmooth_* \rat_{}$ which
is an isomorphism on each stalk, and this proves the third isomorphism. 
\blacksquare

\subsection{Perverse filtration}
\label{mark}
Let 
\[
\hi : \mi^{\dai +\ff} \lorw \ai^{\dai} 
\]
be a proper map
 of relative dimension $\ff$ between  irreducible   varieties of the indicated dimensions.
We assume that $\mi$ is nonsingular, or with at worst finite quotient singularities, and that the fibres have constant dimension $\ff$.
Let $\eta \in H^2(\mi)$ be the first Chern class of
a relatively ample (or $h$-ample) line bundle on $\mi$, i.e. a line bundle which is ample 
when restricted to every fiber of $\hi$. 

The goal of this section is to define  the perverse Leray filtration
$P$ on the cohomology groups 
$H^*(\mi)$  and to list and discuss some of  its relevant  properties.

\subsubsection{Definition of the perverse filtration $P$ on $H^*(\mi)$}
\label{sspf}
We employ freely the language of derived categories and perverse sheaves
(see the seminal paper \ci{bbd}, the survey \cite{bams}, 
or for example the paper \ci{htam}). Standard textbooks on the subject are \ci{dimca, iv,k-s}.

Let $D_{\ai}$ be the full subcategory of the bounded derived category 
of the category of sheaves of rational vector spaces on $\ai$ with objects
the bounded complexes with constructible cohomology sheaves.
We denote the derived direct image $R\hi_*$
simply by $\hi_*$ and, for $i \in \zed$,   the $i$-th hypercohomology group
of $\ai$   with coefficients in $K\in D_{\ai}$
by $H^i(\ai, K)$. If the index $i$ is unimportant (but fixed), we simply write
$H^*(\ai,K)$. We set $H^{\bullet}(\ai, K):= \oplus_i H^i(\ai, K)$.
 We work
with the middle perversity $t$-structure.  The corresponding
category of perverse sheaves is denoted by  $P_{\ai}$. Given $K \in D_{\ai}$, we 
have  the sequence of maps
of  ``truncated" complexes 
\[
\ldots \lorw \ptd{p-1} K \lorw   \ptd{p}K  \lorw \ptd{p+1} K \lorw \ldots
\lorw K \qquad 
p \in \zed,
\]
where $\ptd{p}K = 0$ for every $p \ll 0$ and $\ptd{p}K = K$ for every $p \gg 0$.
 The  (increasing) perverse filtration $P$ on the cohomology groups
 $H^*(\ai, K)$ is defined by taking the images of the truncation maps
 in cohomology:
 \begin{equation}
 \label{pf}
 P_p H^*(\ai,K): = \im \,\{ H^*(\ai, \ptd{p} K) \lorw H^*(\ai, K)\}.
 \end{equation}
Clearly, the perverse filtration on $H^*(\mi) = H^*(\mi, \rat_\mi)$  becomes trivial after a dimensional shift.
On the other hand, we also have the perverse filtration on $H^*(\ai, \hi_*\rat) = H^*(\mi)$
which, as it is the case for its variant given by 
the Leray filtration, is highly nontrivial. This is what may be 
called the perverse Leray filtration on $H^*(M)$ associated with $\hi$.

For the needs of this paper, we  want the perverse Leray filtration $P$ on $H^*(\mi)$
to be  of type $[0,2\ff]$, i.e. $P_{-1} =\{0\}$ and $P_{2\ff} = H^*(\mi)$,
and to satisfy  $1 \in P_0 H^0(\mi)$. In order to achieve this, we 
 define (with slight abuse of notation) the perverse Leray filtration on $H^*(M)$
 (with respect to $\hi$) by setting
 \[
 P_p H^*(\mi) :=  P_p H^{*- \dai}(\ai, \hi_* \rat_{\mi} [\dai]).\]
 Note that  in \cite{htam}, the perverse Leray  filtration is defined
 so that it is   of type $[-f,f]$. 

In order to simplify the notation, we set
\begin{equation}
\label{not}
H^*_{\leq p} (\mi) : = P_pH^*(\mi), \qquad
H^*_p(\mi) := Gr^P_p H^*(\mi):= P_p/P_{p-1}.
\end{equation}
In this paper, we  also use  the  graded spaces for the weight
filtration $Gr^W_wH^*(\mi)$ and we  employ the same
notation $H^*_w(\mi)$. In those cases,  we make it clear  which
meaning should be given to the symbols.

\subsubsection{Decomposition and relative hard Lefschetz theorems, primitive decomposition}
\label{dtrhl}
Define
\begin{equation}
\label{ppii}
{\mathcal P}^p: = \phix{p}{ \hi_* \rat_{\mi} [\dai]} \, \in \, P_{\ai}, \quad p \in \zed,
\end{equation}
where $\phix{p}{-}$ denotes the $p$-th perverse cohomology functor. 
We have that ${\mathcal P}^p=0$ for $p \notin [0, 2\ff]$.
The decomposition theorem for the proper map
$\hi: \mi \to \ai$ then  gives the existence of   isomorphisms in $D_{\ai}$
\begin{equation}
\label{dt}
\varphi: 
\bigoplus_{p=0}^{2\ff} {\mathcal P}^p[-p] \stackrel{\simeq}\lorw
 \hi_* \rat_{\mi} [\dai].
\end{equation}
We have identifications
\begin{equation}
\label{5tgb}
H^*_{\leq p} (\mi) =  
\bigoplus_{p'= 0}^{ p}  \varphi  \left(H^*_{p'}(\mi) \right), \qquad
H^*_p(\mi) =  H^{*- \dai-p} (\ai, {\mathcal P}^{p} ). \end{equation}
\begin{remark} 
\label{attocc}
{\rm
The images $\varphi  \left(H^*_{p}(\mi) \right) \subseteq
H^*(\mi) $ depend on $\varphi$. If $H^*_{\leq p-1} (\mi)= \{0\}$, then the image
$\varphi  \left(H^*_{p}(\mi) \right)=
H^*_{\leq p} (\mi)$ is independent of $\varphi$.
In particular, the image 
$\varphi  \left(H^*_{0}(\mi) \right)=H^*_{\leq 0}(\mi)$ 
is independent of $\varphi$.
}
\end{remark}

One of the deep assertions of the decomposition theorem is that
each perverse sheaf ${\mathcal P}^p$ is semisimple and
splits canonically into a direct sum 
\begin{equation}
\label{phi}
 {\mathcal P}^p = \bigoplus_{Z}   IC_Z(L_{Z,p})
 \end{equation}
 of intersection complexes
over a finite collection of  distinct  irreducible closed  subvarieties $Z$ in $\ai$ with coefficients given
by semisimple   local systems $L_{Z,p}$  defined on a dense open subset  $Z^o\subseteq
Z_{reg} \subseteq Z$  of 
the regular part of $Z$.

There are the following three basic symmetries.

\begin{enumerate}
\item
(PVD) Poincar\'e-Verdier duality :   if we denote the Verdier dual of $K$ by $K^{\vee}$,
then we have:
\begin{equation}
\label{pvd}
{\mathcal P}^{\ff-i}\simeq  ({{\mathcal P}^{\ff +i}})^{\vee},
 \qquad \forall i \in \zed.
\end{equation}

\item
(RHL)  Relative hard Lefschetz:  for every $i \geq 0$, the $i$-th iteration of the operation
 of cupping with the $h$-ample line bundle  $\eta$ yields isomorphisms
 of perverse sheaves
 \begin{equation}
 \label{rhl}
 \eta^i : {\mathcal P}^{\ff-i} \stackrel{\simeq}\lorw
 {\mathcal P}^{\ff+i};\end{equation}
in particular, we have the hard Lefschetz isomorphisms
 at the level of graded groups (still called RHL):
 \begin{equation}
 \label{rhlg} \eta^i:
 H^*_{\ff -i}(\mi)  \stackrel{\simeq}\lorw H^{*+2i}_{\ff+i} (\mi), \qquad 
 \forall \;  i \geq 0 .\end{equation}
 
 \item
 (Self-duality) The isomorphisms PVD and RHL are  compatible
 with the direct sum decomposition (\ref{phi}) and, in particular,
 the local systems $L_{Z,p}$ are self-dual.
 \end{enumerate}
 
 Recall that $P_{\ai}$ is an Abelian category. By a standard abuse of notation,
 which greatly simplifies the notation, 
 we view kernels and images as subobjects.
 
 For 
 $0 \leq i \leq \ff$ and $0 \leq j \leq  \ff -i$ define
 \begin{equation}
 \label{qs}
 {\mathcal Q}^{i,0} := \ke \, \left\{ \eta^{\ff -i +1}: {\mathcal P}^{i}
 \lorw  {\mathcal P}^{2\ff- i+2} \right\}, \qquad
 {\mathcal Q}^{i,j} := \im \, \left\{ \eta^{j}: {\mathcal Q}^{i,0} \lorw 
   {\mathcal P}^{i+2j} \right\}
 \end{equation}
 and  set ${\mathcal Q}^{i,j}=0$ for all the other values of $(i,j)$.
  The RHL (\ref{rhl})  then yields the  
 natural primitive decompositions in $P_{\ai}$:
 \begin{equation}
 \label{inuno}
  {\mathcal P}^{k} = \bigoplus_{j \geq 0} {\mathcal Q}^{k-2j,j},
  \qquad
  \forall k \in \zed^{\geq 0}.
 \end{equation}
 
\subsubsection{Deligne's $Q$-splitting associated with the relatively ample $\eta$}
\label{delspl}  
The paper \cite{deligneseattle} defines three preferred decomposition isomorphisms (\ref{dt}) associated with the 
$h$-ample line bundle  $\eta$. 
We consider the first of them, (see also  \cite{decmigseattle}),  
 which we denote by $\phi_{\eta}$
 and name the Deligne isomorphism; notice, however, that the indexing scheme employed here differs from that of  \cite{deligneseattle,decmigseattle}.
 The cohomological  properties of the  Deligne isomorphism
 needed in this paper are the following:
 
 \begin{fact}
 \label{chardeliso}
 {\rm
 The map 
 $$\phi_{\eta}: 
\bigoplus_{p=0}^{2\ff} {\mathcal P}^p[-p] \stackrel{\simeq}\lorw
 \hi_* \rat_{\mi} [\dai].$$ is characterized by the following properties. Let $0 \leq i \leq \ff$. Then:
 
 \n
 (i) Applying the functor  $\phix{i}{-} $ to the map  ${\phi_\eta}_{|}: {\mathcal Q}^{i,0}[-i] \to h_* \rat [\dai]$ gives 
 the canonical inclusion ${\mathcal Q}^{i,0} \subseteq {\mathcal P}^{i}$ ;
 
 \n
 (ii)  for every $s>  \ff -i$, the composition below is zero
\[{\mathcal Q}^{i,0}[-i] \stackrel{{\phi_\eta}_{|}}\lorw h_* \rat [\dai]  \stackrel{\eta^s}\lorw
h_* \rat [\dai][2s] \lorw \left(\ptu{\ff +s} \rat [\dai] \right) [2s],
 \]  
or,  equivalently, the composition $\eta^s \circ
{{\phi_\eta}_{|}}$  factors through $^{\mathfrak p}\!D^{\leq f-s-1}_{\ai}$.
 }
 \end{fact}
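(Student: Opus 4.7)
The plan is to establish existence and uniqueness separately, following the approach of \cite{deligneseattle} (see also \cite{decmigseattle}, noting that the indexing used here differs by a shift).

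For existence, Deligne's construction of $\phi_\eta$ proceeds inductively by exploiting the primitive decomposition (\ref{inuno}) together with the relative hard Lefschetz isomorphisms $\eta^i: {\mathcal P}^{\ff - i} \stackrel{\simeq}{\lorw} {\mathcal P}^{\ff + i}$. Concretely, one first defines the restriction of $\phi_\eta$ to each ${\mathcal Q}^{i,0}[-i]$ by lifting the canonical inclusion ${\mathcal Q}^{i,0} \hookrightarrow {\mathcal P}^i = \phix{i}{\hi_* \rat[\dai]}$ through the perverse truncation tower of $\hi_* \rat[\dai]$; at each step, the obstruction to extending the lift lives in an $\mathrm{Ext}^1$-group between perverse sheaves, and this obstruction is canonically killed using the RHL data. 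The restriction to the remaining summands ${\mathcal Q}^{i,j}[-i-2j]$ for $j > 0$ is then defined by applying $\eta^j$ to the previously constructed map on ${\mathcal Q}^{i,0}[-i]$. Property (i) is automatic from the first step of the construction. For property (ii), one uses that ${\mathcal Q}^{i,0}$ is by definition annihilated by $\eta^{\ff - i + 1}$ in ${\mathcal P}^{2\ff - i + 2}$; hence, for $s > \ff - i$, the component of $\eta^s \circ {\phi_\eta}_{|}$ in the top relevant perverse cohomology vanishes, and the residual part lands in the lower perverse truncation stages, which is precisely the content of (ii).

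For uniqueness, suppose two splittings $\phi, \phi'$ both satisfy (i) and (ii). Set $\delta := (\phi - \phi')_{|{\mathcal Q}^{i,0}[-i]}$. It suffices to prove $\delta = 0$ for every $0 \leq i \leq \ff$, since the values on ${\mathcal Q}^{i,j}[-i-2j]$ for $j > 0$ are recovered by composition with $\eta^j$ as dictated by Deligne's scheme. By (i), $\phix{i}{\delta} = 0$, so $\delta$ factors through $\pdli{i-1} \hi_* \rat[\dai]$. Property (ii), applied to both $\phi$ and $\phi'$ and subtracted, confines $\eta^s \circ \delta$ to $\pdli{\ff + s - 1} \hi_* \rat[\dai]$ for every $s > \ff - i$. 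Combining this with the RHL isomorphism (\ref{rhlg}) and descending induction on the perverse truncation degree, one pushes $\delta$ into arbitrarily low truncation stages, hence $\delta = 0$.

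The main obstacle is the bookkeeping of perverse degrees in the uniqueness step: managing the simultaneous interplay of the $\pdli{p}$-filtration, the Lefschetz shifts produced by $\eta^s$, and the primitive decomposition (\ref{inuno}) requires a careful diagram chase. This is essentially the content of Deligne's original argument in \cite{deligneseattle}, and no new ingredient beyond translation to the present paper's indexing is needed.
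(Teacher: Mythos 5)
The paper states Fact \ref{chardeliso} without proof, citing \cite{deligneseattle} (and the exposition in \cite{decmigseattle}) for the construction and characterization of $\phi_\eta$; there is no internal argument to compare against. Your proposal reconstructs Deligne's argument, and the overall structure — existence via an inductive lifting through the perverse truncation tower using RHL, uniqueness by killing the difference $\delta$ of two putative splittings level by level — is the right one. Two points deserve tightening. First, in the existence sketch, the phrase ``the obstruction is canonically killed using the RHL data'' is imprecise: the relevant $\mathrm{Ext}^{1}$-groups do not vanish in general, and the decomposition theorem already guarantees that lifts exist; the content of Deligne's construction is rather that RHL singles out a \emph{canonical} lift among the many, via the primitive decomposition (\ref{inuno}) and the isomorphisms $\eta^{\ff-i}:{\mathcal P}^{i}\to{\mathcal P}^{2\ff-i}$ of (\ref{rhl}) (you cite (\ref{rhlg}), which is the graded version on cohomology groups — what you actually need is the sheaf-level statement (\ref{rhl})). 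Second, in the uniqueness step the ``descending induction'' requires verifying that the obstruction class $\bar\delta_q\in\mathrm{Ext}^{i-q}({\mathcal Q}^{i,0},{\mathcal P}^q)$ is independent of the choice of lift of $\delta$ through $\pdli{q}\hi_*\rat[\dai]$ for $q<i-1$ (where the lift is no longer unique); this holds because two lifts differ by a map factoring through $\ptu{q+1}\hi_*\rat[\dai][-1]$, whose composite with the projection to ${\mathcal P}^q[-q]$ vanishes for degree reasons. With that verified, your argument runs: $\bar\delta_q\neq 0$ would force, via the RHL isomorphism $\eta^{\ff-q}_*$ and functoriality of truncation under cup product, a nonvanishing obstruction for $\eta^{\ff-q}\delta$ in perverse degree $2\ff-q$, contradicting property (ii) with $s=\ff-q>\ff-i$. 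These are the bookkeeping details you flag as ``the main obstacle,'' and they do go through; the proposal is a correct reconstruction of the cited argument.
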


 For $d \geq 0$, $0 \leq i \leq \ff$ and $0 \leq j \leq \ff -i$, define
 \begin{equation}
 \label{pofqs}
 Q^{i,j;\, d} := \phi_\eta  \left(   H^{d - \dai  -i -2j} \left(\ai, {\mathcal Q}^{i,j} \right)   \right)
  \subseteq
 H^d_{\leq i +2j}(\mi),
 \end{equation}
 \begin{equation}
 \label{pofqss}
  Q^{i,j}:= \bigoplus_{d\geq 0} Q^{i, j; \, d}\subseteq
 \bigoplus_{d \geq 0} H^{d}_{\leq i+2j}(\mi),
 \end{equation}
  and define $Q^{i,j}= Q^{i,j;\, d}= \{0\}$ for all the other values of $(i,j;d)$.
 We  then have the following decompositions, which depend on $\phi_\eta$:
 \begin{equation}
 \label{qdec}
  H^{\bullet} (\mi)= 
 \bigoplus_{i,j}Q^{i,j}, \qquad     H^{d} (\mi)=  \bigoplus_{ i,j} Q^{i,j;d},
 \end{equation}
 \begin{equation}\label{qdecc}
 H_{\leq p}^{\bullet}(\mi) =
 \bigoplus_{d} H^d_{\leq p} (\mi) =\bigoplus_{ i,j,d, \,
 i+2j \leq p} Q^{i,j;d}=
 \bigoplus_{i,j, \,  i+2j \leq p} Q^{i,j}.
\end{equation}
Every  $u\in H^{\bullet} (\mi)$ admits the  {\em $Q$-decomposition}
associated with the splitting $\phi_\eta$:
 \begin{equation}
 \label{qdeci}
 u = \sum u^{i,j}, \qquad u^{i,j} \in Q^{i,j}.
 \end{equation}

By construction, we have 
\[
H_p^d (\mi) = \bigoplus_{i+2j=p} Q^{i,j;\, d},\qquad
H_p (\mi) = \bigoplus_{i+2j=p} Q^{i,j}.\]

 The properties  of the Deligne splitting that we need, and that follow
 from Fact \ref{chardeliso}, are 
 \begin{equation}
\label{novel}
\eta\, Q^{i,j} = Q^{i,j+1}, \; \forall\; 0 \leq j <\ff -i, \qquad
\eta\, Q^{i, f-i} 
\,\subseteq \,\bigoplus_{0 \leq l \leq \min{(\ff-i, \ff -k)}} Q^{k,l}.
\end{equation}
In particular,  we have the  simple relation
\begin{equation}
\label{12w}
Q^{i,j}= \eta^j Q^{i,0}, \qquad \forall \, 0 \leq j \leq \ff -i.
\end{equation}

Here is some ad-hoc notation and terminology.
Let $p \in \zed$ and  $u \in H^{\bullet}_{\leq p}(\mi)$. Denote by $[u]_p\in H^{\bullet}_p(\mi)$
 the natural projection to the graded group.
In what follows, we add over  $ 0 \leq i \leq \ff $ and $0 \leq j \leq \ff -i$. We have 
\[u = \sum_{i+2j \leq p} u^{i,j},
\qquad
[u]_p =  \sum_{ i+2j = p} [u^{i,j}]_p.\]
We say that:

\begin{itemize}
\item
$u$ {\em has   perversity $\leq p$}; 

\item
$u$ {\em has  perversity $p$} if   $[u]_p \neq 0$;

\item
the class  $0\in H^\bullet(\mi)$ has
perversity $p$, for every $p \in \zed$;

\item $u$ is {\em sharp} if $u^{i,j}=0$
whenever $i +2j <p$; note that the zero class  is automatically sharp
and that a class may have a given perversity without being sharp;

\item
 $u= u^{p,0} \in Q^{p,0} \subseteq H^{\bullet}_{\leq p}(\mi)$
is  {\em RHL-primitive}; note that such a class is automatically sharp.

\end{itemize}

One should not confuse RHL-primitivity with primitivity:
if $u \in Q^{p,0}$, then
\begin{equation}
\label{predth}
\eta^{\ff -p+1} u^{p,0} \in H^{\bullet}_{\leq 2\ff-p+1}(\mi),
\qquad {\rm i.e.} \qquad
 [\eta^{\ff -p+1} u^{p,0}]_{2\ff -p+2}=0,\end{equation}
whereas, one could have  $\eta^{\ff -p+1} u^{p,0}\neq 0$.
 
  \medskip
  Recall that we have chosen the Deligne splitting $\phi_\eta$ associated with $\eta$.
  The following lemma does not hold
  for an arbitrary splitting  $\varphi$  in (\ref{dt}). 
  
  \begin{lemma}
  \label{nomix}
 {\rm  ({\bf Non-mixing lemma})}
 Let $u  \in H^{\bullet}_{\leq p}(\mi)$.  If
  $\eta^{\ff - p+1}u =0$,    then $u$ is RHL-primitive, i.e.  $u= u^{p,0}\in Q^{p,0}$.
    \end{lemma}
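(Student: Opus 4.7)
The plan is to expand $u$ in the Deligne $Q$-decomposition $u=\sum_{i+2j\leq p}u^{i,j}$, apply $\eta^{f-p+1}$, and exploit the direct sum structure $H^\bullet(\mi)=\bigoplus Q^{k,l}$ to project the equation $\eta^{f-p+1}u=0$ onto each graded perversity piece and extract the vanishing of every $u^{i,j}$ with $(i,j)\neq(p,0)$.

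The key preliminary observation is the following dichotomy. For every $(i,j)\neq(p,0)$ with $i+2j\leq p$ one has $i+j\leq p-1$, hence $j+(f-p+1)\leq f-i$, and iterating the first clause of \eqref{novel} together with the relative hard Lefschetz theorem \eqref{rhlg} yields an isomorphism $\eta^{f-p+1}\colon Q^{i,j}\stackrel{\sim}{\lorw} Q^{i,j+f-p+1}$, landing in a single direct summand at perversity $i+2j+2(f-p+1)$. For the primitive piece $u^{p,0}\in Q^{p,0}$, by contrast, the last application of $\eta$ exits the regular Lefschetz regime: writing $\eta^{f-p+1}u^{p,0}=\eta(\eta^{f-p}u^{p,0})\in\eta\,Q^{p,f-p}$ and invoking the second clause of \eqref{novel}, the image decomposes inside $\bigoplus Q^{k,l}$ with $l\leq\min(f-p,f-k)$, so in particular $k+2l\leq f+l\leq 2f-p$, in agreement with Fact \ref{chardeliso}(ii).

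Next, for each $n=0,1,\dots,p$, project $\eta^{f-p+1}u=0$ onto the graded piece $H^\bullet_{2f-p+2-n}(\mi)$. The contributions from the non-primitive pieces come from $u^{i,j}$ with $i+2j=p-n$; each lands in a distinct summand $Q^{i,j+f-p+1}$ whose first index $k=i$ lies in $[0,p-n]$. The contribution of $\eta^{f-p+1}u^{p,0}$ is only nonzero for $n\geq 2$, and lies in $Q^{k,l}$ with $k+2l=2f-p+2-n$ and $l\leq\min(f-p,f-k)$; a short computation shows these two inequalities confine $k$ to the range $[p-n+2,p+n-2]$, which is disjoint from $[0,p-n]$. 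By linear independence of the direct summands, both sides must vanish separately; using that $\eta^{f-p+1}\colon Q^{i,j}\to Q^{i,j+f-p+1}$ is an isomorphism for each relevant $(i,j)$, we conclude $u^{i,j}=0$ for every $(i,j)\neq(p,0)$ with $i+2j=p-n$. Running $n$ from $0$ up to $p$ eliminates every non-primitive term and yields $u=u^{p,0}\in Q^{p,0}$.

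The main obstacle lies in the disjointness argument: it depends crucially on the structural restriction $l\leq\min(f-p,f-k)$ in the second clause of \eqref{novel}, a property specific to Deligne's splitting $\phi_\eta$; as emphasized in the text preceding the lemma, the non-mixing statement fails for an arbitrary splitting in \eqref{dt}. Without this restriction the image $\eta^{f-p+1}u^{p,0}$ could in principle leak into the very $Q^{k,l}$'s that carry the non-primitive contributions, and the projection argument would be inconclusive.
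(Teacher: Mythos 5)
Your proof is correct and uses essentially the same ingredients as the paper's: the Deligne $Q$-decomposition, the two clauses of \eqref{novel}, and the RHL-injectivity of $\eta^{f-p+1}$ on the non-primitive $Q^{i,j}$. The only organizational difference is that you project $\eta^{f-p+1}u=0$ onto each graded perversity piece $H^\bullet_{2f-p+2-n}(\mi)$ and show the contribution of $u^{p,0}$ (landing in $Q^{k,l}$ with $k\in[p-n+2,p+n-2]$) is disjoint from those of the remaining $u^{i,j}$ (landing in $Q^{i,j+f-p+1}$ with $i\leq p-n$), whereas the paper observes directly that each $\eta^{f-p+1}u^{i,j}$ for $(i,j)\neq(p,0)$ sits in a single summand $Q^{i,j+f-p+1}$ outside the range $\bigoplus_{l\leq\min(f-p,f-k)}Q^{k,l}$ into which $\eta^{f-p+1}u^{p,0}$ falls; both are just the same direct-sum disjointness, sliced differently.
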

\n
{\em Proof.}
In view of the $Q$-decomposition, we  can write
\[
u= u^{p,0} + \sum_{j\geq 1} u^{p- 2j,j} + \sum_{s+2t <p} u^{s,t},
\] 
where the first two summands are sharp and have perversity $p$
and the third has perversity $\leq p-1$.
By  (\ref{novel}) and  (\ref{12w}) 
we deduce that
\[
\eta^{\ff -p+1} u^{p,0}   \in   \bigoplus_{0 \leq l\leq \ff - p, \ff -k}Q^{k,l},
\quad
\eta^{\ff -p+1} u^{p-2j,j}   \in   Q^{p-2j,\ff -p +1 +j}, \quad
\eta^{\ff-p+1} u^{s,t}   \in   Q^{s,t+\ff -p+1}.
\]
The  three collections of $Q$-spaces above have no term in common.
It follows  that all three terms in $\eta^{\ff -p+1} u=0$ are zero.
By RHL (\ref{rhl}), cupping with $\eta^{\ff -p+1}$ is injective on the spaces
$Q^{p-2j,j}$, $j\geq 1$, and $Q^{s,t}$ above. We deduce
that $u^{p+2j,j}= u^{s,t} =0$.
\blacksquare

\subsubsection{The perverse filtration and cup-product}
\label{badb}
The following is a crude, completely general, estimate:
\begin{lemma}
\label{fax}
Let $u \in H^d(\mi)$. Then the cup product map with $u$ satisfies:
\[
\cup u:  H^*_{\leq p}(\mi) \lorw H^{*+d}_{\leq p + d} (\mi).
\]
\end{lemma}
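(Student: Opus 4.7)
The plan is to interpret the cup product $\cup u$ as arising from a morphism in $D_{\ai}$ and then to commute it with the perverse truncation functor $\ptd{p}$. Concretely, the class $u \in H^d(\mi) = \Hom_{D_{\mi}}(\rat_{\mi}, \rat_{\mi}[d])$ corresponds to a morphism $\rat_{\mi} \to \rat_{\mi}[d]$ in $D_{\mi}$; pushing forward under $\hi$ and shifting by $[\dai]$ yields, setting $C := \hi_* \rat_{\mi}[\dai]$, a morphism
\[
\tilde u : C \lorw C[d]
\]
in $D_{\ai}$ whose induced map on hypercohomology $H^{*-\dai}(\ai,-)$ is precisely $\cup u : H^*(\mi) \to H^{*+d}(\mi)$.

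The key input is the general identity on perverse truncations
\[
\ptd{p}\bigl(K[d]\bigr) \;=\; \bigl(\ptd{p+d} K\bigr)[d],
\]
valid for any $K \in D_{\ai}$ and any $d \in \Z$; this follows immediately from $\pc{i}{K[d]} = \pc{i+d}{K}$, so that requiring perverse cohomology of $K[d]$ to vanish in degrees $>p$ is the same as requiring perverse cohomology of $K$ to vanish in degrees $>p+d$. Applying the functor $\ptd{p}$ to $\tilde u$ and combining with this identity produces, together with the natural transformation $\ptd{q} \to \id$, the commutative square
\[
\begin{array}{ccc}
\ptd{p}\, C & \stackrel{\ptd{p}\tilde u}{\lorw} & \bigl(\ptd{p+d}\,C\bigr)[d] \\
\downarrow & & \downarrow \\
C & \stackrel{\tilde u}{\lorw} & C[d]
\end{array}
\]
in $D_{\ai}$, where the vertical arrows are the canonical maps.

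Taking hypercohomology $H^{*-\dai}(\ai, -)$ and using $H^i(\ai, K[d]) = H^{i+d}(\ai, K)$, this square becomes a commutative diagram of rational vector spaces whose two vertical arrows have, by the very definition \eqref{pf} of the perverse Leray filtration, respective images $P_p H^*(\mi) = H^*_{\leq p}(\mi)$ and $P_{p+d} H^{*+d}(\mi) = H^{*+d}_{\leq p+d}(\mi)$; the bottom horizontal arrow is $\cup u$. The inclusion $\cup u\bigl(H^*_{\leq p}(\mi)\bigr) \subseteq H^{*+d}_{\leq p+d}(\mi)$ follows at once by chasing through the diagram.

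There is essentially no obstacle: the statement is a formal consequence of the functoriality of $\ptd{p}$ together with the shift identity $\ptd{p}(K[d]) = (\ptd{p+d} K)[d]$. This is why the authors describe the bound as ``crude''; the interesting content of the theory lies in the sharper compatibility mentioned in Proposition~\ref{mult}, namely that $P_i \cup P_j \subseteq P_{i+j+\ff}$ (rather than $P_{i+j+2\ff}$, which is all a naive iteration of this lemma would give), whose proof requires more than just truncation functoriality.
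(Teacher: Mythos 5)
Your argument is correct and is essentially identical to the paper's proof: you view $u$ as a morphism $\rat_{\mi}\to\rat_{\mi}[d]$, push forward under $\hi_*$ (with the shift by $[\dai]$), apply $\ptd{p}$, and invoke the shift identity $\ptd{p}(K[d])=(\ptd{p+d}K)[d]$ before taking hypercohomology. The only difference is presentational; you spell out the commutative square and the image characterization of the filtration a bit more explicitly than the paper does.
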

\n
{\em Proof.}
We have  $H^d(\mi)= {\rm Hom}_{D_{\mi}}(\rat_{\mi}, \rat_{\mi} [d])$
so that we may  view the cohomology class $u$  as a map $u: \rat_{\mi}[a] \to \rat_{\mi} [\dai+d].$
The cup product map $\cup u$ coincides with the map induced in cohomology by the pushed-forward map
$\hi_*u:  \hi_*\rat_{\mi}[\dai] \to \hi_* \rat_{\mi} [\dai+d]$.
We apply truncation and obtain the map
$\ptd{p}  \hi_*\rat_{\mi}[\dai]  \to \ptd{p} \hi_*\rat_{\mi} [\dai+d] = 
(\ptd{p+d}  \hi_*\rat_{\mi} [\dai]) [d]$. The assertion follows  after taking cohomology.
\blacksquare 

A much better estimate, leading to the key Proposition \ref{testpr}, holds under the following:

\begin{assumption}
\label{ipot}
 The intersection complexes $IC_Z (L_{Z,p})$  {\rm (\ref{phi})}  appearing in the decomposition theorem  
for $\hi_* \rat_{\mi}[\dai]$ have   strict support  $\ai$ (i.e. each $Z =A$) 
and  $IC_A(L_{\ai, p}) = R^0 j^o_* L_{\ai,p} [\dai]$,  where $j^o:
\ai^o \to \ai$ is the  immersion of an open dense subset.
\end{assumption}

\begin{fact}
\label{fatto}
{\rm 
 Take 
$\ai^o$ to be the open set over which $\hi$ is smooth,
and  set $R^p:= (R^p \hi_* \rat_{\mi})_{| \ai^o}$. 
We may re-phrase  Assumption \ref{ipot} as follows:
\[\hi_* \rat_{\mi}[\dai] \simeq 
\bigoplus_{p=0}^{2\ff} IC_{\ai}(R^p)[-p] = 
\bigoplus_{p=0}^{2\ff} R^0 j^o_*R^p[\dai][-p] =
\bigoplus_{p=0}^{2\ff} R^p\hi_* \rat_{\mi} [\dai] [-p].\]
As a consequence, if Assumption \ref{ipot} holds, then 
the perverse Leray filtration on $H^*(\mi)=
H^{*- \dai}(\ai, \hi_* \rat_{\mi}[\dai])$  coincides with the
standard Leray filtration on $H^*(\mi)= H^*(\ai, \hi_*\rat_{\mi})$.
}
\end{fact}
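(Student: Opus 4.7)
The plan is to read off the shape of each perverse cohomology sheaf $\mathcal{P}^p$ from Assumption~\ref{ipot}, identify it with a shifted higher direct image of $\hi$, and then compare the ordinary and perverse truncation functors term by term on the resulting direct sum splitting of $\hi_{*}\rat_{\mi}[\dai]$.

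First I will identify the local system $L_{\ai,p}$ with $R^p$. Over the open subset $\ai^{o}\subseteq\ai$ where $\hi$ is smooth, proper smooth base change gives $(R^{p}\hi_{*}\rat_{\mi})_{|\ai^{o}}=R^{p}$, and by Assumption~\ref{ipot} the restriction $\mathcal{P}^{q}_{|\ai^{o}}$ equals the shifted local system $L_{\ai,q}[\dai]$. Restricting the decomposition $\hi_{*}\rat_{\mi}[\dai]\simeq\bigoplus_{q}\mathcal{P}^{q}[-q]$ to $\ai^{o}$ and taking the $(p-\dai)$-th ordinary cohomology sheaf, only the summand with $q=p$ contributes, and one reads off $L_{\ai,p}=R^{p}$.

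Next I will globalize. Taking $\mathcal{H}^{p-\dai}$ of the decomposition on all of $\ai$, each summand $\mathcal{P}^{q}[-q]=R^{0}j^{o}_{*}L_{\ai,q}[\dai-q]$ is a single sheaf placed in ordinary degree $q-\dai$, so again only $q=p$ contributes and yields
$$R^{p}\hi_{*}\rat_{\mi}\;=\;R^{0}j^{o}_{*}L_{\ai,p}\;=\;R^{0}j^{o}_{*}R^{p}.$$
Thus every higher direct image is the $R^{0}j^{o}_{*}$-extension of its restriction to $\ai^{o}$, and the decomposition takes the form
$$\hi_{*}\rat_{\mi}[\dai]\;\simeq\;\bigoplus_{p=0}^{2\ff}R^{p}\hi_{*}\rat_{\mi}[\dai][-p].$$

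With this explicit splitting in hand, the last step is formal. The summand $R^{p}\hi_{*}\rat_{\mi}[\dai-p]$ lives in ordinary degree $p-\dai$ and, being $\mathcal{P}^{p}[-p]$, in perverse degree exactly $p$. Hence both $\ptd{p}$ and $\td{p}$ pick out precisely the subsum over indices $q\leq p$ on the respective side, so
$$\ptd{p}\bigl(\hi_{*}\rat_{\mi}[\dai]\bigr)\;=\;\bigl(\td{p}\hi_{*}\rat_{\mi}\bigr)[\dai].$$
Passing to hypercohomology and using the shift convention in the definition~(\ref{pf}) of the perverse Leray filtration on $H^{*}(\mi)$, this gives $P_{p}H^{*}(\mi)=L_{p}H^{*}(\mi)$. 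I do not anticipate a genuine obstacle: once each $\mathcal{P}^{p}$ is recognized as the shifted sheaf $R^{p}\hi_{*}\rat_{\mi}[\dai]$, the equality of filtrations is essentially bookkeeping about which ordinary degree each summand of $\hi_{*}\rat_{\mi}[\dai]$ occupies.
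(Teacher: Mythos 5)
The paper records Fact~\ref{fatto} as an immediate re-reading of Assumption~\ref{ipot} and supplies no argument, so there is no ``paper proof'' to line up against; what you have written is exactly the standard unwinding that the authors are taking for granted, and it is correct. Your three steps are the right ones: restricting the decomposition $\hi_*\rat_\mi[\dai]\simeq\bigoplus_q\mathcal P^q[-q]$ to the smooth locus $\ai^o$ and reading off $\mathcal H^{p-\dai}$ pins down $L_{\ai,p}\simeq R^p$; taking $\mathcal H^{p-\dai}$ globally then gives $R^p\hi_*\rat_\mi\simeq R^0j^o_*R^p$, so the decomposition takes the stated form $\bigoplus_p R^p\hi_*\rat_\mi[\dai][-p]$; and since each summand sits in ordinary degree $p-\dai$ and perverse degree $p$ simultaneously, $\ptd{p}(\hi_*\rat_\mi[\dai])$ and $(\td{p}\hi_*\rat_\mi)[\dai]$ are the same sub-sum, compatibly with the maps to $\hi_*\rat_\mi[\dai]$, whence the two filtrations agree in hypercohomology (with the paper's shift normalization in the definition of $P_\bullet$).

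Two small points worth making explicit if you want the write-up to be airtight. First, Assumption~\ref{ipot} only posits \emph{some} dense open $j^o:\ai^o\hookrightarrow\ai$; the Fact then silently replaces it by the smooth locus of $\hi$. This is harmless because $IC_\ai(L)$ depends only on the restriction of $L$ to any dense Zariski open, and for a local system on a dense Zariski open of a normal variety the sheaf $R^0j_*$ is insensitive to shrinking the open further (the complement has real codimension $\geq 2$, so $\pi_1$ of small connected opens surjects), but it is a genuine, if routine, verification. Second, you should note that because $\hi_*\rat_\mi[\dai]$ \emph{splits} into a direct sum of shifted sheaves, both $\ptd{p}$ and $\td{p}$ literally \emph{are} the sub-sums and the truncation maps are the inclusions; this is what makes the final identification of images in hypercohomology, and hence of the two filtrations, immediate rather than merely degree-wise.
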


\begin{proposition}
\label{mult}
If Assumption {\rm \ref{ipot}} holds, 
 then we have
\begin{equation}
\label{cupcomp}
H^*_{\leq p} (\mi) \otimes H^\star_{\leq q}(\mi) \lorw H^{*+\star}_{\leq p +q}(\mi).
\end{equation}
\end{proposition}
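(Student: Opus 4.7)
The strategy is to invoke Fact \ref{fatto}: Assumption \ref{ipot} implies that the perverse Leray filtration on $H^*(\mi)$ coincides with the ordinary Leray filtration $L_\bullet$ associated with the proper map $\hi$. Hence it suffices to show that the Leray filtration is multiplicative, i.e.\ that $L_pH^*(\mi)\cdot L_qH^\star(\mi)\subseteq L_{p+q}H^{*+\star}(\mi)$.

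For this, recall that the cup product on $H^*(\mi)=H^*(\ai,\hi_*\rat_{\mi})$ is induced, by derived pushforward, by the canonical multiplication $\rat_{\mi}\otimes^L\rat_{\mi}\to\rat_{\mi}$, yielding a map
\[ \hi_*\rat_{\mi}\otimes^L \hi_*\rat_{\mi}\longrightarrow \hi_*(\rat_{\mi}\otimes^L\rat_{\mi})=\hi_*\rat_{\mi} \]
in $D_{\ai}$. The key classical fact is that the standard $t$-structure is compatible with the derived tensor product: if $K\in D^{\leq p}_{\ai}$ and $L\in D^{\leq q}_{\ai}$, then $K\otimes^L L\in D^{\leq p+q}_{\ai}$, as one sees from the K\"unneth/Tor spectral sequence computing $\mathcal{H}^n(K\otimes^L L)$. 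Using the adjunction defining truncation, this provides a canonical factorization
\[ \tau_{\leq p}\hi_*\rat_{\mi}\otimes^L \tau_{\leq q}\hi_*\rat_{\mi}\longrightarrow \tau_{\leq p+q}(\hi_*\rat_{\mi}\otimes^L \hi_*\rat_{\mi})\longrightarrow \tau_{\leq p+q}\hi_*\rat_{\mi}, \]
and taking hypercohomology on $\ai$ gives exactly $L_p\cdot L_q\subseteq L_{p+q}$. (Equivalently, one may invoke the classical multiplicativity of the Leray spectral sequence, whose abutment filtration is $L_\bullet$.)

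The only bookkeeping needed is to check that Fact \ref{fatto} respects the chosen numbering: under Assumption \ref{ipot} the decomposition $\hi_*\rat_{\mi}[\dai]\simeq\bigoplus_{k}R^k\hi_*\rat_{\mi}[\dai][-k]$ realises each perverse cohomology summand $R^0j^o_*R^k[\dai]$ simultaneously as a shifted constructible sheaf concentrated in standard cohomological degree $k-\dai$, so that $\,^{\mathfrak p}\!\tau_{\leq p}(\hi_*\rat_{\mi}[\dai])=(\tau_{\leq p}\hi_*\rat_{\mi})[\dai]$ and the shift built into the definition of $P$ aligns cleanly with the Leray filtration of $H^*(\ai,\hi_*\rat_{\mi})$. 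I do not expect a genuine obstacle here: the entire content is the reduction, via Assumption \ref{ipot}, from the perverse filtration to the classical Leray filtration, which improves the all-purpose estimate of Lemma \ref{fax} precisely because the ordinary $t$-structure, unlike the perverse one, is compatible with $\otimes^L$ in the strict sense used above.
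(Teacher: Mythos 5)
Your proof is correct and follows essentially the same route as the paper: reduce to the ordinary Leray filtration via Fact~\ref{fatto} and then invoke its multiplicativity under cup product. The only difference is cosmetic --- the paper cites \cite{decberkeley}, Theorem~6.1 for Leray multiplicativity, whereas you supply the standard direct argument via $\tau_{\leq}$-truncation and $\otimes^L$ (which over $\rat$ is just $\otimes$, so the Tor spectral sequence you mention degenerates and the bound $D^{\leq p}\otimes D^{\leq q}\subseteq D^{\leq p+q}$ is immediate).
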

\n
{\em Proof.} It is a known fact that the multiplicativity property (\ref{cupcomp}) holds with respect to the standard Leray filtration (see \cite{decberkeley}, Theorem 6.1 for a proof).
The statement then follows, since, as noticed in Fact \ref{fatto}, the Assumption \ref{ipot} implies  that the perverse Leray filtration and the  standard Leray filtration coincide. 
\blacksquare

\medskip
Let us assume that the target $\ai$ of the map $\hi: \mi \to \ai$ is affine
of dimension $\dai$, and let $\ai \subseteq {\mathbb C}^N$ be an arbitrary closed embedding.
Let $s \geq 0$, $\Lambda^s \subseteq \ai$ be 
a general  $s$-dimensional linear section and let
$\mi_{\Lambda^s} := \hi^{-1}( \Lambda^s)$. For $s <0$, we define
$\mi_{\Lambda^s}:= \emptyset$.

The following is the main result
of \ci{pflht} (Theorem 4.1.1).

\begin{theorem}
\label{gdpf}
A class $u \in H^d_{\leq p} (\mi)$ iff $u_{| \mi_{\Lambda^{d-p-1} }}=0$.
\end{theorem}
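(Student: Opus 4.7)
Set $K := \hi_*\rat_{\mi}[\dai]$, so that $H^d(\mi) = H^{d-\dai}(\ai, K)$ and, by definition, $H^d_{\leq p}(\mi) = \im\{H^{d-\dai}(\ai, \ptd{p}K) \to H^{d-\dai}(\ai, K)\}$. The restriction $u_{|\mi_{\Lambda^s}}$ corresponds to the pullback $H^{d-\dai}(\ai, K) \to H^{d-\dai}(\Lambda^s, K|_{\Lambda^s})$. The statement therefore reads: with $s = d-p-1$, the image of $\ptd{p}K$-cohomology equals the kernel of this restriction. This is Beilinson's geometric incarnation of the perverse truncation via restriction to generic affine linear sections, which I would establish by combining Artin vanishing with weak Lefschetz for perverse sheaves.

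\noindent The two key inputs are: (i) \emph{Artin vanishing}: for $\mathcal{P}$ perverse on an affine variety $U$ of dimension $n$, one has $H^i(U, \mathcal{P}) = 0$ for all $i > 0$; and (ii) \emph{weak Lefschetz for perverse sheaves}: for a general hyperplane section $i_H: H \hookrightarrow \ai$, both $i_H^*\mathcal{P}[-1]$ and $i_H^!\mathcal{P}[1]$ are perverse on $H$. Iterating (ii) through a general flag $\ai = \Lambda^{\dai} \supset \Lambda^{\dai-1} \supset \cdots \supset \Lambda^0 \supset \Lambda^{-1} = \emptyset$, the operation $K \mapsto K|_{\Lambda^s}[-(\dai-s)]$ shifts perverse degrees by exactly $\dai - s$. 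Combined with (i), applied to the affine complement $\ai \setminus \Lambda^s$ via the adjunction triangle $i_{\Lambda^s *}i_{\Lambda^s}^! K \to K \to Rj_* j^* K$ with $j: \ai \setminus \Lambda^s \hookrightarrow \ai$, this yields a clean description of which perverse cohomology sheaves $\phix{k}{K}$ contribute to the kernel of restriction to $\Lambda^s$.

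\noindent The argument then proceeds by induction on the codimension $c := \dai - s$. The base case $s = -1$ (empty section) asserts $H^d(\mi) = H^d_{\leq d}(\mi)$, which follows from $\phix{k}{K} = 0$ for $k \notin [0, 2\ff]$ together with Artin vanishing forcing $H^{d-\dai}(\ai, \phix{k}{K}) = 0$ outside the admissible range of the perverse spectral sequence. The inductive step crosses one hyperplane cut at a time: the adjunction triangle and the vanishings above identify $\ker\{H^{d-\dai}(\Lambda^{s+1}, K) \to H^{d-\dai}(\Lambda^s, K)\}$ with the contribution of $\phix{d-s-1}{K}$, thereby matching the geometric kernel filtration with $\im\{H^{d-\dai}(\ai, \ptd{d-s-1}K) \to H^{d-\dai}(\ai, K)\} = H^d_{\leq p}(\mi)$ for $p = d-s-1$, as required. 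The principal obstacle is the precise bookkeeping of shifts---the ambient shift $[\dai]$ in $K$, the codimension shifts introduced by successive restrictions, and the cohomological degree $d$ must all align to produce the clean relation $s = d-p-1$. A secondary subtlety is ensuring genericity of the flag with respect to every stratum $Z$ and local system $L_{Z,k}$ appearing in the BBD decomposition \eqref{phi} of $K$; this is achievable since the requisite genericity conditions cut out a Zariski open subset of the relevant flag variety.
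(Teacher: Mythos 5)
Note first that the paper itself does not prove this theorem: it is quoted as ``the main result of \ci{pflht} (Theorem 4.1.1)'', so there is no in-paper proof to compare against. Your outline captures the ingredients of the argument given there --- Artin vanishing, weak Lefschetz for perverse sheaves under generic linear sections transverse to a stratification, and an induction along a general flag. The easy inclusion $H^d_{\leq p}(\mi) \subseteq \ker\{H^d(\mi) \to H^d(\mi_{\Lambda^{d-p-1}})\}$ does indeed fall out of exactly these tools: a lift $\bar u \in H^{d-\dai}(\ai, \ptd{p}K)$ of $u$ restricts on $\Lambda^s$ (codimension $c := \dai - s$, transversal) to a class in $H^{d-\dai}(\Lambda^s, i^*\ptd{p}K)$; since transversality makes $i^*$ drop perverse degree by $c$, the restricted complex sits in perverse degrees $\leq p-c$, and Artin vanishing on the affine $\Lambda^s$ kills $H^{d-\dai}$ of it exactly when $d-\dai > p-c$, which rearranges to $s \leq d-p-1$.

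Two things go wrong in the rest. First, the adjunction triangle you invoke, $i_*i^!K \to K \to Rj_*j^*K$, controls the kernel of restriction to the \emph{open complement} $\ai \setminus \Lambda^s$, not to the closed section $\Lambda^s$; the triangle relevant to $\ker\{H^*(\ai,K)\to H^*(\Lambda^s,K)\}$ is the other one, $j_!j^*K \to K \to i_*i^*K$. And even after fixing this, Artin vanishing applies only on affine varieties, which forces the one-hyperplane-at-a-time scheme you gesture at (since $\ai\setminus\Lambda^s$ is not affine for $s<\dai-1$); but then $H^*(\Lambda^{s+1}, j_!j^*K)$ is not compactly supported cohomology of the open piece (the ambient $\Lambda^{s+1}$ is not compact), so the vanishings you need are less immediate than stated. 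Second, and more fundamentally, the reverse inclusion $\ker\{\,\cdot\,\}\subseteq H^d_{\leq p}(\mi)$ is the real content of the theorem, and your inductive step \emph{asserts} rather than proves that the graded piece of the kernel filtration ``is identified with the contribution of $\phix{d-s-1}{K}$.'' Making that precise requires a genuine comparison of the perverse and flag spectral sequences across the hyperplane cut, tracking coherently both the boundary maps in the long exact sequence and the shift by which restriction changes perverse degree. That comparison is the actual work of \ci[\S 4]{pflht}; as written, your sketch restates the hard inclusion rather than establishing it.
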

 
 \begin{remark}
 \label{cbop}
 {\rm Theorem \ref{gdpf} implies in particular that $H^d_{\leq p} (M) =0$ if $p<d-a$,  and that $H^d_{\leq p} (M) =H^d(M)$ if $p\geq d$.}
 \end{remark}
 \begin{remark}
 \label{transv}
 {\rm 
 For  $\Lambda^{s}$   general,    transversality implies 
 the following (see \ci{htam}, Lemma 4.3.8):
 if $u\in H^d_{\leq p}(\mi)$, then,  $u_{|\mi_{\Lambda^s}  } \in
  H^d_{\leq p}(\mi_{\Lambda^s})$; in other words, the change in perversity is compensated
  by the change in codimension.}
 \end{remark}
 
 Let $U \subseteq \ai$ be a  Zariski dense open  subset
 satisfying Assumption \ref{ipot} (with $U$ replacing $\ai$),
 and hence the conclusions of Fact \ref{fatto}.
  Let $Y:= \ai \setminus U$ be the closed complement. Note that such an open
   set $U$
 always exists, e.g. $U= \ai^o$, the set over which the map is smooth.
 However, $Y$ could be rather large, i.e. have small codimension.
 The following proposition is key to our analysis of the
 perverse filtration in the cohomology ring of the moduli of Higgs bundles,
 where, as it turns out, the set $Y$ is small  just enough to let us go  by.

 \begin{proposition}
 \label{testpr}
Let  $u_i \in H^{d_i}_{\leq p_i} (\mi)$ for $i =1,\cdots, l$. 
Let $d:= \sum d_i$ and $p:=\sum p_i$.  
 If $ d  - p  -1 < {\rm codim}\, Y$, then $v:= u_1 \cup u_2\cup \cdots, \cup u_l  \in H^{d}_{\leq p} (\mi)$.
 \end{proposition}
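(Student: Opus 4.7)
The plan is to combine the geometric characterization of the perverse filtration given in Theorem \ref{gdpf} with the multiplicativity property of Proposition \ref{mult} applied on a linear slice that avoids the bad locus $Y$. In other words, I will reduce the proposition to a vanishing statement on the cohomology of a restricted space, and exploit the fact that over $U$ the perverse Leray filtration coincides with the ordinary Leray one, which is multiplicative on the nose.

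First, since $d-p-1 < \mathrm{codim}\, Y$, for a sufficiently general linear section $\Lambda^{d-p-1} \subseteq A$ we may assume $\Lambda^{d-p-1} \cap Y = \emptyset$, that is $\Lambda^{d-p-1} \subseteq U$. By Remark \ref{transv}, taking $\Lambda^{d-p-1}$ sufficiently general also ensures that each restriction $u_i{}_{|M_{\Lambda^{d-p-1}}}$ lies in $H^{d_i}_{\leq p_i}(M_{\Lambda^{d-p-1}})$; perversity is preserved, while the codimension of $M_{\Lambda^{d-p-1}}$ in $M$ equals $d-p-1$.

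Next, I claim that the restricted map $h_\Lambda : M_{\Lambda^{d-p-1}} \to \Lambda^{d-p-1}$ inherits Assumption \ref{ipot}, so that Proposition \ref{mult} applies. Indeed, proper base change gives $h_{\Lambda *}\mathbb{Q}_{M_{\Lambda^{d-p-1}}} \simeq (h_*\mathbb{Q}_M)_{|\Lambda^{d-p-1}}$; since $\Lambda^{d-p-1}$ sits entirely inside $U$, where by Fact \ref{fatto} one has $h_*\mathbb{Q}_M[a]_{|U} \simeq \bigoplus_p R^ph_*\mathbb{Q}_M[a][-p]$, taking a sufficiently general $\Lambda^{d-p-1}$ (so as to meet transversally the smooth locus of $h_{|U}$) yields the analogous decomposition for $h_\Lambda$. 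Consequently the perverse Leray and the ordinary Leray filtrations agree on $H^*(M_{\Lambda^{d-p-1}})$, and Proposition \ref{mult} gives
\[
v_{|M_{\Lambda^{d-p-1}}} \;=\; \prod_i u_i{}_{|M_{\Lambda^{d-p-1}}} \;\in\; H^d_{\leq p}(M_{\Lambda^{d-p-1}}).
\]

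Finally, I apply Remark \ref{cbop} to the restricted map, whose base has dimension $d-p-1$: the group $H^d_{\leq p}(M_{\Lambda^{d-p-1}})$ vanishes as soon as $p < d - (d-p-1) = p+1$, which holds trivially. Hence $v_{|M_{\Lambda^{d-p-1}}} = 0$, and Theorem \ref{gdpf} then forces $v \in H^d_{\leq p}(M)$, as desired. The only genuinely delicate step is verifying that Assumption \ref{ipot} descends to the linear slice, which I expect to handle by choosing $\Lambda^{d-p-1}$ generic enough to meet transversally the smooth locus of $h_{|U}$, so that the slice sits inside the locus over which $h$ is smooth and the decomposition is manifestly of local-system type.
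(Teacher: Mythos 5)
Your proposal is correct and follows essentially the same line of reasoning as the paper's proof: choose a general $\Lambda^{d-p-1}$ missing $Y$, restrict the $u_i$ via Remark~\ref{transv}, apply Proposition~\ref{mult} (which applies because $\Lambda^{d-p-1}\subseteq U$), invoke the vanishing from Remark~\ref{cbop}, and conclude with Theorem~\ref{gdpf}. The one loose phrase is at the very end: a generic slice need not ``sit inside the locus over which $h$ is smooth'' (that locus may have complement of small codimension); what matters is only that $\Lambda^{d-p-1}\subseteq U$, so that the restriction of the shifted-sheaf decomposition of Fact~\ref{fatto} to $\Lambda^{d-p-1}$ still consists of shifted sheaves, which is all that is needed for Proposition~\ref{mult} to apply.
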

 \n{\em Proof.}
 By the  assumption on  ${\rm codim}\, Y$, a general $\Lambda^{d-p-1}$ 
 misses $Y$.
 By applying Remark \ref{transv} to the classes  $u_i$,  and then Proposition
 \ref{mult} to their  restriction to $\mi_{\Lambda^{d-p-1}}$,
 we conclude that the resulting  $v_{| \mi_{\Lambda^{d-p-1}}} \in 
 H^{d}_{\leq p} (\mi_{\Lambda^{d- p-1}})$.
 As noticed in Remark \ref{cbop}, we have $v_{| \mi_{\Lambda^{d-p-1}}}=0$.
 We conclude by Theorem
 \ref{gdpf}.
 \blacksquare

 \subsubsection{Extra vanishing when $H^j(\mi)=0$, $\forall j >2\ff$.}
 \label{louso}

 \begin{proposition}
 \label{ndomett}
 Assume that  $\ai$ is affine and that $H^j(\mi) =0$, $\forall j > 2\ff$. 
 Then the  perversity of 
$u \in H^d(\mi)$  is in the interval
$
\left[  \lceil \frac{d}{2} \rceil, 
d \right]$.
\end{proposition}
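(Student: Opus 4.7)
The plan is to establish the two ends of the interval separately: the upper bound $p\leq d$ from the geometric characterization of the perverse filtration via linear sections, and the lower bound $p\geq \lceil d/2\rceil$ from the relative hard Lefschetz theorem combined with the cohomological vanishing hypothesis.

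For the upper bound I would simply invoke Remark \ref{cbop}, which rests on Theorem \ref{gdpf} (and uses that $\ai$ is affine). Taking $p=d$, the linear section $\Lambda^{d-p-1}=\Lambda^{-1}$ is empty by our convention, so the condition $u|_{\mi_{\Lambda^{d-p-1}}}=0$ is automatic. Hence $H^d_{\leq d}(\mi)=H^d(\mi)$ and every class has perversity $\leq d$.

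For the lower bound I would show that $H^d_p(\mi)=0$ for every integer $p$ with $p<\lceil d/2\rceil$. Since the perverse filtration is of type $[0,2\ff]$, the vanishing of all these graded pieces iterates to give $H^d_{\leq p}(\mi)=0$ for $p\leq \lceil d/2\rceil-1$, from which no nonzero class can have perversity strictly below $\lceil d/2\rceil$. To prove the vanishing, apply the relative hard Lefschetz isomorphism \eqref{rhlg} with $i=\ff-p$:
\[
\eta^{\ff-p}:\ H^d_p(\mi)\ \stackrel{\sim}{\lorw}\ H^{d+2(\ff-p)}_{2\ff-p}(\mi).
\]
The condition $\ff-p\geq 0$ is fine, since the vanishing hypothesis forces $d\leq 2\ff$ and hence $\lceil d/2\rceil\leq \ff$. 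The right-hand side is a subquotient of the ordinary cohomology group $H^{d+2\ff-2p}(\mi)$. For integer $p<\lceil d/2\rceil$ one easily checks, by splitting the cases $d=2k$ and $d=2k+1$, that $p<d/2$, and therefore $d+2\ff-2p>2\ff$. The standing hypothesis $H^j(\mi)=0$ for $j>2\ff$ then kills the ambient group, hence also the graded piece on the right, and by RHL also $H^d_p(\mi)$.

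There is really no significant obstacle here: the argument is a direct consequence of the RHL symmetry, which converts top-degree cohomological vanishing into a lower bound on perversity, together with the affine characterization of the perverse filtration for the upper bound. The only care required is the integer arithmetic turning the inequality $p<\lceil d/2\rceil$ into the strict inequality $d+2\ff-2p>2\ff$ needed to invoke the hypothesis.
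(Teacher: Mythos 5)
Your proof is correct and follows essentially the same route as the paper: the upper bound comes from Remark~\ref{cbop} (which rests on Theorem~\ref{gdpf} and the affineness of $A$), and the lower bound comes from applying the relative hard Lefschetz isomorphism~\eqref{rhlg} and using the vanishing hypothesis $H^j(M)=0$ for $j>2f$ to kill the target group $H^{d+2f-2p}(M)$. The only cosmetic difference is that the paper phrases the argument as a contradiction for a single class $u$ of perversity $p<\lceil d/2\rceil$, while you argue directly that the graded pieces $H^d_p(M)$ vanish for $p<\lceil d/2\rceil$; these are the same argument.
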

\n{\em Proof.} We may assume that $u \neq 0$. 
 Let $p$ be the perversity of $u$. Assume that $p < \lceil \frac{d}{2}\rceil $.
 In particular, $p <\ff$ and $ 2p <d$. By RHL (\ref{rhlg}) and by the
  assumption
 on vanishing, we reach the contradiction 
 $0 \neq \eta^{\ff -p} u \in H^{2\ff-2p +d}(\mi)= \{0\}$. 
 The upper bound follows from
Theorem \ref{gdpf},   as noticed in Remark \ref{cbop}. \blacksquare

\begin{corollary}
\label{ndomett2}
Under the hypothesis of Proposition {\rm \ref{ndomett}}, we have that
\[
H^d_{\leq \lceil \frac{d}{2} \rceil} (\mi) = Q^{\lceil \frac{d}{2} \rceil, 0;\, d},
\qquad
H^d_{\leq \lceil \frac{d}{2}  \rceil +1} (\mi) = Q^{\lceil \frac{d}{2} \rceil, 0;\,d}
\bigoplus Q^{\lceil \frac{d}{2} \rceil -1, 1; \, d}.
\]
 \end{corollary}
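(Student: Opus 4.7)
For the first equality, my plan is to combine the Non-mixing Lemma \ref{nomix} with the vanishing hypothesis $H^{j}(\mi)=0$ for $j>2\ff$. Given $u\in H^{d}_{\leq \lceil d/2\rceil}(\mi)$, I would compute the degree of $\eta^{\ff-\lceil d/2\rceil+1}u\in H^{d+2\ff-2\lceil d/2\rceil+2}(\mi)$. A case-check on the parity of $d$ gives $d+2\ff-2\lceil d/2\rceil+2 = 2\ff+2$ when $d$ is even and $2\ff+1$ when $d$ is odd; both exceed $2\ff$, so the target group vanishes. Hence $\eta^{\ff-\lceil d/2\rceil+1}u=0$. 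Applying Lemma \ref{nomix} with $p=\lceil d/2\rceil$ forces $u=u^{\lceil d/2\rceil,0}\in Q^{\lceil d/2\rceil,0;\,d}$. The reverse containment $Q^{\lceil d/2\rceil,0;\,d}\subseteq H^{d}_{\leq\lceil d/2\rceil}(\mi)$ is tautological from the definition \eqref{pofqs}.

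For the second equality, the plan is to determine exactly which summands $Q^{i,j;\,d}$ of the $Q$-decomposition \eqref{qdecc} can contribute to $H^{d}_{\leq\lceil d/2\rceil+1}(\mi)$. Proposition \ref{ndomett} gives $Q^{i,j;\,d}=0$ whenever $i+2j<\lceil d/2\rceil$, leaving only $i+2j\in\{\lceil d/2\rceil,\lceil d/2\rceil+1\}$. Using the identity \eqref{12w}, $Q^{i,j;\,d}\cong Q^{i,0;\,d-2j}$ via $\eta^{j}$, and Proposition \ref{ndomett} applied to $Q^{i,0;\,d-2j}\subseteq H^{d-2j}(\mi)$ yields the sharper constraint $i\geq \lceil (d-2j)/2\rceil=\lceil d/2\rceil-j$. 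Combining this with $i+2j\leq\lceil d/2\rceil+1$ forces $j\leq 1$. A direct enumeration then shows that the only surviving candidates at perversity $\lceil d/2\rceil$ is $(\lceil d/2\rceil,0)$, and at perversity $\lceil d/2\rceil+1$ are $(\lceil d/2\rceil-1,1)$ and, a priori, $(\lceil d/2\rceil+1,0)$.

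The main obstacle is ruling out the candidate $Q^{\lceil d/2\rceil+1,0;\,d}$. My plan is to argue by contradiction: a nonzero class $u\in Q^{\lceil d/2\rceil+1,0;\,d}$ is RHL-primitive at perversity $\lceil d/2\rceil+1$, so $\eta^{\ff-\lceil d/2\rceil}u=0$ by the definition of $\mathcal{Q}^{\lceil d/2\rceil+1,0}$, while RHL \eqref{rhlg} provides an injection $\eta^{\ff-\lceil d/2\rceil-1}\colon Q^{\lceil d/2\rceil+1,0;\,d}\hookrightarrow H^{d+2\ff-2\lceil d/2\rceil-2}_{2\ff-\lceil d/2\rceil-1}(\mi)$. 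One then exploits the Non-mixing Lemma in the enlarged filtered piece together with the full force of the vanishing $H^{>2\ff}(\mi)=0$ (producing two algebraically independent constraints, one from degree and one from primitivity) to deduce that the image must land in a summand that is killed by \eqref{novel}, forcing $u=0$. Once this elimination is established, $H^{d}_{\leq\lceil d/2\rceil+1}(\mi)=Q^{\lceil d/2\rceil,0;\,d}\oplus Q^{\lceil d/2\rceil-1,1;\,d}$ follows by collecting the remaining two allowed summands. The delicate step is precisely this elimination, which requires tracking how \eqref{novel} governs $\eta$ applied to top RHL-images and interacting it with the vanishing hypothesis.
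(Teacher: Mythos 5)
Your proof of the first equality via the Non-mixing Lemma \ref{nomix} is correct and is a slightly cleaner route than the paper's: the paper instead decomposes $H^d_{\leq \lceil d/2\rceil}(\mi)$ through (\ref{5tgb}) and (\ref{inuno}) and kills each $Q^{\lceil d/2\rceil - 2j, 0;\, d-2j}$ with $j > 0$ by applying Proposition \ref{ndomett} in degree $d - 2j$. Your enumeration for the second equality is also correct, and it is exactly what the paper's ``proved in the same way'' produces: the surviving summands of $H^d_{\leq \lceil d/2\rceil + 1}(\mi)$ are $Q^{\lceil d/2\rceil, 0;\,d}$, $Q^{\lceil d/2\rceil + 1, 0;\,d}$ and $Q^{\lceil d/2\rceil - 1, 1;\,d}$, and nothing else.

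The gap is the ``main obstacle'' step. The summand $Q^{\lceil d/2\rceil + 1, 0;\,d}$ cannot be eliminated: it is genuinely present, and the printed statement of the corollary is simply missing it. The application in the proof of Lemma \ref{dfer} confirms this, since there the corollary is invoked to write $\Ha^{4g-4}_{\leq 2g-1} = Q^{2g-2, 0;\, 4g-4} \oplus Q^{2g-1, 0;\, 4g-4}$, i.e.\ $Q^{\lceil d/2\rceil + 1, 0;\,d}$ is retained while $Q^{\lceil d/2\rceil - 1, 1;\,d}$ has been killed separately using Lemma \ref{ndosta}. A direct counterexample to the two-summand formula, under exactly the hypotheses of Proposition \ref{ndomett}: take $\hi\colon\mi\to\mathrm{pt}$ with $\mi = \mathbb{P}^1\times\mathbb{P}^1$, $\ff = 2$, $\dai = 0$, $\eta$ ample; then $H^j(\mi) = 0$ for $j > 4 = 2\ff$, and for $d=2$ one has $H^2_{\leq 2}(\mi) = H^2(\mi)$ of dimension $2$, while $Q^{1,0;\,2} = 0$ and $Q^{0,1;\,2} = \langle\eta\rangle$ is one-dimensional --- the absent summand $Q^{2,0;\,2} = H^2_{\mathrm{prim}}(\mi)$ supplies the remaining dimension. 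Your attempted elimination also misstates RHL-primitivity: for $u\in Q^{p,0}$ the definition of ${\mathcal Q}^{p,0}$ gives only $[\eta^{\ff - p + 1}u]_{2\ff - p + 2} = 0$, not $\eta^{\ff - p + 1}u = 0$ in $H^*(\mi)$; the paper stresses exactly this distinction in (\ref{predth}), so no such cohomological vanishing is available to launch the contradiction.
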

 \n{\em Proof.}  
 By Proposition \ref{ndomett}, we have $H^d_{\leq \lceil \frac{d}{2} \rceil -1} (\mi) =\{0\}$.
 Then (\ref{5tgb}) implies that 
$H^d_{\leq \lceil \frac{d}{2} \rceil } (\mi) =
\phi_\eta(H^{d-a -  \lceil \frac{d}{2}\rceil} (\ai, {\cal P}^{\lceil \frac{d}{2} \rceil}))$. The equation
(\ref{inuno}) implies that
 \[
 H^d_{\leq \lceil \frac{d}{2} \rceil} (\mi) = 
 \bigoplus_{j \geq 0}
 Q^{\ \lceil \frac{d}{2} \rceil -2j, j; \, d}=
 \bigoplus_{j \geq 0}
 \eta^j\, Q^{\lceil \frac{d}{2} \rceil -2j, 0; \, d-2j}.
 \]
By  Proposition \ref{ndomett}, since 
$\lceil \frac{d}{2} \rceil -2j <  \lceil (d-2j)/2 \rceil$, 
we have  that 
$Q^{\lceil \frac{d}{2} \rceil -2j, 0; \, d-2j}=\{0\}$
for $j >0$. 

\n
The  assertion in perversity $\lceil \frac{d}{2} \rceil +1$  is proved in the same way.
  \blacksquare
 

\section{Cohomology over the elliptic locus}
\label{prfmaintm}
\subsection{Statement of Theorem \ref{final}} 
\label{statmnthm}

We go back to the set-up of Section \ref{luca}.
 
\begin{definition}
\label{goodlocus}
The {\em elliptic locus} $\basegood \subseteq \base$
is the set of points $s=(s_1,s_2) \in \base$ for which  the associated spectral curve $\curv_s$
is integral. We set $\higgsbugood := \hitmap^{-1}( \basegood)$.
\end{definition}

\begin{remark}
\label{goodisirre}
{\rm 
Let $s=(s_1,s_2) \in \base$. Since the covers $\curv_s \to \curv$
have degree $2$, if the section  
$s_1^2-4s_2 \in H^0(\curv, 2D)$    vanishes    with odd multiplicity  
at least at one point of $\curv$, then
$s\in \basegood$.
Since $2D$ has even degree,
there is an even number of  points on $C$ where  $s_1^2-4s_2$ has  odd vanishing order.}
\end{remark}

\begin{lemma}
\label{smingo}
The set $\basegood$ is Zariski open and dense in $\base$ and contains $\basesm$.
The complement $\base \setminus \basegood$
is a closed  algebraic subset of codimension $\deg D$ if $\deg D>2g-2$,
and of codimension $2g-3$ if $D=K_{\curv}$.
\end{lemma}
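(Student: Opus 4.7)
The plan is to realize $\base\setminus\basegood$ explicitly as the image of a natural morphism and then compute its dimension by Riemann--Roch.

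First, I would characterize non-integral spectral curves. Since $\pro_s$ has degree $2$, $\curv_s$ is either non-reduced -- which, by Proposition~\ref{singspcrv} (together with the trivial converse), is equivalent to $s_1^2-4s_2\equiv 0$ -- or reduced but reducible, in which case each of the two components maps birationally, hence isomorphically, onto the smooth curve $\curv$, and so is the graph of a section $u\in H^0(\curv,D)$ of $\pi_D$. Globally this amounts to a factorization $y^2-\pi_D^\ast s_1\cdot y+\pi_D^\ast s_2=(y-\pi_D^\ast u)(y-\pi_D^\ast v)$, equivalently $(s_1,s_2)=(u+v,uv)$ with $u,v\in H^0(\curv,D)$ and $s_1^2-4s_2=(u-v)^2$; conversely, every such pair yields a non-integral $\curv_s$, with the non-reduced case corresponding to $u=v$. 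Thus $\base\setminus\basegood$ coincides with the image of the polynomial morphism
\[
\Phi:H^0(\curv,D)\oplus H^0(\curv,D)\lorw \base,\qquad (u,v)\longmapsto(u+v,\,uv).
\]

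Next, I would establish openness, density, and the inclusion $\basesm\subseteq\basegood$. The morphism $\Phi$ is invariant under the swap $(u,v)\leftrightarrow(v,u)$ and generically injective modulo this action, so $\dim\Phi(\text{source})=2h^0(\curv,D)$. To see the image is Zariski closed, I would verify properness of $\Phi$ analytically: if $\Phi(u_n,v_n)$ stays bounded then $(u_n-v_n)^2=(u_n+v_n)^2-4u_nv_n$ stays bounded, and since the squaring map on $H^0(\curv,D)$ is proper (being homogeneous of degree two with trivial zero fibre), so does $u_n-v_n$; together with boundedness of $u_n+v_n$ this produces a convergent subsequence of $(u_n,v_n)$. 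Hence $\basegood$ is open. The inclusion $\basesm\subseteq\basegood$ is immediate from Proposition~\ref{singspcrv}, since a section with only simple zeros can neither vanish identically nor be a square. Since $\basesm\neq\emptyset$ and $\base$ is irreducible, $\basegood$ is dense.

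Finally, the codimension of $\base\setminus\basegood$ equals $\dim\base-2h^0(\curv,D)$, which I compute via Riemann--Roch. If $\deg D>2g-2$, then $H^1(\curv,D)=H^1(\curv,2D)=0$, giving $h^0(D)=\deg D-g+1$ and $h^0(2D)=2\deg D-g+1$, whence $\dim\base=3\deg D+2-2g$ and the codimension is $\deg D$. If $D=K_\curv$, then $h^0(K_\curv)=g$ and $h^0(2K_\curv)=3g-3$, so $\dim\base=4g-3$ and the codimension is $2g-3$. The main nontrivial point is the characterization in Step~1 -- translating integrality of $\curv_s$ into the purely algebraic condition that $s_1^2-4s_2$ be a square in $H^0(\curv,D)$; openness, density, and the dimension count are then standard.
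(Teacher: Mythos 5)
Your proof is correct and follows essentially the same route as the paper: realize $\base\setminus\basegood$ as the image of the map $(u,v)\mapsto(u+v,uv)$ on $H^0(\curv,D)^{\oplus 2}$, observe the image is closed, and compute dimensions by Riemann--Roch. The only cosmetic difference is that the paper asserts this map is \emph{finite} (which gives closedness of the image for free), while you verify properness analytically; and for the inclusion $\basesm\subseteq\basegood$ the paper argues via non-\'etaleness of the covering, whereas you note that a section with simple zeros can be neither zero nor a square -- both routes are fine.
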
 
\n{\em Proof.}
Since  $\deg D>0$,
given $s=(s_1, s_2)\in \base$,  the zero locus of its discriminant divisor $s_1^2-4s_2$ is not empty, so that 
the spectral covering $\pi_s:C_s \longrightarrow C$ is never \'etale. 
A nonsingular spectral curve must therefore be irreducible, namely $\basesm \subseteq \basegood$. 
The spectral curve associated with $s=(s_1, s_2)$ is a divisor
on the  nonsingular surface $\totspa$, and it is not  integral
precisely when $s$ is in the image of the finite map 
$H^0(\curv,D)\times H^0(\curv,D) \longrightarrow    \base$ sending $(t_1,t_2)$ to $(t_1+t_2,t_1t_2)$; therefore, the  image $\base \setminus \basegood$ is a closed subset.
By the Riemann-Roch theorem on $\curv$, we have that if $\deg D>2g-2$, then 
$\dim (\base \setminus \basegood)= 2(\deg D+1-g)$ and  $\dim \base = 3\deg D+2(1-g)$, while, for  $D=K_{\curv}$, we have that
$\dim (\base \setminus \basegood)= 2g$ and $\dim \base = 4g-3$.
\blacksquare

\medskip
We denote by $j:\basesm \lorw \basegood$ the open imbedding, and by $b_l$ the
$l$-th Betti number.

\medskip
Recall from Theorem~\ref{fibres} the noncanonical isomorphism
$\compjac{\curv_s}^0 \simeq \hitmap^{-1}(s)$.
Section \ref{prfmaintm} is devoted to the  proof of the following
\begin{theorem}
\label{final}
For $s\in \basegood$ and for  $l\geq 0$, we have
\begin{equation}
\label{unglblch}
\dim\left( (R^0 j_*R^l\hitmapsmooth_* \rat)_s\right)=b_l(\compjac{\curv_s})=
b_l(\hitmap^{-1}(s)).
\end{equation}
\end{theorem}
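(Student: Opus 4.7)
The plan is to prove Theorem~\ref{final} by matching an upper bound on $b_l(\compjac{\curv_s})$ with a lower bound on the stalk dimension, using the decomposition theorem applied to $\hitmap$ restricted to $\basegood$ to force the two into equality.

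First I would reformulate the left-hand side. By Corollary~\ref{locsys},
\[
(R^0 j_* R^l\hitmapsmooth_*\rat)_s \;\cong\; \Bigl(\bigwedge^{\!l} H^1(\curv_{s'},\rat)\Bigr)^{\pi_1(\Delta^*)},
\]
where $\Delta^*$ is the intersection of a small disk around $s$ with $\basesm$ and $s'\in\Delta^*$. The second equality in \eqref{unglblch} is immediate from Theorem~\ref{fibres}. By proper base change, the decomposition theorem \eqref{dt}--\eqref{phi} applied to $\hitmap_{|\higgsbugood}$ gives, for each summand $IC_Z(L_{Z,l})$ supported at $Z\ni s$, a tautological inequality $\dim(IC_Z(L_{Z,l}))_s \leq b_l(\hitmap^{-1}(s))$. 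In particular, if $\basegood$ carries a summand of the form $R^0 j_*$-type (i.e.\ the IC complex is concentrated in perverse degree corresponding to a sheaf), then the left-hand side of \eqref{unglblch} is automatically $\leq$ the right-hand side. So the game is to produce matching bounds on both sides of a common combinatorial quantity $B(s,l)$.

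For the upper bound on $b_l(\compjac{\curv_s})$, I would exploit the fact, recalled in Proposition~\ref{singspcrv}, that the singularities of $\curv_s$ are analytically of type $y^2-x^k=0$. Using the Altman--Kleiman theory of compactified Jacobians of integral curves with planar singularities, there is a stratification of $\compjac{\curv_s}^{\,0}$ indexed by torsion-free rank-one data at each singular point. At a $y^2=x^k$ singularity the local contributions are controlled by the combinatorics of sub-$\mathcal{O}$-modules of the normalization; a direct count, plus the Poincar\'e polynomial of $\jac{\widetilde{\curv}_s}$, gives an explicit formula $B(s,l)$ bounding $b_l(\compjac{\curv_s})$ from above.

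For the lower bound on the monodromy invariants, I would choose a generic holomorphic arc in $\base$ through $s$ meeting the discriminant transversely near $s$. Each singularity $y^2=x^k$ smooths to $\lfloor k/2\rfloor$ ordinary nodes in a nearby $\curv_{s'}$, contributing that many vanishing cycles $\delta_j\in H^1(\curv_{s'},\rat)$. The local monodromy factors as a product of commuting Picard--Lefschetz transformations $T_j(x)=x+\langle x,\delta_j\rangle\delta_j$. Repeated application of Picard--Lefschetz on $\bigwedge^l H^1$, combined with the explicit intersection pattern of the $\delta_j$ coming from the double-cover description, computes the invariant subspace and identifies it with the same combinatorial quantity $B(s,l)$.

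The two bounds combined give $b_l(\compjac{\curv_s})\leq B(s,l)\leq \dim(R^0 j_* R^l\hitmapsmooth_*\rat)_s\leq b_l(\hitmap^{-1}(s))=b_l(\compjac{\curv_s})$, forcing equality throughout and simultaneously showing that over $\basegood$ the perverse summands in \eqref{phi} are honest shifted sheaves $R^0 j_* R^l[\dim \basegood]$, as anticipated by the discussion in the introduction. The hard step will be the monodromy computation: keeping track of iterated Picard--Lefschetz actions on exterior powers so that the invariants land on the nose on $B(s,l)$ requires the combinatorics at each $y^2=x^k$ singularity to match the Altman--Kleiman stratification term by term, and verifying the genericity of the deformation so that the image of $\pi_1(\Delta^*)$ really fills out the expected group generated by the $T_j$.
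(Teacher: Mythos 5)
Your overall architecture -- upper bound on $b_l(\compjac{\curv_s})$ from a stratification, lower bound on the stalk dimension from a monodromy computation, decomposition theorem to force equality -- is exactly the paper's strategy, and your reformulation of the left-hand side via Corollary~\ref{locsys} as monodromy invariants is the right first step.  But the monodromy half has two genuine problems.

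First, the Picard--Lefschetz transformations you want to invoke do \emph{not} commute.  An $A_{k-1}$ singularity $y^2=x^k$ has Milnor number $k-1$ (not $\lfloor k/2\rfloor=\delta_c$, which is the genus drop), and the $k-1$ vanishing cycles realize the $A_{k-1}$ Dynkin diagram: consecutive cycles meet with intersection number $\pm 1$, and the associated Dehn twists generate a nonabelian braid group rather than a product of commuting transvections.  Treating them as commuting gives a qualitatively different (and much larger) invariant subspace, so the count you would obtain does not match the Altman--Kleiman stratification and the equality argument collapses.  The paper handles this by realizing the local monodromy as a concrete subgroup $\mathscr B^{\mathfrak a}$ of the full braid group acting via (\ref{trccmnd}), and then computing $\bigl(\bigwedge^\bullet\bigr)^{\mathscr B^{\mathfrak a}}$ by the explicit linear algebra of Lemma~\ref{linealg} and Proposition~\ref{justlinearalg}, which is the hardest step of the whole proof.

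Second, the ``generic holomorphic arc'' is the wrong test object.  A one-parameter slice only sees the cyclic group generated by the single classical monodromy operator (the Coxeter element), so the invariants of that group form an \emph{upper} bound for $\dim(R^0j_*R^l\hitmapsmooth_*\rat)_s$, not a lower one -- the inequality runs the wrong way.  To get a lower bound one needs a map \emph{from} the germ of $\base$ at $s$ \emph{to} a space whose monodromy you control; the paper constructs the universal local family over a distinguished neighborhood $\dist$ in the symmetric product $\curv^{(2r)}$ (Proposition~\ref{univfam}) and uses the classifying map $\Theta$ to pull it back, so that invariants of $\pi_1(\dist_{\rm reg})$ sit \emph{inside} the invariants of $\pi_1(N\cap\basesm)$.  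You do flag the genericity issue at the end, but the arc construction cannot be repaired to give a lower bound; one has to work with a neighborhood of full dimension in a space of branch divisors.  Once both of these are fixed, your framework recovers the paper's argument, but the remaining content -- Lemma~\ref{imgfg}, the distinction between the connected and disconnected cases of $\widehat{\curv}$ in \S\ref{splittcp}, and the invariant-space computation in \S\ref{compmoninv} -- is precisely where the work lives.
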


\medskip
\n
Theorem  \ref{final} 
readily implies the following:
\begin{corollary}
\label{icareshfongl}
The  perverse sheaves ${\mathcal P}_{\basegood}^l$ 
appearing in  the statement of the decomposition theorem
{\rm (\ref{phi})} of \S {\rm \ref{dtrhl}} for the Hitchin map over the open set
 $\basegood$ satisfy 
$$
{\mathcal P}_{\basegood}^l = IC_{\basegood}(R^l\hitmapsmooth_* \rat) 
= R^0 j_*R^l\hitmapsmooth_* \rat  \, [ \dim{\base}], \quad
\forall \,l,
$$
i.e. there is only one intersection complex, supported on the whole $\basegood$,
 given by a sheaf in  the  single cohomological  
 degree $-\dim{\base}$. In particular,  Assumption {\rm \ref{ipot}} of \S {\rm \ref{badb}}
is fulfilled. 
\end{corollary}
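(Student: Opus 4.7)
The plan is to deduce the corollary from Theorem~\ref{final} by a dimension-counting argument applied to the decomposition theorem, using the well-known fact that the top cohomology sheaf of an intersection complex recovers the ordinary pushforward of its coefficient local system.

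First I will apply the decomposition theorem to the proper map $\hitmap : \higgsbugood \to \basegood$, obtaining
$$\hitmap_* \rat_{\higgsbugood}[\dim\base]\;\simeq\;\bigoplus_{l}\mathcal{P}^l_{\basegood}[-l],\qquad \mathcal{P}^l_{\basegood}=\bigoplus_Z IC_Z(L_{Z,l}),$$
where $Z$ ranges over a finite collection of irreducible closed subvarieties of $\basegood$. Since $\hitmapsmooth$ is smooth proper over $\basesm$, restricting to $\basesm$ shows that the summand of $\mathcal{P}^l_{\basegood}$ with full support $Z=\basegood$ is $IC_{\basegood}(R^l\hitmapsmooth_*\rat)$; any other IC summand must be supported on a proper closed subset of $\basegood$, hence in $\basegood\setminus\basesm$.

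Fix $s\in\basegood$ and write $d(K):=\sum_i \dim \mathcal{H}^i(K_s)$ for the total stalk dimension of a complex $K$ at $s$. Proper base change yields
$$d\bigl(\hitmap_*\rat[\dim\base]\bigr)\;=\;\sum_l b_l(\hitmap^{-1}(s)),$$
while applying $d$ to the decomposition displayed above gives $\sum_l d(\mathcal{P}^l_{\basegood})$. Using the standard identification $\mathcal{H}^{-\dim\base}\bigl(IC_{\basegood}(L)\bigr)\cong R^0 j_* L$ for any local system $L$ on $\basesm$, and invoking Theorem~\ref{final}, we have
$$d\bigl(IC_{\basegood}(R^l\hitmapsmooth_*\rat)\bigr)\;\geq\;\dim\bigl(R^0 j_* R^l\hitmapsmooth_*\rat\bigr)_s\;=\;b_l(\hitmap^{-1}(s)).$$
Combining these estimates and summing over $l$ yields the chain
$$\sum_l b_l(\hitmap^{-1}(s))\;=\;\sum_l d(\mathcal{P}^l_{\basegood})\;\geq\;\sum_l d\bigl(IC_{\basegood}(R^l\hitmapsmooth_*\rat)\bigr)\;\geq\;\sum_l b_l(\hitmap^{-1}(s)).$$

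Equality must therefore hold throughout. The equality between the first two sums forces the absence of additional IC summands in each $\mathcal{P}^l_{\basegood}$, proving the first assertion $\mathcal{P}^l_{\basegood}=IC_{\basegood}(R^l\hitmapsmooth_*\rat)$. The equality between the second and third sums, holding for every $s\in\basegood$, forces each $IC_{\basegood}(R^l\hitmapsmooth_*\rat)$ to be cohomologically concentrated in the single degree $-\dim\base$, with stalk in that degree equal to $R^0 j_* R^l\hitmapsmooth_*\rat$; hence $IC_{\basegood}(R^l\hitmapsmooth_*\rat)= R^0 j_* R^l\hitmapsmooth_*\rat\,[\dim\base]$, as claimed. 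The fulfilment of Assumption~\ref{ipot} is then immediate. The only substantive ingredient is Theorem~\ref{final}; once that is in hand the argument is a formal squeeze, and the main (modest) technical point is to keep careful track of the shifts and of the identification of the top cohomology sheaf of an IC complex with the ordinary direct-image sheaf.
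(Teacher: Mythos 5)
Your proof is correct and follows essentially the same route as the paper's: both deduce the statement from Theorem~\ref{final} by a stalk-by-stalk dimension count applied to the decomposition $\hitmap_*\rat[\dim\base]|_{\basegood}\simeq\bigl(\bigoplus_l IC(R^l)[-l]\bigr)\oplus K$, using $\mathcal{H}^{-\dim\base}(IC(L))\cong R^0j_*L$ and forcing the vanishing of $K$ and of the higher cohomology sheaves of the $IC(R^l)$. The only cosmetic difference is that you package the paper's single displayed identity as a squeeze chain of inequalities.
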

\n{\em Proof that Theorem \ref{final} implies Corollary \ref{icareshfongl}.}
Set  $R^l:=R^l\hitmapsmooth_* \rat$  and $a:= \dim{ \base}$.
On the smooth locus $\basesm$, the decomposition theorem takes the form
$\hitmapsmooth_* \rat \simeq  \bigoplus R^l [-l].$
It follows that
$(\hitmap_* \rat[a])_{|\basegood} \simeq   \left( \bigoplus IC(R^l )[-l] \right) \bigoplus K,$
where $K$ is a direct sum of shifted semisimple perverse sheaves supported on 
proper subsets of $\basegood$. 
Taking the stalk at $s \in \basegood$ of the cohomology sheaves 
$$
H^{k+a}(\hitmap ^{-1}(s))\simeq {\mathcal H}^k(\hitmap_* \rat[a ])_s \simeq  \left( \bigoplus_l
{\mathcal H}^{k-l}( IC( R^l ))_s \right) \bigoplus {\mathcal H}^k(K)_s,
$$
and  
$$
b_{k+a}(\hitmap^{-1}(s))= \dim {\mathcal H}^k(\hi_* \rat[\dai ])_s=
\sum_l \dim {\mathcal H}^{k-l}( IC(R^l))_s + \dim {\mathcal H}^k(K)_s.
$$
By the very definition of intersection cohomology complex   
${\mathcal H}^{-\dai}( IC(R^l))=R^0 j^0_*R^l$, hence the equality (\ref{unglblch}) forces 
${\mathcal H}^{r}( IC(R^l))=0$  for $ r\neq -\dai$  and 
${\mathcal H}^{r}(K)=0$ for all  $r$.
\blacksquare

\bigskip
Let us briefly outline the structure of proof of Theorem \ref{final}, which
occupies the remainder of \S \ref{prfmaintm}. 
In \S \ref{cpctjcb}, we prove an upper bound, Theorem
\ref{estbncj}, on the Betti numbers of the compactified Jacobian of an integral 
curve with $A_k$-singularities. The partial normalizations of such a curve define a natural stratification 
of the compactified Jacobian; the cohomology groups of the strata are easy to determine, 
and the spectral sequence arising from the stratification gives the
desired  upper bound.
In \S \ref{lbebncj}, we complement  this upper bound estimate  with   a 
lower bound estimate, Theorem \ref{mainmonthm},  for the dimension of the stalks $(R^0 j_*R^l\hitmapsmooth_* \rat)_s$. 
The proof of Theorem \ref{mainmonthm} consists of a monodromy computation
which  
is completed in  \S \ref{bsc}. 
In  \S \ref{lbebncj},  we also prove that 
the decomposition theorem  forces the  equality of the two bounds. 
This completes the proof of Theorem \ref{final}.

\begin{remark}
{\rm   The arguments used in the proof of  Theorem \ref{final}
do not depend on the specific features of the Hitchin map, and
hold more generally in the following setting: 
suppose $S$ is a nonsingular complex variety, and 
${\mathscr C} \lorw S$ is a proper family of integral curves, 
smooth over the open set
$S_{\rm reg}  \stackrel{j}{\hookrightarrow} S$,
which are branched double coverings of a fixed curve $C$.
Let ${\mathscr I} \stackrel{f}{\lorw} S$ be the associated  family
of compactified Jacobians. If $\mathscr I$  
is  nonsingular, or has at worst finite quotient singularities, then Theorem \ref{final} and its Corollary \ref{icareshfongl} hold: in particular, for $s \in S$, we have
$b_l(f^{-1}(s))=  \dim\left( (R^0 j_*R^l f_* \rat)_s\right)$.
}
\end{remark}

\subsection{The upper bound estimate}
\label{cpctjcb}
In this section, $\gencurv$  denotes an integral  projective  curve whose singularities are 
 double points of type $A_k$, i.e.  analytically isomorphic to $(y^2-x^{k+1} =0) \subseteq \comp^2$
for some $k\geq 1$.  In view of Proposition \ref{singspcrv},
and  of the fact that we work exclusively with integral spectral curves,
this is the generality we need.
If $k\geq 3$,  then  blowing up  a point of type $A_k$ produces a point of type $A_{k-2}$,
and
if $k=1,2$, respectively corresponding to an ordinary node and a cusp, then blowing up a point resolves the singularity.
The invariant $\delta_c:= \dim_{\comp} \widetilde{\mathcal O}_{\gencurv,c}/ {\mathcal O}_{\gencurv,c}$ measures the drop of the arithmetic  genus under   normalization.
If  $c \in \gencurv$ is a singular point of type $A_k$, then
$\delta_c = \left\lceil \frac{k}{2} \right\rceil$.
The point $c$ must be blown-up $\delta_c$ times in order to be resolved, and,  with the exception of the blowing up of
$A_1$, each blowing up  map is bijective.

The following theorem lists a few well-known facts concerning the 
Picard variety of a curve with singularities of type $A_k$:

\begin{theorem}
\label{genfctspic}
Let  $\gencurv$ be a reduced and connected projective curve
with at  worst $A_k$-singularities. Let  $\gencurv_{\rm sing} \subseteq \gencurv$ be its singular locus, and let $\nu: \widetilde{\gencurv} \lorw \gencurv$ be  the normalization map.
Let $\nu^*: \jac{\gencurv}^0 \lorw \jac {\widetilde{\gencurv}}^0 $ be the map induced
by pull-back, where $\jac{\gencurv}^0$ (resp.    $\jac {\widetilde{\gencurv}}^0$) is the connected component of the identity of the Picard scheme of
$\gencurv$ (resp. ${\widetilde{\gencurv}}$).  
If  $c \in \gencurv_{\rm sing}$ is a singular point of type $A_{k}$, set
$$
 {\mathcal P}_{c}:=\left\{\begin{array}{ll}
\comp^{\delta_c}  & \hbox{ if } k \hbox{ is even},\\
\comp^{\times}\times \comp^{\delta_c-1}   &  \hbox{ if } k \hbox{ is odd},
\end{array}\right. 
$$
and define the commutative algebraic group ${\mathcal P}:= \prod_{c \in \gencurv_{\rm sing}} {\mathcal P}_{c} $.
Then:
\begin{enumerate}
\item
If  $\gencurv$ is irreducible, there is an exact sequence of  
commutative algebraic groups  
\begin{equation}
\label{picirre}
1 \lorw   {\mathcal P} \lorw
\jac{\gencurv}^0 {\lorw} \jac {\widetilde{\gencurv}}^0 \lorw 1,
\end{equation}
\item
If $\gencurv$ is reducible, and $\sharp$ is the number of irreducible components,  we have an exact sequence
\begin{equation}
\label{picre}
1 \lorw  {\mathcal P}/(\comp^{\times})^{\#-1} \lorw
\jac{\gencurv}^0 {\lorw} \jac {\widetilde{\gencurv}}^0 \lorw 1.
\end{equation}
\end{enumerate}
\end{theorem}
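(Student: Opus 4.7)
The plan is to use the short exact sequence of sheaves on $\gencurv$ induced by the normalization $\nu$,
\begin{equation*}
1 \lorw \mathcal{O}_{\gencurv}^{*} \lorw \nu_{*}\mathcal{O}_{\widetilde{\gencurv}}^{*} \lorw \shf \lorw 1,
\end{equation*}
where $\shf := \nu_{*}\mathcal{O}_{\widetilde{\gencurv}}^{*}/\mathcal{O}_{\gencurv}^{*}$ is a skyscraper sheaf supported on $\gencurv_{\rm sing}$. Since $\nu$ is finite, $R^{i}\nu_{*} = 0$ for $i>0$, so $H^{1}(\gencurv, \nu_{*}\mathcal{O}_{\widetilde{\gencurv}}^{*}) = \Pic(\widetilde{\gencurv})$. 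Taking cohomology, using that $\gencurv$ is connected while $\widetilde{\gencurv}$ has exactly $\#$ connected components, yields the six-term exact sequence
\begin{equation*}
1 \lorw \comp^{\times} \lorw (\comp^{\times})^{\#} \lorw \prod_{c \in \gencurv_{\rm sing}} \shf_{c} \lorw \Pic(\gencurv) \lorw \Pic(\widetilde{\gencurv}) \lorw 0.
\end{equation*}

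\textbf{Local quotients at $A_{k}$.} I would compute $\shf_{c} \cong \mathcal{P}_{c}$ as algebraic groups by splitting into the two parities of $k$. For $k=2m-1$ odd (two smooth branches), a direct check in the completion gives
\begin{equation*}
\widehat{\mathcal{O}}_{\gencurv,c} = \{(f,g) \in \comp[[t_{1}]] \times \comp[[t_{2}]] : f \equiv g \pmod{t^{m}}\}.
\end{equation*}
Splitting off constant terms and applying the formal logarithm to the principal units $1+(t_{i})$ then identifies $\shf_{c}$ with the direct product of the constant-ratio factor $\comp^{\times}$ and the additive cokernel $\bigl((t_{1}) \oplus (t_{2})\bigr)/\{(h,k) : h\equiv k \bmod t^{m}\} \cong \comp^{m-1}$, producing $\comp^{\times} \times \comp^{\delta_{c} - 1}$. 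For $k=2m$ even (single branch parametrized by $t \mapsto (t^{2}, t^{2m+1})$), $\widehat{\mathcal{O}}_{\gencurv,c} = \comp[[t^{2}, t^{2m+1}]] \subset \comp[[t]]$ has monomial complement spanned by $t, t^{3}, \ldots, t^{2m-1}$; the constants agree on both sides, and the logarithm converts $\shf_{c}$ into $\comp^{m} = \comp^{\delta_{c}}$.

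\textbf{Assembly.} Substituting $\shf_{c} = \mathcal{P}_{c}$ in the six-term sequence and restricting the last two terms to their degree-zero (componentwise on $\widetilde{\gencurv}$) subgroups---which is legitimate because $\nu^{*}$ preserves degree on each component---produces
\begin{equation*}
0 \lorw \Bigl(\prod_{c}\mathcal{P}_{c}\Bigr)\Big/\mathrm{Im}\bigl((\comp^{\times})^{\#}/\comp^{\times}\bigr) \lorw \jac{\gencurv}^{0} \lorw \jac{\widetilde{\gencurv}}^{0} \lorw 0.
\end{equation*}
The map $(\comp^{\times})^{\#} \to \prod_{c}\shf_{c}$ lands in the product of the $\comp^{\times}$-factors coming from two-branch singularities, via ratios of branch-constants, and its kernel is the diagonal $\comp^{\times}$ precisely because the dual graph of $\gencurv$ is connected; hence its image is a subtorus isomorphic to $(\comp^{\times})^{\#-1}$. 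When $\#=1$ this image collapses and one obtains statement~(1); in general one obtains statement~(2), with quotient $\mathcal{P}/(\comp^{\times})^{\#-1}$.

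\textbf{Main obstacle.} The delicate ingredient is the local unit-group calculation at $A_{k}$: one must verify that splitting off constants and applying the formal logarithm genuinely produce the stated decomposition of $\shf_{c}$ into a torus and a unipotent factor of the predicted dimensions. Everything else---vanishing of $R^{i}\nu_{*}$ for the finite map $\nu$, the connectedness of the dual graph, and degree preservation under $\nu^{*}$---is standard.
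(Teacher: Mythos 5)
Your proposal is correct and follows essentially the same route as the paper: the paper's proof consists precisely of invoking the exact sequence $1 \to \mathcal O_{\gencurv}^\times \to \nu_*\mathcal O_{\widetilde{\gencurv}}^\times \to \nu_*\mathcal O_{\widetilde{\gencurv}}^\times/\mathcal O_{\gencurv}^\times \to 1$ together with a local computation at the singular points, which it delegates to Liu's book (\S 7.5, Thm.\ 5.19). Your write-up simply spells out that local computation (the completed local rings, the logarithm trick separating the toric and unipotent parts) and the connectedness-of-the-dual-graph argument identifying the image of $(\comp^\times)^\#/\comp^\times$ inside $\prod_c\mathcal P_c$, both of which are exactly what the cited reference supplies.
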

\n{\em Proof.}
These facts  follow directly from the exact sequence
of sheaves of groups on $\gencurv$
$$
1 \lorw {\mathcal O}_{\gencurv }^{\times} \lorw \nu_*{\mathcal O}_{\widetilde{\gencurv} }^{\times} \lorw
\nu_*{\mathcal O}_{\widetilde{\gencurv} }^{\times}/ {\mathcal O}_{\gencurv }^{\times} \lorw 1,
$$
and a local computation (see \cite{liu}, \S 7.5, especially Thm. 5.19).
\blacksquare

\bigskip
The connected group  $\jac{\gencurv}^0$  acts, via tensor product, on the compactification 
$\compjac{\gencurv}^0$, which is obtained by adding  degree zero
rank $1$ torsion free sheaves
on $\gencurv$   which are not locally free.

Let  $\nu: \gencurv' \lorw \gencurv$  
be a   finite birational map. There is the  direct image map
\[\nu_*: \compjac{\gencurv'}^0 \lorw \compjac{\gencurv}^0, \qquad
\shf' 
\longmapsto \nu_*\shf' 
.\]
The following theorem summarizes most of the properties
 of the compactified Jacobians of blow-ups that we need in the sequel
of the paper.
\begin{theorem}
\label{genfctscj}
Let  $\gencurv$ be an integral, projective curve
with at  worst $A_k$-singularities, 
 let $\nu: \gencurv' \lorw \gencurv$ be  a   finite  birational map
 and let $\shf \in  \compjac{\gencurv}^0$.    Then we have
\begin{enumerate}

\item 
The compactified Jacobian $ \compjac{\gencurv}^0$ is irreducible. The action of $\jac{\gencurv}^0$
 has finitely many orbits.
The orbit corresponding to locally free sheaves is dense.

\item
The direct image map
$\nu_*: \compjac{\gencurv'}^0 \lorw \compjac{\gencurv}^0$
is  a closed imbedding with image a closed 
$\jac{\gencurv}^0$-invariant subset of $\compjac{\gencurv }^0$. 
The image  of $\jac{\gencurv '}^0$
is a locally closed 
$\jac{\gencurv}^0$-invariant subset of $\compjac{\gencurv }^0$. 

\item
There are  a unique finite
birational map $\mu: \gencurv_{\shf} \lorw \gencurv$,
obtained as a composition of simple  blow-ups, 
and a line bundle $\mathcal L_{\shf}$ on $\gencurv_{\shf}$, such that
$\shf=\mu_* \mathcal L_{\shf}$. 

\item
Let $\pos$ be the  poset of  blow-ups  $\gencurv' \lorw \gencurv$.
There is a decomposition into locally closed subsets
\begin{equation}
\label{decpctjac}
\compjac{\gencurv }^0=\coprod_{\{\gencurv ' \to \gencurv \} \in {\pos} }\jac{\gencurv '}^0.
\end{equation}
\end{enumerate}
\end{theorem}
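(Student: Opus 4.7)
The strategy is to reduce all four assertions to the local classification of rank-$1$ torsion-free modules at an $A_k$-singularity, and then to globalize via the functorial assignment $\shf \mapsto \mathcal{E}nd_{\mathcal{O}_\gencurv}(\shf)$. With the local picture in hand, parts (1)--(4) follow in a uniform way.

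The keystone is part (3). Given $\shf \in \compjac{\gencurv}^0$, one defines the coherent $\mathcal O_\gencurv$-algebra $\mathcal E_\shf := \mathcal{E}nd_{\mathcal O_\gencurv}(\shf)$. Because $\shf$ is rank-$1$ torsion-free on an integral curve, it embeds in the constant sheaf of rational functions, and so does $\mathcal E_\shf$; hence $\mathcal O_\gencurv \subseteq \mathcal E_\shf \subseteq \nu_*\mathcal O_{\widetilde{\gencurv}}$. Setting $\gencurv_\shf := \underline{\mathrm{Spec}}_{\mathcal O_\gencurv}(\mathcal E_\shf)$ produces a finite birational map $\mu: \gencurv_\shf \to \gencurv$ and, tautologically, an $\mathcal E_\shf$-module structure on $\shf$. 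The essential local input is the classification (Greuel--Kn\"orrer, or a direct fractional-ideal computation for $R = k[[x,y]]/(y^2-x^{k+1})$) of indecomposable rank-$1$ torsion-free modules over the complete local ring of an $A_k$-singularity: there are exactly $\delta_c+1$ such modules up to isomorphism, each is a fractional ideal corresponding to a partial normalization sitting between $\gencurv$ and $\widetilde\gencurv$, and these partial normalizations are precisely the iterated simple blow-ups of the singular point. Consequently $\mu: \gencurv_\shf \to \gencurv$ is realized as a composition of simple blow-ups and $\shf$ becomes locally free of rank $1$ on $\gencurv_\shf$, giving $\mathcal L_\shf \in \jac{\gencurv_\shf}^0$ with $\mu_*\mathcal L_\shf = \shf$. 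Uniqueness of the pair $(\mu,\mathcal L_\shf)$ is immediate since $\mathcal E_\shf$ depends only on $\shf$.

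The remaining parts follow. For (2), the direct image $\nu_*$ clearly sends line bundles to rank-$1$ torsion-free sheaves, and it is injective on isomorphism classes: if $\nu_*\mathcal F \simeq \nu_*\mathcal F'$ with $\mathcal F,\mathcal F'$ line bundles on $\gencurv'$, then both have $\mathcal E_{\nu_*\mathcal F} = \nu_*\mathcal O_{\gencurv'}$, so by the uniqueness in (3) one recovers $\mathcal F \simeq \mathcal F'$. Extension to the compactified Picard schemes is the universal property of relative Picard; closedness of the embedding follows from properness on both sides. The $\jac{\gencurv}^0$-invariance of the image, as well as of $\nu_*\jac{\gencurv'}^0$, is the projection formula $\nu_*\mathcal F \otimes L = \nu_*(\mathcal F\otimes \nu^*L)$; local closedness of the latter stratum inside the former is immediate from (3), since it is the complement of the direct images from the (finitely many) strictly finer blow-ups. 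Part (4) is then a direct consequence of (3), which partitions $\compjac{\gencurv}^0$ into the $\jac{\gencurv'}^0$ indexed by the finite poset $\pos$. For (1), irreducibility of $\compjac{\gencurv}^0$ is the Altman--Kleiman--Rego theorem for integral locally planar curves; finiteness of $\jac{\gencurv}^0$-orbits is the local finiteness of isomorphism classes at each $A_k$-point; and openness and density of the locally free orbit (i.e.\ the stratum with $\gencurv_\shf = \gencurv$) follows from the exact sequences of Theorem \ref{genfctspic}, which force all other strata to have strictly smaller dimension.

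The main technical obstacle is the local input used in the proof of (3): identifying $\mathcal E_\shf$, at each singular point, with the structure sheaf of an iterated simple blow-up, and simultaneously showing that every rank-$1$ torsion-free module is the direct image of a line bundle from such a blow-up. This is precisely where the $A_k$-hypothesis is used in an essential way; once it is established, everything else is formal bookkeeping with $\mathcal{E}nd$-algebras, the projection formula, and the structure of the relative Picard functor.
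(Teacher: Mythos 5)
The paper does not prove this theorem; it proves it by citation: part (1) to Rego and Altman--Iarrobino--Kleiman, part (2) to Beauville, parts (3)--(4) to Proposition 3.4 of Gagn\'e's (unpublished) MIT thesis. Your proposal is to actually supply a unified argument built around the endomorphism algebra $\mathcal E_\shf = \mathcal{E}nd_{\mathcal O_{\gencurv}}(\shf)$, the scheme $\gencurv_\shf := \underline{\mathrm{Spec}}(\mathcal E_\shf)$, and the local classification of rank-one torsion-free modules at an $A_k$-point (Greuel--Kn\"orrer, or a direct fractional-ideal computation). This is sound, and it is indeed the mathematics underlying the sources the paper cites; in particular, the crucial $A_k$-specific facts you invoke are correct: at an $A_k$-point the intermediate rings between the local ring and its normalization form a \emph{chain} of $\delta_c+1$ successive simple blow-ups, every rank-one torsion-free module is isomorphic to one of these intermediate rings, and hence is free of rank one over $\mathcal E_\shf$, which is exactly what makes (3) and then (4) work. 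Your reductions of (2) and (1) to (3) are also correct: $\mathcal{E}nd_{\mathcal O_{\gencurv}}(\nu_*\mathcal F)=\nu_*\mathcal O_{\gencurv'}$ for $\mathcal F$ a line bundle on $\gencurv'$ (since $\nu_*\mathcal F$ is locally principal over $\nu_*\mathcal O_{\gencurv'}$), and the projection formula gives the $\jac{\gencurv}^0$-equivariance. Two places are stated loosely and would need a sentence each in a polished version: (i) ``closedness follows from properness on both sides'' is not quite a proof --- what you actually want is that $\nu_*$ is a proper \emph{monomorphism} of schemes (injective on $T$-points for all $T$, which does follow from your uniqueness argument applied scheme-theoretically), and a proper monomorphism is a closed immersion; and (ii) openness of the locally free locus is the usual openness of local freeness, not a consequence of the dimension count, which only gives density once irreducibility is known. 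Neither is a genuine gap.
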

\n{\em Proof.} The proof of 1.   can be found in  \ci{rego, aik}. The  proof of
2. can be found in  \ci{beauvi}. 
The proof of 3. and 4. can be found in  \ci{gagne}, Proposition 3.4. 
\blacksquare

\bigskip
The main goal of this section is to prove Theorem \ref{estbncj},
which  gives an upper bound for the Betti numbers of  the compactified Jacobian
$\compjac{\gencurv}^0$.  
In order to achieve this upper bound,  we study the decomposition (\ref{decpctjac}) by
describing the poset ${\pos}$ of  all the  blowing-ups of $\gencurv$.

\bigskip
\begin{definition}
\label{stfcrvc}
An integral projective curve $\gencurv$  with    $A_k$-singularities is 
said to be of {\em singular type} 
\[\underline{k}:=(k_1, \ldots, k_o;k_{o+1}, \ldots, k_{o+e})\]
 if its singular locus consists of   $o+e$  distinct points
$\{c_1,\ldots, c_o,c_{o+1}, \ldots, c_{o+e}\}$, where $c_a$ is singular of type $A_{k_a}$,
with 
 $k_a$  odd for $1\leq a\leq o$, and  $k_a$  even for $o+1\leq a \leq o+e$. 
We say that  each 
  singular point is  of one of  two  possible types: odd, or  even, and
   we set $O:=\{c_1,\ldots, c_o\} \subseteq \gencurv_{\rm sing}$, the set of odd singular points, and $E:=\{c_{o+1},\ldots, c_{o+e}\} \subseteq \gencurv_{\rm sing}$ the set of even singular points.
   \end{definition}
  Recall that for each entry $k_a$ above, we have defined an integer $\delta_{c_a}:= \lceil k_a/2 \rceil$.

\begin{lemma}
\label{betnum}
Let $\gencurv$ be of singular type $\underline{k}$, let  $\widetilde\gencurv$
be its normalization and let  
 $\widetilde{g}: = g( \widetilde{\gencurv})$.
Then
$$
\sum_l b_l(\jac{\gencurv}^0)=2^{2 \widetilde{g}+ o}. 
$$
\end{lemma}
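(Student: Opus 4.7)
My plan is to compute the total Betti number of $\jac{\gencurv}^0$ by determining its homotopy type as a connected commutative complex Lie group, via the exact sequence (\ref{picirre}) from Theorem~\ref{genfctspic}(1):
$$
1 \longrightarrow \mathcal{P} \longrightarrow \jac{\gencurv}^0 \longrightarrow \jac{\widetilde{\gencurv}}^0 \longrightarrow 1.
$$
The description of $\mathcal{P}$ in Theorem~\ref{genfctspic} shows that $\mathcal{P}$ is a product of exactly $o$ copies of $\mathbb{C}^\times$ (one per odd singular point) together with some number of copies of $\mathbb{C}$. Since $\mathbb{C}$ is contractible and $\mathbb{C}^\times$ retracts onto $S^1$, the group $\mathcal{P}$ is homotopy equivalent to the compact real torus $(S^1)^o$.

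Next, I would exploit that the sequence above realizes $\jac{\gencurv}^0$ as a principal $\mathcal{P}$-bundle over $\jac{\widetilde{\gencurv}}^0$, hence as a Serre fibration. Applying the long exact sequence of homotopy groups, and using $\pi_k(\mathcal{P}) = 0$ for $k \geq 2$ together with $\pi_2(\jac{\widetilde{\gencurv}}^0) = 0$ (since the universal cover of the abelian variety $\jac{\widetilde{\gencurv}}^0$ is $\mathbb{C}^{\tilde g}$), I obtain the short exact sequence of finitely generated abelian groups
$$
0 \longrightarrow \mathbb{Z}^{o} \longrightarrow \pi_1(\jac{\gencurv}^0) \longrightarrow \mathbb{Z}^{2\tilde g} \longrightarrow 0,
$$
so that $\pi_1(\jac{\gencurv}^0)$ has rank $2\tilde g + o$.

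To conclude, I would invoke the following standard fact: any connected commutative complex Lie group $G$ is isomorphic, via the exponential map, to $\mathbb{C}^n/\Lambda$ where $n = \dim G$ and $\Lambda \subset \mathbb{C}^n$ is a discrete subgroup of rank equal to $\mathrm{rank}\,\pi_1(G)$. Such a quotient is diffeomorphic to $(S^1)^{\mathrm{rank}\,\Lambda} \times \mathbb{R}^{2n-\mathrm{rank}\,\Lambda}$ and is therefore homotopy equivalent to the compact torus $(S^1)^{\mathrm{rank}\,\pi_1(G)}$. Applied to $G = \jac{\gencurv}^0$, this yields a homotopy equivalence with $(S^1)^{2\tilde g + o}$ and thus $\sum_l b_l(\jac{\gencurv}^0) = 2^{2\tilde g + o}$. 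The one step requiring genuine care is the topological identification of $\mathcal{P}$ as a product of $\mathbb{C}$'s and $\mathbb{C}^\times$'s (supplied by Theorem~\ref{genfctspic}); the remainder is formal.
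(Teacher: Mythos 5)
Your proposal is correct and follows essentially the same route as the paper: invoke the exact sequence of Theorem~\ref{genfctspic} together with the structure theorem for connected commutative Lie groups, identify the number of circle factors as $2\tilde g + o$, and conclude. You simply spell out (via the fibration long exact sequence on $\pi_1$) the step that the paper compresses into ``Theorem~\ref{genfctspic} implies that $r = 2\widetilde g + o$.''
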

\n{\em Proof. }
A connected commutative Lie group is isomorphic to $(S^1)^r \times \real^s$, 
for some $r$ and $s$. The Betti numbers satisfy
$\sum_l b_l((S^1)^r \times \real^s )=2^r$. In our  case,  Theorem
\ref{genfctspic} implies  that $r=2\widetilde{g}+ o$. 
\blacksquare

\bigskip

The poset of blowing ups
 of $\gencurv$ can be described as the set 
$$
{\pos}= \{ I=(i_1,\ldots,i_o ; i_{o+1}, \ldots ,  i_{o+e})\in \nat^{o+e}  \, | \;
 0\leq i_a \leq \delta_{c_a},\;  \forall a =1, \cdots , o+e \},
$$
where we say that $I\geq I'$ if $i_a\geq i_a'$ for all $a=1, \ldots , o+e$.
Let $\gencurv_I$ be the curve obtained from $\gencurv$ by
blowing up, in any order, $i_1$ times the point $c_1,$  
$i_2$ times the point $c_2$,  etc. 
Let  $\nu_I:\gencurv_I \lorw \gencurv$ be  the corresponding  finite birational map.
The singular points of  $\gencurv_I$ are still of type $A_k$.

Theorems \ref{genfctscj}  and \ref{genfctspic} can be   applied
 to $\jac{\gencurv_I}^0$ and
to  $\compjac{\gencurv_I}^0$.
Note that $\gencurv_I \lorw \gencurv$ factors through $\gencurv_{I'} \lorw \gencurv$
if and only if $I\geq I'$, and that
if $I=(\delta_{c_1}, \ldots , \delta_{c_{o+e}})$, then $\widetilde{\gencurv}= \gencurv_I \lorw \gencurv$ is the normalization.
Define $|I|:=\sum i_a$. 
Theorem  \ref{genfctspic} implies that 
\begin{equation}
\label{dimiok}
\dim \jac{\gencurv_I}^0=\dim \jac{\gencurv}^0 -|I|.\end{equation}
For $I \in {\pos}$, the direct image  $\nu_{I,*}$ defines a locally closed imbedding 
$\jac{\gencurv_I}^0 \lorw \compjac{\gencurv}^0$. By applying (\ref{decpctjac}) to the
natural  maps 
$\gencurv_{I'} \lorw \gencurv_I$ for $I'\geq I$, we see that
$$
\compjac{\gencurv_I}^0=\coprod_{I'\geq I} \jac{\gencurv_{I'}}^0.
$$

\begin{proposition}
\label{spsqnc}
We have the following inequality concerning Betti numbers 
$$\sum_{l\geq 0} b_l(\compjac{\gencurv}^0)\leq 
\sum_{I\in {\pos}}\sum_{l\geq 0} b_l(\jac{\gencurv_I}^0).
$$
\end{proposition}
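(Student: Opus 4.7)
\medskip
\noindent\textbf{Proof plan for Proposition \ref{spsqnc}.}

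The strategy is to exploit the stratification (\ref{decpctjac}) to produce a spectral sequence converging to $H^*(\compjac{\gencurv}^0)$ whose $E_1$ page is built from the cohomology of the individual strata $\jac{\gencurv_I}^0$, and then to invoke the general principle that the total dimension of the abutment is bounded above by the total dimension of the $E_1$ page. Since $\compjac{\gencurv}^0$ is projective, ordinary and compactly supported cohomology agree, and since each stratum $\jac{\gencurv_I}^0$ is a smooth connected commutative algebraic group, Poincar\'e duality converts compactly supported Betti numbers into ordinary Betti numbers without loss, so the bound will take exactly the form stated.

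First, I would organize the strata into a filtration by closed subvarieties via the codimension function $I\mapsto |I|$. Using \eqref{dimiok} one has $\mathrm{codim}\, \jac{\gencurv_I}^0=|I|$ inside $\compjac{\gencurv}^0$, and Theorem \ref{genfctscj}(2) together with the decomposition of $\compjac{\gencurv_I}^0$ as $\coprod_{I'\geq I}\jac{\gencurv_{I'}}^0$ identifies the closure of $\jac{\gencurv_I}^0$ with $\coprod_{I'\geq I}\jac{\gencurv_{I'}}^0$. Since $I'\geq I$ implies $|I'|\geq |I|$, the subset
\[
Z_k := \bigcup_{|I|\geq k}\jac{\gencurv_I}^0 \;\subseteq\; \compjac{\gencurv}^0
\]
is closed for each $k\geq 0$, and the successive differences are precisely $Z_k\setminus Z_{k+1}=\coprod_{|I|=k}\jac{\gencurv_I}^0$. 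I would then feed this decreasing filtration $\compjac{\gencurv}^0=Z_0\supseteq Z_1\supseteq\cdots\supseteq\emptyset$ into the standard spectral sequence for compactly supported cohomology of a space filtered by closed subsets; its $E_1$ page reads
\[
E_1^{p,q} \;=\; H^{p+q}_c\bigl(Z_{-p}\setminus Z_{-p+1}\bigr)\;=\;\bigoplus_{|I|=-p} H^{p+q}_c(\jac{\gencurv_I}^0),
\]
and it abuts to $H^{p+q}_c(\compjac{\gencurv}^0)=H^{p+q}(\compjac{\gencurv}^0)$, the last equality because $\compjac{\gencurv}^0$ is projective.

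To finish, I would apply the elementary inequality that the total dimension of the limit of a convergent spectral sequence is at most the total dimension of any given page, yielding
\[
\sum_l b_l(\compjac{\gencurv}^0) \;\leq\; \sum_{p,q}\dim E_1^{p,q} \;=\; \sum_{I\in\pos}\sum_{k}\dim H^{k}_c(\jac{\gencurv_I}^0).
\]
Each $\jac{\gencurv_I}^0$ is a smooth orientable real manifold, so Poincar\'e duality gives $\dim H^k_c(\jac{\gencurv_I}^0)=b_{n_I-k}(\jac{\gencurv_I}^0)$ where $n_I$ is its real dimension; summing over $k$ leaves the total Betti number of $\jac{\gencurv_I}^0$ unchanged, and the desired inequality follows.

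No serious obstacle is foreseen: the only subtle point is the verification that $Z_k$ is closed, which hinges on coupling the closure description of each stratum (Theorem \ref{genfctscj}(2) applied to each blowup $\nu_I$) with the monotonicity of $|I|$ in the poset. Everything else is formal invocation of the spectral sequence of a filtered space, Poincar\'e duality on the smooth strata, and the identification $H^*=H^*_c$ for the projective compactified Jacobian.
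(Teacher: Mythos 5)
Your proposal is correct and follows essentially the same route as the paper: you filter $\compjac{\gencurv}^0$ by the closed subsets $Z_k=\coprod_{|I|\geq k}\jac{\gencurv_I}^0$, invoke the spectral sequence of this filtration to bound the total Betti number of the abutment by that of the $E_1$ page, and use Poincar\'e duality on the smooth, possibly noncompact strata to rewrite $\sum_k\dim H^k_c(\jac{\gencurv_I}^0)$ as $\sum_l b_l(\jac{\gencurv_I}^0)$. The only cosmetic difference is that the paper phrases the $E_1$ term as $H^{p+q}(Z_{-p},Z_{-p+1})$ and then passes to compactly supported cohomology via compactness of the $Z_r$, whereas you go straight to the compactly supported version; these are the same spectral sequence.
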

\n{\em Proof.} 
Let  $r$ be a nonnegative integer. Define the subset of $\compjac{\gencurv}^0$
$$
Z_r = \coprod_{|I|\geq r} \jac{\gencurv_I}^0.
$$
In view of the discussion above, we have

\begin{enumerate}
\item
$Z_r$ is a closed subset of $\compjac{\gencurv}^0$. In particular, it is compact;

\item
there are closed inclusions
$\emptyset \subseteq Z_{\delta} \subseteq \ldots 
\subseteq Z_1 \subseteq Z_0=\compjac{\gencurv}^0,$ where $\delta:=\sum_{c \in \gencurv_{\rm sing}} \delta_{c}$

\item
$Z_r \setminus Z_{r+1}= \coprod_{|I|=r} \jac{\gencurv_I}^0$, 
where the union is over the
connected components, all of which have the same dimension by (\ref{dimiok}).
\end{enumerate}
The nested inclusions 2., yield the classical spectral sequence 

\begin{equation}
\label{clospseq}
E_1^{p,q} = H^{p+q}(Z_{-p}, Z_{-p+1}) \Longrightarrow H^{p+q}( \compjac{\gencurv}^0).
\end{equation}
In view of the compactness 1., the $E_1$-term reads
$$
E_1^{p,q}=H^{p+q}_c(Z_{-p}\setminus Z_{-p+1})=\bigoplus_{|I|=-p}H^{p+q}_c(\jac{\gencurv_I}^0).
$$
By Poincar\'e duality, we have
that $\sum_{l\geq 0} b_l (\jac{\gencurv_I}^0) = \sum_{l\geq 0} \dim{H^l_c (\jac{\gencurv_I}^0)}$.
It follows that 
$$
\sum_{p,q} \dim{ E_1^{p,q}}= 
\sum_{I\in {\pos}}\sum_{l\geq 0} b_l(\jac{\gencurv_I}^0).
$$ 
Clearly, $\sum_{p,q}\dim{E^{p,q}_r} \leq \sum_{p,q} \dim{E_1^{p,q}}$,
for  every $r\geq 1$, and the statement follows.
\blacksquare

\bigskip
In what follows, we adopt the convention that a product over the empty set equals $1$.
For every subset $J \subseteq O$,  let $\delta_J:= 
\prod_{c \in J}\delta_c$. We have the following  
\begin{lemma}
\label{tot}
We have 
\[
\sum_{ I \in {\pos}} \sum_{l\geq 0}  b_l(\jac{\gencurv_I}^0)=2^{2\widetilde{g}} \left( \prod_{c \in O }(2\delta_c+1)\right)\left(\prod_{c \in E}(\delta_c+1)\right).
\]
\end{lemma}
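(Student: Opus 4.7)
The plan is to reduce the double sum to a product formula by analyzing one singular point at a time, using Lemma \ref{betnum} to handle each summand and a factorization of the poset $\pos$ to handle the outer sum.

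First, for each $I \in \pos$, I would apply Lemma \ref{betnum} to the curve $\gencurv_I$. This requires computing two invariants of $\gencurv_I$: the geometric genus $\widetilde g(\gencurv_I)$ and the number $o(I)$ of singular points of odd $A_k$-type. Since the normalization $\widetilde{\gencurv} \to \gencurv$ factors through every partial normalization $\gencurv_I \to \gencurv$ (by the universal property, or by direct construction as a sequence of blow-ups), both curves have the same normalization, so $\widetilde g(\gencurv_I) = \widetilde g$ is constant over $\pos$. The conclusion is
\[
\sum_{l\geq 0} b_l(\jac{\gencurv_I}^0) = 2^{2\widetilde g + o(I)}.
\]

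The central combinatorial observation is the computation of $o(I)$, which is the step I expect to be the main conceptual point. The key fact is that blowing up an $A_k$-singularity with $k \geq 3$ yields an $A_{k-2}$-singularity at the unique preimage point, while blowing up $A_1$ or $A_2$ resolves the singularity. Consequently, the $a$-th singular point of $\gencurv$ persists as a singular point of $\gencurv_I$ precisely when $i_a < \delta_{c_a}$, and in that case its type has the same parity ($A_{k_a - 2i_a}$) as the original $A_{k_a}$. Therefore the odd singular points of $\gencurv_I$ are exactly the $c_a \in O$ with $i_a < \delta_{c_a}$, giving
\[
o(I) = \#\{\,a : 1 \leq a \leq o,\ i_a < \delta_{c_a}\,\}.
\]

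Finally, writing $\pos = \prod_{a=1}^{o+e}\{0,1,\dots,\delta_{c_a}\}$ and noting that $2^{o(I)}$ splits as a product of factors depending only on the individual $i_a$ with $a \leq o$, the sum factors as
\[
\sum_{I \in \pos} 2^{o(I)} \;=\; \prod_{c \in O}\Bigl(\sum_{i=0}^{\delta_c} 2^{[i<\delta_c]}\Bigr)\,\prod_{c \in E}\Bigl(\sum_{i=0}^{\delta_c} 1\Bigr) \;=\; \prod_{c \in O}(2\delta_c+1)\,\prod_{c \in E}(\delta_c+1),
\]
where the odd-point factor counts $\delta_c$ values of $i$ contributing $2$ (when $i<\delta_c$) plus one value contributing $1$. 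Multiplying by the constant $2^{2\widetilde g}$ pulled out from each summand yields the claimed formula.
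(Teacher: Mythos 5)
Your proof is correct and uses the same core idea as the paper: apply Lemma \ref{betnum} to each $\gencurv_I$ and then exploit the product structure of $\pos$. Where the paper groups the terms by the number $o_I$ of odd points, counts the multiplicities $\#_r$ one step at a time, and finishes with the identity $\sum_{r}2^r\sum_{|J|=r}\delta_J=\prod_{c\in O}(2\delta_c+1)$, you factor $\sum_{I}2^{o(I)}$ directly over $\pos=\prod_a\{0,\dots,\delta_{c_a}\}$ using the observation $o(I)=\#\{a\le o : i_a<\delta_{c_a}\}$; this is a cleaner presentation of the same computation. (Minor nit: for $A_1$ the blown-up preimage is two points, not one, but this does not affect the argument since the singularity is resolved in that case anyway.)
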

\n{\em Proof.}
Let $o_I$ be the number of odd points on $\gencurv_I$.
For  every $0 \leq r \leq o$, let $\#_r$  be the number of  curves $\gencurv_I$ with a given $o_I =r$.
Since Lemma \ref{betnum} holds for every $\gencurv_I$, we have that
\begin{equation}
\label{99ii}
\sum_{ I \in {\pos}} \sum_{l\geq 0}  b_l(\jac{\gencurv_I}^0) =\sum_{ I \in {\pos} }2^{2\widetilde{g}+o_I}  =\sum_{r=0}^o \#_r \, 2^{2\widetilde{g} + r}.\end{equation}
We have 
$$
\#_o= \left(\prod_{c \in O} \delta_c \right)\left(\prod_{c \in E} (\delta_c+1)\right);$$ 
in fact,   the following two operations leave the number of
odd points unchanged: blowing up
    $t$ times, $0 \leq t \leq \delta_c$,  an even point $c \in E$, and blowing up 
   $t$ times,
   $0 \leq t < \delta_c$   an  odd point $c \in O$.

\n
In order to have precisely $o-1$ odd points, we need
to first blow up   
$\delta_c$ times  an odd point $c$. Once this is done, we repeat the count above
and deduce that 
$$
\#_{o-1}= \left(\sum _{j=1}^o\prod_{\stackrel{c \in O}{ c\neq c_j}} \delta_c \right)
\prod_{c \in E} \left(\delta_c+1\right)=
\left(\sum_{\stackrel{J \subseteq O} { \sharp J=o-1}} \delta_J
\right)
\prod_{c \in E}(\delta_c+1).
$$
It is clear that we can repeat this  argument and  re-write
the last term in 
(\ref{99ii}) as
\[
2^{2\widetilde{g}}
\left(\sum_{r=0}^o 2^r \sum_{\stackrel{J\subseteq O } { \sharp J=r}}\delta_J\right)
\prod_{c \in E} (\delta_c+1)= \left(\prod_{c \in O}(2\delta_c+1)\right)\left(\prod_{c \in E} (\delta_c+1)\right),
\]
by the elementary equality $\sum_{r=0}^o 2^r \sum_{\stackrel{J\subseteq O} { \sharp J=r}}\delta_J =\prod_{c \in O}(2\delta_c+1).$
\blacksquare

\medskip
\n
Finally, we combine  Proposition \ref{spsqnc} and  Lemma \ref{tot}
and obtain the desired upper bound:

\begin{theorem}
\label{estbncj}
Let $\gencurv$ be an integral  curve all of whose singularities are of type $A_k$.
Denote by 
$O:=\{c_1,\ldots, c_o\}$, the set of  its singular points of type $A_k$ with $k$ odd, and by 
$E:=\{c_{o+1},\ldots, c_{o+e}\}$ the set of its  singular points of type $A_k$ with $k$ even.
Denote by $\widetilde{g}$ the 
genus of the normalization $\widetilde{\gencurv}$ of $\gencurv$. Then:
$$\sum_l b_l \left(\compjac{\gencurv}^0 \right)\leq    
2^{2\widetilde{g}}  \left( \prod_{c \in O}(2\delta_c+1)\right)\left(\prod_{c \in E}(\delta_c+1)\right).
$$
\end{theorem}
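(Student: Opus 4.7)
The plan is essentially to assemble the two ingredients already established in this section, Proposition \ref{spsqnc} and Lemma \ref{tot}, since together they yield the stated inequality immediately. Before writing the proof I would reread what each ingredient contributes, so the final assembly is transparent.

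First I would recall the stratification supplied by part 4 of Theorem \ref{genfctscj}: the compactified Jacobian $\compjac{\gencurv}^0$ decomposes as a disjoint union $\coprod_{I\in\pos}\jac{\gencurv_I}^0$ of the smooth Jacobians of the partial normalizations $\nu_I:\gencurv_I\to\gencurv$, indexed by the poset of blow-up multi-indices $I=(i_1,\ldots,i_{o+e})$ with $0\leq i_a\leq\delta_{c_a}$. The key point, which is the content of Proposition \ref{spsqnc}, is that the subsets $Z_r:=\coprod_{|I|\geq r}\jac{\gencurv_I}^0$ are closed (hence compact) in $\compjac{\gencurv}^0$ and form a nested filtration. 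The associated spectral sequence has $E_1^{p,q}=\bigoplus_{|I|=-p}H^{p+q}_c(\jac{\gencurv_I}^0)$ and abuts to $H^{p+q}(\compjac{\gencurv}^0)$; since differentials can only decrease dimensions, and since Poincaré duality on each smooth Jacobian converts $\sum_l\dim H^l_c$ to $\sum_l b_l$, one obtains
\[
\sum_l b_l(\compjac{\gencurv}^0)\;\leq\;\sum_{I\in\pos}\sum_l b_l(\jac{\gencurv_I}^0).
\]

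Second I would invoke Lemma \ref{tot}, whose proof rests on Lemma \ref{betnum} (giving $\sum_l b_l(\jac{\gencurv_I}^0)=2^{2\widetilde{g}+o_I}$ via Theorem \ref{genfctspic}, where $o_I$ is the number of odd singular points remaining on $\gencurv_I$) together with the combinatorial count that distributes the $\gencurv_I$ according to the value of $o_I$. The crucial arithmetical point underlying that count is the following: for an even singular point $c\in E$ the number of distinct partial blow-ups at $c$ preserving its parity is $\delta_c+1$, while an odd singular point $c\in O$ contributes either one of $\delta_c$ blow-ups that keep it odd, or it is fully resolved (now appearing as part of $\widetilde{g}$), and fully resolving an odd point reduces the exponent on $2$. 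Carefully organizing the sum yields exactly $2^{2\widetilde g}\prod_{c\in O}(2\delta_c+1)\prod_{c\in E}(\delta_c+1)$.

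Combining these two displays gives the statement. There is no genuine obstacle left; the real substance has been packaged in the earlier results. If I had to identify the one step most worth dwelling on, it would be checking that the spectral sequence of Proposition \ref{spsqnc} is correctly set up, since it requires that the strata $Z_r\setminus Z_{r+1}$ be the union of the $\jac{\gencurv_I}^0$ with $|I|=r$ and that each $Z_r$ be closed in $\compjac{\gencurv}^0$; both follow from the order-reversing character of the direct-image maps in part 2 of Theorem \ref{genfctscj}, namely that $\nu_{I',*}$ factors through $\nu_{I,*}$ precisely when $I'\geq I$.
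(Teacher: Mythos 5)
Your proposal is correct and follows exactly the paper's argument: the theorem is obtained by combining the spectral-sequence inequality of Proposition \ref{spsqnc} with the Betti-number count of Lemma \ref{tot}, and your recap of the supporting stratification (Theorem \ref{genfctscj}) and of the odd/even blow-up bookkeeping matches what the paper does.
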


In fact,  Theorem \ref{final} below implies  that the inequality above 
is in fact an equality. In particular, see Corollary~\ref{btnmbrjccom}, the spectral sequence (\ref{clospseq})
degenerates at $E_1$.
\subsection{The lower bound estimate}
\label{lbebncj}
The aim of this section is to prove  Theorem \ref{mainmonthm}, which, as we show below,
readily implies Theorem \ref{final}.

\begin{theorem}
\label{mainmonthm}
Let $s \in \basegood$, let  $\curv_s$ be the corresponding spectral  curve
with its singular locus
 $\{c_1, \ldots, c_o, c_{o+1}, \ldots, c_{o+e}\}$.  Let
$O:=\{ c_1, \ldots, c_o \}$ be the set of points of type $A_k$ with $k$ odd, and let
$E:=\{c_o, \ldots, c_{o+e} \}$ be the set of points of type  $A_k$ with $k$ even. 
Denote by $j:\basesm \lorw \basegood$ the  open imbedding.
Then
\[   
2^{2\widetilde{g}}  \left( \prod_{c \in O}(2\delta_c+1)\right)\left(\prod_{c \in E}(\delta_c+1)\right) \leq
\sum_{l} \dim{ \left(R^0 j_*R^l\hitmapsmooth_* \rat \right)_s},
\]
where $\widetilde{g}$ denotes the 
genus of the normalization $\widetilde{\curv _s}$ of $\curv_s$. 
\end{theorem}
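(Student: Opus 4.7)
The plan is to translate the stalk computation into the computation of invariants under local monodromy, and then to establish the lower bound by explicit Picard-Lefschetz analysis.

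First, by Corollary~\ref{locsys}, we have the isomorphism $R^l \hitmapsmooth_* \rat \cong \bigwedge^l R^1 \spectfamsmooth_* \rat$ of local systems on $\basesm$. Hence, taking $U$ to be a small contractible neighbourhood of $s$ in $\basegood$ and $s' \in U \cap \basesm$, and summing over $l$, one has
\[
\sum_l \dim\bigl(R^0 j_* R^l\hitmapsmooth_*\rat\bigr)_s \;=\; \dim\Bigl(\textstyle\bigwedge^{\bullet} H^1(\curv_{s'}, \rat)\Bigr)^{\pi_1(U \cap \basesm,\, s')}.
\]
The task becomes exhibiting enough monodromy-invariant classes in $\bigwedge^{\bullet} H^1(\curv_{s'},\rat)$. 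Next, I would analyse the local geometry: near each singularity $c_a \in \curv_s$ of type $A_{k_a}$, the family of spectral curves is locally modelled on a double cover $y^2 = p_a(x)$ with $p_a$ varying in a $(k_a+1)$-parameter family, and the deformations at distinct singular points can be varied independently within $\basegood$. The Milnor fibration at each $c_a$ produces $k_a$ vanishing cycles $\delta_1^{(a)}, \ldots, \delta_{k_a}^{(a)} \in H^1(\curv_{s'},\rat)$ whose pairwise symplectic pairings form the skew-symmetrized $A_{k_a}$ Cartan matrix. Because the $c_a$ are distinct points of $\curv$, the subspaces $V_a := \langle\delta_i^{(a)}\rangle$ are mutually symplectically orthogonal, and they assemble to $V = \bigoplus_a V_a$ with $\dim V = \sum k_a$.

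The central input is the Picard-Lefschetz formula: the meridian of each local branch of the discriminant acts on $H^1(\curv_{s'},\rat)$ by the symplectic transvection $T_\delta(v) = v + (v\cdot\delta)\delta$ in the vanishing cycle $\delta$, and by the identity on vectors symplectically orthogonal to $\delta$. Hence the local monodromy group $G$ is generated by commuting subgroups $G_a$, with $G_a$ fixing $V_{a'}$ for $a' \ne a$ and acting non-trivially only on $V_a$ together with auxiliary symplectic partners to the radical of $V_a$. Accordingly one chooses a splitting
\[
H^1(\curv_{s'},\rat) = U \oplus \bigoplus_a W_a,
\]
where $W_a = V_a$ if $k_a$ is even (so the restricted pairing is non-degenerate), $W_a = V_a \oplus \langle e_a\rangle$ of dimension $k_a+1$ if $k_a$ is odd (with $e_a$ a symplectic partner of the one-dimensional radical of $V_a$), and $U$ a symplectic complement on which $G$ acts trivially. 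A Riemann-Hurwitz computation for the normalization $\widetilde\curv_s \to \curv$ yields $\dim U = 2\widetilde g$.

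Finally, the $G$-invariant subspace contains
\[
\textstyle\bigwedge^{\bullet} U \;\otimes\; \bigotimes_a \bigl(\bigwedge^{\bullet} W_a\bigr)^{G_a},
\]
so it remains to compute the three kinds of factors. Clearly $\dim \bigwedge^{\bullet}U = 2^{2\widetilde g}$. For $k_a$ even, the Picard-Lefschetz transvections associated with a basis of vanishing cycles generate a Zariski-dense subgroup of $\Sp(V_a)$, so $(\bigwedge^{\bullet}V_a)^{G_a}$ is the subalgebra generated by the symplectic form, of dimension $k_a/2 + 1 = \delta_{c_a}+1$. For $k_a$ odd, the base case $A_1$ is elementary ($T_\delta$ fixes exactly $1,\delta,\delta\wedge e$, giving $3 = 2\delta+1$); the general case is handled by a direct symplectic computation (carried out in \S\ref{bsc}) on the $(k_a+1)$-dimensional space $W_a$, yielding $\dim(\bigwedge^{\bullet}W_a)^{G_a} = k_a+2 = 2\delta_{c_a}+1$. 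Multiplying these three dimensions produces the required lower bound
$2^{2\widetilde g}\prod_{c \in O}(2\delta_c+1)\prod_{c \in E}(\delta_c+1)$. The main obstacle will be the Picard-Lefschetz calculation for odd $k_a \geq 3$: because the restricted intersection pairing on $V_a$ is degenerate, the decomposition $H^1 = U \oplus \bigoplus_a W_a$ is not obviously $G$-equivariant (the transvection $T_{\delta_{k_a}^{(a)}}$ has a non-zero component on $e_a$), and one has to verify carefully that the count of invariants nevertheless factorizes as claimed. This careful bookkeeping, via iterated application of the Picard-Lefschetz formula, is the technical heart of the argument.
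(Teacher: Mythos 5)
Your overall strategy — reduce the stalk to $\pi_1$-invariants of exterior powers of $H^1$ of a nearby smooth fibre, and then compute by Picard--Lefschetz over a local family near the discriminant divisor — is exactly the paper's (see the outline after the statement of Theorem~\ref{mainmonthm}, Corollary~\ref{locsys}, Proposition~\ref{univfam}, and Theorem~\ref{mainest}). The place where your proposal diverges, and where the gap you flag at the end is genuine, is the proposed direct-sum decomposition $H^1 = U \oplus \bigoplus_a W_a$ with $\dim U = 2\widetilde g$ and each $W_a$ a $G_a$-block.

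The decomposition is \emph{not} $G$-equivariant, for a concrete reason. In the chain basis $\{\lambda_j\}$ of \S\ref{dcd}--\ref{ufcd}, the vanishing cycles of the Milnor fibre at $c_a$ are $\lambda_{d_{a-1}+1},\dots,\lambda_{d_a-1}$, but the natural candidates for the auxiliary partner $e_a$ (needed when $k_a$ is odd, i.e.\ $a_a$ is even) are the ``junction'' cycles $\lambda_{d_a}$, and by (\ref{intnum1}) these pair nontrivially with \emph{both} $\lambda_{d_a-1}\in V_a$ and $\lambda_{d_a+1}\in V_{a+1}$. So $T_{\lambda_{d_a+1}}(e_a)\neq e_a$: the transvection from the neighbouring singularity moves $e_a$ out of $W_a$. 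One cannot fix this by replacing $e_a$ by a correction inside $V_{a+1}$: any vector pairing with the radical of $V_a$ must pair with $\lambda_{d_a-1}$, hence by (\ref{intnum1}) must involve $\lambda_{d_a}$, which in turn pairs with $V_{a+1}$, and so on down the chain. In other words, the coisotropic ``escape'' directions are not local to one singularity. Consequently the containment $\bigwedge^\bullet U\otimes\bigotimes_a(\bigwedge^\bullet W_a)^{G_a}\subseteq(\bigwedge^\bullet H^1)^G$, which your lower bound rests on, is unjustified as written.

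The paper avoids this by never attempting a $\bigoplus_a W_a$ decomposition. Instead it works with the full chain basis on the big disc $\overline\Delta$, and the invariant count is carried out in Lemma~\ref{linealg} and Proposition~\ref{justlinearalg} by a \emph{recursive} block structure: in case 3 of Lemma~\ref{linealg} the two long chains $c_{[2,2m]}$ and $c_{[1,t]}$ are kept together in a $2$-dimensional subspace $U_0$, and the factor of $4=\dim\bigwedge^\bullet U_0$ is distributed across the product $\prod(a_i+1)$. Relatedly, the paper's ``constant'' summand (Remark~\ref{trivcycl}, (\ref{splitCdisc}), (\ref{splitCconn})) on which the local monodromy is literally trivial has dimension $4g$ (disconnected case) or $4g-2$ (connected case), not $2\widetilde g$; it is only at the very end, via the Riemann--Hurwitz identity in Remark~\ref{connect}, that $2^{4g-2}\prod(a_i+1)$ is rewritten as $2^{2\widetilde g}\prod_O(2\delta_c+1)\prod_E(\delta_c+1)$. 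To salvage your write-up you would either need to reproduce that recursive argument (which is what \S\ref{compmoninv} does), or find a genuinely $G$-stable direct-sum decomposition, and the chain picture above shows the second option is not available in the form you propose.
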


\n{\em Proof that Theorem {\rm \ref{mainmonthm}} implies Theorem {\rm \ref{final}}}.
We have the following inequalities
\[
 \sum_{l} \dim{ \left(R^0 j_*R^l\hitmapsmooth_* \rat \right)_s} \, \leq  \,
\sum_l b_l \left(\compjac{\curv_s}^0 \right)
\,  \leq \,  \sum_{l} \dim{ \left(R^0 j_*R^l\hitmapsmooth_* \rat \right)_s} ,
\]
where the first one follows from the general equality
\[
{\mathcal H}^{-\dim\, \base}( IC_{\base}(R^l\hitmapsmooth_* \rat)  )_s=
\left(R^0 j_*R^l\hitmapsmooth_* \rat \right)_s
\] combined with the decomposition theorem 
(\ref{dt}) and (\ref{phi}) in \S \ref{dtrhl} 
for the Hitchin map over $\basegood$ where we add up only 
the summands supported on $\basegood$, and the second inequality
follows immediately by combining Theorem \ref{estbncj}
and Theorem \ref{mainmonthm}.
\blacksquare

\bigskip

\n{\bf Outline of the strategy for the proof of Theorem \ref{mainmonthm}.}

\n
Let 
$s\in \basegood$.
In view of Corollary \ref{locsys}, we have the natural isomorphism:
$$
\left(R^0 j_*R^l\hitmapsmooth_* \rat \right)_s \simeq
\left(R^0 j_*\bigwedge^l R^1\spectfamsmooth_* \rat \right)_s=
\lim_{\lorw} \Gamma \left(N \cap \basesm, \bigwedge^l R^1\spectfamsmooth_* \rat
\right), 
$$
where the direct limit is taken over the set of connected
neighborhoods $N$  of $s$ in $\base$.

Fix a base-point $n_0 \in N\cap \basesm$. We have the monodromy representation 
$$
\pi_1(N \cap \basesm,n_0) \lorw {\rm Aut} (H_1(C_{n_0})),
$$
and  its exterior powers     
$$
\pi_1(N \cap \basesm,n_0) \lorw {\rm Aut} (\bigwedge^l H_1(C_{n_0})).
$$

The  evaluation map 
$\Gamma (N \cap \basesm, \bigwedge^l R^1\spectfamsmooth_* \rat) \lorw \bigwedge^l H_1(C_{n_0})    $
at the point $n_0$ identifies the vector space of sections $\Gamma (N \cap \basesm, \bigwedge^l R^1\spectfamsmooth_* \rat)$ 
with the subspace of monodromy invariants of $\bigwedge^l H_1(C_{n_0})$. Thus, in order to prove 
Theorem \ref{mainmonthm} we need to investigate the monodromy  of the  restriction  of the spectral curve family
$\spectfamsmooth :{\curv}_{\basesm} \to  \basesm$
to $N \cap \basesm$, where $N$ is a small enough
connected neighborhood of $s$ in $\base$.

We consider the local family $   {\mathscr C}_{\mathcal U} \lorw  {\mathcal U}$ of double coverings of $\curv$ whose 
branch locus is ``close'' to that of $\curv_s$,
i.e. it is contained in a neighborhood ${\mathcal U}$ of the divisor $(s_1^2-4s_2)$ in the symmetric product of $\curv$.
The family has the property that every other family of double coverings whose branch locus is contained in ${\mathcal U}$
is the pull-back of $ {\mathscr C}_{\mathcal U} \lorw  {\mathcal U}$    via a uniquely determined map, see Proposition \ref{univfam} for a precise statement.
We investigate the monodromy of the smooth part of this  family, and  we 
determine the dimension of the subspace of  monodromy invariants in the exterior powers of the associated local system. 
Since the spectral curve family, restricted to a 
small enough neighborhood of $s$ in $\base$, is 
isomorphic to the pullback of  this local family via the map $\Theta$ of  Remark \ref{classmap}, 
the dimension of the subspace of  monodromy invariants of the local family gives a lower bound for the 
dimension of  the monodromy invariants of the spectral curve family, thus proving Theorem \ref{mainmonthm}. 

\begin{remark}
{\rm While our analysis of the monodromy is purely local, 
a detailed study of the global monodromy of the family $\higgsbusm \lorw \basesm$ 
has been carried out, for $\curv$ a hyperelliptic curve, by Copeland in \cite{copeland}.}
\end{remark}

\begin{notation}
\label{pernonimpazzire}
{\rm 
In the remaining of \S \ref{lbebncj}, 
for notational simplicity, we denote with the same symbol a cycle (resp. relative cycle) 
and the homology (resp. relative homology) class 
it defines. In particular an equality of cycles (resp. relative cycles) will always mean equality of their homology 
(resp. relative homology) classes. }
\end{notation}

\subsubsection{The double covering of a disc }
\label{dcd}
We review some basic facts (see \cite{ar}, Part 1) concerning the topology of a holomorphic branched double
covering $\rho: \Su \lorw \overline{\D}$ of the closed  unit disc
$\overline{\D} \subseteq \comp$, 
with boundary $\partial\, \D$ and interior $\D$, under the following:
\begin{assumption}
\label{enopblass} 
The map $\rho$ is the restriction of a holomorphic mapping
from  thickenings of domain and codomain, there are no  branch points on $\partial \D$, and 
the degree $2r$ of the branch locus  divisor $Z$  is even.
\end{assumption}

Let $p_Z(z)$ be the monic degree $2r$ polynomial vanishing on $Z$: then
\begin{equation}
\label{model}
\Su=\left\{(z,w) \in \overline{\D} \times \comp \hbox{ such that } w^2=p_Z(z) \right\}, \quad \rho(z,w)=z.
\end{equation}

\medskip
\begin{remark}
\label{kindergarten}
{\rm
Since   $p_{Z}(z)$ has even degree,  
the boundary  $\partial \, \Su=\rho^{-1}(\partial \, \D)$ of $\Su$ consists of 
two connected components $\partial\,'$ and $ \partial\,''$, which we endow with the 
orientation induced from $\Su$. We denote the  resulting 
cycles in homology with the same symbols (cf. Notation \ref{pernonimpazzire}).
}
\end{remark}

Assume $Z$ consists of $2r$ distinct points.
By the Riemann-Hurwitz formula,  $\Su$ is
biholomorphic to a compact Riemann surface   of genus $r-1$ 
with two open disks removed.

Denote by $\I:=[0,2r+1] \subseteq \real $, and
let $\beta: \I \lorw \overline{\D}$ be a differentiable imbedding such that
$\partial \, \D \cap \beta(\I)= \{\beta(0), \beta(2r+1) \} $ and
$Z= \{\beta(1), \cdots , \beta(2r)\}$.
The subsets
$$
\lambda_j := \rho^{-1}\left(\beta([j,j+1] )\right), \,\, j=1, \cdots , 2r-1,
$$
are closed curves, which we orient subject to the requirements
\begin{equation}
\label{intnum1}
(\lambda_j,\lambda_{j+1})=1,
\end{equation}
where $(\, , \,)$ denotes the intersection product with respect to the the natural orientation
of $\Su$, and the equality in homology
\begin{equation}
\label{relbdry}
\sum_{j=1}^{r} \lambda_{2j-1} =\partial '.
\end{equation}
The $1$-cycles $\{\lambda_j\}_{j=1}^{2r-1}$
form  a basis for the first homology group
$H_1(\Su)$.

\begin{remark}
\label{relhom}
{\rm   In view of the
long exact sequence in relative homology  of the pair $(\Su, \partial \, \Su)$,
the kernel of the natural map 
$H_1(\Su) \lorw H_1(\Su, \partial \,\Su)$ is one dimensional,
generated by  the cycle $\sum_{j=1}^{r} \lambda_{2j-1} =\partial '$.
In order to complete the set $\{\lambda_j\}_{j=1}^{2r-1}$  to a system of generators of
$H_1(\Su, \partial \,\Su)$, we need to  add a relative $1$-cycle sent via the boundary map to a generator of 
$\ke \, \{H_0(\partial \,\Su)\lorw H_0(\Su)\}$,
namely a cycle
joining the two components
$\partial\,', \partial\,''$ of the boundary. 
We take the relative homology class of the curve 
\begin{equation}
\label{defimu}
\mu:= \rho^{-1}\left(\beta([0,1] )\right) \subseteq \Su
\end{equation}
oriented so that  we have the first equality below. We have 
\begin{equation}
\label{intnum2}
(\mu, \lambda_1)=1, \,\hbox{ and } \,(\mu, \lambda_j) = 0, \;\; \forall \; 
1 < j \leq 2r-1  .
\end{equation}

\n
The relative homology classes
of the cycles  $\{ \lambda_1, \ldots, \lambda_{2r-1}, \mu \}$  form  a set of generators for
$H_1(\Su, \partial \Su)$ subject to the only  relation $\sum_{j=1}^{r} \lambda_{2j-1} =0$. }
\end{remark}

\subsubsection{The family of coverings of the disc and its monodromy}
\label{ufcd}
We identify the symmetric product
$\D^{(2r)}$, parametrizing the  effective divisors of degree $2r$ 
on the unit disk  $\D$,
with the space of monic 
polynomials  of degree $2r$ whose roots have absolute values
less than $1$, by sending $v=(v_1, \ldots, v_{2r}) \in \D^{(2r)}$ to $p_v(X)=\prod_{1}^{2r}(X-v_l)$.
The elementary 
symmetric functions   of $(v_1, \ldots, v_{2r})$  give a system of coordinates for  
$\D^{(2r)}$, thus realizing it   as a bounded open subset of $\comp^{2r}$.

\begin{notation}
\label{notasim}
{\rm  We denote a point in $\D^{(2r)} \subseteq \comp^{2r}$
by the divisor $v$ on $\D$ or  by its associated monic  polynomial $p_v$.
}
\end{notation}

On $\overline{\D}\times \D^{(2r)}$ there is 
the divisor 
$$
{\mathscr Z}_{2r}:=\left\{(z, p)\in \overline{\D}\times \D^{(2r)} \hbox{ such that }p(z)=0      \right\}
$$
and the  double covering 
$$
{\mathscr S}_{2r}=\left\lbrace (z,p,w) \in \overline{\D} \times \D^{(2r)} \times \comp \hbox{ such that } w^2=p(z)   \right\rbrace,
$$
defining the family $\Phi_{2r}:{\mathscr S}_{2r} \lorw \D^{(2r)}$  
of (possibly singular) Riemann surfaces with boundary
(for every fiber $\Su_v$, the singularities are disjoint from the boundary)

\beq
\label{famsimdisc}
\xymatrix{  {\mathscr S}_{2r}    \ar[rd]_{\rho_{2r}} \ar[rr]^{\Phi_{2r}}    &  &  \D^{(2r)}    \\
   & \overline{\D} \times    \D^{(2r)}.    \ar[ru]^{p_2}  &  
}
\eeq

The map $\rho_{2r}$ is a double covering branched over ${\mathscr Z}_{2r}$, and,
for  $v \in \D^{(2r)}$, the fibre $\Su_v := \Phi_{2r}^{-1}(v)$ 
is  the double covering $\Su_v  \lorw \overline{\D}$ 
of equation $w^2=p_v(z)$ branched precisely  over the effective divisor $v$ in $\D$.

\begin{remark}
\label{bdrytriv}
{\rm 
By Remark \ref{kindergarten},
the boundary of every fibre of the map 
$\Phi_{2r}$ consists of two connected components.
Since $\D^{(2r)}$ is contractible,  
we have a smooth  trivialization 
$\partial {\mathscr S}_{2r}\simeq \left( S^1 \coprod S^1     \right)\times  \D^{(2r)}$,
well-defined up to isotopy.  
}
\end{remark}

\medskip
The locus $E$ of polynomials with  vanishing discriminant is a divisor  in $\D^{(2r)}$, 
and $\D^{(2r)}_{{\rm reg}}:= \D^{(2r)}\setminus E$
is the open subset  corresponding
to multiplicity free divisors, namely 
$2r$-tuples 
of distinct points in $\D$. 
The double covering $\Su_v= \Phi_{2r}^{-1}(v)$  of $\overline{\D}$  introduced in \S \ref{ufcd} 
is nonsingular if and only if $v \in \D^{(2r)}_{{\rm reg}}$.

\medskip

We choose a base-point   $\underline{v} \in \D^{(2r)}_{{\rm reg}}$. The fundamental group  
$\pi_1(\D^{(2r)}_{{\rm reg}},\underline{v})$  is the  classical 
braid group $\mathscr B^{2r}$ on $2r$ strands (see \ci{ar}, \S 3.3). 
As in \S \ref{dcd}, a differentiable imbedding $\beta: \I \lorw \D$ such that $\underline{v}=\{\beta(1), \cdots , \beta(2r)\}$, 
 defines a basis $\{\lambda_j\}_{j=1}^{2r-1}$ of  $H_1(\Su_{\underline{v}})$, the relative class   $\mu \in H_1(\Su_{\underline{v}}, \partial \, \Su_{\underline{v}})$, and
the usual set $T_1,\ldots, T_{2r-1} $ of 
generators of $\mathscr B^{2r}$: if $v_i:=\beta(i)$,  the braid
$T_i$  exchanges $v_i$ with $v_{i+1}$ by a  half-turn.  More precisely,  
let $\D^+, \D^-$ be the two open half-discs determined by $\beta$ and its orientation; then $T_i$  can be represented by two curves $\tau^+, \tau^{-}: [0,1] \lorw \D$ such that 
\begin{equation}
\label{genbraids}
\tau^+(0)=\tau^-(1)=v_i,   \, \, \tau^+(1)=\tau^-(0)=v_{i+1}, \, \, \tau^+((0,1)) \subseteq D^+,\, \,  \tau^{-}((0,1))\subseteq D^- .
\end{equation}

We apply  the Ehresmann fibration lemma 
 to the restriction of the  family
$\Phi_{2r}$ to $\D^{(2r)}_{{\rm reg}}$. 
 We have monodromy 
homeomorphisms, 
$M(T_i): (\Su_{\underline{v}}, \partial \, \Su_{\underline{v}}) \lorw (\Su_{\underline{v}}, \partial \, \Su_{\underline{v}})$, 
for $i=1, \ldots, 2r-1$, which restrict to the identity on the boundary $\partial \, \Su_{\underline{v}}$. 
They are unique up to an isotopy which fixes the boundary pointwise.

Let $\gamma \in H_1(\Su_{\underline{v}}, \partial \, \Su_{\underline{v}})$ be a relative $1$-cycle. 
Since  the monodromy homeomorphisms fix the boundary,  the difference 
$M(T_i)(\gamma) -\gamma$ is homologous to
a cycle, denoted ${\rm Var}_i(\gamma)$, disjoint from the boundary.
This defines  the classical 
{\em variation maps} (see  \ci{ar}, \S 2.1): 
\[ 
{\rm Var}_i: H_1(\Su_{\underline{v}}, \partial \, \Su_{\underline{v}})\lorw 
H_1(\Su_{\underline{v}}), \qquad  i=1, \ldots, 2r-1.
\] 

\begin{proposition}
\label{mndrmydisc}
The following holds:  
$$
{\rm Var}_i(\lambda_j)=\left\{\begin{array}{rl}
\lambda_i    & \hbox{ if } j= i-1\\
-\lambda_i   & \hbox{ if } j=i+1\\ 
0    &  \hbox{ if } j\neq i-1,i+1,
\end{array}\right.
\qquad
{\rm Var}_i(\mu)=\left\{\begin{array}{ll}
0   & \hbox{ if } i\neq 1\\
\lambda_1   & \hbox{ if } i=1.
\end{array}\right.
$$
\end{proposition}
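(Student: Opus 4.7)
The plan is to identify each monodromy homeomorphism $M(T_i)$, up to isotopy rel boundary, with the Dehn twist along the vanishing cycle $\lambda_i$, and then to read off the variation from the Picard--Lefschetz formula for surfaces.

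First, I would localize. The braid $T_i$ moves only $v_i$ and $v_{i+1}$, keeping the remaining $v_k$ fixed. Choosing a small open disc $U \subset \D$ containing $v_i, v_{i+1}$ and disjoint from the other branch points, Ehresmann's fibration lemma allows one to trivialize the restriction of the family $\Phi_{2r}$ away from $\rho_{2r}^{-1}(U)$ so that a representative of $M(T_i)$ is supported inside $\rho^{-1}(U)$ and is the identity outside (in particular on $\partial\Su_{\underline{v}}$, as required). Over $U$ the family of double covers is analytically isomorphic to the standard nodal degeneration $w^2 = z^2 - t$ with branch points $\pm\sqrt{t}$; the braid $T_i$ corresponds to $t$ running once around the discriminant $t=0$, i.e.\ $\sqrt{t}$ making a half-turn and swapping the two branch points. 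The classical Picard--Lefschetz theorem applied to this nodal degeneration identifies the monodromy with the Dehn twist $\tau_{\lambda_i}$ along the vanishing cycle, which is precisely the cycle $\lambda_i = \rho^{-1}(\beta([i,i+1]))$ lying above the segment joining $v_i$ to $v_{i+1}$.

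Second, I apply the Dehn twist formula
\[ \tau_{\lambda_i}(\gamma) - \gamma = (\gamma, \lambda_i)\,\lambda_i, \]
valid for $\gamma \in H_1(\Su_{\underline{v}})$ and extending verbatim to relative classes $\gamma \in H_1(\Su_{\underline{v}}, \partial\Su_{\underline{v}})$ since the support of $\tau_{\lambda_i}$ is disjoint from $\partial\Su_{\underline{v}}$, so the pairing $(\,\cdot\,, \lambda_i)$ is well defined. By definition of the variation map this gives
\[ {\rm Var}_i(\gamma) = (\gamma, \lambda_i)\,\lambda_i. \]
All seven formulae in the proposition then follow by substituting $\gamma = \lambda_j$ or $\gamma = \mu$ and invoking the intersection numbers (\ref{intnum1}) and (\ref{intnum2}): only $j = i \pm 1$ contributes nonzero pairings with $\lambda_i$, and among the $\mu$--pairings only $i = 1$ contributes.

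The delicate step is sign bookkeeping: the orientation of the half-twist $T_i$ (equivalently, of the loop around the discriminant in $\D^{(2r)}$), the orientation of the vanishing cycle $\lambda_i$, and the orientations of the $\lambda_j$ prescribed by (\ref{intnum1}) must all be coordinated so that the Picard--Lefschetz sign comes out as a \emph{plus} in the displayed formula. Once this is verified by a single direct computation in the two-branch-point local model (say, by checking ${\rm Var}_i(\lambda_{i-1}) = \lambda_i$ in an explicit picture), the rest of the proof is purely mechanical. Identifying $M(T_i)$ with the correct Dehn twist and Picard--Lefschetz are the only non-trivial inputs.
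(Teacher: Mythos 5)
Your proof is correct and takes essentially the same route as the paper's: identify $M(T_i)$ with the Dehn twist along the vanishing cycle $\lambda_i$, invoke the Picard--Lefschetz variation formula ${\rm Var}_i(c)=(c,\lambda_i)\lambda_i$ on relative homology, and conclude from the intersection numbers (\ref{intnum1}) and (\ref{intnum2}). The localization to a small disc containing $v_i,v_{i+1}$ and the explicit local model $w^2=z^2-t$ are useful elaborations of what the paper states more briefly, and your explicit flag about coordinating orientations is appropriate, but the argument is the same one.
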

\n{\em Proof.} The monodromy $M(T_i)$
is associated  with  the degeneration  of $\Su_{\underline{v}}$
in which the $i$-th and $(i+1)$-th ramification points come together
and the covering acquires a node. It follows that
$M(T_i)$  is a Dehn twist around $\lambda_i$.
The Picard-Lefschetz formula (\ci{ar}, \S 1.3)   gives: 
\[\hbox{ if } c \in  H_1(\Su_{\underline{v}}, \partial \, \Su_{\underline{v}}), \hbox{ then } {\rm Var}_i(c)=(c, \lambda_i)\lambda_i. \]
We conclude by combining  the above with 
(\ref{intnum1}) and (\ref{intnum2}).
\blacksquare

\subsubsection{The local family}
\label{locmondc}
Let  $d=2r$ be  an even positive integer, and let $\mathfrak a$ be a partition of $d$, which we write
\begin{equation}
\label{mutyp}
{\mathfrak a }=(a_1, \ldots, a_{2\nop+\nep}),
\end{equation}
where $a_1, \ldots, a_{2\nop}$ are odd positive integers and 
$a_{2\nop+1}, \ldots, a_{2\nop+\nep}$ are even positive integers;
we set
\begin{equation}
\label{mutypdai}
 d_0:=0,  \, d_{i}:= \sum_{j=1}^{i}a_j  \,\, \hbox{ for } i=1, \cdots , 2\nop+\nep.
\end{equation}
Clearly $d= d_{2\nop+\nep}$.

Let $\sigma$ be an effective divisor of degree $d$ on a projective nonsingular curve
$\curv$ with {\em multiplicity type $\mathfrak a$}, namely 
\begin{equation}
\label{ddiv}
\sigma=\sum_{i=1}^{2\nop+\nep} a_i q_i \, ,
\end{equation}
where the points $q_1, \ldots, q_{2\nop+\nep}$ of $\curv$ are distinct;

\medskip
Let ${\mathcal O}_{\curv}(\sigma)$  be the corresponding line bundle on $\curv$
and let 
$s \in \Gamma(\curv,{\mathcal O}_{\curv}(\sigma) )$ be  the section
vanishing at  $\sigma$, well-defined up to a non-zero scalar. 
We choose a square root of ${\mathcal O}_{\curv}(\sigma)$, that is, 
a line bundle $L$  on $\curv$ such that $L^{\otimes 2}\simeq {\mathcal O}_{\curv}(\sigma)$.
The double cover $\curv_{\sigma}$ of $\curv$  branched over  $\sigma$ is the curve on  the total space 
${\mathbb V}(L) \stackrel{\pi}{\lorw}\curv$ of $L$ defined by
$$
\{y \in {\mathbb V}(L) \,:\,
y^2 = \pi^* s \}.
$$  
Note that the topology, e.g. its being connected or not,  depends on the choice of 
the square root $L$, and not only on $\sigma$.

From this point on, we work under the following:
\begin{assumption}
\label{basicass}
The double covering 
$\curv_{\sigma }$ is integral. 
\end{assumption}

\bigskip
The effective divisors of degree $2r$ on the curve $\curv$ are parametrized by 
the symmetric product $\curv^{(2r)}$, which is a nonsingular algebraic variety, stratified by the loci 
corresponding to divisors with a fixed multiplicity type.
We denote by $\curv^{(2r)}_{\rm reg}$ the open subset consisting of multiplicity-free divisors
and, for every  subset $Y \subseteq \curv^{(2r)}$, we set $Y_{\rm reg}:=Y \cap \curv^{(2r)}_{\rm reg}$. 
We have the divisor 
$
Z:=\left\{(c,u) \in \curv \times \curv^{(2r)} \,:\, c \in u\right\} \subseteq  \curv \times \curv^{(2r)},
$
the associated line bundle ${\mathcal O}(Z)$ on $\curv \times \curv^{(2r)}$
and its tautological  section $S \in \Gamma (\curv \times \curv^{(2r)}, {\mathcal O}(Z))$ vanishing
 at $Z$.
Given   an open subset  $V \subseteq \curv \times \curv^{(2r)}$, we set $Z_V:=Z \cap V$  and denote by ${\mathcal O}_V(Z)$
and  $S_V $
the restrictions of the corresponding objects to $V$.

The following proposition follows readily from the fact that the squaring map 
$\jac{\curv}^{\, r} \lorw \jac{\curv}^{\, 2r} $ is \'etale:

\begin{proposition}
\label{univfam}
Let $ \mathcal U$ be a connected and simply connected open neighborhood of $\sigma$ in $ \curv^{(2r)}$,
and let $Z_{\mathcal U}:=Z\cap  (\curv \times \mathcal U )$. Then
for every line bundle $L$ on $\curv$ such that $L^{\otimes 2}\simeq {\mathcal O}_{\curv}(\sigma)$,
there is a projective family $\Phi_{\mathcal U}$:
$$
\xymatrix{ &   &   {\mathscr C}_{\mathcal U} \ar[ld]_{\rho_{\mathcal U}} \ar[dd]^{\Phi_{\mathcal U}}        \\
Z_{\mathcal U}  \ar@{^{(}->}[r] &  \curv \times \mathcal U \ar[ld]_{p_1} \ar[rd]^{p_2}  &  \\
\curv &  & \mathcal U 
}$$
with the following properties

\begin{enumerate}
\item
for $u \in {\mathcal U}$, the curve $\curv_u:=\Phi^{-1}_{\mathcal U}(u) \stackrel{\rho_u}{\lorw} \curv$
is a double covering of $\curv$ ramified at the effective divisor $u=Z_{\mathcal U} \cap p_2^{-1}(u)$, and 
$\curv_{\sigma}:=\Phi^{-1}_{\mathcal U}(\sigma) \stackrel{\rho_{\sigma}}{\lorw} \curv$ 
is the double covering of $\curv$ ramified at $\sigma$ corresponding to the choice of the square root $L$.

\item
The map $\rho_{\mathcal U}$ is a double covering  branched over 
$Z_{\mathcal U}$.

\item
The restriction   $\Phi_{\mathcal U_{\rm reg}}: {\mathscr C}_{\mathcal U_{\rm reg}}:=\Phi_{\mathcal U}^{-1}(\mathcal U_{\rm reg} )      \lorw \mathcal U_{\rm reg}$
is a smooth family.

\item
If
$$
\xymatrix{ &   &   {\mathscr C }' \ar[ld]_{\rho '} \ar[dd]^{\Phi ' }        \\
Z'  \ar@{^{(}->}[r] &  \curv \times T  \ar[rd]^{p_2}  &  \\
 &  & T 
}$$
is a family of double coverings of $C$ with $\rho '$ ramified over the divisor $Z'$, and, for $t_0 \in T$, there is an isomorphism
$$
\xymatrix{  
\Phi '^{-1}(t_0) \ar[rd] \ar[rr]^{\simeq}&   &   {\curv }_{\sigma}  \ar[ld]^{\rho _{\sigma}}         \\
                               &  \curv    &  
                               }$$
then, for a suitable neighborhood $V \subseteq T$ of $t_0$,  the map $\theta: V \lorw {\mathcal U}$  associating to  $t \in T$ the branch locus of $\Phi '^{-1}(t) \lorw \curv$,
defines an isomorphism   $\Phi '^{-1}(V) \simeq {\mathscr C}_{\mathcal U} \times_{\theta} V$   over $V$.
 \end{enumerate}
\end{proposition}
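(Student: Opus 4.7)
The plan is to build $\mathscr{C}_{\mathcal{U}}$ as the relative double cover defined by the equation $y^2 = S_{\mathcal{U}}$ inside the total space $\mathbb{V}(\mathcal{L})$ of a line bundle $\mathcal{L}$ on $C \times \mathcal{U}$ satisfying $\mathcal{L}^{\otimes 2} \simeq \mathcal{O}(Z_{\mathcal{U}})$ and $\mathcal{L}|_{C \times \{\sigma\}} \simeq L$ (the isomorphism here being the one canonically associated to the given square root structure). Once such an $\mathcal{L}$ is produced, properties (1)--(3) are immediate: the fiber over $u \in \mathcal{U}$ is the double cover of $C$ cut out by the section $S_{\mathcal{U}}|_{C \times \{u\}}$ of $\mathcal{L}^{\otimes 2}|_{C \times \{u\}}$, vanishing on $u$; smoothness of the fibers over $\mathcal{U}_{\mathrm{reg}}$ follows from the Jacobian criterion applied to $y^2 = S_{\mathcal{U}}$ since there $S_{\mathcal{U}}|_{C \times \{u\}}$ has simple zeros; and $\rho_{\mathcal{U}} : \mathscr{C}_{\mathcal{U}} \hookrightarrow \mathbb{V}(\mathcal{L}) \to C \times \mathcal{U}$ is a double cover branched exactly on $\{S_{\mathcal{U}} = 0\} = Z_{\mathcal{U}}$.

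The key step is the construction of $\mathcal{L}$, which I would obtain by descent from a Poincar\'e bundle. Let $\alpha : \mathcal{U} \to \mathrm{Pic}^{2r}(C)$ be the Abel--Jacobi map $u \mapsto [\mathcal{O}_C(u)]$. In characteristic zero the squaring isogeny $[2] : \mathrm{Pic}^r(C) \to \mathrm{Pic}^{2r}(C)$ is \'etale with covering group $\mathrm{Pic}^0(C)[2]$, so $\alpha^{*}\bigl(\mathrm{Pic}^r(C) \to \mathrm{Pic}^{2r}(C)\bigr)$ is an \'etale cover of $\mathcal{U}$; since $\mathcal{U}$ is simply connected and the point $[L]$ lies in the fiber over $\sigma$, this cover admits a unique section, i.e.\ a unique lift $\widetilde{\alpha} : \mathcal{U} \to \mathrm{Pic}^r(C)$ with $\widetilde{\alpha}(\sigma) = [L]$. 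After shrinking $\mathcal{U}$ so that a fixed basepoint $p_0 \in C \setminus \mathrm{supp}(\sigma)$ stays off the support of every $u \in \mathcal{U}$, the line bundle $\mathcal{O}(Z_{\mathcal{U}})$ acquires a canonical trivialization along $\{p_0\} \times \mathcal{U}$ and so matches the pull-back $(\mathrm{id} \times \alpha)^{*}\mathcal{P}'$ of the $p_0$-normalized Poincar\'e bundle $\mathcal{P}'$ on $C \times \mathrm{Pic}^{2r}(C)$. Defining $\mathcal{L} := (\mathrm{id} \times \widetilde{\alpha})^{*}\mathcal{P}$, where $\mathcal{P}$ is the $p_0$-normalized Poincar\'e bundle on $C \times \mathrm{Pic}^r(C)$, the canonical identification $(\mathrm{id} \times [2])^{*}\mathcal{P}' \simeq \mathcal{P}^{\otimes 2}$ yields $\mathcal{L}^{\otimes 2} \simeq \mathcal{O}(Z_{\mathcal{U}})$, while $\mathcal{L}|_{C \times \{\sigma\}} \simeq L$ holds by construction.

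For the universal property (4), given $\Phi'$ and the isomorphism at $t_0$, I would define $\theta$ on a neighborhood $V$ of $t_0$ in $T$ by sending $t$ to the branch divisor of $\rho'|_{\Phi'^{-1}(t)}$; this is holomorphic in $t$, sends $t_0$ to $\sigma$, and lands in $\mathcal{U}$ after shrinking. The eigensheaf decomposition $\rho'_{*}\mathcal{O}_{\mathscr{C}'|_V} = \mathcal{O}_{C \times V} \oplus (\mathcal{L}')^{-1}$ produces a line bundle $\mathcal{L}'$ on $C \times V$ with $(\mathcal{L}')^{\otimes 2} \simeq (\mathrm{id} \times \theta)^{*}\mathcal{O}(Z_{\mathcal{U}})$, and the hypothesis identifies $\mathcal{L}'|_{C \times \{t_0\}}$ with $L$. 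Thus $\mathcal{L}'$ and $(\mathrm{id} \times \theta)^{*}\mathcal{L}$ are two square roots of the same line bundle on $C \times V$ that agree on the fiber over $t_0$; by the same \'etale-lifting principle applied to the (possibly shrunk, simply connected) $V$ they are isomorphic as line bundles whose squares are identified with $(\mathrm{id} \times \theta)^{*}\mathcal{O}(Z_{\mathcal{U}})$, and this isomorphism promotes canonically to the desired isomorphism $\Phi'^{-1}(V) \simeq \mathscr{C}_{\mathcal{U}} \times_{\theta} V$ of double covers.

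The main difficulty I expect is not the \'etaleness argument itself but the bookkeeping around normalizations: ensuring that $\mathcal{L}^{\otimes 2}$ equals $\mathcal{O}(Z_{\mathcal{U}})$ on the nose rather than only up to a pull-back from $\mathcal{U}$, and likewise for the comparison in (4), so that one recovers isomorphisms of pairs (line bundle, trivialization of its square) rather than merely of line bundles. The device of moving the basepoint $p_0$ off the support of every divisor in $\mathcal{U}$ is what removes this ambiguity; alternatively, one can pass to the analytic category and shrink $\mathcal{U}$ to a polydisc neighborhood of $\sigma$, on which $\mathrm{Pic}(\mathcal{U}) = 0$ by the exponential sequence, so that every line bundle pulled back from $\mathcal{U}$ is canonically trivialized and the ambiguity disappears for free.
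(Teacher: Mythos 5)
Your proof is correct and follows essentially the same route as the paper's, which disposes of this proposition in a single sentence by observing that it ``follows readily from the fact that the squaring map $\jac{\curv}^{\, r} \lorw \jac{\curv}^{\, 2r}$ is \'etale.'' Your writeup is a full expansion of that sketch: lifting the Abel--Jacobi map through the \'etale squaring isogeny using simple connectedness of $\mathcal U$, pulling back a Poincar\'e bundle to get $\mathcal L$, and taking care (via the $p_0$-normalization or, equivalently, via $\Pic(\mathcal U)=0$ on a polydisc) of the pullback-from-$\mathcal U$ ambiguity, which is exactly the technical content the paper leaves implicit.
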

\bigskip
We now  define the 
 {\em distinguished neighborhoods} of $\sigma$ in $\curv^{(2r)}$.
 
Choose a closed disc $\overline{\Delta} \subseteq \curv$ whose interior $\Delta$ contains the support of $\sigma$.   
Choose open discs $\Delta_1, \ldots, \Delta_{2\nop+\nep}\subseteq \Delta \subseteq \curv$ so that:

\begin{enumerate}
\item
$q_i \in \Delta_i\, \hbox{ for all }i.$
\item
$\overline{\Delta_i}\subseteq \Delta  \hbox{ for all }i \hbox{ and }   \overline{\Delta_i}\cap \overline{\Delta_j}=\emptyset \, \hbox{ for all }i\neq j$.
\end{enumerate}
As in \S \ref{ufcd}, we have the $a_i$-th symmetric product $\Delta_i^{(a_i)}$  and 
its open subset $\Delta_{i,{\rm reg}}^{(a_i)}$  corresponding to $a_i$-tuples of distinct points.
The  set  of  effective divisors of degree $2r$ consisting of  
$a_i$ points contained in $\Delta_i$, where $i = 1, \ldots, 2\nop+\nep$, 
defines  a {\em distinguished} neighborhood of $\sigma \in \curv^{(2r)}$:
$$
\dist :=\prod_i \Delta_i^{(a_i)} \subseteq \Delta^{(2r)}  \subseteq  \curv^{(2r)}.
$$
Distinguished neighborhoods are contractible and   give rise to  a fundamental 
system of neighborhoods of $\sigma \in \curv^{(2r)}.$
We also have the open subset 
\beq
\label{shreg}
\dist_{{\rm reg}}:= \dist \cap \curv^{(2r)}_{\rm reg}=   \prod_i \Delta_{i,{\rm reg}}^{(a_i)} \subseteq \Delta^{(2r)}_{\rm reg},
\eeq
consisting of the simple, i.e. multiplicity-free, divisors in $\dist$.

\medskip
By Proposition \ref{univfam}, the choice of a square root  $L$ of the line bundle ${\mathcal O}_C (\sigma)$ yields
the  family $\Phi_{\Delta^{(2r)}}:  {\mathscr C}_{\Delta^{(2r)}} \lorw \Delta^{(2r)} $, the smooth family 
$\Phi_{\Delta^{(2r)}}:  {\mathscr C}_{\Delta^{(2r)}_{\rm reg}} \lorw \Delta^{(2r)}_{\rm reg} $,
and their restrictions $\Phi_{\dist}:{\mathscr C}_{\dist} \lorw \dist$  and 
$\Phi_{\dist_{{\rm reg}}}:{\mathscr C}_{\dist_{{\rm reg}}} \lorw \dist_{{\rm reg}}$.

Our aim is  the proof of Theorem \ref{mainest} below.
This result 
is the main step in the proof of Theorem \ref{mainmonthm} which, as we have
seen at the beginning of \S\ref{lbebncj},
completes the proof of the main  Theorem \ref{final} of this section.

\begin{theorem}
\label{mainest}
Let $\curv$ be a nonsingular projective curve of genus $g$, 
let $\sigma \in \curv^{(2r)}$ be an effective divisor of multiplicity type 
${\mathfrak a}=(a_1, \ldots, a_{2\nop+\nep})$, and let 
$L$ be a square root of ${\mathcal O}_{\curv}(\sigma)$
such that the associated  double covering $\rho_{\sigma}:C_{\sigma} \lorw C$ is integral. 
Let $\dist$ be a distinguished neighborhood of $\sigma$, let
$\dist_{{\rm reg}}$ be the open subset of simple divisors in $\dist$ and $j:\dist_{{\rm reg}} \lorw \dist$ the corresponding imbedding.
Then:
\begin{equation}
\label{qequ}
\sum_{l=0}^{4g+2r-2} \dim  \left( R^0j_* \bigwedge^l {R^1 \Phi_{\dist_{\rm reg}}}_* \rat \right)_{\sigma} =\sum_{l=0}^{4g+2r-2} \dim \Gamma\left(\dist_{\rm reg}, \bigwedge^l {R^1 \Phi_{\dist_{\rm reg}}}_* \rat\right) =
  2^{4g-2}  \left( \prod_{i=1}^{2\nop+\nep}(a_i+1)\right).
\end{equation}

\end{theorem}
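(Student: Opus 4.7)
My plan is to convert the stalk in~(2.4.5) into a monodromy invariant calculation on a generic fibre and then evaluate those invariants by a careful application of the Picard-Lefschetz formula. First, since $\dist$ is contractible and $\dist_{\rm reg}$ is connected, the stalk at $\sigma$ of $R^0 j_*(\bigwedge^l R^1 \Phi_{\dist_{\rm reg},*}\mathbb{Q})$ coincides with $\Gamma(\dist_{\rm reg}, \bigwedge^l R^1\Phi_{\dist_{\rm reg},*}\mathbb{Q})$, and the standard correspondence between local systems on a connected base and representations of its fundamental group identifies the latter with $(\bigwedge^l H_1(C_{u_0}))^{\pi_1(\dist_{\rm reg}, u_0)}$ for any base-point $u_0 \in \dist_{\rm reg}$. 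The product structure $\dist_{\rm reg} = \prod_i \Delta_{i,{\rm reg}}^{(a_i)}$ gives $\pi_1(\dist_{\rm reg}, u_0) \cong \prod_i {\mathscr B}^{a_i}$, and by trivialising the family $\Phi_{\dist}$ smoothly outside $\bigsqcup \overline{\Delta_i}$ each factor ${\mathscr B}^{a_i}$ acts on $C_{u_0}$ by a homeomorphism supported in $V_i := \rho_{u_0}^{-1}(\overline{\Delta_i})$ that is the identity on $\partial V_i$. Proposition~\ref{mndrmydisc} then identifies the action on $H_1(C_{u_0})$ of the standard braid generator $T^{(i)}_j$ with the Dehn twist $c \mapsto c + (c, \lambda^{(i)}_j)\lambda^{(i)}_j$ along the vanishing cycle $\lambda^{(i)}_j \in H_1(V_i) \hookrightarrow H_1(C_{u_0})$, where $(\cdot,\cdot)$ is the intersection form.

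The second step uses the covering involution $\tau: C_{u_0}\to C_{u_0}$, which commutes with the monodromy and hence splits $H_1(C_{u_0};\mathbb{Q}) = H_1^+ \oplus H_1^-$ equivariantly. Pull-back by $\rho_{u_0}$ identifies $H_1^+$ with $H_1(C)$ of dimension $2g$, on which $G := \pi_1(\dist_{\rm reg}, u_0)$ acts trivially; whereas $H_1^-$ is symplectic of dimension $2g - 2 + 2r$ by Riemann-Hurwitz, and every vanishing cycle $\lambda^{(i)}_j$ belongs to $H_1^-$. Factoring the exterior algebra accordingly reduces the theorem to the identity
$$
\sum_l \dim\bigl(\bigwedge^l H_1^-\bigr)^G = 2^{2g-2}\prod_i(a_i+1).
$$

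The last identity is a direct invariant-theoretic calculation. Writing $T^{(i)}_j = \mathrm{id} + N^{(i)}_j$ with $N^{(i)}_j$ the rank-one nilpotent $c\mapsto (c,\lambda^{(i)}_j)\lambda^{(i)}_j$, extended as a derivation to $\bigwedge^\bullet H_1^-$, the $G$-invariants (at each degree) equal the common kernel $\bigcap_{i,j}\ker N^{(i)}_j$. The disjointness of supports gives $(\lambda^{(i)}_j,\lambda^{(i')}_k)=0$ for $i\neq i'$, so the $N^{(i)}_j$ from different discs commute, while within a single disc they follow the $A_{a_i-1}$ pattern $(\lambda^{(i)}_j,\lambda^{(i)}_{j+1})=1$. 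I would then decompose $H_1^-$ via Mayer-Vietoris for $C_{u_0} = \rho_{u_0}^{-1}(C \setminus \bigsqcup \Delta_i) \cup \bigsqcup V_i$ into a ``central'' summand issuing from $\rho_{u_0}^{-1}(C \setminus \bigsqcup \Delta_i)$, annihilated by every $N^{(i)}_j$, and ``local'' summands $\Lambda_i := \mathrm{Image}(H_1(V_i)^- \to H_1^-)$, each moved only by the $i$-th braid factor. Computing $\bigcap_j \ker N^{(i)}_j$ disc-by-disc in the exterior algebra of $\Lambda_i$ by iterated Picard-Lefschetz along the $A_{a_i-1}$ chain yields a local factor of $(a_i+1)$ per disc, while the central summand contributes the free factor $2^{2g-2}$.

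The main obstacle is keeping track of the boundary classes when some $a_i$ is even: then the restricted form on $\mathrm{span}\{\lambda^{(i)}_j\}$ has a one-dimensional radical spanned by $\partial' = \sum_j \lambda^{(i)}_{2j-1}$, so $\Lambda_i$ is not a symplectic subspace of $H_1^-$, and the $\tau$-antiinvariant part of the two boundary circles of $V_i$ contributes to the central summand through the Mayer-Vietoris connecting map. Carefully tracking these connecting maps on deck-antiinvariants, and checking that the additional boundary dimensions compensate precisely against the loss of independence among the $\Lambda_i$, is what delivers the exponent $2^{2g-2}$ independently of the parity pattern of the $a_i$ and produces the factor $(a_i+1)$ uniformly, whether $a_i$ be even or odd.
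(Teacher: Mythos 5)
Your overall framework --- converting the stalk to a space of monodromy invariants, identifying $\pi_1(\dist_{\rm reg})$ with $\prod_i {\mathscr B}^{a_i}$, and computing the monodromy by the Picard--Lefschetz formula as a product of Dehn twists --- is the right one, and it agrees in spirit with the paper. Your splitting $H_1(C_{u_0}) = H_1^+ \oplus H_1^-$ by the covering involution is a genuinely different and arguably cleaner device than the paper's: the paper instead splits off the constant part by an excision argument for the pair $(C_{\underline u}, \Xi_{\underline u})$, a decomposition which is sensitive to whether $\widehat{C}$ is connected and forces a two-case analysis, whereas yours is uniform. What your decomposition buys is this uniformity; what it costs is that all the difficulty is concentrated into the reduced identity $\sum_l \dim (\bigwedge^l H_1^-)^G = 2^{2g-2}\prod_i(a_i+1)$, which you do not actually prove.

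The genuine gap is your ``disc-by-disc'' claim that $\Lambda_i := \mathrm{Image}(H_1(V_i)^- \to H_1^-)$ supplies a local factor of $a_i+1$. As stated this cannot be literally correct: $\dim \Lambda_i \le a_i-1$ (the span of the chain $\lambda^{(i)}_1,\dots,\lambda^{(i)}_{a_i-1}$), so $\dim \bigwedge^\bullet \Lambda_i \le 2^{a_i-1}$, which is strictly less than $a_i+1$ when $a_i \in \{1,2\}$; the extra invariants therefore must come from classes that straddle the disc and its complement. You acknowledge this obstacle --- the radical $\partial'$ when $a_i$ is even and the boundary contributions through Mayer--Vietoris --- but you do not resolve it, and ``carefully tracking these connecting maps'' is precisely the content of the theorem. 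In the paper the resolution is the pair of linear-algebra results (its Lemma on invariants of subgroups generated by consecutive braid generators with a gap, and the subsequent Proposition): one works with a single long chain of $2r-1$ or $2r$ cycles coming from one large disc containing all of $\sigma$, not with $2\nop+\nep$ independent short chains, and the ``missing'' generators $T_{d_i}$ produce extra invariant classes of the form $c_{[1,t]}$ and $c_{[2,2m]}$ that live across gaps --- this is exactly where factors like $a_i+1$ rather than $\lceil(a_i+1)/2\rceil$ originate. Without carrying out that (or an equivalent) computation, your proposal establishes only an upper bound of the wrong shape for small $a_i$, so the argument is incomplete at its crux. A second, more minor omission: the identification of the stalk $(R^0 j_*\, \cdot\,)_\sigma$ with $\Gamma(\dist_{\rm reg}, \cdot)$ requires knowing that the invariants are unchanged when $\dist$ is shrunk; the paper supplies this via the stabilisation of $\pi_1(\dist_{\rm reg})$, a point your proposal glosses over.
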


\begin{remark}
\label{connect}
{\rm Recall the Definition \ref{stfcrvc} of singular type of an integral curve with $A_k$-singularities and of the two sets $O$ and $E$.
With the notation of Theorem \ref{mainest}, we have $E= \{ \rho_{\sigma}^{-1}(q_i)\}$ for $i \in \{1, \cdots, 2\nop\} $ such that $a_i \neq 1$, and  $O= \{ \rho_{\sigma}^{-1}(q_i)\}$  for $2\nop + 1 \leq i \leq 2\nop+ \nep$.
With the convention that 
if $a_i=1$, then the term $a_i-1$ should be deleted from the singular type vector $\underline{k}$, 
we have that $\curv_{\sigma}$ has singular type 
$
\underline{k}=(a_{2\nop+1}-1, \ldots,a_{2\nop+\nep}-1; a_1-1,\ldots , a_{2\nop}-1).
$

\medskip
\n
Then, the right-hand-side of equation \ref{qequ} equals 
the quantity 
$2^{2\widetilde{g}}  \left( \prod_{c \in O}(2\delta_c+1)\right)\left(\prod_{ c \in E}(\delta_c+1)\right)$
associated with the singular curve $C_{\sigma}$ (see Theorem \ref{estbncj}).

For $i=1, \cdots, 2\nop+\nep$, we set $\delta_i:=\delta_{\rho_{\sigma}^{-1}(q_i)}$, with the convention that
$\delta_i=0$ if $a_i=1$, i.e. if $\rho_{\sigma}^{-1}(q_i)$ is a nonsingular point of $C_{\sigma}$. Clearly 
\begin{equation}
\label{changeindexset}
  \left( \prod_{c \in O}(2\delta_c+1)\right)\left(\prod_{ c \in E}(\delta_c+1)\right)=
   \left( \prod_{i=1}^{2\nop}(\delta_i+1)\right)\left(\prod_{i=2\nop+1}^{2\nop
+\nep}(2\delta_i+1)\right).
\end{equation}
By the Riemann-Hurwitz formula
and the definition of  the $\delta$-invariant (\S \ref{cpctjcb}),
we have that 
\begin{equation}
\label{generenormalizz}
\widetilde{g}=2g+ r -1-\sum \delta_i=
2g + \frac{1}{2}\sum_{i=1}^{2\nop+\nep}a_i -1 - 
\frac{1}{2} \sum_{i=1}^{2\nop}(a_i-1) - 
\frac{1}{2}\sum_{i=\nop+1}^{2\nop+\nep} a_i = 2g+\nop -1;
\end{equation}
now  use  the fact that
\begin{equation}
a_i=\left\{\begin{array}{rl}
2\delta_i  & \hbox{ if } a_i \hbox{ is even }\\ 
2\delta_i+1 &  \hbox{ if } a_i \hbox{ is odd},
\end{array}\right.
\end{equation}
and deduce the equality
$$
2^{4g-2}  \left( \prod_{i=1}^{2\nop+\nep}(a_i+1)\right)= 
2^{2\tilde{g}-2\nop}\prod_{i=1}^{2\nop}(2\delta_i+2)\prod_{i=2\nop +1}^{2\nop+\nep}(2\delta_i+1) = 2^{2\tilde{g}}\prod_{i=1}^{2\nop}(\delta_i+1)\prod_{i=2\nop +1}^{2\nop+\nep}(2\delta_i+1),
$$
whose right-hand side coincides, by (\ref{changeindexset}), with 
$2^{2\widetilde{g}}  \left( \prod_{c \in O}(2\delta_c+1)\right)\left(\prod_{ c \in E}(\delta_c+1)\right).$
}
\end{remark}

\bigskip

\n
For $i=1, \cdots , 2\nop+\nep$, let $\underline{u}_i \in \Delta_{i,{\rm reg}}^{(a_i)}$, and
let $\underline{u}=(\underline{u}_1, \cdots ,\underline{u}_{2\nop +\nep})\in \dist_{\rm reg}$.

We have the monodromy representation 
$$
\pi_1(\dist_{\rm reg},\underline{u}) \lorw {\rm Aut} (H_1(\curv_{\underline{u}})),
$$
and  its exterior powers     
$$
\pi_1(\dist_{\rm reg},\underline{u}) \lorw {\rm Aut} \left(\bigwedge^l H_1(\curv_{\underline{u}})\right).
$$
The evaluation map at the base-point  $\underline{u}$  gives an isomorphism:
$$
\Gamma\left(\dist_{\rm reg}, \bigwedge^l R^1 {\Phi_{\dist_{\rm reg}}}_* \rat\right) \stackrel{\simeq}{\lorw}
\left(\bigwedge^l H_1(\curv_{\underline{u}})\right)^{\pi_1}
$$
where $(- )^{\pi_1}$ denotes the subspace of  invariants.

\begin{remark}
\label{moncomprmk}
{\rm Since the family  $\Phi_{  \dist_{{\rm reg}} }:{\mathscr C}_{\dist_{{\rm reg}}} \lorw \dist_{{\rm reg}}$ is the restriction to $\dist_{\rm reg} $ of the family
$\Phi_{ \Delta^{(2r)} _{{\rm reg}} }:{\mathscr C}_{\Delta^{(2r)} _{{\rm reg}}}   \lorw \Delta^{(2r)} _{{\rm reg}}$, its monodromy representation
is the composition
\beq
\label{moncomp}
\pi_1(\dist_{\rm reg},\underline{u}) \lorw \pi_1({\Delta}^{(2r)}_{\rm reg},\underline{u}) \lorw  {\rm Aut} (H_1(\curv_{\underline{u}})).
\eeq
}
\end{remark}

\subsubsection{Proof of Theorem  \ref{mainest}, step 1:  
splitting off the constant part}
\label{splittcp}
We have the diagram
$$
\xymatrix{    {\mathscr C}_{ \Delta^{(2r)}} \ar[rd]_{\rho_{\Delta^{(2r)}}} \ar[rr]^{\Phi_{\Delta^{(2r)} }}    &  &  \Delta^{(2r)}      \\
   & \curv \times   \Delta^{(2r)}    \ar[ru]^{p_2}  &  
}$$
of Proposition \ref{univfam} and the
nonsingular branched double covering
\begin{equation}
\label{basedblcvr}
\rho_{\underline{u}}: \Phi_{ \Delta^{(2r)}    }^{-1}(\underline{u})=:
\curv_{\underline{u}} \lorw \curv. 
\end{equation}
By the Riemann-Hurwitz formula we have $g(\curv_{\underline{u}})=2g +r-1,$ where, we remind the reader, $r=\frac{\deg\, \sigma}{2}=\frac{\sum_i a_i}{2}$.
We set 
$$
\widehat{\curv} :=\rho_{\underline{u}}^{-1}(\curv \setminus \Delta), \qquad 
\Xi_{\underline{u}}:=\rho_{\underline{u}}^{-1}(\overline{\Delta}).
$$
\medskip
\begin{remark}
\label{gluefamily}
{\rm In view of Remark \ref{kindergarten}, the inverse image 
$\rho_{\underline{u}}^{-1}(\partial \overline{\Delta})=\partial \, \Xi_{\underline{u}}$ consists of
two connected  components.
There are two distinct possibilities for the
restriction of the covering $\rho_{\underline{u}}$ to $\widehat{\curv}$.
The former is that this restricted covering is
disconnected and thus  biholomorphic to two copies of $\curv \setminus \Delta $. This is the case 
if the square root $L$ of ${\mathcal O}_C (\sigma)$ is a trivial line bundle on $\curv \setminus \Delta$.
The latter, corresponding to the case in which $L$ is a non-trivial line bundle on $\curv \setminus \Delta$, is that 
$\widehat{\curv}$ is connected,
in which case    $\widehat{\curv}= C' \setminus (U_1 \coprod U_2)$ is obtained by removing two discs $U_1, U_2$ 
from a connected compact Riemann surface $C'$ of genus $2g-1$. 
}
\end{remark}

\medskip
Since the line bundle associated with a divisor on $\curv$ supported on $\Delta$ is trivial on $\curv \setminus \Delta$, we have a biholomorphism (of surfaces with boundaries)
$ \rho_{\Delta^{(2r)}}^{-1}((\curv \setminus {\Delta}) \times \Delta^{(2r)}) \simeq \widehat{\curv} \times \Delta^{(2r)}$, 
 and
the family $\Phi_{\Delta^{(2r)}}:  {\mathscr C}_{\Delta^{(2r)}} \lorw \Delta^{(2r)} $ 
is obtained by glueing   the family $ \rho_{\Delta^{(2r)}}^{-1}(\overline{\Delta} \times \Delta^{(2r)})    \lorw \Delta^{(2r)}$ to the constant family   $\widehat{\curv}  \times  \Delta^{(2r)} \lorw \Delta^{(2r)}$ along the boundary $ (S^1 \coprod S^1)\times  \Delta^{(2r)} $;
the same clearly applies to its restrictions $\Phi_{\dist} , \Phi_{\dist_{{\rm reg}}}$.

\bigskip
The long exact sequence of relative cohomology of the pair $\Xi_{\underline{u}} \subseteq \curv_{\underline{u}}$, 
 the vanishing $H_2(\Xi_{\underline{u}})=0$ and the fact that 
$H_0(\Xi_{\underline{u}}) \lorw H_0(\curv_{\underline{u}})$ is an isomorphism, give
the exact sequence
\begin{equation}
\label{lexrelcoh}
\xymatrix{
0 \ar[r] & H_2(\curv_{\underline{u}}) \ar[r] & H_2(\curv_{\underline{u}}, \Xi_{\underline{u}}) \ar[r] & H_1(\Xi_{\underline{u}}) \ar[r] & H_1(\curv_{\underline{u}}) \ar[r] & 
H_1(\curv_{\underline{u}}, \Xi_{\underline{u}}) \ar[r] & 0 \\
         &                                   &    H_2(\widehat{\curv}, \partial \, \widehat{\curv}) \ar[u]_{\simeq}    &    &      & H_1(\widehat{\curv}, \partial \, \widehat{\curv}) \ar[u]_{\simeq}        &
}
\end{equation}
where the vertical arrows indicate the  excision isomorphisms.

\bigskip
\n
{\em Case 1: $\widehat{\curv}$  is disconnected.}

\n
In this case $\dim H_2(\curv_{\underline{u}}, \Xi_{\underline{u}}) =2$, and 
$\dim H_1(\curv_{\underline{u}}, \Xi_{\underline{u}}) \simeq H_1(\curv )^{\oplus 2}=4g$;
define
\begin{equation}
\label{Hdisco}
H_{\mathfrak a,{\rm disc}}:=\im \{H_1( \Xi_{\underline{u}}) \lorw  H_1(\curv_{\underline{u}}) \}.
\end{equation}
It follows from the sequence (\ref{lexrelcoh}) that $\dim H_{\mathfrak a,{\rm disc}}=2r-2$, 
and we have an exact sequence
\begin{equation}
\label{exseqCdisc}
0 \lorw H_{\mathfrak a, {\rm disc}} \lorw H_1(\curv_{\underline{u}}) \lorw H_1(\curv_{\underline{u}}, \Xi_{\underline{u}}) \lorw 0.
\end{equation}

\begin{remark}
\label{therareoddpoints}
{\rm In this case, in order to satisfy the assumption \ref{basicass},
we must have $\nop\geq 1$. 
In fact, if $\nop=0$, every singular point of $C_{\sigma}$ has two branches.
Let  $\nu: \widetilde{\curv_{\sigma}} \lorw \curv_{\sigma}$ be the 
normalization map. Since the inverse image by $\nu$ of every singular points consists of two points, the composition 
$\widetilde{\curv_{\sigma}} \stackrel{\nu}{\lorw} \curv_{\sigma} \lorw \curv$ is an \'etale covering, which must be trivial if $\widehat{\curv}$  is disconnected. 
This implies that $\widetilde{\curv_{\sigma}}$ is disconnected
 and $\curv_{\sigma}$ is reducible
against the assumption \ref{basicass}. See also \ci{ngo2} \S 11.
}
\end{remark}

\begin{remark}
\label{trivcycl}
{\rm Given a basis  
of $H_1(\curv)$, it is possible to represent its elements by cycles contained
in $\curv \setminus \Delta $. By taking pre-images  of these cycles via $\rho_{\underline{u}}$, we get $4g$
  linearly independent homology classes which split the exact sequence \ref{exseqCdisc}.
Since, as we have  already observed, the family $\Phi_{\Delta^{(2r)}}:  \curv_{\Delta^{(2r)}} \lorw \Delta^{(2r)} $, 
is obtained by glueing the constant family with the family 
$ \rho_{\Delta^{(2r)}}^{-1}(\overline{\Delta} \times \Delta^{(2r)})          \lorw \Delta^{(2r)}$, the direct sum decomposition 
\begin{equation}
\label{splitCdisc}
 H_1(\curv_{\underline{u}}) = H_{\mathfrak a,{\rm disc}} \oplus H_1(\curv )^{\oplus 2} 
 \end{equation} 
is invariant under the action of $\pi_1(\Delta^{(2r)}_{\rm reg}, \underline{u})$, which  is trivial on the second summand. 
}
\end{remark}

\bigskip
\n
{\em Case 2: $\widehat{\curv}$ is connected.} 

\n
In this case  $\dim H_2(\curv_{\underline{u}}, \Xi_{\underline{u}}) =1$ and $\dim H_1(\curv_{\underline{u}}, \Xi_{\underline{u}}) =4g-1$;
the sequence (\ref{lexrelcoh}) takes the form:
\begin{equation}
\label{exseqCconn}
0 \lorw  H_1(\Xi_{\underline{u}}) \lorw H_1(\curv_{\underline{u}}) \lorw H_1(\curv_{\underline{u}}, \Xi_{\underline{u}})= H_1(\widehat{\curv}, \partial \, \widehat{\curv})\lorw  0.
\end{equation}

As we already observed in Remark \ref{gluefamily},  $\widehat{\curv}= C' \setminus ( U_1 \coprod U_2)$ is obtained by removing two discs $U_1, U_2$
from a connected compact Riemann surface $C'$ of genus $2g-1$; 
 it is readily seen that the map
$H_1(C') \lorw  H_1(C', \partial \overline{U_1} \coprod \partial \overline{U_2})$ is injective.
We have  the excision isomorphism
$H_1(C', \partial \overline{U_1} \coprod \partial \overline{U_2}) =H_1(\widehat{\curv}, \partial \, \widehat{\curv})$,
by which we  identify $H_1(C')$ with a subspace of $H_1(\widehat{\curv}, \partial \, \widehat{\curv}) $.
Let $\widehat{\gamma} \in H_1(\widehat{\curv}, \partial \, \widehat{\curv})$ be the class
of a path in $\widehat{\curv}$ joining the two connected components of its boundary. It is then easy to see that  
$H_1(\widehat{\curv}, \partial \, \widehat{\curv})= H_1(C')  \bigoplus {\rm Span \, \widehat{\gamma}}$.
By using the excision isomorphism $ H_1(\curv_{\underline{u}}, \Xi_{\underline{u}}) = H_1(\widehat{\curv}, \partial \, \widehat{\curv})$,
we obtain an isomorphism $ H_1(\curv_{\underline{u}}, \Xi_{\underline{u}})= H_1(C')  \bigoplus {\rm Span \, \widehat{\gamma}}.$

The natural map $H_1(\widehat{\curv}) \lorw H_1({\curv}')$ is clearly surjective, as
every class in $H_1(C')$ can be represented by cycles contained in  $\widehat{\curv}$. 
By using this fact, we choose a (non-canonical) splitting $ H_1({\curv}') \lorw H_1(\widehat{\curv}) $. 
An easy argument, based on the Mayer-Vietoris exact sequence  associated with the 
decomposition $\curv_{\underline{u}}= \widehat{\curv} \cup \Xi_{\underline{u}}$, shows that the map
$H_1(\widehat{\curv}) \lorw H_1(\curv_{\underline{u}})$ is injective. Via the composition 
$H_1({\curv}') \lorw H_1(\widehat{\curv}) \lorw H_1(\curv_{\underline{u}})$,
we may then identify $H_1( \curv ') $  with a subspace of $H_1(C_{\underline{u}})$. 
The lack of canonicity of this identification will be harmless for what follows.

\subsubsection{Proof of Theorem  \ref{mainest}, step 2:  
construction of an adapted basis }
Let us choose a differentiable imbedding  $\beta:\I =[0, 2r+1]\lorw \overline{\Delta}$ with the following properties:
\begin{enumerate}
\item
$\beta(\I)\cap \partial \overline{\Delta}=\{\beta(0), \beta(2r+1)\}$. 
\item
$\underline{u}_i= \{  \beta(d_{i-1}+1), \cdots , \beta(d_i)  \}$ for $i=1, \cdots , 2\nop+\nep$, with $d_i$ defined in (\ref{mutypdai}).
\item
For every $i=1, \cdots , 2\nop+\nep $, the inverse images   $\beta^{-1}(\beta(\I)\cap \overline{\Delta_i})$ are closed sub-intervals of $\I$.
\end{enumerate}
As in  \S \ref{dcd} and \S\ref{ufcd}, $\beta$ defines the cycles $\cl{}{i} \in H_1(\Xi_{\underline{u}}), \, \mu \in H_1(\Xi_{\underline{u}}, \partial\, \Xi_{\underline{u}})$,
and the set $\{T_i\}$ of generators  of ${\mathscr B}^{2r}$.

The open imbedding 
$\dist_{\rm reg} \lorw \Delta^{(2r)}_{\rm reg}$ induces the group homomorphism
$ \pi_1(\dist_{\rm reg},\underline{u}) \lorw \pi_1({\Delta}^{(2r)}_{\rm reg},\underline{u}) = {\mathscr B}^{2r}$.
It is evident from the definition of $\Delta_i$ and $\beta$ that, if $d_i<j<d_{i+1}$, then $T_j$ can be represented by a pair of curves as in (\ref{genbraids}), 
whose image is entirely contained in $\Delta_i$, and is hence contained in the image of the homomorphism
above,  whereas this is not possible if  
$j \in \{ d_1, \ldots, d_{2\omega+ \epsilon-1} \}$. This observation readily implies the following lemma; 
the missing details of the proof are left  to the reader:

\begin{lemma}
\label{imgfg}
The map 
$$ \pi_1(\dist_{\rm reg},\underline{u}) \lorw \pi_1({\Delta}^{(2r)}_{\rm reg},\underline{u}) = {\mathscr B}^{2r}   $$
is injective. Its image is  the subgroup  ${\mathscr B}^{\mathfrak a}$ of 
${\mathscr B}^{2r}$ generated  by 
the elements $T_j$'s for  $j \in \{ 1, \ldots, 2r-1\}\setminus \{ d_1, \ldots,
d_{2\omega+ \epsilon-1} \}$.  
\end{lemma}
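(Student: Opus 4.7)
The plan is to exploit the product structure of $\dist_{\rm reg}$ and then compare it with the generators of $\mathscr{B}^{2r}$. Since $\dist_{\rm reg} = \prod_{i=1}^{2\omega+\epsilon}\Delta^{(a_i)}_{i,{\rm reg}}$ is a product of configuration spaces of distinct points in pairwise disjoint discs, one has the canonical identification
\[
\pi_1(\dist_{\rm reg},\underline{u}) \;\cong\; \prod_{i=1}^{2\omega+\epsilon} \mathscr{B}^{a_i},
\]
where each factor is the disc braid group based at $\underline{u}_i$.

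The image of this group in $\mathscr{B}^{2r}$ is easy to pin down. Property (3) of $\beta$ ensures that, for $1 \le k \le a_i - 1$, the standard Artin generator of $\mathscr{B}^{a_i}$ swapping $\beta(d_{i-1}+k)$ and $\beta(d_{i-1}+k+1)$ can be realized by a half-turn of the form (\ref{genbraids}) whose supporting curves are confined to $\Delta_i$. Under the map $\pi_1(\dist_{\rm reg}) \to \mathscr{B}^{2r}$ it is therefore sent to the generator $T_{d_{i-1}+k}$. As $(i,k)$ varies, the indices $d_{i-1}+k$ sweep precisely the set $\{1,\ldots,2r-1\}\setminus\{d_1,\ldots,d_{2\omega+\epsilon-1}\}$, showing that the image is contained in, and contains, the subgroup $\mathscr{B}^{\mathfrak{a}}$ described in the statement.

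For injectivity, I would reduce to the classical injectivity statement for pure braid groups. The natural surjections onto symmetric groups give a commutative diagram with exact rows
\[
\begin{array}{ccccccccc}
1 & \to & \prod_i P_{a_i} & \to & \prod_i \mathscr{B}^{a_i} & \to & \prod_i S_{a_i} & \to & 1 \\
  &     & \downarrow      &     & \downarrow                &     & \downarrow       &     &   \\
1 & \to & P_{2r}          & \to & \mathscr{B}^{2r}          & \to & S_{2r}           & \to & 1
\end{array}
\]
whose right-hand vertical arrow is the inclusion of the Young subgroup $S_{a_1}\times\cdots\times S_{a_{2\omega+\epsilon}} \hookrightarrow S_{2r}$, which is visibly injective, and whose left-hand vertical arrow is the homomorphism induced by the inclusion of ordered configuration spaces $\prod_i F_{a_i}(\Delta_i) \hookrightarrow F_{2r}(\Delta)$. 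The key step, and the main technical obstacle, is the injectivity of this leftmost map $\prod_i P_{a_i} \to P_{2r}$; this is a classical consequence of iterated application of the Fadell--Neuwirth fibration, exploiting the pairwise disjointness of the discs $\Delta_i$ to lift loops one strand at a time. Injectivity of the central map $\prod_i \mathscr{B}^{a_i} \hookrightarrow \mathscr{B}^{2r}$ then follows from the five lemma, completing the argument.
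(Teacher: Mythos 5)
Your proof is correct, and in fact it is more complete than the one in the paper, which confines itself to the observation that the generators $T_j$ for $d_i<j<d_{i+1}$ lift to $\pi_1(\dist_{\rm reg})$ while the $T_{d_i}$ do not, and then declares that the lemma ``readily'' follows, leaving the remaining details --- injectivity in particular --- to the reader. Your fleshing-out of the image computation via the product decomposition $\pi_1(\dist_{\rm reg})\cong\prod_i\mathscr{B}^{a_i}$ and the identification of the images of the Artin generators is exactly the natural way to make that sketch precise, and is essentially the paper's intended argument.

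The genuinely new content you supply is the injectivity argument, and it is sound. Reducing to the pure braid groups via the commutative ladder onto symmetric groups and invoking the (short) five lemma is the standard device here; the crucial and correctly identified ``main technical obstacle'' --- injectivity of $\prod_i P_{a_i}\to P_{2r}$ for pairwise disjoint subdiscs $\Delta_i\subset\Delta$ --- is indeed a classical fact, and the route you propose (iterating the Fadell--Neuwirth fibration, using that configuration spaces of the disc are aspherical so the fibrations give short exact sequences on $\pi_1$, and that the induced maps on fibres $\Delta_i\setminus\{\text{pts}\}\hookrightarrow\Delta\setminus\{\text{pts}\}$ are $\pi_1$-injective inclusions of punctured discs) is a legitimate way to prove it. An alternative, which shortcuts the ladder, is to quote the theorem of van~der~Lek that a parabolic subgroup of an Artin group generated by a subset $J$ of the Artin generators is itself the Artin group of the induced Coxeter subgraph; applied to $J=\{1,\dots,2r-1\}\setminus\{d_1,\dots,d_{2\nop+\nep-1}\}$ this says directly that $\mathscr{B}^{\mathfrak a}\cong\prod_i\mathscr{B}^{a_i}$, which together with the surjection $\prod_i\mathscr{B}^{a_i}\twoheadrightarrow\mathscr{B}^{\mathfrak a}$ from your image computation forces injectivity. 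Either way, the proposal is correct.
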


\begin{remark}
\label{constant}
{\rm It follows from Lemma \ref{imgfg} that if 
$\dist '\subseteq \dist$ is another distinguished neighborhood, 
and $\underline{u} \in \dist '$, then the natural map  
$
\pi_1(\dist_{\rm reg}',\underline{u})\lorw \pi_1(\dist_{\rm reg},\underline{u})
$ is an isomorphism, and 
$\Gamma\left(\dist_{\rm reg}, \bigwedge^l {R^1 \Phi_{\dist_{\rm reg}}}_* \rat\right) \lorw \Gamma\left( \dist ' _{\rm reg}, \bigwedge^l R^1 {\Phi_{\dist'_{\rm reg}}}_* \rat\right)
$ is an isomorphism. Hence the natural  maps:
$$
\Gamma\left(\dist_{\rm reg}, \bigwedge^l {R^1 \Phi_{\dist_{\rm reg}}}_* \rat\right) \lorw \Gamma\left( \dist ' _{\rm reg}, \bigwedge^l R^1 {\Phi_{\dist'_{\rm reg}}}_* \rat\right)
\lorw \left( R^0j_* \bigwedge^l {R^1 \Phi_{\dist_{\rm reg}}}_* \rat 
\right)_{\sigma}
$$
are isomorphisms. }
\end{remark}

\bigskip
In the case in which $\widehat{\curv}$ is disconnected, the kernel of the map $  H_1(\Xi_{\underline{u}}) \lorw H_1(\curv_{\underline{u}}) $
 is generated by the element $\sum_{l=1}^{d} \lambda_{2l-1} $, see (\ref{relbdry}).
Since, by Remark \ref{therareoddpoints}, $\nop \geq1$,    
we may use this relation  to eliminate $\lambda_{a_1}$, so that the set $\{ \cl{}{i}\}$, for $i=1, \cdots , 2r-1,\, i \neq a_1$, is a basis for the space $H_{\mathfrak a,{\rm disc}}=\im \, H_1(\Xi_{\underline{u}}) \lorw H_1(\curv_{\underline{u}}) .$   This choice is suggested by Lemma \ref{imgfg}
since $T_{a_1} \notin {\mathscr B}^{\mathfrak a}$. It will be evident in \S \ref{compmoninv} that this choice is computationally quite convenient.

In force of Remark \ref{trivcycl}, Remark \ref{moncomprmk} and Lemma \ref{imgfg}, we are reduced to compute the dimension of the subspace of invariants
of $\extal{H_{\mathfrak a,{\rm disc}}}$ for the action of the group ${\mathscr B}^{\mathfrak a} \subseteq {\mathscr B}^{2r}\simeq
\pi_1(\Delta^{(2r)}_{\rm reg}, \underline{u})$ defined in Lemma \ref{imgfg}.

\bigskip
 We now deal with the case in which $\widehat{\curv }$ is connected; we resume the notation introduced at the end of \S \ref{splittcp}. Recall in particular the relative cycle 
$\widehat{\gamma}$, 
the identification of $H_1(C')$ with a subspace of $ H_1(\curv_{\underline{u}}, \Xi_{\underline{u}})$ and the non-canonical one with a subspace of $H_1(C_{\underline{u}})$. 
We lift the relative class $\widehat{\gamma}$ to a homology class 
$\cl{}{0} \in H_1(\curv_{\underline{u}})$  by joining 
$\widehat{\gamma}$ with a representative of the relative cycle $\mu$ defined by (\ref{defimu}) in $\Xi_{\underline{u}}$. 
Setting $H_{\mathfrak a, {\rm conn}} :=H_1(\Xi_{\underline{u}}) \bigoplus {\rm Span \, \cl{}{0}}$, the decomposition 
\begin{equation}
\label{splitCconn}
H_1(\curv_{\underline{u}})= H_{\mathfrak a,{\rm conn}}  \bigoplus H_1(C')
\end{equation} 
is $\pi_1(\Delta^{(2r)}_{\rm reg}, \underline{u})\simeq{\mathscr B}^{2r}$-invariant. Note that, by (\ref{intnum2}), the action on $\cl{}{0}$ is given by:
\begin{equation}
\label{azionelambdazero}
T_i(\cl{}{0})=\cl{}{0} \hbox{ if } i\neq1 \hbox{ and }  T_1(\cl{}{0})=\cl{}{0}+\cl{}{1}.
\end{equation}
Since, by construction, the cycles in the subspace $H_1(C')$ may be chosen to be entirely contained in $\widehat{\curv}$,
 the action of ${\mathscr B}^{2r}$
on the summand $H_1(C')$  is trivial, and, by  Remark \ref{moncomprmk} and Lemma \ref{imgfg}, 
we are reduced 
to compute the dimension of the subspace of invariants
of $\extal{H_{\mathfrak a,{\rm conn}}}$ for the action of the group ${\mathscr B}^{\mathfrak a}\subseteq {\mathscr B}^{2r}\simeq
\pi_1(\Delta^{(2r)}_{\rm reg}, \underline{u})$.

\bigskip
\n
In either case, a local coordinate $\zeta: \overline{\Delta} \lorw \D$, defined  on an open set containing $\overline{\Delta}$,  identifies
the family $ \rho_{\Delta^{(2r)}}^{-1}(\overline{\Delta} \times \Delta^{(2r)})  \lorw \Delta^{(2r)}$ with the family 
$\Phi_{2r}: {\mathscr S}_{2r}  \lorw    \D^{(2r)}$ of \S \ref{ufcd}, and 
the restriction ${\rho_{\underline{u}}}_{|\Xi_{\underline{u}}} : \Xi_{\underline{u}} \lorw \overline{\Delta}$ with the double covering 
$\Su_{\underline{v}} \lorw \overline{\D}$, where 
$\underline{v}:=\zeta(\underline{u})$.
By (\ref{mndrmydisc}), the action  of ${\mathscr B}^{\mathfrak a}$ on $H_{\mathfrak a,{\rm conn}}$ and $H_{\mathfrak a,{\rm disc}}$ is then given by
\begin{equation}
\label{trccmnd}
T_i(\lambda_j)=\lambda_j \;\hbox{ if } |i-j| \neq1, \quad  T_i(\lambda_{i + 1})= \lambda_{i + 1} - \lambda_i, \quad   
 T_i(\lambda_{i - 1})= \lambda_{i - 1} + \lambda_i   .
\end{equation}

\subsubsection{Proof of Theorem  \ref{mainest}, step 3:  computation of monodromy invariants}
\label{compmoninv}

Lemma \ref{linealg}  and Proposition \ref{justlinearalg} below summarize the 
linear algebra facts which we  need to complete the proof of  Theorem \ref{mainest}
\begin{lemma}
\label{linealg}
Let $ U$ be a vector space of even dimension $2m$ with basis $c_1, \cdots, c_{2m}$, and denote by $\extal{ U}$ be its exterior algebra.
Let $T_1, \cdots ,T_{2m} \in \Aut ( U)$ be defined by
\begin{equation} 
\label{opt}
T_i(c_j)=c_j \; \hbox{ if } |i-j| \neq 1, \quad    T_i(c_{i+1})=c_{i+1}-c_i,
\quad T_i(c_{i-1})=c_{i-1}+c_i,
\end{equation}
and denote their natural extensions to $\extal{ U}$ again by $T_i$.
For $I \subseteq \{1, \cdots, 2m\}$,  let $T_I$ be  the subgroup  of  $\Aut (\extal {U})$  generated by the $T_i$'s with $i \in I$, and
denote by $\left(\extal { U}\right)^{T_I} \subseteq \extal { U}$ the subspace of $T_I$-invariants. 
\begin{enumerate} 
\item
For $I=\{1, 2, \cdots, 2m\}$, we have $\dim \left(\extal { U}\right)^{T_{I}}= m+1$.
\item
For $I'= \{2, 3, \cdots, 2m\} $, we have $ \dim    \left(\extal { U}\right)^{T_{I'}} = 2m+1$.
\item 
For $I''=\{2,  \cdots, t, t+2, \cdots , 2m\} $, with $t$ odd, we have $\dim \left(\extal { U}\right)^{T_{I''}}= (t+1)(2m-t+1)$.
\end{enumerate}
\end{lemma}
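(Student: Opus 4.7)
The plan is to reformulate each invariance condition as the vanishing of a nilpotent derivation. Write $T_i=\mathrm{id}+N_i$, where $N_i\colon U\to U$ has $N_i(c_{i-1})=c_i$, $N_i(c_{i+1})=-c_i$, and $N_i(c_j)=0$ otherwise. Extended to $\bigwedge U$, the operator $N_i$ is a degree-zero derivation, because the quadratic term $N_i(a)\wedge N_i(b)$ in the algebra-homomorphism identity $T_i(a\wedge b)=a\wedge b+N_i(a)\wedge b+a\wedge N_i(b)+N_i(a)\wedge N_i(b)$ lies in $c_i\wedge c_i\wedge\bigwedge U=0$. Since $N_i^2=0$ on the generators (and hence on $\bigwedge U$, as $N_i^2$ is again a derivation by the same cancellation), we conclude $(\bigwedge U)^{T_i}=\ker N_i$. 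Thus in each part the task is to compute $\dim\bigcap_{i\in I}\ker N_i$.

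For Part 1, every $T_i$, $1\leq i\leq 2m$, preserves the alternating bilinear form on $U$ whose Gram matrix is tridiagonal with $\pm 1$ on the sub- and super-diagonals. A standard tridiagonal determinant computation shows this matrix has determinant $1$, so the form is non-degenerate; its dual Poisson bivector $\pi\in\bigwedge^2 U$ is then invariant under all $T_i$, and $\pi^m$ is a nonzero top form. The $m+1$ classes $1,\pi,\pi^2,\ldots,\pi^m$ are therefore linearly independent invariants. Exhaustiveness is proved by induction on $m$: one decomposes an arbitrary invariant along the last pair $(c_{2m-1},c_{2m})$, uses $N_{2m}(v)=N_{2m-1}(v)=0$ to eliminate two components, and reduces the remaining two to the Part 1 statement for dimension $2m-2$, producing the recursion $\dim^{(m)}=\dim^{(m-1)}+1$.

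For Part 2, the removal of $T_1$ creates the extra linear invariant $v_\mathrm{e}:=c_2+c_4+\cdots+c_{2m}$: for odd $i\in I'$ the two contributions $N_i(c_{i-1})=c_i$ and $N_i(c_{i+1})=-c_i$ cancel, while even $i$ does not touch $v_\mathrm{e}$ at all. Together with $\pi$ from Part 1 this produces the $2m+1$ distinct-degree classes $\pi^k$ for $0\leq k\leq m$ and $v_\mathrm{e}\wedge\pi^k$ for $0\leq k\leq m-1$, which are linearly independent. Completeness follows by an induction parallel to that of Part 1, this time splitting off $c_1$ (which bears no invariance constraint) and reducing to an odd-dimensional analogue of the same statement for $T_2,\ldots,T_{2m}$ acting on $\mathrm{Span}(c_2,\ldots,c_{2m})$.

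For Part 3, I would construct a $T_{I''}$-invariant direct sum decomposition $U=U_1\oplus W$, with $U_1:=\mathrm{Span}(c_1,\ldots,c_t)$ (which is manifestly preserved, since $T_i$ acts trivially on $U_1$ for $i\geq t+2$). The invariant complement $W$ is constructed by modifying $c_{t+1}$ so as to absorb the shear $N_t(c_{t+1})=-c_t$: I would set $c'_{t+1}:=c_{t+1}+c_{t-1}+c_{t-3}+\cdots+c_2$ (well defined because $t$ is odd) and $c'_j:=c_j$ for $t+2\leq j\leq 2m$, and verify by direct calculation that $W:=\mathrm{Span}(c'_{t+1},c'_{t+2},\ldots,c'_{2m})$ is stable under every $T_i$ with $i\in I''$. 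With the decomposition in hand, $\bigwedge U\cong\bigwedge U_1\otimes\bigwedge W$ as $T_{I''}$-modules, so the invariant subspaces factorise. The action of $T_2,\ldots,T_t$ on $\bigwedge U_1$ is precisely the odd-dimensional analogue of Part 2 and yields $t+1$ invariants; the analogous action of the relabeled operators $T_{t+2},\ldots,T_{2m}$ on $\bigwedge W$ yields $2m-t+1$ invariants; multiplying gives the asserted $(t+1)(2m-t+1)$. The main technical obstacle is the explicit verification of the $T_{I''}$-invariance of the complement $W$ just described, which is an elementary but not entirely obvious check at the interface index $t$; once this is settled, the rest of Part 3 is formal.
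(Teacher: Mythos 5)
Your reduction $T_i=\mathrm{id}+N_i$ with $N_i$ a square-zero degree-zero derivation, so that $T_I$-invariance becomes $\bigcap_{i\in I}\ker N_i$, is correct and pleasantly simplifies the bookkeeping. In Parts 1 and 2 you produce exactly the paper's invariants: the dual Poisson bivector $\pi$ of the intersection form is, up to a nonzero scalar, the paper's explicit $\Omega=\sum_{s=1}^m c_{[1,2s-1]}\wedge c_{2s}$, and your $v_{\mathrm e}$ is the paper's $c_{[2,2m]}$; so Parts 1 and 2 are essentially the paper's argument in a more conceptual dress.

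Part 3 is where you diverge, and where a genuine gap sits. The paper's decomposition is $U=U_0\oplus U_1\oplus U_2$ with $U_0=\mathrm{Span}(c_{[1,t]},c_{[2,2m]})$ a two-dimensional space on which $T_{I''}$ acts trivially, and $U_1=\mathrm{Span}(c_2,\ldots,c_t)$, $U_2=\mathrm{Span}(c_{t+2},\ldots,c_{2m})$ of \emph{even} dimensions $t-1$ and $2m-t-1$ carrying the full action of Part 1; this closes the induction with no further input. Your two-piece decomposition $U=\mathrm{Span}(c_1,\ldots,c_t)\oplus W$ produces two summands of \emph{odd} dimension, each of which you compute by an ``odd-dimensional analogue of Part 2.'' That analogue is a new assertion which is neither part of the stated lemma nor proved in your plan, and you lean on it twice: once here and once in your completeness sketch for Part 2. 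The ``main technical obstacle'' is therefore not the $T_{I''}$-stability of $W$, which is a short direct check, but this unproved auxiliary statement. (It is provable by the paper's own trick applied once more: for odd $n$ the vector $c_{[1,n]}=c_1+c_3+\cdots+c_n$ spans a trivial invariant line, $\mathrm{Span}(c_2,\ldots,c_n)$ is an invariant complement of even dimension $n-1$ carrying the full Part-1 action, and the invariants factor across the two summands; but that needs saying.) Note also that ``splitting off $c_1$'' in your Part-2 completeness sketch is not a decomposition into $T_{I'}$-stable subspaces, since $T_2(c_1)=c_1+c_2$, so the invariants do not factor across it and the induction has to be set up more carefully than your outline suggests; the invariant-complement device above is the clean way to do it.
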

\n{\em Proof.}
For $a,b \in \{1, \cdots, 2m\}$, with $a \leq b$ and $a \equiv b (2)$, we set
$$
c_{[a,b]}:=c_a+c_{a+2}+\cdots + c_{b-2}+c_b.
$$
It immediately follows from (\ref{opt}) that
\begin{equation}
\label{invint}
T_i \left( c_{[a,b]}\right) =\left\{\begin{array}{ll}
c_{[a,b]}  & \hbox{ if } i\neq a-1, b+1, \\ 
c_{[a,b]}-c_{a-1} &  \hbox{ if } i=a-1, \\
c_{[a,b]}+c_{b+1} &  \hbox{ if } i=b+1.
\end{array}\right.
\end{equation}

\begin{itemize}
\item
Case $I=\{1, \cdots, 2m\}$: a direct computation using (\ref{invint}) shows that
$$
\Omega:= \sum_{s=1}^{m} c_{[1, 2s-1]}\wedge c_{2s}  \in \bigwedge^2  U
$$
is  $T^I$-invariant and $\Omega^{m} \neq 0$. Hence $1, \Omega, \Omega^2, \cdots ,\Omega^{m}$ are the desired $m+1$ $T^I$-invariants. 
\item
Case $I'= \{2, \cdots, 2m\} $. Since $T_{I'} < T_{I}$,
the $\Omega^t$'s introduced  above are  $T_{I'}$-invariant.
Furthermore, since $T_1 \notin T_{I'}$, it follows from (\ref{invint}) that $c_{[2,2m]} \in  U^{T_{I'}}$.
Then $1, c_{[2,2m]}, \Omega, c_{[2,2m]} \wedge \Omega, \cdots , c_{[2,2m]}\wedge \Omega^{m-1}, \Omega^{m},$ 
give the desired $2m+1$ $T^{I'}$-invariants, which, being non-zero and of different degrees, are linearly independent.
\item
Case $I''=\{2, \cdots, t, t+2, \cdots , 2m\} $ with $t$ odd. 
In addition to the $T^{I'}$-invariant $c_{[2,2m]} \in  U$ introduced above, 
we have $c_{[1,t]}  \in U^{T_{I''}} $, again by (\ref{invint}), since $T_{t+1} \notin T_{I''}$.

Let $U_0$ be the space spanned by $c_{[2,2m]}$ and $c_{[1,t]}$, and set  
$$ 
U_1:=  {\rm Span} \{ c_2, c_3, \cdots, c_{t} \} \hbox{ and }  U_2:={\rm Span} \{ c_{t+2}, c_{t+3}, \cdots, c_{2m} \} .
$$ 
It results from (\ref{opt}), and again from the fact that $T_{t+1} \notin T_{I''}$, that
the direct sum decomposition $ U=  U_0 \oplus  U_1 \oplus  U_2$ is $T_{I''}$-invariant. 

\medskip
Let $G_1$  be the group generated by  $ \{ T_2, T_3, \cdots, T_{t}\},$
and let
$G_2 $ be the group generated by $ \{ T_{t+2}, T_{t+3}, \cdots, T_{2m}\}.$

Applying  case 1 of this Lemma to the vector spaces $ U_1$ and $ U_2$ with the groups $G_1, G_2$ respectively, gives 

$$
\Omega_1^k \in \left( \bigwedge^{2k}  U_1\right)^{G_1}
\qquad \forall \;  0\leq k \leq \frac{1}{2}(t-1),
$$ 
and 
$$
\Omega_2^l \in \left(\bigwedge^{2l}  U_2\right)^{G_2} \qquad\forall \; 0\leq l \leq \frac{1}{2}(2m-t-1).
$$
Since $G_1$ acts trivially on $ U_2$, and 
$G_2$ acts trivially on $ U_1$, the $\frac{1}{4}(t+1)(2m-t+1)$ elements
$\Omega_1^k \otimes \Omega_2^l \in \bigwedge^{2k}  U_1\otimes \bigwedge^{2l}  U_2$ are $T_{I''}$ invariant. 
They  are, furthermore,  linearly independent, since they are non-zero and 
live in different summands of the direct sum decomposition of 
$\extal{\left( U_1\oplus U_2 \right)}$.

From the $T_{I''}$-isomorphism 
$$\extal{U} \simeq \left(\extal{ U_0} \right)\otimes \left(\extal{ U_1}\right)\otimes \left(\extal { U_2}\right),$$
we conclude that $\left(\extal { U}\right)^{T_{I''}}$ is a free $\frac{1}{4}(t+1)(2m-t+1)$-rank  module over the four-dimensional  $T_{I''}$-invariant 
algebra  $\extal { U_0}$, hence its dimension is $(t+1)(2m-t+1)$.
\end{itemize}
In all of the three cases considered, it is not hard to verify that there is no other invariant.
\blacksquare

\bigskip
Let ${\mathfrak a }$ be a partition of $d$, with associated integers $a_i, \nop, \nep , d_i$ as in (\ref{mutyp}), (\ref{mutypdai}), and let ${\mathscr B}^{\mathfrak a}$  
be the group of Lemma \ref{imgfg}. Let 
\beq
\label{vectsp1}
V_{\mathfrak a, {\rm disc}} \hbox{ be the }\rat \hbox{-vector space generated by the set }
I_{\rm disc}= \{ 1, \cdots, d-1\} \setminus \{a_1\},
\eeq
and let
\beq
\label{vectsp2}
V_{\mathfrak a, {\rm conn}} \hbox{ be the }\rat \hbox{-vector space generated by the set }
I_{\rm conn}= \{ 0, \cdots, d-1\}.
\eeq

In either case, denote by $\{  \lambda_i \}_{i \in I}$ the corresponding basis, with $I=I_{\rm disc}$ or $I=I_{\rm conn}$ and
endow the vector spaces and their exterior algebras $\extal{ V_{\mathfrak a, {\rm disc}}}$ and 
$\extal{ V_{\mathfrak a, {\rm conn}}}$  with the  ${\mathscr B}^{\mathfrak a}$-module structure defined by
(\ref{trccmnd}).

\begin{proposition}
\label{justlinearalg}
Let $(-)^{{\mathscr B}^{\mathfrak a}}$ denote the subspace of ${\mathscr B}^{\mathfrak a}$-invariants. We have   
\beq
\label{disccase}
\dim \left(\extal{ V_{\mathfrak a, {\rm disc }}}\right)^{{\mathscr B}^{\mathfrak a}} = \frac{1}{4}\prod (a_i+1).
\eeq 
\beq
\label{conncase}
\dim \left(\extal{ V_{\mathfrak a, {\rm conn }}}\right)^{{\mathscr B}^{\mathfrak a}} = \prod (a_i+1).
\eeq 

\end{proposition}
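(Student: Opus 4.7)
My plan is to reduce the computation to single-block invariants, then invoke Lemma~\ref{linealg}. The key ingredient is a $\mathscr{B}^{\mathfrak a}$-equivariant direct-sum decomposition of $V_{\mathfrak a, \rm conn}$ reflecting the block structure cut out by the missing generators $T_{d_1},\ldots,T_{d_{n-1}}$, where $n = 2\nop+\nep$. Introduce the increasing filtration $0 = F_{d_0} \subsetneq F_{d_1} \subsetneq \cdots \subsetneq F_{d_n} = V_{\mathfrak a, \rm conn}$ with $F_{d_i} := \mathrm{Span}\{\lambda_0, \ldots, \lambda_{d_i - 1}\}$. Direct inspection of (\ref{trccmnd}) shows each $F_{d_i}$ is $\mathscr{B}^{\mathfrak a}$-invariant and that on the successive quotient $Q_i := F_{d_i}/F_{d_{i-1}}$ (of dimension $a_i$) only the factor $G_i := \langle T_{d_{i-1}+1}, \ldots, T_{d_i-1}\rangle$ acts nontrivially, inducing---after re-indexing $\bar\lambda_{d_{i-1}+k} \leftrightarrow \lambda_k$---the action of $\mathscr{B}^{(a_i)}$ on $V_{(a_i), \rm conn}$.

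The crux is to show this filtration splits $\mathscr{B}^{\mathfrak a}$-equivariantly, giving $V_{\mathfrak a, \rm conn} \cong \bigoplus_{i=1}^n V_{(a_i), \rm conn}$. I construct the splitting inductively: having built $\tilde Q_1, \ldots, \tilde Q_{i-1}$, I lift each basis vector $\bar\lambda_k$ of $Q_i$ to a $G_1\times\cdots\times G_{i-1}$-fixed vector in $V_{\mathfrak a, \rm conn}$. For $k > d_{i-1}$ the naive lift $\tilde\lambda_k := \lambda_k$ suffices; for $k = d_{i-1}$ one takes the boundary sum $\tilde\lambda_{d_{i-1}} := \lambda_{d_{i-1}} + \lambda_{d_{i-1}-2} + \lambda_{d_{i-1}-4} + \cdots$ terminating at the smallest nonnegative index of the same parity as $d_{i-1}$, whose invariance under every $T_j$ with $j \leq d_{i-1} - 1$ follows from the same pair-cancellation argument that underlies Lemma~\ref{linealg}. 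The K\"unneth formula then gives $(\bigwedge V_{\mathfrak a, \rm conn})^{\mathscr{B}^{\mathfrak a}} = \bigotimes_{i=1}^n (\bigwedge V_{(a_i), \rm conn})^{G_i}$, reducing the problem to $\dim(\bigwedge V_{(a)})^{\mathscr{B}^{(a)}} = a+1$ for a single block. For even $a$ this is Lemma~\ref{linealg} Case~2 (with $2m = a$, noting that the would-be leftmost generator corresponds to the absent $T_0$); for odd $a$ one further splits $V_{(a), \rm conn} = \mathrm{Span}(\lambda_0+\lambda_2+\cdots+\lambda_{a-1}) \oplus \mathrm{Span}\{\lambda_1,\ldots,\lambda_{a-1}\}$ and applies Lemma~\ref{linealg} Case~1 with $2m = a-1$ to the second summand, yielding $2(m+1) = a+1$. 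Multiplying over blocks gives (\ref{conncase}).

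The disconnected case (\ref{disccase}) is obtained by exhibiting two further $\mathscr{B}^{\mathfrak a}$-invariant direct-summand decompositions. The subspace $V^+ := \mathrm{Span}\{\lambda_1, \ldots, \lambda_{d-1}\}$ is $\mathscr{B}^{\mathfrak a}$-invariant; since $\nop \geq 1$ forces $a_1$ odd and $T_{a_1} = T_{d_1}$ to be missing, the boundary sum $\lambda_0 + \lambda_2 + \cdots + \lambda_{a_1-1}$ is $\mathscr{B}^{\mathfrak a}$-fixed and yields $V_{\mathfrak a, \rm conn} = V^+ \oplus \mathrm{Span}(\lambda_0+\lambda_2+\cdots+\lambda_{a_1-1})$. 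Next, the relation cycle $\partial' = \sum_{j=1}^r \lambda_{2j-1}$ is $\mathscr{B}^{2r}$-fixed---odd-indexed $T_j$ fix each odd-indexed $\lambda$ individually, while even-indexed $T_j$ produce paired cancellations on consecutive odd basis vectors---and a parallel block-by-block splitting argument inside $V^+$ produces a $\mathscr{B}^{\mathfrak a}$-invariant complement to $\mathrm{Span}(\partial')$ isomorphic to $V_{\mathfrak a, \rm disc} = V^+/\mathrm{Span}(\partial')$. Combining the two splittings gives $\dim(\bigwedge V_{\mathfrak a, \rm conn})^{\mathscr{B}^{\mathfrak a}} = 4 \cdot \dim(\bigwedge V_{\mathfrak a, \rm disc})^{\mathscr{B}^{\mathfrak a}}$, which together with (\ref{conncase}) yields (\ref{disccase}). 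The main obstacle is verifying the equivariant splitting of these filtrations: the boundary-sum lifts $\tilde\lambda_{d_{i-1}}$ must cancel the Picard-Lefschetz variations under every $T_j \in G_1 \cup \cdots \cup G_{i-1}$, not merely those in $G_{i-1}$, and confirming that the cancellations propagate cleanly through blocks of alternating parity requires a careful parity-tracking argument driven by the distribution of odd and even $a_i$'s encoded in $2\nop$ and $\nep$.
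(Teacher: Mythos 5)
Your proof is correct, and it takes a genuinely different route from the paper's. The paper proves (\ref{disccase}) and (\ref{conncase}) by two separate inductions on the number of "block groups'' ($\nop$, then $\nep$, and $\nop+\nep$ respectively), treating each \emph{pair} of odd parts as a single unit handled by Lemma~\ref{linealg}, Case~3, and introducing a single correction vector $\widehat{\lambda}_d$ only at partial sums $d$ that are necessarily even. You instead build, all at once, a $\mathscr{B}^{\mathfrak a}$-equivariant splitting of the full block filtration into all $2\nop+\nep$ individual one-block summands, including odd single blocks which the paper never isolates on their own. This forces you to consider correction vectors $\tilde\lambda_{d_{i-1}}$ at partial sums $d_{i-1}$ of either parity; their invariance under every $T_j$ with $j\leq d_{i-1}-1$ does hold, since the Picard--Lefschetz cancellations inside the chain $\lambda_{d_{i-1}},\lambda_{d_{i-1}-2},\dots$ are local and the only endpoints to check are safe (at the top because $T_{d_{i-1}}$ has been removed from $\mathscr{B}^{\mathfrak a}$, at the bottom because the absent $T_0$ and, in the odd case, the trivially acting $T_1$ cause no trouble). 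The payoff is twofold: you never need Case~3 of Lemma~\ref{linealg} (the single-block computation uses only Cases~1 and~2, after splitting off one trivial line in the odd case), and you derive (\ref{disccase}) from (\ref{conncase}) by peeling off two $\mathscr{B}^{\mathfrak a}$-fixed lines (the boundary vector $\lambda_0+\lambda_2+\cdots+\lambda_{a_1-1}$ and the relation vector $\partial'$), which exhibits the factor $4$ directly and makes the relation between the two formulas transparent rather than a byproduct of two parallel inductions. One small simplification to your last step: the $\mathscr{B}^{\mathfrak a}$-invariant complement to $\mathrm{Span}(\partial')$ in $V^+$ can be taken to be simply $\mathrm{Span}\{\lambda_i : 1\leq i\leq d-1,\ i\neq a_1\}$, which is $\mathscr{B}^{\mathfrak a}$-stable precisely because $T_{a_1}$ has been removed; no further block-by-block splitting argument is needed there.
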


\n{\em Proof.} 
We first prove (\ref{disccase}), starting with the case $\nep=0$.
We proceed by induction on $\nop$.

\smallskip
\begin{itemize}
\item
{\em Assume $\nop$=1.} 
Then ${\mathfrak a}=(a_1, a_2)$ and 
$$
{\mathscr B}^{\mathfrak a}={\mathscr B}^{a_1} \times {\mathscr B}^{a_2} \hbox{ is the group generated by }
T_1, \cdots , T_{a_1-1} , T_{a_1+1}, \cdots , T_{a_1+a_2-1}.
$$ 
Since $T_{a_1} \notin {\mathscr B}^{\mathfrak a}$, it follows from (\ref{trccmnd}) that the direct sum decomposition
$$
V_{\mathfrak a, {\rm disc}}=W_1\bigoplus W_2, \;\;
\hbox{ with }
W_1={\rm Span}\{\lambda_1, \cdots, \lambda_{a_1-1}\}, \quad W_2={\rm Span}\{\lambda_{a_1+1}, \cdots, \lambda_{a_1+a_2-1}\} ,
$$
is  ${\mathscr B}^{\mathfrak a}$-invariant.
Since furthermore, ${\mathscr B}^{a_1}$ acts trivially on $ \extal{W_2}$ and
${\mathscr B}^{a_2}$ acts trivially on $ \extal{W_1}$, we have 
$$
\left( \extal {V_{\mathfrak a, {\rm disc}}} \right)^{{\mathscr B}^{\mathfrak a}}\simeq 
\left( \extal{W_1}\right)^{{\mathscr B}^{\mathfrak a}} \bigotimes  \left(\extal{W_2}\right)^{{\mathscr B}^{\mathfrak a}}
\simeq
\left( \extal{W_1}\right)^{{\mathscr B}^{a_1}} \bigotimes  \left(\extal{W_2}\right)^{{\mathscr B}^{a_2}}.
$$
We now apply twice Lemma \ref{linealg}, case 1, setting first $U=W_1$ and $2m=a_1-1$, and then $U=W_2$, and $2m=a_2-1$,
to find $\dim \left( \extal {V_{\mathfrak a, {\rm disc}}}\right)^{{\mathscr B}^{\mathfrak a}}= \frac{1}{4}(a_1+1)(a_2+1)$.

\item
Assume the statement is proved for every multiplicity type ${\mathfrak a}$ with $\nep=0$ and $\nop \leq k$, and let 
${\mathfrak a}'= (  {\mathfrak a}  , a_{2k+1}, a_{2k+2})$, with ${\mathfrak a}:=(a_1, \cdots, a_{2k})$.
Set $d':=  d+a_{2k+1}$ and $d'':=d+a_{2k+1} +a_{2k+2}$. 
We have 
${\mathscr B}^{{\mathfrak a}'} \simeq {\mathscr B}^{{\mathfrak a}} \times {\mathscr B}^{\mathfrak b}$, where 
$$
{\mathscr B}^{\mathfrak b} \simeq  {\mathscr B}^{a_{2k+1}}\times {\mathscr B}^{a_{2k+2}} \hbox{ is the subgroup generated by }
T_{d+1}, \cdots , T_{d'-1},T_{d'+1}, \cdots T_{d''-1}.
$$
The subspace $V_{{\mathfrak a, {\rm disc}}}= {\rm Span }\{ \cl{}{i}\}_{\stackrel{i \in \{1, \cdots , d-1\},}{ i\neq a_1}}  \subseteq V_{{\mathfrak a}'}$ is ${\mathscr B}^{{\mathfrak a}'}$-invariant,
and the subgroup ${\mathscr B}^{\mathfrak b}$ acts trivially on it by (\ref{trccmnd}). Hence 
$$
\left(\bigwedge V_{{\mathfrak a, {\rm disc}}}\right)^{{\mathscr B}^{{\mathfrak a}'}}=\left(\bigwedge V_{\mathfrak a, {\rm disc}}\right)^{{\mathscr B}^{\mathfrak a}} \hbox{ and }
\dim \left(\bigwedge V_{\mathfrak a, {\rm disc}}\right)^{{\mathscr B}^{\mathfrak a}}=\frac{1}{4}\prod_{i=1}^{2k} (a_i+1),
$$
by the inductive hypothesis.

The subspace ${\rm Span } \{\cl{}{d}, \cdots, \cl{}{d''-1}\}$, however, is not ${\mathscr B}^{{\mathfrak a}'}$ invariant as $T_{d-1}(\cl{}{d})=\cl{}{d}-\cl{}{d-1}$.
We correct this by introducing $\widehat{\lambda}_{d}:=  \cl{}{d}+ \cl{}{d-2}+ \cdots +  \cl{}{d_{2k-1}+1} $;
by (\ref{trccmnd}) we have $T_j(\widehat{\lambda}_{d})=\widehat{\lambda}_{d}$ if  $j \neq d_{2k-1}, d+1$.
Since $T_{d_{2k-1}} \notin {\mathscr B}^{{\mathfrak a}'}$, while $T_{d+1}(\widehat{\lambda}_{d})=\widehat{\lambda}_{d}+\lambda_{d+1}$,  the subspace 
$$W:= {\rm Span}\{\widehat{\lambda}_{d}, {\cl{}{d+1}}, \cdots,    \cl{}{d''-1}  \}$$ is ${\mathscr B}^{{\mathfrak a}'}$-invariant.  
The decomposition $V_{{\mathfrak a}',{\rm disc}}=V_{{\mathfrak a},{\rm disc}} \oplus W$ is hence ${\mathscr B}^{{\mathfrak a}'}$-invariant and 
${\mathscr B}^{{\mathfrak a}}$ acts trivially on $W$. Since $T_d \notin {\mathscr B}^{{\mathfrak a}'}$, we can apply  case 3 of lemma \ref{linealg} to $ U=W$ with $t=a_{2k+1}$.
The statement is now proved for every  ${\mathfrak a}$ such that $\nep=0$.

\end{itemize}
{\em Case $\nep>0$.}
Assume the statement is proved for every ${\mathfrak a}$ with $\nep \leq k$. 
Let ${\mathfrak a}':=({\mathfrak a}, a_{2\nop+\nep})$ and let
 $d':=  d+a_{2\nop+\nep}$.
Just as in the case above, we set 
$$W:= {\rm Span}\{\widehat{\lambda}_{d}, {\cl{}{d+1}}, \cdots,    \cl{}{d'-1}  \},$$ 
where $\widehat{\lambda}_{d}:= \cl{}{d}  + \cl{}{d-2}+ \cdots +  \cl{}{d_{2\nop -1}+1}  $, we have a 
$ {\mathscr B}^{{\mathfrak a}'}$-invariant decomposition
$V_{{\mathfrak a}',{\rm disc}}=V_{{\mathfrak a},{\rm disc}} \oplus W$ with the property that ${\mathscr B}^{{\mathfrak a}}$ acts trivially on  $W$
and ${\mathscr B}^{a_{2\nop+\nep}}$ acts trivially on $V_{{\mathfrak a},{\rm disc}}$; 
we may thus apply case 2 of lemma \ref{linealg} to $U=W$ with $2m=a_{2\nop+\nep}$.

\bigskip
\n
The proof of (\ref{conncase}) goes along the same lines as that of (\ref{disccase}), so we will skip some details. 
We proceed by induction on $\nop +\nep$.
\begin{itemize}
\item
Assume $\nep=1,$ $\nop=0$. Then ${\mathfrak a}=(a_1)$ with $a_1$ even,  and $V_{\mathfrak a, {\rm conn }}$ is generated by $\lambda_0, \cdots , \lambda_{a_1-1}$
with the action of the group generated by $T_1, \cdots, T_{a_1-1}$. This is, up to an obvious renumbering, precisely case 2 of Lemma \ref{linealg}, which gives 
$\dim (\extal{ V_{\mathfrak a, {\rm conn }}})^{{\mathscr B}^{\mathfrak a}}=a_1+1$.
\item 
Assume instead $\nop=1,$ $\nep=0$.
Then ${\mathfrak a}=(a_1, a_2)$ with $a_1, a_2$ odd, $d=a_1+a_2$, and $V_{\mathfrak a, {\rm conn }}$ is generated by $\lambda_0, \cdots , \lambda_{d-1}$
with the action of the group generated by $T_1, \cdots, T_{a_1-1},T_{a_1+1}, \cdots, T_{d-1}$. This is, up to an obvious renumbering, case 3 of Lemma \ref{linealg}
with $t=a_1$, $2m=a_1+a_2$, and we obtain $\dim (\extal{ V_{\mathfrak a, {\rm conn }}})^{{\mathscr B}^{\mathfrak a}}=(a_1+1)(a_2+1)$. 
\item
Assume the statement is proved for all $\mathfrak a$ with $\nop+\nep\leq k$. Given ${\mathfrak a}'$ with $\nop+\nep=k+1$, one needs to consider two cases:
\begin{itemize}
\item 
${\mathfrak a}':=({\mathfrak a}, a)$ with $a$ even. Defining $\widehat{\lambda}_{d}:= \cl{}{d}  + \cl{}{d-2}+ \cdots +  \cl{}{d_{0}}$, we have a   
${\mathscr B}^{{\mathfrak a}'}$-invariant decomposition $V_{{\mathfrak a}',{\rm conn}}=V_{{\mathfrak a},{\rm conn}} \oplus W$, with 
$ W:= {\rm Span}\{\widehat{\lambda}_{d}, {\cl{}{d+1}}, \cdots,    \cl{}{d+a-1}  \} $, and we proceed as above, applying case 2 of lemma \ref{linealg} to $U=W$ with 
$2m=a$.
\item
${\mathfrak a}':=({\mathfrak a}, a',a'')$ with $a',a''$ odd. Defining $\widehat{\lambda}_{d}:= \cl{}{d}  + \cl{}{d-2}+ \cdots +  \cl{}{d_{0}}$, we have a   
${\mathscr B}^{{\mathfrak a}'}$-invariant decomposition $V_{{\mathfrak a}',{\rm conn}}=V_{{\mathfrak a},{\rm conn}} \oplus W$, with 
$W:= {\rm Span}\{\widehat{\lambda}_{d}, {\cl{}{d+1}}, \cdots,    \cl{}{d+a'+a''-1}  \} $, and we proceed as above, applying case 3 of lemma \ref{linealg} to $U=W$ with 
$2m=a$.
\end{itemize}
\end{itemize}
\blacksquare

\n {\em Proof of  Theorem \ref{mainest}.}
In the case in which $\widehat{\curv }$ is disconnected, the decomposition (\ref{splitCdisc})
$H_1(\curv_{\underline{u}}) = H_{\mathfrak a,{\rm disc}} \oplus H_1(\curv )^{\oplus 2}$, 
 the fact that $\pi_1(\dist_{\rm reg}, \underline{u})$ acts trivially on the $4g$-dimensional space $H_1(\curv )^{\oplus 2}$,
 the identification of $\pi_1(\dist_{\rm reg},\underline{u})$ with ${\mathscr B}^{\mathfrak a}$ acting on $H_{\mathfrak a,{\rm disc}} $ as described in (\ref{trccmnd}) and case 1 of Proposition \ref{justlinearalg} applied to  $H_{\mathfrak a,{\rm disc}} $ imply that
 $$
\left( \extal{H_1(\curv_{\underline{u}})} \right)^{\pi_1(\dist_{\rm reg},\underline{u})}=
\left( \extal{H_{\mathfrak a,{\rm disc}}} \right)^{{\mathscr B}^{\mathfrak a}}\bigotimes \extal{ \left( H_1(\curv)^{\oplus 2}\right)},$$
and
$$
\dim \left( \extal{H_1(\curv_{\underline{u}})}\right)^{\pi_1(\dist_{\rm reg},\underline{u})}=2^{4g}\left(\frac{1}{4} \prod (a_i+1)\right)=2^{4g-2} \prod (a_i+1)
 $$
In a completely analogous way, in the case in which $\widehat{\curv }$ is connected, the decomposition (\ref{splitCconn}) $H_1(\curv_{\underline{u}})= H_{\mathfrak a,{\rm conn}}  \bigoplus H_1(C')$,  the fact that $\pi_1(\dist_{\rm reg}, \underline{u})$ acts trivially on the $4g-2$-dimensional space $H_1(C')$, 
the identification of $\pi_1(\dist_{\rm reg},\underline{u})$ with ${\mathscr B}^{\mathfrak a}$ acting on $H_{\mathfrak a,{\rm conn}} $ as described in (\ref{trccmnd}), and case 2 of Proposition \ref{justlinearalg} applied to  $H_{\mathfrak a,{\rm conn}}$, imply that
$$
\left( \extal{H_1(\curv_{\underline{u}})} \right)^{\pi_1(\dist_{\rm reg},\underline{u})}=  \left( \extal{H_{\mathfrak a,{\rm conn}}} \right)^{{\mathscr B}^{\mathfrak a}}\bigotimes \left( \extal{  H_1(C ')} \right), 
 $$
and
$$
\dim \left( \extal{H_1(\curv_{\underline{u}})} \right)^{\pi_1(\dist_{\rm reg},\underline{u})}= 2^{4g-2} \prod (a_i+1).
 $$
\blacksquare

\subsubsection{Proof of Theorem  \ref{mainmonthm}: Back to the spectral curve}
\label{bsc}

We resume the notations of the statement of Theorem \ref{mainmonthm}.
Let $\dist$ be a distinguished neighborhood of $\Theta(s) \in \curv^{(2d)}$.
For a small enough neighborhood $N$ of $s \in \basegood$, we have $\Theta(N)\subseteq  \dist$
and $\Theta(N\cap \basesm )\subseteq  \dist_{\rm reg}$. Let $\Phi_{\dist}$ be the family constructed in 
Proposition \ref{univfam} associated with the choice of the square root ${\mathcal O}_{\curv}(D)$ of 
${\mathcal O}_{\curv}(\Theta(s))\simeq {\mathcal O}_{\curv}(2D)$. By point 4. in Proposition \ref{univfam}, 
the restriction of the spectral curve family to $N\cap \basesm$ is the pullback via $\Theta$ of
$\Phi_{  \dist_{{\rm reg}} }$, hence, by Corollary \ref{locsys}
and by the base change theorem for proper maps, we have the isomorphisms of local systems on 
$N \cap \basesm$:

$$
R^l\hitmapsmooth_* \rat \simeq \Theta^*
\left(\bigwedge^l {R^1 \Phi_{\dist_{\rm reg}}}_* \rat \right).
$$
As the stalk $(R^0 j_*R^l\hitmapsmooth_* \rat)_s$ is the direct limit
over the set of neighborhoods $N$  of $s$ in $\base$ of the 
space of monodromy invariants of the local system
$R^l\hitmapsmooth_* \rat $ in  $N\cap \basesm$, the statement  follows from Theorem \ref{mainest} 
and Remark \ref{connect}.
\blacksquare

\medskip
As shown just after the statement of \ref{mainmonthm},
 we have also completed the proof of Theorem \ref{final}.
 This latter result  has the following consequence:

\begin{corollary}
\label{btnmbrjccom}
Let $s \in \basegood$,   and let $\curv_s, \widetilde{g}, O, E$ be as in the statement of 
Theorem {\rm \ref{mainmonthm}}.  Then
\begin{enumerate}
\item
the spectral sequence (\ref{clospseq}) degenerates at $E_1$.
\item
The Poincar\'e polynomial of the fiber $\hitmap^{-1}(s)\simeq \compjac{\curv_s}^0$  is
\begin{equation}
\label{genfnccmpjcb}
\sum_l t^lb_l(\hitmap^{-1}(s))=(1+t)^{2\widetilde{g}} \left( \prod_{c \in O} \left( 1+t+\ldots+t^{2\delta_c}\right) \right)\left(\prod_{c \in E}
\left(1+t^2+\ldots +t^{\delta_c}\right)\right). 
\end{equation}
\end{enumerate}
\end{corollary}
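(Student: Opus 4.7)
\noindent
The plan is to extract both parts of the corollary from the equality case of the inequality in Theorem \ref{estbncj}. Combining Theorem \ref{final} with Theorem \ref{mainmonthm} and Lemma \ref{tot} gives
\begin{equation*}
\sum_l b_l(\compjac{\gencurv_s}^0) = 2^{2\widetilde{g}}\prod_{c \in O}(2\delta_c+1)\prod_{c \in E}(\delta_c+1) = \sum_{I \in \pos}\sum_l b_l(\jac{\gencurv_I}^0).
\end{equation*}
By the argument in the proof of Proposition \ref{spsqnc}, the rightmost quantity equals the total dimension of the $E_1$-page of the spectral sequence (\ref{clospseq}), while the leftmost is the total dimension of its $E_\infty$-page. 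As subsequent differentials can only decrease total dimensions, equality forces $E_r = E_1$ for every $r$, and hence $E_1 = E_\infty$. This proves part (1).

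\noindent
For part (2), the $E_1$-degeneration yields the identity of Poincar\'e polynomials
\begin{equation*}
P(\compjac{\gencurv_s}^0, t) = \sum_{I \in \pos} P_c(\jac{\gencurv_I}^0, t).
\end{equation*}
By Theorem \ref{genfctspic}, each $\jac{\gencurv_I}^0$ is an extension of an abelian variety of dimension $\widetilde{g}$ by $(\comp^\times)^{o_I}\times \comp^{\sum_c \delta'_c(I) - o_I}$, where $o_I$ is the number of odd singular points of the partial normalization $\gencurv_I$ and the $\delta'_c(I)$ are its delta invariants. As a connected commutative complex Lie group it deformation retracts onto its maximal compact torus of real dimension $2\widetilde{g}+o_I$, so $P(\jac{\gencurv_I}^0, t) = (1+t)^{2\widetilde{g}+o_I}$. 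Poincar\'e duality on this smooth orientable variety of complex dimension $\widetilde{g}+\sum_c\delta'_c(I)$ then gives
\begin{equation*}
P_c(\jac{\gencurv_I}^0, t) = t^{2\sum_c\delta'_c(I) - o_I}(1+t)^{2\widetilde{g}+o_I}.
\end{equation*}

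\noindent
To finish, the poset $\pos$ factors as a product of chains $\{0, 1, \ldots, \delta_c\}$ indexed by the original singular points of $\gencurv_s$, and the exponents $o_I$ and $\sum_c\delta'_c(I)$ decompose as sums of independent local contributions at each $c$: choosing $i_c$ blow-ups at $c$ replaces the $A_k$ singularity by $A_{k-2i_c}$ with delta invariant $\delta_c - i_c$ when $i_c < \delta_c$, and otherwise resolves it entirely. Summing the local contribution over $i_c \in \{0, \ldots, \delta_c\}$ at each $c$ separately produces, by an elementary geometric-series computation, the local factors $1+t+\cdots +t^{2\delta_c}$ at points $c \in O$ and $1+t^2+t^4+\cdots +t^{2\delta_c}$ at points $c \in E$; multiplying these by the global factor $(1+t)^{2\widetilde{g}}$ yields formula (\ref{genfnccmpjcb}). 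The main work is this local-to-global bookkeeping; the deeper input is the equality case of Theorem \ref{estbncj}, which rests on all the preceding material in \S\ref{prfmaintm}.
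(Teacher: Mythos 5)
Your argument for part (1) is the same as the paper's: comparing the total dimension of the $E_1$-page (via Lemma \ref{tot}) with the total dimension of the $E_\infty$-page (via Theorems \ref{final} and \ref{mainmonthm}) and concluding degeneration from their equality.

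For part (2) you take a genuinely different route. The paper is terse here: it extracts the Poincar\'e polynomial directly from the stalks $(R^0 j_* R^l \hitmapsmooth_* \rat)_s$ by re-tracing, degree by degree $l$, the dimensions of the monodromy invariants produced in \S\ref{compmoninv} (i.e., reading off the degrees of the invariants $\Omega_1^k\otimes\Omega_2^l$, $c_{[a,b]}$, etc.\ in Lemma \ref{linealg} and Proposition \ref{justlinearalg}). You instead use the $E_1$-degeneration just obtained in part (1) to write $P(\compjac{\gencurv_s}^0,t)=\sum_{I\in\pos} P_c(\jac{\gencurv_I}^0,t)$, compute each compactly supported Poincar\'e polynomial via Poincar\'e duality on the smooth stratum $\jac{\gencurv_I}^0$ together with the structure of connected commutative algebraic groups from Theorem \ref{genfctspic}, and then factor the resulting sum over the product of chains $\prod_a\{0,\ldots,\delta_{c_a}\}$. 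This is a clean, self-contained alternative: it avoids re-examining the linear algebra of \S\ref{compmoninv} for degree information, trading it for a combinatorial identity which your geometric-series verification handles correctly (the bookkeeping $o_I=\#\{a\in O:\; i_a<\delta_a\}$ and $\sum_c\delta_c'(I)=\sum_a(\delta_a-i_a)$ is the crux, and you get it right). Your approach also promotes Lemma \ref{tot} from a numerical identity at $t=1$ to a polynomial identity. Incidentally, the exponent $t^{2\delta_c}$ in your even local factor is the correct one; the printed formula (\ref{genfnccmpjcb}) has $t^{\delta_c}$ there, which appears to be a typo since evaluating at $t=1$ must return $\delta_c+1$.
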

\n{\em Proof.}
It follows from Theorem \ref{estbncj} and Theorem \ref{mainmonthm} that  
$$
\sum_{p,q} \dim{ E_{\infty}^{p,q}}= \sum_l b_l(\hitmap^{-1}(s))=  
2^{2\widetilde{g}}  \left( \prod_{c \in O}(2\delta_c+1)\right)\left(\prod_{c \in E}(\delta_c+1)\right)=\sum_{p,q} \dim{ E_1^{p,q}},
$$
hence all the differentials in the spectral sequence (\ref{clospseq})  are forced to vanish, proving point 1. 
Point 2. follows immediately by the equality 
$(R^0 j_*R^l\hitmapsmooth_* \rat)_s=b_l(\compjac{\curv_s})$ of Theorem \ref{final}, keeping track of
the cohomological degrees of the monodromy invariants.
\blacksquare

\subsection{The cases $\SL_2$ and $\PGL_2$ }
\label{comp3gra}

We now extend the main result found for $\hitmap$, Corollary \ref{icareshfongl}, to $\hitmaph$.
In order to do this, we need to discuss how $\higgsbu,$ $\mdpgl,$ $\mdsl$ and the relative maps 
$\hitmap,$ $\hitmaph,$ $\hitmapc$ introduced in \S\ref{hitchmap} are related.
As in \S \ref{modhiggsbnd}, we denote by   
$\Gamma := \Jac^0_C[2] \simeq   \Z_2^{2g}$ the group of  points of order two in $\Jac^0_C$,
by  $\higgsbu^0 \subseteq \higgsbu$ be the subset of stable Higgs bundles with traceless
Higgs field:
$$
\higgsbu^0= \{(E,\phi ) \hbox{ with }{\rm tr}(\phi)=0 \}.
$$
We denote by $\hitmap^0:\higgsbu^0 \lorw  \base^0$ the restriction
of the Hitchin map, where, we recall, $\base^0:= H^0(\curv, 2D) \subseteq \base$.

\medskip
\n
There are natural maps, easily seen to be isomorphisms of algebraic varieties: 
\[
sq:H^0(\curv, D) \times  \higgsbu^0   \lorw \higgsbu, 
\qquad
sq': H^0(\curv, D) \times \base^0  \lorw 
 H^0(\curv, D) \times \base^0 =\base
\]
given by
\[
sq: \left( (E, \phi),s\right) \longmapsto \left( E, \phi + \frac{s}{2} \otimes 1_E \right),
\qquad
sq': 
(v, u) \longmapsto \left(v, u + \frac{v^{\otimes 2}}{4}\right),\]
making the following into a commutative diagram:

\begin{equation}
\label{prhitmp}
\xymatrix{
\higgsbu^0 \ar[d]^{\hitmap^0} &  H^0(\curv, D) \times \higgsbu^0 \ar[l]\ar[d]^{Id \times \hitmap^0} \ar[r]^-{sq}_-{\simeq} & \higgsbu \ar[d]^{\hitmap}\\
  \baseo      &     H^0(\curv, D) \times \baseo  \ar[l]_-{p} \ar[r]^-{sq'}_-{\simeq}& \base.
}
\end{equation}

\begin{remark}
\label{identiequal}
{\rm 
It follows from the commutative Diagram \ref{prhitmp} that we have an isomorphism
$$ 
\hitmap_* \rat_{\higgsbu} \simeq sq'_{*}p^* \hitmap^0_* \rat_{\higgsbu^0}.
$$
}
\end{remark}

\begin{remark}
\label{regandgoodlocus}
{\rm We clearly have (see Proposition \ref{singspcrv})
$$
\base^0_{\rm reg}=\basesm \cap \base^0= \{ s \in  H^0(\curv, 2D) \, |  \; s  \;
\hbox {has simple zeros}\}
$$
and 
$$
{sq '} ^{-1}(\basesm)= H^0(\curv, D) \times \base^0_{\rm reg}, \qquad    
{sq } ^{-1}(\higgsbusm)= H^0(\curv, D) \times \higgsbusm ^0 .
$$

Similarly,  
$$
\baseogood=\basegood \cap \baseo= \{ s \in  H^0(\curv, 2D) |\, \curv_s \hbox { is integral }\}
$$
and 
$$
{sq '} ^{-1}(\basegood)= H^0(\curv, D) \times \baseogood, \qquad    
{sq } ^{-1}(\higgsbugood)= H^0(\curv, D) \times {\hitmap^0} ^{-1}(\baseogood).
$$
}
\end{remark}

Diagram \ref{prhitmp}  and Remark  \ref{regandgoodlocus}  
reduce the study of $\hitmap: \higgsbu \lorw \base$ to that of $\hitmap^0: \higgsbu^0 \lorw \baseo$:
We can safely identify $ sq'^{*}\hitmap_* \rat_{\higgsbu}$ with $p^* \hitmap^0_* \rat_{\higgsbu^0}$, and
the main Theorem \ref{final} and its corollary \ref{icareshfongl}
hold for $\hitmap ^0$ on $\baseogood$, namely,
if $j: \baseosm \lorw \baseogood$ is the open imbedding, then
\begin{equation}
\label{mzerovabbene}
IC  \left(   R^l{\hitmap^0_{\rm reg}}_* \rat_{{\higgsbu}^0_{\rm reg}}      \right)_{|{\baseogood}}     
\simeq \left( R^0j_*  R^l {\hitmap^0_{\rm reg}}_* \rat_{{\higgsbu}^0_{\rm reg}}    \right) [\dim \baseo],
\hbox{ and }
{\hitmap ^0_*\rat_{\higgsbu^0}}_{| \baseogood} \simeq \bigoplus_l \left(R^0 j_*R^l {\hitmap^0_{\rm reg}}_* \rat_{{\higgsbu}^0_{\rm reg}}\right)[-l].
\end{equation}
\bigskip
For the other groups $\SL_2$ and $\PGL_2$, we have $$\mdsl=\lambda_{D}^{-1}((\Lambda,0)), \hbox{ and }  \mdpgl=\mdsl / \Gamma = \higgsbu^0 / \jac{\curv}^{\, 0},$$
and the corresponding Hitchin maps 
\[
\xymatrix{
 \mdsl \ar[rd]^{\check{q}}\ar[rdd]_{\hitmapc} \ar@{^{(}->}[rr] &  & \higgsbu^0 \ar[ld]_{q^0}\ar[ldd]^{\hitmap^0} \\
&  \mdpgl \ar[d]^{\hitmaph}&    \\
& \base^0 &
}
\]
where $\check{q}$ and $q^0$ are the two quotient maps.

\medskip
\n
The following is readily verified 
\begin{proposition}
\label{mapdifgrps}
The map 
\[
q:  \jac{\curv}^{\, 0} \times \mdsl \lorw \higgsbu ^0,
\qquad
\left(L,(E, \phi)\right) \longmapsto (E\otimes L, \phi \otimes 1_L ).
\]
is an unramified  Galois covering with group $\Gamma$, 
and there is a commutative diagram:
\begin{equation}
\label{diagdifgrps}
\xymatrix{
    \jac{\curv}^{\, 0} \times \mdsl \ar[rd]_{Id \times \check{q}} \ar[rrrr]^{q} \ar[dd]^(.61){\check{p_2}} \ar[rrd]^{[2]\times \check{q}}  & & & &\higgsbu^0\ar[llddd]^{\hitmap^0}\ar[lldd]_(.5){q^0} \ar[lld]_r  \\
& \jac{\curv}^{\, 0} \times \mdsl \ar[r]_{[2]\times Id}   & \jac{\curv}^{\, 0} \times \mdpgl\ar[d]^{\hat{p_2}}&     \\
 \mdsl \ar[rr]^{\check{q}}\ar[rrd]_{\hitmapc} & &\mdpgl\ar[d]_{\hitmaph}&     \\
 &  &\baseo ,& 
}
\end{equation}
where $r: \higgsbu^0 \lorw \jac{\curv}^{\, 0} \times \mdpgl$ sends 
$(E, \phi)$ to $(\det E \otimes \Lambda^{-1}, q^0(E, \phi))$, and
$[2]\times \check{q}$ sends $(L, (E,\phi))$ to 
$(L^{\otimes 2}, \check{q}(E,\phi))$.
\end{proposition}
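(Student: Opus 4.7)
The plan is to verify the two assertions separately: first that $q$ is an unramified Galois $\Gamma$-covering, and then that the various squares and triangles in the diagram commute by straightforward computation. I would begin by checking that $q$ is well defined: if $(E,\phi) \in \mdsl$ and $L \in \jac{\curv}^{\,0}$, then $\det(E \otimes L) = \Lambda \otimes L^{\otimes 2}$ has degree $1$, and $\tr(\phi \otimes 1_L) = \tr(\phi) = 0$; stability is preserved under twisting by a line bundle, so $(E \otimes L, \phi \otimes 1_L) \in \higgsbu^0$. To identify the fibres, given $(E',\phi') \in \higgsbu^0$, a preimage $(L,(E,\phi))$ must satisfy $E \simeq E' \otimes L^{-1}$ and the identification $\phi \otimes 1_L = \phi'$; the determinant condition $\det E \simeq \Lambda$ then forces $L^{\otimes 2} \simeq \det(E') \otimes \Lambda^{-1}$. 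Since multiplication by $2$ is surjective on $\jac{\curv}^{\,0}$, such $L$ exist, and the set of solutions is a torsor under $\Gamma = \jac{\curv}^{\,0}[2]$, so the fibre has exactly $|\Gamma| = 2^{2g}$ points.

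Next I would exhibit the Galois action explicitly: $\Gamma$ acts on $\jac{\curv}^{\,0} \times \mdsl$ by $\gamma \cdot (L,(E,\phi)) = (L \otimes \gamma^{-1}, (E \otimes \gamma, \phi \otimes 1_\gamma))$, which preserves $\mdsl$ because $\gamma^{\otimes 2} \simeq \mathcal{O}_{\curv}$ forces $\det(E \otimes \gamma) \simeq \Lambda$. One checks that $q(\gamma \cdot (L,(E,\phi))) = q(L,(E,\phi))$ and that the action is free (any fixed point would produce an isomorphism $(E,\phi) \simeq (E \otimes \gamma, \phi \otimes 1_\gamma)$, which, by the stability of $(E,\phi)$ and Schur's lemma, forces $\gamma \simeq \mathcal{O}_{\curv}$). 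Thus $q$ realises $\higgsbu^0$ as the quotient $\bigl(\jac{\curv}^{\,0} \times \mdsl\bigr)/\Gamma$, hence as an unramified Galois cover with group $\Gamma$.

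For the commutativity of \eqref{diagdifgrps}, each square or triangle reduces to a direct calculation. For the outermost triangle involving $\hitmap^0$, note that $\hitmap^0(E \otimes L, \phi \otimes 1_L) = (\tr(\phi \otimes 1_L), \det(\phi \otimes 1_L)) = (0, \det\phi) = \hitmapc(E,\phi)$ because the twist by $L$ does not affect the characteristic coefficients; this also shows the compatibility with $\hat{p}_2$ via the descent to $\mdpgl$. For the square defining $r$, one computes
\[
r\bigl(q(L,(E,\phi))\bigr) = r(E \otimes L, \phi \otimes 1_L) = \bigl(\det(E) \otimes L^{\otimes 2} \otimes \Lambda^{-1},\, q^0(E \otimes L, \phi \otimes 1_L)\bigr) = \bigl(L^{\otimes 2},\, \check{q}(E,\phi)\bigr),
\]
using $\det(E) \simeq \Lambda$ and the fact that $q^0$ factors through the $\jac{\curv}^{\,0}$-action; this is exactly $([2] \times \check{q})(L,(E,\phi))$. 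The remaining pieces, $q^0 \circ q = \check{q} \circ \check{p}_2$ composed with the obvious projections, follow from the same kind of manipulation.

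The only delicate point, and what I would regard as the ``main'' step, is establishing that the $\Gamma$-action is free (so that the quotient map really is an \emph{unramified} Galois cover and not merely a finite quotient map with stabilisers); as sketched above, this follows from the stability of the Higgs bundles in $\mdsl$ combined with the fact that the only line bundle $\gamma \in \Gamma$ admitting a nontrivial automorphism of a stable $(E,\phi)$ realising $(E,\phi) \simeq (E \otimes \gamma, \phi \otimes 1_\gamma)$ is the trivial one. Everything else is a mechanical check of determinants, traces, and the universal properties of the various quotients.
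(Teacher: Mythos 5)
Your overall structure is right and most of the verifications are correct, but the argument you single out as the ``main step'' --- freeness of the $\Gamma$-action --- is flawed, and the flaw is conceptually significant even though the conclusion holds for a simpler reason.

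You claim that a fixed point of $\gamma$ on $\jac{\curv}^{\,0}\times\mdsl$ would give an isomorphism $(E,\phi)\simeq(E\otimes\gamma,\phi\otimes 1_\gamma)$, and that stability plus Schur's lemma then force $\gamma\simeq\mathcal O_{\curv}$. This last implication is false. Schur's lemma controls $\End(E,\phi)$, not the existence of isomorphisms $(E,\phi)\to(E\otimes\gamma,\phi\otimes 1_\gamma)$, and in fact such isomorphisms do exist for nontrivial $\gamma$: the $\Gamma$-action on $\mdsl$ is \emph{not} free. This is exactly what produces the finite quotient singularities of $\mdpgl=\mdsl/\Gamma$, and it is the source of the nontrivial variant cohomology $H^*_{{\rm var}}(\slM)$ studied in \S\ref{SL2} (whose support is the endoscopic locus). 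Roughly, if $\gamma$ is a $2$-torsion line bundle with associated \'etale double cover $\pi_\gamma\colon\curv_\gamma\to\curv$, then any stable $(E,\phi)$ obtained by pushing forward a rank-one Higgs datum from $\curv_\gamma$ satisfies $(E,\phi)\simeq(E\otimes\gamma,\phi\otimes 1_\gamma)$. So your stated argument proves something untrue and cannot be the source of freeness.

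The correct observation is simpler and uses only the first factor, which you ignore: a fixed point $(L,(E,\phi))$ of $\gamma$ must also satisfy $L\otimes\gamma^{-1}=L$ in the group $\jac{\curv}^{\,0}$, and translation in a group is free, hence $\gamma=\mathcal O_{\curv}$. This is the whole content of the freeness assertion; the ``anti-diagonal'' action you wrote down is free precisely because the $\jac{\curv}^{\,0}$-factor absorbs the twist without any condition on $(E,\phi)$. Once this is corrected, the rest of your verification (fibres being $\Gamma$-torsors via solvability of $L^{\otimes 2}\simeq\det E'\otimes\Lambda^{-1}$, and the diagram commutativity checks, in particular $r\circ q=[2]\times\check q$) is accurate and matches the spirit in which the paper presents the proposition as ``readily verified.''
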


\begin{remark}
\label{quoprod}
{\rm The map $[2]\times \check{q}:    \jac{\curv}^{\, 0} \times \mdsl \lorw \jac{\curv}^{\, 0} \times \mdpgl $ is the quotient map relative to the
diagonal  action of $\Gamma \times \Gamma$ on  $\jac{\curv}^{\, 0} \times \mdsl$.
}
\end{remark}

Proposition \ref{mapdifgrps} implies that 
\[  H^*(\higgsbu) \simeq H^*(\higgsbu^0) \simeq  \left( H^*(\mdsl )\otimes H^*( \Jac^0_{\curv}  )    \right)^{ \Gamma }\simeq H^*(\mdpgl )\otimes H^*( \Jac^0_{\curv}  ) .\]
The last isomorphism follows from the fact that the action of $ \Gamma$ on  $H^*(\Jac^0_C)$ is trivial,  
as it is the restriction to a subgroup of the action of the connected group $\Jac^0_C$.

\medskip
Before stating Theorem \ref{identify}, 
which gives a refinement of the isomorphism above at the 
level of derived categories, we make some general remarks 
on actions of finite abelian groups and the splitting 
they induce on complexes. 

For ease of exposition, until further notice, we work with the constructible
derived category of sheaves of {\em complex}  vector spaces.
Let $K$ be an object of $D_{\ai}$ and suppose that a finite abelian group $\Gamma$ acts
on  the right on $K$,  i.e. that we are given a representation
$\Gamma \to ({\rm Aut}_{D_A} (K))^{op}$. 
It then follows from \ci{cortihana}, 2.24, (see also  \cite{laumonngo}  Lemma 3.2.5)  that there is a character decomposition 
 \begin{equation}
 \label{isort}
K \simeq \bigoplus_{\zeta \in \hat{\Gamma} } K_{\zeta} .
 \end{equation}
where $\hat{\Gamma}$ denotes the group of characters of $\Gamma$.

Suppose  $\Gamma$
acts on  the left on  an algebraic variety $\mi$,
and let  $\mi \stackrel{q}{\lorw} \mi / \Gamma $ be  the quotient map.
Since $q$ is finite, the derived direct image complex
$q_* \comp_{\mi}$ is a sheaf.
Clearly,   $\Gamma$  acts on 
$q_* \comp_{\mi}$ on the right via pull-backs,
and \ref{isort} above boils down to the canonical decomposition 
in the Abelian category of  sheaves
$$
\bigoplus_{\zeta \in  \hat{\Gamma} } L_{\zeta} \simeq q_* \comp_{\mi}.
$$

Let $\hi : \mi \lorw \ai $ be a proper map which is $\Gamma$-equivariant. 
We have the commutative diagram
$$
\xymatrix{
\mi \ar[d]^{\hi} \ar[r]^q & \mi/\Gamma  \ar[ld]^{\hi'} \\
\ai &
}
$$
and, by using ${h'}_* q_*= h_*$, we get
the  canonical identification.
\begin{equation}
\label{decoprima}
\bigoplus_{\zeta \in \hat{\Gamma}}  \hi'_*   L_{\zeta} \simeq \hi_*\comp_{\mi}.
\end{equation}
Clearly, $h_* \comp_{\mi}$ is endowed with the $\Gamma$-action, 
and (\ref{decoprima}) is just its character decomposition, namely
$\left(\hi_*\comp_{\mi}\right)_{\zeta}=\hi'_*   L_{\zeta}$.

In particular, taking the trivial representation  $\rho=1$,  we have  
$L_1=(q_*\comp_{\mi})^{\Gamma} \simeq \comp_{\mi/\Gamma}$ and 
we  thus identify  the direct image
$\hi'_*\comp_{\mi/\Gamma}=\hi'_*((q_*\comp_{\mi})^{\Gamma}) $  with 
the canonical direct summand (which we may call the $\Gamma$-invariant part)
$\left(\hi_*\comp_{\mi}\right)^{\Gamma}:= (\hi_*\comp_{\mi})_1$ 
of $\hi_*\comp_{\mi}$.

\medskip
Let us go  back to our situation where 
 $\Gamma \simeq \Z_2^{2g}$. In this case,
 the characters are all $\{\pm 1\}$-valued,
and  we can safely return to rational coefficients.

From what above and the diagram  (\ref{diagdifgrps}), it follows that 
$(\check{q}\circ \check{p_2})_* \rat_{\jac{\curv}^{\, 0} \times \mdsl}=
(q^0\circ q)_*\rat_{\jac{\curv}^{\, 0} \times \mdsl}$ contains $q^0_*\rat_{\higgsbu^0}$
and $ (\check{q}\circ \check{p_2})_* \rat_{\mdsl }$

\begin{theorem}
\label{identify}
There are canonical isomorphisms in $D_{\baseo}$:
\begin{equation}
\label{identifica}
\hitmap^0_*\rat_{\higgsbu^0}\simeq 
\bigoplus_{i \in \nat} \bigwedge^i  H^1(C)\otimes \hitmaph_* \rat_{\mdpgl} [-i]\simeq
\bigoplus_{i \in \nat} \bigwedge^i  H^1(C)\otimes (\hitmapc_* \rat_{\mdsl})^{\Gamma} [-i].
\end{equation}
\end{theorem}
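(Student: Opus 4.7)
The plan is to pull back the computation via the finite \'etale $\Gamma$-cover $q:\jac{\curv}^{\,0}\times\mdsl\to\higgsbu^0$ of Proposition \ref{mapdifgrps}, apply K\"unneth on the product, and then descend by taking $\Gamma$-invariants. This will produce both isomorphisms of \eqref{identifica} in one stroke, since the second one is just the identity $(\hitmapc_*\rat_{\mdsl})^{\Gamma}\simeq\hitmaph_*\rat_{\mdpgl}$ coming from $\mdpgl=\mdsl/\Gamma$ with $\check q$ a quotient by a free $\Gamma$-action.

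Concretely, since $q$ is Galois with group $\Gamma$ acting diagonally, one has the canonical identification $\rat_{\higgsbu^0}\simeq (q_*\rat_{\jac{\curv}^{\,0}\times\mdsl})^{\Gamma}$, and the commutativity $\hitmap^0\circ q=\hitmapc\circ\check{p_2}$ from diagram \eqref{diagdifgrps} then yields
$$
\hitmap^0_*\rat_{\higgsbu^0}\simeq\bigl(\hitmapc_*(\check{p_2})_*\rat_{\jac{\curv}^{\,0}\times\mdsl}\bigr)^{\Gamma}.
$$
The map $\check{p_2}:\jac{\curv}^{\,0}\times\mdsl\to\mdsl$ is a trivial bundle with fiber $\jac{\curv}^{\,0}$, so the K\"unneth isomorphism in the derived category, together with the canonical identification $H^1(\jac{\curv}^{\,0})\simeq H^1(\curv)$, gives
$$
(\check{p_2})_*\rat_{\jac{\curv}^{\,0}\times\mdsl}\simeq\bigoplus_i H^i(\jac{\curv}^{\,0})\otimes\rat_{\mdsl}[-i]\simeq\bigoplus_i\bigwedge\nolimits^i H^1(\curv)\otimes\rat_{\mdsl}[-i].
$$
The diagonal $\Gamma$-action acts on the first factor $\jac{\curv}^{\,0}$ by translations and on the second factor $\mdsl$ by tensorization with $2$-torsion line bundles. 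Because translations in the connected group $\jac{\curv}^{\,0}$ are isotopic to the identity, they act trivially on $H^*(\jac{\curv}^{\,0})$; hence taking $\Gamma$-invariants commutes with the K\"unneth splitting and affects only the second tensor factor, which, via $(\hitmapc_*\rat_{\mdsl})^{\Gamma}\simeq\hitmaph_*\rat_{\mdpgl}$, yields \eqref{identifica}.

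The only real subtlety is ensuring that the K\"unneth isomorphism above is genuinely $\Gamma$-equivariant and that the induced $\Gamma$-action on the $H^*(\jac{\curv}^{\,0})$-factor is trivial in the derived category, not merely on cohomology sheaves. This rests on the classical fact that translations by elements of a connected topological group act trivially on its own cohomology, which here upgrades to a statement in $D_{\mdsl}$ because the $\Gamma$-action on $\jac{\curv}^{\,0}$ acts on the first factor of a literal product alone. With that bookkeeping in place, the identification of $\Gamma$-invariants with $\hitmaph_*\rat_{\mdpgl}$ is immediate, and both claimed isomorphisms of \eqref{identifica} follow.
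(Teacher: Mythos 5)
Your argument is essentially correct, but it takes a genuinely different route than the paper's, and there are a couple of points worth flagging.

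The paper does not push forward directly along $\check{p_2}$ and take $\Gamma$-invariants; instead it factors the covering through $[2]\times\check q\colon \jac{\curv}^{\,0}\times\mdsl\to\jac{\curv}^{\,0}\times\mdpgl$, which is a quotient by $\Gamma\times\Gamma$ acting as a \emph{product}. This lets the paper apply K\"unneth to a genuine external product of two quotient maps, so no equivariance of K\"unneth with respect to a diagonal action ever needs to be checked: the character decomposition $([2]\times\check q)_*\rat\simeq\bigoplus_{\zeta,\zeta'}L_\zeta\boxtimes M_{\zeta'}$ is immediate, and $r_*\rat_{\higgsbu^0}$ is then read off as the diagonal part $\bigoplus_\zeta L_\zeta\boxtimes M_\zeta$. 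The key input is $(\hat{p_2})_*L_\zeta=0$ for $\zeta\neq 1$, which is exactly the vanishing of the cohomology of a nontrivial character local system on a torus. Your proof uses the same underlying fact in a different form --- translations on a connected group act trivially on $H^*$, upgraded to $D(\mathrm{Vec})$ because morphisms there are detected on cohomology --- but you apply it directly to $\check{p_2}_*\rat$ with the diagonal $\Gamma$-action. Your own ``only real subtlety'' is precisely what the paper's detour through $[2]\times\check q$ is designed to avoid: one must verify that the K\"unneth isomorphism $\check{p_2}_*\rat\simeq R\Gamma(\jac{\curv}^{\,0},\rat)\otimes\rat_{\mdsl}$ is compatible with the diagonal $\Gamma$-equivariant structures, which requires the functoriality of K\"unneth for a product of maps $\gamma_1\times\gamma_2$, not merely that the action on the first factor is trivial in cohomology. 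Your phrasing ``the $\Gamma$-action on $\jac{\curv}^{\,0}$ acts on the first factor of a literal product alone'' glosses over this: $\Gamma$ acts diagonally, not on the first factor alone, so the statement needs the explicit factorization $\gamma = \gamma_1\times\gamma_2$ together with functoriality to justify it. With that made precise, your argument works, and in fact is arguably more direct.

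One small inaccuracy: you call $\check q\colon\mdsl\to\mdpgl$ a quotient by a \emph{free} $\Gamma$-action. It is not free --- the paper states that $\mdpgl$ has finite quotient singularities, which would be impossible if the action on the nonsingular $\mdsl$ were free. (The \emph{diagonal} action on $\jac{\curv}^{\,0}\times\mdsl$, i.e.\ the deck group of $q$, is free; the action on $\mdsl$ alone is not.) This does not affect the conclusion $(\check q_*\rat_{\mdsl})^\Gamma\simeq\rat_{\mdpgl}$, which holds for any quotient by a finite group with $\rat$-coefficients, but the justification should not invoke freeness.
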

\n{\em Proof. }
Consider the diagram (\ref{diagdifgrps}).
As noticed in Remark \ref{quoprod}, the map $[2]\times \check{q}$ is the quotient by the action of $\Gamma \times \Gamma$.
Consider the character decompositions 
$$
[2]_* \rat_{\jac{\curv}^{\, 0}} \simeq  \bigoplus_{\zeta \in \hat{\Gamma}}L_{\zeta},
\qquad
\check{q}_*\rat_{\mdsl}\simeq \bigoplus_{\zeta \in \hat{\Gamma}}M_{\zeta}.
$$
The K\"unneth formula gives the following canonical isomorphisms in 
$D_{\jac{\curv}^{\, 0} \times \mdpgl}$:

\begin{equation}
\label{urk1}
([2]\times \check{q})_* \rat_{\jac{\curv}^{\, 0} \times \mdsl} \simeq 
[2]_* \rat_{\jac{\curv}^{\, 0}} \boxtimes \check{q}_* \rat_{\mdsl}
\simeq \bigoplus_{(\zeta, \zeta') \in \hat{\Gamma} \times \hat{\Gamma}} L_{\zeta } \boxtimes M_{\zeta '},
\end{equation}

\begin{equation}
\label{urk2}
([2] \times Id)_*\rat_{\jac{\curv}^{\, 0} \times \mdpgl} \simeq 
\bigoplus_{\zeta \in  \hat{\Gamma}} L_{\zeta} \boxtimes \rat_{\mdpgl} \subseteq ([2]\times \check{q})_* \rat_{\jac{\curv}^{\, 0} \times \mdsl} ,
\end{equation}
and
\begin{equation}
\label{urk3}
r_*\rat_{\higgsbu^0}\simeq \bigoplus_{\zeta \in  \hat{\Gamma}} L_{\zeta} \boxtimes M_{\zeta} \subseteq ([2]\times \check{q})_* \rat_{\jac{\curv}^{\, 0} \times \mdsl} ,
\end{equation}
this latter since $q$ is the quotient by the diagonal action $\Gamma \lorw \Gamma \times \Gamma$.

\n
Noting that the map $[2]$ is a finite covering of a product of circles,
we have canonical isomorphisms
\begin{equation}
\label{urka}
(\hat{p_2})_*  L_{\zeta }=\left( \bigoplus_i H^i(\jac{\curv}^{\, 0}, L_{\zeta } )[-i]\right)\otimes \rat_{\mdpgl}= \left\{\begin{array}{ll}
0   & \hbox{ if } \zeta \neq 1 \\
\bigoplus_i \bigwedge^i H^1(C)\otimes \rat_{\mdpgl} [-i] & \hbox{ if } \zeta  = 1.
\end{array}\right. 
\end{equation}

Applying the functor $\hat{p_2}_*$ to $([2]\times \check{q}))_* \rat_{\jac{\curv}^{\, 0} \times \mdsl }$, to 
$([2]\times Id))_* \rat_{\jac{\curv}^{\, 0} \times \mdpgl }$ and to 
$r_* \rat_{\higgsbu^0 }$,
we obtain   canonical isomorphisms in $D_{\mdpgl}$:

\begin{equation}
\label{burk1}
(\hat{p_2}\circ ([2]\times \check{q}))_* \rat_{\jac{\curv}^{\, 0} \times \mdsl }  
\simeq 
\bigoplus_{i \in \nat, \zeta \in \hat{\Gamma}} \bigwedge^i H^1(C)\otimes M_{\zeta} [-i]
,\end{equation}

\begin{equation}
\label{burk2}
(\hat{p_2}\circ ([2]\times Id))_* \rat_{\jac{\curv}^{\, 0} \times \mdpgl }
\simeq
\bigoplus_{i \in \nat} \bigwedge^i  H^1(C)\otimes \rat_{\mdpgl} [-i],
\end{equation}

\begin{equation}
\label{burk3}
q^0_* \rat_{\higgsbu^0 }=(\hat{p_2}\circ r)_*\rat_{\higgsbu^0 }
\simeq
\bigoplus_{i \in \nat}
\bigwedge^i H^1(C)\otimes \rat_{\mdpgl} [-i].
\end{equation}
Taking the direct image $\hitmaph_*$ of the isomorphisms \ref{burk1}
\ref{burk2} and \ref{burk3} above, and using the fact that 
$\check{q}_*\rat_{\mdsl}\simeq \bigoplus_{\zeta \in \hat{\Gamma}} M_{\zeta}$, 
with $M_1 \simeq \rat_{\mdpgl}$, we find the 
canonical isomorphisms in $D_{\baseo}$ we are seeking for.
\blacksquare

\bigskip
The map $\hitmaph$ is projective, and it will be shown in Corollary \ref{alphample} that 
the class $\alpha$, defined by Equation \ref{defabps},
is the cohomology class of a $\hitmaph$-ample line bundle on $\mdpgl$.
We can thus apply the results of \S \ref{delspl}:
we have the Deligne  isomorphims, which depends on $\alpha$:
\[\phi_{\alpha}: \bigoplus_{p \geq 0} {\hat{\mathcal P}}^p[-p]  
 \stackrel{\simeq}\lorw \hat\hitmap_* \rat_{\mdpgl}
 [\dim \mdpgl], 
\]
underlying  an  even finer decomposition (see (\ref{inuno})). The cohomology groups 
$H^*(\mdpgl)$ are   endowed with the direct sum decomposition (\ref{qdec}):
$H^*(\mdpgl )= \sum Q^{i,j}$.

Similarly the class 
$$
\tilde{\alpha}:= \alpha \otimes 1 + 1\otimes \sum_i \epsilon_i \epsilon_{i+g} \in H^*(\mdpgl )\otimes H^*( \Jac_{\curv})= H^*(\mdgl),
$$ 
introduced in (\ref{defalpfatld}), is the class of a  
relatively ample line bundle on $\mdgl$.

We now determine the Deligne splitting $\phi_{\tilde\alpha}$
associated with $\hitmap$  and
$\tilde{\alpha}$.
Denote by      
 $S_{\Jac^0_{\curv}}: \Jac^0_{\curv} \lorw pt$  the ``structural map."
By using  the canonical splitting in $D_{pt}$
$$
{\phi_J}\, : \; S_{\Jac^0_{\curv} * }\rat_{\Jac^0_{\curv}}  \stackrel{\simeq}\lorw 
\left( \bigoplus_{i \geq 0} \bigwedge^i  H^1(C)[-i]\right),  
$$
the canonical isomorphism  of Theorem \ref{identify} takes the form
$$
\hitmap^0_*\rat_{\higgsbu^0} \simeq \hitmaph_* \rat_{\mdpgl} \boxtimes  
S_{\Jac^0_{\curv} * }\rat_{\Jac^0_{\curv}}   \simeq
\hitmaph_* \rat_{\mdpgl} \boxtimes  \left( \bigoplus_{i \in \nat} \bigwedge^i  H^1(C)[-i]\right).
$$
Note that this splitting does not depend on the choice of $ \sum_i \epsilon_i \epsilon_{i+g}$ and that the operation
 of cupping with this class is diagonal (with respect to the splitting).
Let 
\[
\phi_{\tilde{\alpha}}^{\,0}: \bigoplus_{p \geq 0} {\mathcal P}^p[-p] 
 \stackrel{\simeq}\lorw \hitmap^0_* \rat_{\higgsbu^0}[\dim \baseo], 
\]
be  the Deligne splitting  for $\hitmap^0$   relative to  (the restriction of) $\tilde{\alpha}$.
Since, as pointed out above, the action of $ \sum_i \epsilon_i \epsilon_{i+g}$ is diagonal,
\[
\phi_{\alpha} \otimes \phi_J: 
\left(\bigoplus_{p \geq 0} {\hat{\mathcal P}}^p[-p]\right) \otimes \left( \bigoplus_{i \geq 0} \bigwedge^i  H^1(C)[i]\right) 
\lorw \hitmaph_* \rat_{\mdpgl}
 [\dim \baseo] \otimes S_{\Jac^0_{\curv} * }\rat_{\Jac^0_{\curv}}  \simeq \hitmap^0_* \rat_{\higgsbu^0}[\dim \baseo]
\]
satisfies the properties stated in Fact \ref{chardeliso}, hence it is the isomorphism $\phi_{\tilde{\alpha}}^{\,0}$.
Since, by Remark \ref{identiequal}, we have the natural isomorphism
$ \hitmap_* \rat_{\higgsbu} \simeq sq'_{*}p^* \hitmap^0_* \rat_{\higgsbu^0}$, 
and $p$ is smooth with contractible fibres of dimension 
$\dim \base- \dim \baseo$,  the isomorphism 
$$ 
sq'_{*}p^*(\phi_{\alpha} \otimes \phi_J)=
sq'_{*}p^*(\phi_{\tilde{\alpha}}^0):\hitmap_* \rat_{\higgsbu}[\dim \baseo] \lorw 
\bigoplus_{r \geq 0}sq'_{*}p^* \left(
\bigoplus_{p+i=r}{\hat{\mathcal P}}^p \otimes \bigwedge^i  H^1(C)\right)[-r]
$$
is, up to the shift 
$\dim \baseo - \dim \base$, 
the Deligne isomorphism $\phi_{\tilde{\alpha}}$ associated with $\hitmap$ and $\tilde{\alpha}$.

\medskip
In particular, we have, for all $k$ and $p$, a canonical isomorphism:
\begin{equation}
\label{fe1}
H^k_{\leq p}(\higgsbu)=
\bigoplus_{j \geq 0}
\left(H^{k-j}_{\leq{p-j}} (\mdpgl)  \otimes  \bigwedge^j  H^1(C)\right).
\end{equation}
The isomorphism of Theorem \ref{identify} can be understood geometrically as  follows.

\medskip
Given the nonsingular double (branched) cover 
$\curv_s \lorw \curv$, the Prym variety $\prym(\curv_s) \subseteq  \jac{\curv_s}^{\, 0} $ is defined as
\begin{equation}
\label{defiprym}
\prym(\curv_s):=\{ {\mathcal F} \in  \jac{\curv_s}^{\, 0} \, |\; {\rm Nm}({\mathcal F})= {\mathcal O}_{\curv}\},
\end{equation}
where ${\rm Nm}: \jac{\curv_s}^{\, 0} \lorw
\jac{\curv}^{\, 0}$ is the norm map, see \ci{acgh},  Appendix B, \S 1.
Clearly, the image by pullback of the subgroup $\Gamma$ is contained in $\prym(\curv_s)$,
and we have the quotient isogeny 
$$
\prym(\curv_s) \lorw \prym(\curv_s) / \Gamma.
$$

The open subset $\mdslsm:= \mdsl \cap \higgsbusm$ is a torsor for 
the Abelian scheme  ${\mathscr P}^{-}_{\rm reg} $    over $\base^0_{\rm reg}$
whose fibre over $s \in \base^0_{\rm reg}$ is $\prym(\curv_s)$. Similarly,
the open subset $\mdpglsm:= \mdslsm / \Gamma$  is a torsor for 
the Abelian scheme 
${\mathscr P}^{-}_{\rm reg}  / \Gamma $ over      $\base^0_{\rm reg}$
whose fibre over $s \in \base^0_{\rm reg}$ is  $\prym(\curv_s)  / \Gamma \simeq \prym(\curv_s)^{\vee}$.

The involution $\iota$ (see  \S \ref{spctrcrv})
 on the family of  spectral curves   $u:{\mathscr C}_{\base}\to \base$  gives a $\zed /2\zed$-action on the local system
$R^1 \spectfamsmooth_* \rat_{{\mathscr C}_{\basesm}}$ and a corresponding decomposition  into $(\pm 1)$- eigenspaces ${\mathcal V}^{\pm}$. 
The pullback from $\curv$ gives a canonical isomorphism of local systems
$$
{\mathcal V}^+ = H^1(\curv) \otimes \rat_{\basesm},
$$
between the local system of invariants and
 the constant sheaf with stalk $H^1(\curv) $,
so that 
$$
 R^1 \hitmapsmooth_* \rat_{\higgsbusm} \simeq   R^1 \spectfamsmooth_* \rat_{{\mathscr C}_{\basesm}}= 
\left( H^1(\curv) \otimes \rat_{\basesm} \right) \bigoplus {\mathcal V}^-.
$$
It follows from Corollary \ref{locsys}  that, for every $l$,
\begin{equation} 
\label{altraroba}
 R^l \hitmapsmooth_* \rat_{\higgsbusm} \simeq   \bigwedge^l R^1 \spectfamsmooth_* \rat_{{\mathscr C}_{\basesm}}
= \bigoplus_{a+b=l} 
\bigwedge^aH^1(\curv) \otimes  \bigwedge^b {\mathcal V}^-   .
\end{equation}
Clearly, the analogous statement  for the restriction to $\baseosm$ holds true.
Comparing with Theorem \ref{identify} we see that 
\begin{equation}
\label{chessega}
{R^l \hitmaph_{\rm reg}}_* \rat_{\mdpgl}
\simeq \bigwedge^l {\mathcal V}^- \, \, \hbox{ and } \, \, 
\hitmaph_*{\rat_{\mdpgl}}_{|\baseosm}\simeq \bigoplus_l \bigwedge^l {\mathcal V}^-[-l]. 
\end{equation}
From     (\ref{altraroba}) we have
$$
IC \left( R^l {\hitmap ^0_{\rm reg}}_* \rat_{\higgsbusm} \right) \simeq  \bigoplus_{a+b=l} IC \left( \bigwedge^aH^1(\curv) \otimes  \bigwedge^b{\mathcal V}^-\right)
\simeq 
\bigoplus_{a+b=l}\bigwedge^aH^1(\curv) \otimes IC \left( \bigwedge^b{\mathcal V}^-\right),
$$
while, from the first equality  in (\ref{mzerovabbene}),
$$
IC  \left( R^l{\hitmap^0_{\rm reg}}_* \rat_{{\higgsbu}^0_{\rm reg}}      \right)_{|\baseogood}      
\simeq 
\left( R^0j_*  R^l {\hitmap^0_{\rm reg}}_* \rat_{{\higgsbu}^0_{\rm reg}}    \right) [\dim \baseo]
\simeq 
\bigoplus_{a+b=l}\bigwedge^aH^1(\curv) \otimes    R^0j_*  \bigwedge^b{\mathcal V}^- .
$$
Comparing with the second equality of  in (\ref{mzerovabbene}) and Theorem \ref{identify} we finally obtain  
\begin{corollary}
\label{pglvabene}
Let $j:\baseosm \mapsto \baseogood$ be the open imbedding.
Over the locus $\baseogood$ we have 
$$
IC\left(\bigwedge^l {\mathcal V}^-\right) _{|{\baseogood}}\simeq \left( R^0j_* \bigwedge^l {\mathcal V}^-\right)[\dim \baseo],
\hbox{ and }
\left(\hitmaph_* \rat_{\mdpgl}\right)_{|\baseogood}    \simeq  \bigoplus_l \left(R^0j_* \bigwedge^l {\mathcal V}^-\right)[-l].
$$
In particular, over the locus $\baseogood$, the map $\hitmaph$ satisfies Assumption \ref{ipot}.
\end{corollary}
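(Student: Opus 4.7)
My plan is to derive the corollary by assembling pieces that have already been established for $\hitmap^0$ and then bootstrapping to $\hitmaph$ via Theorem \ref{identify}. The starting point is the reduction in Remark \ref{regandgoodlocus} and equation (\ref{mzerovabbene}), which transfers Corollary \ref{icareshfongl} from $\hitmap$ to the restricted map $\hitmap^0 : \higgsbu^0 \to \baseo$. Combined with the canonical splitting (\ref{altraroba}),
\[
R^l {\hitmap^0_{\rm reg}}_* \rat \simeq \bigoplus_{a+b=l} \bigwedge^a H^1(\curv) \otimes \bigwedge^b {\mathcal V}^-,
\]
this gives, over $\baseogood$,
\[
IC\!\left(R^l {\hitmap^0_{\rm reg}}_* \rat\right)_{|\baseogood} \simeq \bigoplus_{a+b=l} \bigwedge^a H^1(\curv) \otimes R^0 j_* \bigwedge^b {\mathcal V}^-\,[\dim \baseo].
\]

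First I would note that the functor $IC$ is additive, and that for a constant local system $\bigwedge^a H^1(\curv) \otimes \rat_{\baseosm}$ the intersection complex is simply the constant sheaf shifted. Hence applying $IC$ termwise to (\ref{altraroba}) yields
\[
IC\!\left(R^l {\hitmap^0_{\rm reg}}_* \rat\right) \simeq \bigoplus_{a+b=l} \bigwedge^a H^1(\curv) \otimes IC\!\left(\bigwedge^b {\mathcal V}^-\right).
\]
Comparing the two displayed decompositions summand by summand (the factor $\bigwedge^a H^1(\curv)$ appearing on both sides), one reads off
\[
IC\!\left(\bigwedge^b {\mathcal V}^-\right)_{|\baseogood} \simeq \left(R^0 j_* \bigwedge^b {\mathcal V}^-\right)[\dim \baseo],
\]
which is the first claim.

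For the second assertion, I would invoke Theorem \ref{identify} together with the identification $R^l {\hitmaph_{\rm reg}}_* \rat_{\mdpgl} \simeq \bigwedge^l {\mathcal V}^-$ from (\ref{chessega}). Substituting the decomposition (\ref{mzerovabbene}) for $\hitmap^0_* \rat_{\higgsbu^0}$ over $\baseogood$ into the isomorphism of Theorem \ref{identify} gives
\[
\bigoplus_l \bigoplus_{a+b=l} \bigwedge^a H^1(\curv) \otimes \left(R^0 j_* \bigwedge^b {\mathcal V}^-\right)[-l] \simeq \bigoplus_i \bigwedge^i H^1(\curv) \otimes \left(\hitmaph_* \rat_{\mdpgl}\right)_{|\baseogood}[-i].
\]
The action of $H^*(\jac{\curv}^{\,0})$ is diagonal with respect to the splitting of Theorem \ref{identify}, so matching the $i=0$ component on the right with the sum over all $b$ on the left (collecting terms with $a=0$) produces the desired decomposition of $\left(\hitmaph_* \rat_{\mdpgl}\right)_{|\baseogood}$. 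The ``in particular'' clause about Assumption \ref{ipot} then follows because the decomposition exhibits $\hitmaph_* \rat_{\mdpgl}$ over $\baseogood$ as a direct sum of shifted ordinary sheaves, each of which is the middle extension of a local system from $\baseosm$, all with full support $\baseogood$.

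The work is essentially bookkeeping: the substantive content (the support and purity statements for $\hitmap$, as well as the Prym decomposition) has already been extracted in Theorem \ref{final}, Corollary \ref{icareshfongl}, and Theorem \ref{identify}. The only delicate point is ensuring that the isomorphism in Theorem \ref{identify} is compatible with the eigenspace decomposition ${\mathcal V}^\pm$ of $R^1 \spectfamsmooth_* \rat$ under the involution $\iota$, so that the $i$-th summand on the $\hitmap^0$ side genuinely corresponds to $\bigwedge^i H^1(\curv)$ tensored with the Prym contribution; this compatibility is precisely what (\ref{altraroba}) and (\ref{chessega}) record, so the matching of summands is unambiguous.
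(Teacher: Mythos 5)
Your argument is correct and follows the paper's own proof essentially verbatim: applying $IC$ termwise to the splitting (\ref{altraroba}), matching against the first equality of (\ref{mzerovabbene}) to isolate $IC(\bigwedge^b\mathcal{V}^-)$, and then combining Theorem \ref{identify} with (\ref{chessega}) and the second equality of (\ref{mzerovabbene}) to pass to $\hitmaph$. The extra remark on the compatibility of Theorem \ref{identify} with the $\iota$-eigenspace decomposition is a sensible clarification but represents the same route, not a new one.
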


\section{Preparatory results}
\label{someres}
\subsection{Placing the generators in the right perversity}
\label{plgenrpe}

Here we prove the following

\begin{theorem}
\label{eccolo}
We have 
\[
\epsilon_i \in H^1_{\leq 1}(\mdgl),  \;\; \forall \; 1 \leq i \leq 2g.\]
In each of the  three cases $\GL_2, \SL_2$ and $\PGL_2$, we have
\[
\alpha \in H^2_{\leq 2}(\higgsbu),\,\, \psi_i \in  H^3_{\leq 2}(\higgsbu),  
\;\;
\forall \; 1 \leq i \leq 2g.
\]
Furthermore, if $g>2$, or $g\geq 2$ and $\deg D>2g-2$,
\[
\beta \in H^4_{\leq 2}(\higgsbu).
\]
\end{theorem}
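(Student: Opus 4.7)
The assertions for $\epsilon_i\in H^1_{\leq 1}(\mdgl)$ and $\alpha\in H^2_{\leq 2}(\higgsbu)$ are immediate: by Remark~\ref{cbop} we have $H^d_{\leq p}(\higgsbu)=H^d(\higgsbu)$ whenever $p\geq d$, so these containments hold for free. In all three of the cases $\GL_2$, $\SL_2$, $\PGL_2$, the remaining cases are compatible via Theorem~\ref{identify} and the compatibility of perverse filtrations it entails, so it suffices to treat the $\GL_2$ setting.

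For $\psi_i\in H^3_{\leq 2}(\higgsbu)$, my plan is to use Theorem~\ref{gdpf} with $d=3$, $p=2$: the claim becomes $\psi_i\lvert_{\hitmap^{-1}(s)}=0$ for a generic $s\in\basesm$. For such $s$, the BNR identification (Theorem~\ref{fibres}) gives $\mathbb{E}\lvert_{\hitmap^{-1}(s)\times C}\cong (\mathrm{id}\times\pi_s)_*\mathbb{L}_s$, with $\mathbb{L}_s$ a Poincar\'e line bundle on $\hitmap^{-1}(s)\times C_s$. Applying Grothendieck--Riemann--Roch to this pushforward and using $c_2(\mathrm{End}\,E)=4c_2(E)-c_1(E)^2$, one extracts the K\"unneth $(3,1)$-component of $c_2(\mathrm{End}\,\mathbb{E})\lvert_{\hitmap^{-1}(s)\times C}$, i.e.\ $\sum_i\psi_i\lvert_{\hitmap^{-1}(s)}\otimes e_i$, in terms of the K\"unneth components of $\mathrm{ch}(\mathbb{L}_s)$ and the ramification data of $\pi_s$. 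Using the decomposition $H^1(C_s)=\pi_s^*H^1(C)\oplus V_s^-$ under the covering involution and tracking signs, the surviving pieces turn out to lie in $H^{\ast}(\hitmap^{-1}(s))$ paired with classes of $H^{0}(C)$ or $H^{2}(C)$ rather than $H^{1}(C)$, so the $(3,1)$-component vanishes. This is where the (unpublished) Thaddeus calculation referenced in the introduction enters; its role is precisely to perform this K\"unneth extraction cleanly.

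For $\beta\in H^4_{\leq 2}(\higgsbu)$ under the stated hypothesis on $g$ or $\deg D$, I would use Lemma~\ref{smingo}: the codimension of $\base\setminus \baseogood$ is $\deg D\geq 2$ when $\deg D>2g-2$, and $2g-3\geq 2$ when $D=K_C$ and $g>2$. Hence a generic linear section $\Lambda^{1}\subset\base$ lies entirely inside $\baseogood$. On $\hitmap^{-1}(\baseogood)$, Corollary~\ref{pglvabene} (and its $\GL_2$/$\SL_2$ counterparts arising via Theorem~\ref{identify}) shows Assumption~\ref{ipot} holds, so Proposition~\ref{mult} gives the strict multiplicativity $P_i\cup P_j\subseteq P_{i+j}$ of the perverse filtration over $\baseogood$. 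Combining this with the perversities for $\alpha$, $\psi_i$, $\epsilon_i$ established above and the Hausel--Thaddeus relations \eqref{relaht} in Theorem~\ref{hauthaddmntm}, the restriction $\beta\lvert_{\hitmap^{-1}(\Lambda^1)}$ lies in $H^4_{\leq 2}(\hitmap^{-1}(\Lambda^1))$, which by Remarks~\ref{cbop} and~\ref{transv} and the 1-dimensionality of $\Lambda^1$ forces it to vanish, giving $\beta\in H^4_{\leq 2}(\higgsbu)$ by Theorem~\ref{gdpf}.

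The main obstacle is the vanishing statement for $\psi_i$: while the setup via BNR and GRR is structurally straightforward, verifying that the $(3,1)$ K\"unneth slice of $c_2(\mathrm{End}\,(\mathrm{id}\times\pi_s)_*\mathbb{L}_s)$ vanishes requires a careful bookkeeping of Chern characters of the Poincar\'e bundle together with the Todd correction coming from the ramified cover $\pi_s$, and this is where I expect the most work. The $\beta$ case depends crucially on the codimension estimate of Lemma~\ref{smingo}, which is precisely what the hypothesis on $g$ or $\deg D$ ensures.
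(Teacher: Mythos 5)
Your treatment of $\epsilon_i$, $\alpha$, $\psi_i$ matches the paper's: the degree bound from Remark~\ref{cbop} (or Proposition~\ref{ndomett}), then restriction to a generic fiber via Theorem~\ref{gdpf}, with the Grothendieck--Riemann--Roch computation (the content of Proposition~\ref{restrunivcls} in the Appendix) giving $\psi_i|_{\hitmap^{-1}(s)}=0$. Incidentally, that same computation shows $c_2({\rm End}\,\mathbb{E})|_{\curv\times\hitmap^{-1}(s)}$ has pure K\"unneth type $(2,2)$, so $\beta|_{\hitmap^{-1}(s)}=0$ as well, giving the intermediate bound $\beta\in H^4_{\leq 3}(\higgsbu)$ for free --- a point you miss, since you expect surviving pieces paired with $H^0(\curv)$.

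The real gap is the step $\beta\in H^4_{\leq 2}$. Reducing to $\beta|_{M_{\Lambda^1}}=0$ for a generic line $\Lambda^1$ is correct, and the hypothesis on $g$ and $\deg D$ does, by Lemma~\ref{smingo}, put $\Lambda^1$ inside $\basegood$. But your proposed deduction of $\beta|_{M_{\Lambda^1}}\in H^4_{\leq 2}(M_{\Lambda^1})$ from Proposition~\ref{mult} together with the relations (\ref{relaht}) cannot be made to work: $\beta$ is an independent multiplicative generator, none of the relations solves for $\beta$ in terms of $\alpha,\psi_i,\epsilon_i$, and multiplicativity of the perverse filtration is merely an inclusion that says nothing about the perversity of a generator that is not a product. (Proposition~\ref{luck}, the closest statement of this shape, explicitly takes the perversity bound on $\beta$ from Theorem~\ref{eccolo} as input, so invoking it here would be circular.) The ingredient you are missing is a separate geometric fact: $c(T\mdgl)=(1-\beta)^{2g-2}$, so $\beta$ is a nonzero multiple of $c_2(T\mdgl)$, and over $\higgsbusm$ the Chern classes of the tangent bundle vanish because the Hamiltonian vector fields associated to linear functions on $\base$ trivialize the relative tangent bundle while $T\basesm$ itself is trivial. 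This gives $\beta|_{M_{\Lambda^1_{\rm reg}}}=0$, which one upgrades to $\beta|_{M_{\Lambda^1}}=0$ by a diagram chase between the degenerate Leray spectral sequences of $\hitmap|_{M_{\Lambda^1}}$ and $\hitmap|_{M_{\Lambda^1_{\rm reg}}}$ over the one-dimensional affine bases, using that $H^0(j_*R^4)\cong H^0(R^4)$ and that $H^1(j_*R^3)\to H^1(R^3)$ is injective.
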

\n{\em Proof.}  By the isomorphism (\ref{fe1}), it is enough to
work in the case of $\GL_2$.
Recalling that $\deg {\epsilon}_i = 1 $, $\deg {\alpha}=2$,
$\deg {\psi_i} =3$ and $\deg {\beta}=4$,   Proposition \ref{ndomett} 
implies that
$$
\epsilon_i \in H^1_{\leq 1}(\higgsbu), \,   
\alpha \in H^2_{\leq  2}(\higgsbu), \psi_i \in H^3_{\leq  3}(\higgsbu), \hbox{ and }
\beta \in H^4_{\leq 4}(\higgsbu).
$$

\n
By Thaddeus' Proposition~\ref{restrunivcls} in the Appendix, we have that $\alpha$ does not vanish over the general fiber, while
$\psi_i$ and $\beta$ do.  Theorem \ref{gdpf} implies 
$\alpha \in H^2_{\leq 2}(\higgsbu), \psi_i \in H^3_{\leq 2}(\higgsbu)$
and $\beta \in  H^4_{\leq 3}(\higgsbu)$.  The same proposition
shows that in order to conclude that $\beta \in H^4_{\leq 2}(\higgsbu)$,  
we need to prove that $\beta$ vanishes over a generic line $\Lambda \subseteq
\base$.

Set, for simplicity of notation,
$M_{\Lambda}:=\hitmap^{-1}(\Lambda), M_{\Lambda_{\rm reg}}:=\hitmap^{-1}(\Lambda_{\rm reg})$
where $\Lambda_{\rm reg}:= \Lambda \cap \basesm$.
\n
Since, by Lemma \ref{smingo}, 
the generic line avoids $\base \setminus \basegood$ unless $g=2$ and $D=K_{\curv}$,
we have that Assumption \ref{ipot} holds for   
$\hitmap_{|M_{\Lambda}}: M_{\Lambda} \to \Lambda$ due to Corollary~\ref{icareshfongl}.
 We thus  have Fact \ref{fatto}.
Let $j: \Lambda_{\rm reg} \to \Lambda$ be the open immersion.

\n
Since $\Lambda$ and $\Lambda_{\rm reg}$
are affine and one dimensional,  their cohomology groups
in degree $\geq 2$ with coefficients in constructible sheaves are zero.
In particular, the Leray spectral sequences for $\hitmap_{|M_{\Lambda}}$ 
and $\hitmap_{|M_{\Lambda_{\rm reg}}}$ are necessarily $E_2$-degenerate. 
The restriction  map in cohomology yields a map of Leray spectral 
sequences and thus a  commutative diagram of
short exact ``edge" sequences (as in \S \ref{badb}, $R^l$ stands for the local system  $R^l{\hitmapsmooth }_* \rat$)
 \[
 \xymatrix{
 0 \ar[r] & \ar[r] H^1 (j_* R^3) \ar[r] \ar[d]^{r'}& H^4(M_{\Lambda}) \ar[r] \ar[d]^{r} &
 H^0( j_* R^4) \ar[r] \ar[d]^{=}& 0 \\
  0 \ar[r]  & \ar[r] H^1 ( R^3) \ar[r]  & H^4(M_{\Lambda_{\rm reg}}) \ar[r]  &
 H^0(  R^4) \ar[r]  & 0.
 }
 \]
 The arrow $r'$ is, in turn, arising from the edge sequence of the Leray spectral sequence
 for the map $j$ and is thus injective.

Below we prove that the restriction of the class $\beta \in H^4(\mdgl)$ to $\higgsbusm$ vanishes.  The sought-after conclusion $\beta_{|M_{\Lambda}}=0$ follows from this by a 
simple diagram chasing.

The class  $\beta$ is a multiple of the second Chern class $c_2(\mdgl)$. This can be seen by formally calculating
the total Chern class of $\mdgl$ using $\mathbb E$. The result $c(T\mdgl)=(1-\beta)^{2g-2}$  formally agrees with the formula for the total Chern class of
$T{\mathcal N}\oplus T^*{\mathcal N}$ which was calculated in \cite[Corollary 2]{newstead}.
 Every linear function  on  $\base$
 gives a Hamiltonian vector field on
$\higgsbu$, tangent to the fibres of $\hitmap$. These Hamiltonian vector fields trivialize the relative tangent bundle of $\higgsbusm$.
The tangent bundle $ T\higgsbusm$ is an extension of the 
trivial bundle $\hitmap_{\rm reg}^*T\basesm$ by the relative tangent bundle.
It follows that the  Chern classes of $ T\higgsbusm$  vanish.
\blacksquare

\begin{remark}
\label{genus2}
{\rm In fact, although the argument above cannot be applied, we have
that  $\beta \in H^4_{\leq 2}(\higgsbu)$, and that $\beta$ vanishes
over the generic line,  also in the case  $g=2$ and 
$D=K_{\curv}$. 
This fact is  proved in Proposition \ref{betaisfine}.}
\end{remark}

\subsection{Vanishing of the refined  intersection form}
The purpose of this section is to establish Corollary
\ref{nozeroperv}, a fact we need 
in \S \ref{dequalschi} 
as one of the pieces in the proof of  the equality of the  weight and perverse Leray filtrations
in the case $D=K_{\curv}$.
We need the following result proved in \ci{hauselint}, Theorem 1.1.

\begin{theorem} 
\label{tamasinterform}
The natural map 
$$
H^{6g-6}_c(\mdsld) \lorw H^{6g-6}(\mdsld)
$$
from compactly supported cohomology to cohomology is the zero map.
\end{theorem}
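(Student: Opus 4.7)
\medskip
\noindent
\emph{Proof plan for Theorem \ref{tamasinterform}.}
The plan is to exploit the Hitchin $\C^{\times}$-action on $\mdsld$ given by $t \cdot (E,\phi) := (E, t\phi)$, together with the fact that the hyperk\"ahler moment map $\mu(E,\phi) = \tfrac{i}{2}\int_\curv \tr(\phi\phi^{\ast})$ for the induced $S^{1}$-action is proper and bounded below by zero, with minimum the Narasimhan-Seshadri locus $\mathcal N = \{\phi = 0\} \subset \mdsld$. By Frankel-Kirwan Morse theory applied to $\mu$, the downward gradient flow retracts $\mdsld$ onto the nilpotent cone $N := \hitmapc^{-1}(0)$, which is compact. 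In particular $H^{\bullet}(\mdsld) \cong H^{\bullet}(N)$, so every class in $H^{6g-6}(\mdsld)$ admits representatives supported in arbitrarily small neighborhoods of $N$.

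Since $\mdsld$ is smooth of real dimension $12g-12$, Poincar\'e duality identifies the map $H^{6g-6}_{c}(\mdsld) \to H^{6g-6}(\mdsld)$ with the refined intersection pairing $H_{6g-6}(\mdsld) \to H^{BM}_{6g-6}(\mdsld)$; its image is determined by the intersection numbers $[Z]\cdot [W]$ of fundamental classes of compact $(3g-3)$-dimensional subvarieties $Z \subset \mdsld$ against arbitrary middle-dimensional Borel-Moore classes $[W]$. I would then use the Bia\l ynicki-Birula-type decomposition of $\mdsld$ coming from the $\C^{\times}$-action: each fixed component $F \subset N$ is a holomorphic symplectic submanifold, and (because the $\C^{\times}$-action rescales the holomorphic symplectic form with nontrivial weight) the attracting and repelling strata are isotropic; together with the properness of $\hitmapc$, this produces Lagrangian unstable/stable cells of complex dimension $3g-3$ whose closures generate $H_{\bullet}(N)$ and $H^{BM}_{\bullet}(\mdsld)$.

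Given a compact middle cycle $Z$ and a representative of $[W]$ supported near $N$, I would use the upward $\C^{\times}$-flow to push $Z$ outside any prescribed neighborhood of $N$: since $\mu$ is strictly increasing along upward orbits that are not entirely contained in $N$, and since $Z$ being compact cannot be contained in $N$ except via cells whose self-pairing is already zero for Lagrangian reasons, this separation kills the intersection number. Equivalently, following \cite{hauselint}, one realizes this ``escape to infinity'' by producing a smooth projective compactification $\overline{\mdsld}$ whose boundary divisor $D_{\infty}$ supports every top-dimensional cohomology class of $\overline{\mdsld}$; then the factorization
\[
H^{6g-6}_{c}(\mdsld) \;\longrightarrow\; H^{6g-6}(\overline{\mdsld}) \;\longrightarrow\; H^{6g-6}(\mdsld)
\]
yields the vanishing as soon as the restriction to the open part kills all classes of $\overline{\mdsld}$ coming from $D_\infty$.

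The main obstacle, and the heart of \cite{hauselint}, is the construction of such a compactification $\overline{\mdsld}$ together with the identification of its cohomology: one needs an equivariant Kirwan-style resolution of a weighted projective closure of $\mdsld$ and a careful accounting, via the $\C^{\times}$-fixed loci and their attracting cells, to show that the restriction map $H^{6g-6}(\overline{\mdsld}) \to H^{6g-6}(\mdsld)$ is zero. Once this structural input is in place, the vanishing of the refined intersection form is formal.
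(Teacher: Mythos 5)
The paper does not prove this statement; it invokes it directly from \cite{hauselint}, Theorem 1.1, as an external input. Your proposal attempts to sketch a proof, which is more than the paper does, so it must be judged on its own merits — and it has a real gap at the decisive step.

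The framing (the $\C^\times$-action, the retraction of $\mdsld$ onto the compact nilpotent cone $N = \hitmapc^{-1}(0)$, and the reduction to a refined intersection form) is correct and matches how the paper later uses this result in Corollary \ref{nozeroperv}. However, your ``escape to infinity'' step collapses precisely where it is needed. Since $\det(t\phi)=t^2\det(\phi)$, the nilpotent cone $N$ is $\C^\times$-invariant, so flowing a compact cycle $Z\subset N$ upward never removes it from $N$. And this is the only case that matters: after the retraction, both $H^{6g-6}_c$ and $H^{6g-6}$ are carried by $N$, and all middle-dimensional cycles can be taken inside it. You fall back on the claim that cycles contained in $N$ ``have self-pairing already zero for Lagrangian reasons,'' but that is not true as stated. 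The irreducible components of $N$ are indeed Lagrangian (a minor correction: the $\C^\times$-fixed loci inside $N$ are \emph{isotropic}, not holomorphic symplectic, since $\omega$ rescales with nonzero weight), but for a Lagrangian $(3g-3)$-fold $Z$ the self-intersection is $e(T^*Z)\cap[Z]=\chi(Z)$ in real dimension $6g-6$, which is not automatically zero — for the component $\mathcal N$ it vanishes only because $\chi(\mathcal N)=0$, a specific numerical fact, and the other (singular) BB-closures require a separate, non-formal analysis. In fact, the paper's Theorem \ref{dcminterform} asserts that the \emph{top graded piece} of this refined intersection form is an isomorphism, so the vanishing of the full form cannot be a soft consequence of the Lagrangian structure of $N$; it is a nontrivial cancellation across perversities, and that is exactly the content one must import from \cite{hauselint}.

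Your final paragraph honestly identifies where the work is (the construction of a suitable compactification $\overline{\mdsld}$ and a careful analysis showing $H^{6g-6}(\overline{\mdsld})\to H^{6g-6}(\mdsld)$ vanishes), but you do not carry this out, so the proposal is an outline of a strategy rather than a proof. Given that the present paper deliberately treats this result as a black box, the appropriate move is simply to cite \cite{hauselint}; if you do want to give an argument, the ``Lagrangian reasons'' step needs to be replaced by the actual compactification-and-localization analysis.
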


\begin{remark}
\label{topdimns}
Note that  $H^{d}(\mdsld)=H^{d}(\mdpgld)=0$ for every $d > 6g-6$, since, by  (\ref{naht}), 
$\mdsld$ and $\mdpgld$ are homeomorphic to $\slM_\B$ and $\pglM_\B$ respectively, which are
affine complex varieties of dim $6g-6$.
\end{remark}

Let us recall that  the refined intersection forms 
on the fibres of a map (see \ci{htam}, \S 3.4 for a general discussion)
is the composite of the two maps 
\begin{equation}
\label{factintfrm}
H_{12g-12-r}(\hitmapc^{-1}(0)) \lorw  H^r(\mdsld) \lorw  H^{r}(\hitmapc^{-1}(0)),
\end{equation}
and arises by taking  $r$-th cohomology of the adjunction maps
\begin{equation}
\label{compadj}
i_!i^!\hitmapc_* \rat_{\mdsld} \lorw  \hitmapc_* \rat_{\mdsld} 
\lorw   i_*i^*\hitmapc_* \rat_{\mdsld}
\end{equation}
and using the canonical isomorphisms 
${H}^r(\base,i_!i^!\hitmapc_* \rat_{\mdsld})
\simeq   H^r(\mdsld, \mdsld \setminus \hitmapc^{-1}(0) ) 
\simeq H_{12g-12-r}(\hitmapc^{-1}(0))$
and
${H}^r(\base, i_*i^*\hitmapc_* \rat_{\mdsld}) \simeq H^{r}(\hitmapc^{-1}(0)).$ 
The first map  in (\ref{factintfrm}) is the ordinary push-forward in homology (followed by Poincar\'e duality)
and the second map is the restriction map.

By the decomposition theorem (see Formula \ref{dt}),  the complex
$\hitmapc_* \rat_{\mdsld}$  splits into the direct sum of its perverse cohomology sheaves.
Combining this fact with (\ref{compadj}) we obtain 
one refined intersection form for each perversity $a$:
\[
\iota_a^r \, :\; H_{12g-12-r, a}(\hitmapc^{-1}(0))\lorw H^{r}_a(\hitmapc^{-1}(0)).
\]

\medskip
The following fact will be used in the proof of the next corollary:
\begin{theorem}
\label{dcminterform}
The map $\iota^{3g-3}_{3g-3}: H_{6g-6,3g-3}(\hitmapc^{-1}(0) )\lorw H^{6g-6}_{3g-3}(\hitmapc^{-1}(0)) $
is an isomorphism.  
\end{theorem}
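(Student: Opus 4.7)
The plan is to identify $\iota^{6g-6}_{3g-3}$ (reading the super- and sub-scripts so that source and target match the formula $\iota^r_a:H_{12g-12-r,a}\to H^r_a$ at $r=6g-6,\ a=3g-3$) with the natural Poincar\'e--Verdier self-duality pairing on the stalk at $0$ of the middle-perversity perverse cohomology sheaf $\mathcal P^{3g-3}$ of $\hitmapc_*\rat_{\mdsld}[\dim\baseo]$, and then to deduce that this pairing is an isomorphism by invoking the decomposition theorem together with its compatibility with Verdier duality.

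Concretely, using the decomposition $\hitmapc_*\rat_{\mdsld}[\dim\baseo]\simeq \bigoplus_p \mathcal P^p[-p]$, the adjunction triangle $i_!i^!\hitmapc_*\rat\to \hitmapc_*\rat\to i_*i^*\hitmapc_*\rat$ splits perversity-wise; restricting to the summand $p=3g-3$ and passing to the appropriate hypercohomology yields
\[
\iota^{6g-6}_{3g-3}:\mathcal H^{0}(i^!\mathcal P^{3g-3})_0\longrightarrow \mathcal H^{0}(\mathcal P^{3g-3})_0.
\]
By the Poincar\'e--Verdier self-duality $\mathcal P^{3g-3}\simeq (\mathcal P^{3g-3})^{\vee}$ of (\ref{pvd}), the source is canonically identified with $\mathcal H^{0}(\mathcal P^{3g-3})_0^{\vee}$, and under this identification $\iota^{6g-6}_{3g-3}$ becomes the natural self-pairing induced by Verdier duality. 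Non-degeneracy of this pairing then follows from the semisimplicity of $\mathcal P^{3g-3}$ furnished by the decomposition theorem, combined with the canonical polarizations on the primitive components provided by the Deligne splitting associated with the relatively ample class $\alpha$ (whose $\hitmaph$-ampleness is asserted in Corollary~\ref{alphample}, and which pulls back to a class relatively ample for $\hitmapc$) and relative hard Lefschetz.

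The main obstacle I foresee is the identification in the first step: verifying that $\iota^{6g-6}_{3g-3}$, defined abstractly via the adjunction triangle in the derived category, coincides (up to sign) with the concrete Verdier self-pairing. This boils down to checking compatibility of the adjunctions $i_!i^!\to\id\to i_*i^*$ with the Poincar\'e--Verdier isomorphism ${\mathbb D}(\hitmapc_*\rat[\dim\mdsld])\simeq \hitmapc_*\rat[\dim\mdsld]$ available because $\mdsld$ is smooth. Once this identification is in place, the isomorphism claim reduces to the standard fact that a pure self-dual semisimple perverse sheaf, split into polarized primitive components, carries a non-degenerate self-pairing on each of its cohomology stalks.
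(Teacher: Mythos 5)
The paper's proof is just a citation to Theorem 2.1.10 of \ci{htam}, so the comparison is between your sketch and that theorem's content. Your identification of the graded refined intersection form with the map $\mathcal H^0(i^!\mathcal P^{3g-3})_0\to\mathcal H^0(i^*\mathcal P^{3g-3})_0$ is correct, as is the observation that Verdier self-duality of $\mathcal P^{3g-3}$ makes this a self-pairing. The gap is in the final deduction of non-degeneracy. It is \emph{not} a ``standard fact'' that a pure self-dual semisimple perverse sheaf has a non-degenerate self-pairing on each of its stalk cohomology groups: for an intersection complex of positive-dimensional support, the natural costalk-to-stalk map at a point is forced to be zero in every degree $k\neq 0$ (the costalk vanishes for $k<0$ and the stalk vanishes for $k>0$ by the perverse support conditions), even though both sides can be nonzero; the pairing degenerates completely away from degree $0$. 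Moreover, the Deligne splitting is a choice of decomposition of $\hitmapc_*\rat$ in the derived category and does not supply polarizations. Polarizations on the primitive pieces are constructed Hodge-theoretically in \ci{htam}, and their existence together with the non-degeneracy statement \emph{is} the content of the cited theorem, so invoking them here amounts to restating the claim rather than proving it.

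Once the decomposition theorem (from \ci{bbd}) is taken as given, a much shorter argument suffices, and you should use it instead. The strict support conditions for intersection complexes give $\mathcal H^0(IC_Z(L))=0$ whenever $\dim Z>0$; hence both $\mathcal H^0(\mathcal P^{3g-3})_0$ and, dually, $\mathcal H^0(i^!\mathcal P^{3g-3})_0$ receive contributions only from skyscraper summands $V_{\{0\}}\subseteq\mathcal P^{3g-3}$ concentrated at the origin. For such a summand, the composite of the adjunction maps $i_!i^!V_{\{0\}}\to V_{\{0\}}\to i_*i^*V_{\{0\}}$ is the identity. Therefore the graded refined intersection form at perversity $3g-3$ and cohomological degree $6g-6$ is an isomorphism, with no appeal to polarizations, relative hard Lefschetz, or the Deligne splitting.
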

\n{\em Proof.} See  \ci{htam}, Theorem 2.1.10 (where a  different numbering convention is adopted).
\blacksquare

\begin{corollary}
\label{nozeroperv}
The perverse Leray filtration on the middle-dimensional groups satisfies
$$H^{6g-6}_{\leq 3g-3}(\mdsld)=0, \qquad H^{6g-6}_{\leq 3g-3}(\mdpgld)=0.$$
\end{corollary}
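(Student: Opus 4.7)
The plan is to combine Theorems \ref{tamasinterform} and \ref{dcminterform} via the refined intersection form at $0 \in \baseo$ in order to produce a stalk vanishing for $\mathcal{P}^{3g-3}$, and then to upgrade this to a global section vanishing using the $\comp^*$-action on the Hitchin map. Since $H^*(\mdpgld) = H^*(\mdsld)^\Gamma$ compatibly with the perverse filtration, the $\mdpgld$ statement follows formally from the $\mdsld$ one, so I focus on $\mdsld$.

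The first, easy step is a reduction. Because $\mdsld$ is affine of complex dimension $2f = 6g-6$ with $H^{>6g-6}(\mdsld) = 0$, Proposition \ref{ndomett} forces every class in $H^{6g-6}(\mdsld)$ to have perversity at least $3g-3$; Corollary \ref{ndomett2} then identifies $H^{6g-6}_{\leq 3g-3}(\mdsld) = Q^{3g-3,0;6g-6}$, which via the Deligne splitting $\phi_\alpha$ is the image of $H^0(\baseo, \mathcal{Q}^{3g-3,0})$. Thus it suffices to prove $H^0(\baseo, \mathcal{Q}^{3g-3,0}) = 0$.

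Next, I would apply Theorem \ref{tamasinterform} through the refined intersection form (\ref{factintfrm}) at $s=0$. Since $\hitmapc$ is proper, the fiber $\hitmapc^{-1}(0)$ is compact, and the sheaf $i_*i^!\hitmapc_*\rat_{\mdsld}$ appearing in (\ref{compadj}) has compact support; this implies that the first arrow of (\ref{factintfrm}) factors canonically as $H_{6g-6}(\hitmapc^{-1}(0)) \to H^{6g-6}_c(\mdsld) \to H^{6g-6}(\mdsld)$. Theorem \ref{tamasinterform} annihilates the middle arrow, so the total refined intersection form $H_{6g-6}(\hitmapc^{-1}(0)) \to H^{6g-6}(\hitmapc^{-1}(0))$ vanishes. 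The decomposition theorem presents this form as the direct sum $\bigoplus_a \iota^{3g-3}_a$, forcing each summand to vanish; combining the vanishing of $\iota^{3g-3}_{3g-3}$ with Theorem \ref{dcminterform} (which asserts that this same map is an isomorphism) yields $H^{6g-6}_{3g-3}(\hitmapc^{-1}(0)) = \mathcal{H}^0(\mathcal{P}^{3g-3})_0 = 0$, and a fortiori $\mathcal{H}^0(\mathcal{Q}^{3g-3,0})_0 = 0$.

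The final and main obstacle is to upgrade this stalk vanishing to the desired global vanishing $H^0(\baseo, \mathcal{Q}^{3g-3,0}) = 0$. For this I would exploit the $\comp^*$-action on $\mdsld$ by scaling Higgs fields, $(E,\phi) \mapsto (E, t\phi)$, which is $\hitmapc$-equivariant for the positive-weight (hence contracting) $\comp^*$-action on $\baseo \cong \comp^{3g-3}$, and for which $\alpha$ is $\comp^*$-invariant. Choosing the decomposition theorem and the subobject $\mathcal{Q}^{3g-3,0} \subseteq \mathcal{P}^{3g-3}$ $\comp^*$-equivariantly, every simple summand $IC_Z(L_Z)$ of $\mathcal{Q}^{3g-3,0}$ has $Z$ a $\comp^*$-stable closed subvariety of $\baseo$, i.e.\ an affine cone through $0$. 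The standard formula for the intersection cohomology of an affine cone then gives $IH^{\dim Z}(Z, L_Z) = H^0(\baseo, IC_Z(L_Z)) = 0$ whenever $\dim Z \geq 1$; therefore only summands supported at the point $\{0\}$ can contribute to $H^0(\baseo, \mathcal{Q}^{3g-3,0})$, and these are faithfully detected by the stalk $\mathcal{H}^0(\mathcal{Q}^{3g-3,0})_0$, which vanishes by the previous step.
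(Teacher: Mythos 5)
Your argument is correct, and for the first half it coincides with the paper's: the reduction to $\mdsld$ via $\Gamma$-invariants, the factorization of the refined intersection form through $H^{6g-6}_c(\mdsld)\to H^{6g-6}(\mdsld)$, and the combination of Theorems \ref{tamasinterform} and \ref{dcminterform} to get the stalk vanishing $\mathcal{H}^0(\mathcal{P}^{3g-3})_0=0$ all match. Where you genuinely diverge is the final upgrade from stalk to global vanishing. The paper does \emph{not} argue through the supports of the perverse summands; instead it invokes the result (from Simpson and Hausel, via the downward $\comp^\times$-flow) that the nilpotent cone $\hitmapc^{-1}(0)$ is a deformation retract of $\mdsld$, so the restriction $H^{6g-6}(\mdsld)\to H^{6g-6}(\hitmapc^{-1}(0))$ is an isomorphism; since this restriction is block-diagonal with respect to any decomposition-theorem splitting, the vanishing of the $p=3g-3$ graded piece of the target forces the vanishing of $H^{6g-6}_{3g-3}(\mdsld)$. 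You replace the deformation retract by a sheaf-theoretic localization: take the $\comp^\times$-equivariant splitting (legitimate since $\alpha$ spans the one-dimensional $H^2$ and is therefore $\comp^\times$-invariant, so $\mathcal{Q}^{3g-3,0}=\ker(\alpha\colon\mathcal{P}^{3g-3}\to\mathcal{P}^{3g-1})$ is equivariant), observe that every support $Z$ is a $\comp^\times$-stable closed cone in $\baseo$, and kill the contributions with $\dim Z\geq 1$ by the cone formula $IH^{\dim Z}(Z,L)=0$; the zero-dimensional skyscrapers are then exactly detected by the stalk, which vanishes. Both routes lean on the contracting $\comp^\times$-action, but yours localizes at the level of the IC summands and uses the Goresky--MacPherson cone formula, whereas the paper's uses a global topological fact (the deformation retract) and is a one-liner once that is granted. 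Your version has the virtue of being internal to the perverse-sheaf machinery already set up in \S\ref{mark}, while the paper's is shorter modulo the cited results about the core of the Hitchin system.
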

\n{\em Proof.} Since   $\hitmaph_* \rat_{\mdpgld} = (\hitmapc_* \rat_{\mdsld})^{\Gamma} $, 
it suffices to prove the statement for $\mdsld$.

\medskip
\n For every $r$, the first map in (\ref{factintfrm}) factors as follows
\begin{equation}
\label{factntrmp}
H^r(\mdsld, \mdsld \setminus \hitmapc^{-1}(0) ) \lorw H^r_c(\mdsld) 
\lorw H^r(\mdsld) 
\end{equation}
so that, by Theorem \ref{tamasinterform}, it is the zero map.
It follows that the refined intersection form 
(\ref{factintfrm})
$$
H_{6g-6}(\hitmapc^{-1}(0)) \lorw  H^{6g-6}(\hitmapc^{-1}(0))
$$
vanishes. This, in turn, implies that all the graded refined intersection forms
$\iota^{3g-3}_a$ are zero. 

If we combine the vanishing of $\iota^{3g-3}_{3g-3}$ with Theorem
\ref{dcminterform}, then we deduce that
\begin{equation}
\label{bsque}H^{6g-6}_{3g-3}(\hitmapc^{-1}(0))=0.\end{equation}

 Proposition \ref{ndomett} implies that 
$H^{6g-6}_{\leq 3g-4}(\mdsld)=0$. In order to conclude, we need to
prove that $H^{6g-6}_{3g-3}(\mdsld)=0$.

Since the restriction map to the fiber is compatible with any splitting
coming from the decomposition theorem, in view of the vanishing
(\ref{bsque}), it is enough to show that the restriction map
$H^{6g-6}(\mdsld) \lorw H^{6g-6}(\hitmapc^{-1}(0))$ is an isomorphism. 
In fact it follows from \cite[\S 3]{simpson-ubiquity} that $\hitmapc^{-1}(0)$ - being
the downward flow \cite[Theorems 3.1 and 5.2]{hausel-compact} of a $\C^\times$-action on $\mdsld$ - is a deformation retract of $\mdsld$.
\blacksquare

\subsection{Bi-graded $\slt$-modules}
\label{bigsm}

We collect here some linear algebra considerations which will be used in the sequel of the paper.
Let 
 ${\mathbb H} = \bigoplus_{d,w \geq 0} H^d_w$ be a   finite dimensional
 bi-graded
 vector space. We say that $d$ is the degree and $w$ is the weight.
 We employ the following notation
 \[
 {\mathbb H}_w := \bigoplus_{d \geq 0} {\mathbb H}^d_w, \qquad
 {\mathbb H}^d:= \bigoplus_{w \geq 0} {\mathbb H}^d_w.\]
 Let $Y $ be a nilpotent endomorphism of ${\mathbb H}$ which is
bi-homogeneous   of type $(2,2)$, i.e. $Y: {\mathbb H}^*_\star \to {\mathbb H}^{*+2}_{\star+2}$.
Let 
$w_o \in \zed^{\geq 0}$ be such that for every $l \geq 0$  we have 
hard-Lefschetz-type isomorphisms
\[
Y^l:  {\mathbb H}_{w_o - l}  \stackrel{\simeq}\lorw  {\mathbb H}_{w_o+l}.\]
Note that we must then have  that ${\mathbb H}_w= \{0\}$ for every $w > 2w_o$.

It is well-known that we can turn ${\mathbb H}$ into 
an $\slt$-module in a natural way by means of 
a unique pair $(X, H)$ of homogeneous  endomorphism of ${\mathbb H}$
or respective types $(-2,-2)$ and $(0,0)$ subject to $[X,Y] =H$.
In this case $H$ is just the ``$w$-grading'' operator: $Hu=(w-w_0)u$ if $u \in {\mathbb H}_w$.

Given a bi-homogeneous element $u \in {\mathbb H}^d_w$, we define
\begin{equation}
\label{dd11} \Delta (u)  :  = d-w.
\end{equation}
Note that the action of $\slt$ leaves
$\Delta$ invariant.

We define the primitive space $\P := \ke \, X \subseteq {\mathbb H}$ and we obtain the primitive
decomposition

\beq
\label{primideco}
{\mathbb H} = \bigoplus_{j \geq 0} \;Y^j \cdot \P.
\eeq

Note that we have 
\[
Y^j \cdot \P = \ke{\, X^{j+1}} \cap \im {\,Y^j}. \]

Since $X$ is homogeneous, the space $\P$ is also bi-graded. 
We set $\P_w:= \P \cap {\mathbb H}_w$, $\P^d:= \P \cap {\mathbb H}^d$ and
$\P^d_w:= \P\cap {\mathbb H}^d_w$. 
We have $\P_w =\{ 0 \} $ for every $w > w_o$ and 
\[
\P= \bigoplus_{d,w} \P^d_w= \bigoplus_{w \geq 0} \P_w = 
\bigoplus_{d \geq 0} \P^d. 
\]
For every fixed weight $w$, the primitive decomposition can be re-written as follows
\[
{\mathbb H}_w = \bigoplus_{j \geq 0}  Y^j \cdot \P_{w-2j},
\qquad
{\mathbb H}_w^d = \bigoplus_{j \geq 0}  Y^j \cdot \P^{d-2j}_{w-2j}
.\]
We denote by $\Pi$ the operator of projection onto $\P$; 
clearly, if  $u \in \Ha^d_w$  then $\Pi(u) \in \P^d_w$, and $
\Pi(u)=u + \sum_{j>0} Y^j u_j,$
with $u_j \in \Ha^{d-2j}_{w-2j}$.

\medskip
Given a subset 
 $S \subseteq {\mathbb H}$, we define  the associated   $Y$-string
\[
\langle S \rangle_Y = \bigoplus_{j\geq 0} \; Y^j \cdot \langle S \rangle_\rat \; \subseteq {\mathbb H}.\]
In particular, $\langle \P \rangle_Y ={\mathbb H}$.
If $u$ is a bi-homogeneous element, then $\Delta$  is constant on the $Y$-string $\langle u\rangle_Y$ generated by  $u$. 

Let $0 \neq p_w \in \P_w$. Then $\langle p_w \rangle_Y= \langle p_w, Y\cdot p_w, \ldots, Y^{w_o-w} \cdot p_w \rangle_\rat
\subseteq  {\mathbb H}$
is isomorphic to the irreducible $\slt$-submodule of dimension $(w_o-w)+1$. Hence,
the isotypical decomposition of the $\slt$-module ${\mathbb H}$ is:
\[
{\mathbb H} =  \bigoplus_{0 \leq w \leq w_o} \langle \P_w \rangle_Y.\]
We define the {\em isobaric} decomposition of $\mathbb H$ as the  direct sum decomposition  obtained by 
  grouping  terms according to the powers of $Y$ in the isotypical decomposition given above, namely
\begin{equation}
\label{isobbaric}
{\mathbb H} = \bigoplus_{0 \leq w \leq w_o} \;\bigoplus_{0 \leq j \leq w_o -w}
Y^j \cdot \P_w.
\end{equation}

The proof of the following proposition is completely elementary, and safely left to the reader
(for point 4 just remark that if  $u \in \Ha^d_w\cap {\rm Im} Y$, then $u=Yv$, with $v \in \Ha^{d-2}_{w-2}$):
\begin{proposition}
\label{projprim}
Let $M \subseteq {\mathbb H}$ be a subset of  bi-homogeneous elements such that
 $\langle M\rangle_Y={\mathbb H}$.  
Then:
\begin{enumerate} 
\item 
The set  $\Pi(M)\subseteq \P$  obtained by projecting $M$ to the primitive space
is also a $Y$-generating subset of  bi-homogeneous elements with the same bi-degrees, and its linear span is $\P$:
\[
\langle \Pi(M) \rangle_Y ={\mathbb H}, \qquad \langle \Pi(M) \rangle_\rat =\P.\]

\item
 If $T' \subseteq \Pi(M)$ is a linearly independent set, then  it can be completed 
 to a basis $T \subseteq \Pi(M)$ for $\P$ which is also $Y$-generating:
 $\langle T \rangle_\rat =\P$, and 
 $\langle T \rangle_Y = {\mathbb H}$. 
 
Let $T \subseteq \Pi (M)$ be a basis for $\P$.
\item
Let $T^d_w:=T\cap \Ha^d_w,\, T^d:=T\cap \Ha^d, \, T_w:=T\cap \Ha_w$; then
\[
\P^d_w=  \langle T^d_w \rangle_\rat, \qquad 
\langle \P_w \rangle_{\rat} = \langle T_w \rangle_{\rat}
, \qquad \langle \P^d \rangle_{\rat} = \langle T^d \rangle_{\rat}.
\]

\item
If $m \in M$ has bidegree $(d,w)$, then there are 
$C_{j,t}\in \rat$, for $j>0, t \in T^{d-2j}_{w-2j}$,such that 
\[
\Pi(m)=m+ \sum_{j,t}C_{j,t}Y^jt.
\]
In particular, if $\Pi(m)\neq 0$ then $\Delta(m)=\Delta(\Pi(m))$.  
\end{enumerate} 
\end{proposition}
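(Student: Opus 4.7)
The plan is to reduce everything to one structural observation: $\Pi\colon\mathbb{H}\to\mathcal{P}$ is the projection along the complement $Y\mathbb{H}=\bigoplus_{j\geq 1}Y^{j}\cdot\mathcal{P}$, and because $Y$ is bi-homogeneous of type $(2,2)$, the primitive decomposition (\ref{primideco}) is a decomposition of bi-graded modules. In particular $\mathcal{P}=\bigoplus\mathcal{P}^{d}_{w}$ with $\mathcal{P}^{d}_{w}=\mathcal{P}\cap\mathbb{H}^{d}_{w}$, and the operator $\Pi$ is bi-homogeneous of bidegree $(0,0)$. The single fact that will be used throughout is
\[
\Pi(Y\mathbb{H})=0,
\]
which is immediate from $Y\mathbb{H}=\bigoplus_{j\geq 1}Y^{j}\mathcal{P}$.

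For part (1), bi-homogeneity of $\Pi$ shows that for $m\in M$ of bidegree $(d,w)$ one has $\Pi(m)\in\mathcal{P}^{d}_{w}$, so $\Pi(M)$ is bi-homogeneous with the same bi-degrees as $M$. To prove the span assertion $\langle\Pi(M)\rangle_{\mathbb{Q}}=\mathcal{P}$, I would take a primitive $p\in\mathcal{P}$, use the hypothesis $\langle M\rangle_{Y}=\mathbb{H}$ to write $p=\sum_{i}c_{i}Y^{j_{i}}m_{i}$, and apply $\Pi$. The terms with $j_{i}\geq 1$ vanish by the displayed fact, and $\Pi(p)=p$, so $p=\sum_{i:\,j_{i}=0}c_{i}\Pi(m_{i})$. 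Taking $Y$-strings then gives $\langle\Pi(M)\rangle_{Y}\supseteq\langle\mathcal{P}\rangle_{Y}=\mathbb{H}$.

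Part (2) is then pure linear algebra: any spanning set of a finite-dimensional vector space contains a basis extending any prescribed linearly independent subset; once $T\subseteq\Pi(M)$ is a basis of $\mathcal{P}$, the identity $\langle T\rangle_{Y}=\langle\mathcal{P}\rangle_{Y}=\mathbb{H}$ is automatic. Part (3) follows by noting that every element of $T\subseteq\Pi(M)$ is bi-homogeneous (of the same bi-degree as the corresponding element of $M$), so $T$ is a bi-homogeneous basis of the bi-graded space $\mathcal{P}$; the assertions $\mathcal{P}^{d}_{w}=\langle T^{d}_{w}\rangle_{\mathbb{Q}}$, $\mathcal{P}_{w}=\langle T_{w}\rangle_{\mathbb{Q}}$, $\mathcal{P}^{d}=\langle T^{d}\rangle_{\mathbb{Q}}$ are then immediate.

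Part (4) is the only place where the bi-grading actually has to be tracked. I would write $m-\Pi(m)\in Y\mathbb{H}$ uniquely as $\sum_{j\geq 1}Y^{j}v_{j}$ with $v_{j}\in\mathcal{P}$; since $m$ and $\Pi(m)$ lie in $\mathbb{H}^{d}_{w}$ and $Y^{j}$ has bidegree $(2j,2j)$, uniqueness forces $v_{j}\in\mathcal{P}^{d-2j}_{w-2j}$ (this is exactly the remark supplied in the parenthetical hint). Expanding each $v_{j}$ in the basis $T^{d-2j}_{w-2j}$ produces the coefficients $C_{j,t}$ with the asserted indexing. Finally, $\Delta(Y^{j}t)=(d-2j+2j)-(w-2j+2j)=d-w=\Delta(m)$, so $\Delta$ is constant along each summand of the expression, and hence $\Delta(\Pi(m))=\Delta(m)$ whenever $\Pi(m)\neq 0$. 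There is no genuine obstacle here; the only point requiring care is the bookkeeping in part (4) that ensures the $v_{j}$ land in the correct bi-graded pieces of $\mathcal{P}$.
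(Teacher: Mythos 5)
Your argument is correct. The paper does not actually spell out a proof—it states that the proposition is "completely elementary, and safely left to the reader," giving only the hint that an element $u \in \Ha^d_w \cap \operatorname{Im} Y$ has the form $Yv$ with $v \in \Ha^{d-2}_{w-2}$—and your write-up is exactly the intended elementary argument, organized around the observation $\Pi(Y\mathbb{H})=0$ and using the paper's hint verbatim for part (4).
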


\section{W=P}
\label{WequalsP}
\stepcounter{subsection}

This section contains the proof of the main result of this paper:
the identification of the perverse Leray filtration associated to the Hitchin map
with the weight filtration of the cohomology of the character variety 
in the case $D=K_{\curv}$ ( \S \ref{dequalschi}), or, if $\deg D>2g-2$, with 
the abstract weight filtration, defined in  Definition \ref{abstrwfiltr}
(\S  \ref{modhiggsbnd}). The latter case is easier and we deal with it first.

\bigskip
Let
$\Ha:= \bigoplus_{d \geq 0} H^d(\mdpgl),$ and let
 $Y:= \alpha \cup : \Ha^d_w \lorw \Ha^{d+2}_{w+2}$ 
 be the operation of cupping with $\alpha$. 

In virtue of the curious hard Lefschetz Theorem \ref{chlwithpoles},
we are in the situation described in \S \ref{bigsm}, with $w_0=g-1+\deg D$. 
The monomials $\psi^{\underline{t}}\beta^s$ (as defined in Proposition~\ref{betapsi}) 
give a set of 
bi-homogeneous elements which is $Y$-generating, for $\Ha$, and, 
by Proposition \ref{projprim}, the elements $\Pi(\psi^{\underline{t}}\beta^s) $
span $\P$. The perverse filtration  is denoted by $\Ha_{\leq}$. As noticed in Corollary \ref{alphample}, $\alpha$ is relatively ample, and it defines a
Deligne decomposition  of $\Ha$, whose summands are  denoted, as in \S
\ref{delspl}, by $Q^{i,j}$ and,  when we want to emphasize the cohomological 
degree, by  $Q^{i,j;\, d}:=Q^{i,j}\cap \Ha^d$.
We set 
$Q:=\bigoplus_{i,d}Q^{i,0;\,d}.$
Clearly,
\begin{equation}
\label{lafa}
\langle Q \rangle_{Y} ={\mathbb H}.
\end{equation}

\begin{proposition}
\label{luck}
Let $u \in \Ha^d_w$, i.e. the weight $w(u)=w$.
\begin{enumerate}
\item
If $\Delta(u)\leq {\rm codim}\,\base^0 \setminus \basegood^0$, then
$u \in \Ha^d_{\leq w}$,  i.e. the perversity $p(u) \leq w$.
\item
If $u \in \P^d_w$, and $\Delta(u)\leq {\rm codim} \,\base^0 \setminus \basegood^0$, then, more precisely,
 $u \in Q^{w,0;\, d} \subseteq  \Ha^d_{\leq w}$.
\end{enumerate}
\end{proposition}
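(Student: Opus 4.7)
The plan is to prove both parts by combining the cup-product estimate of Proposition \ref{testpr} with, for Part 2, the non-mixing Lemma \ref{nomix}. The essential inputs are Theorem \ref{eccolo}, which places the multiplicative generators of $H^*(\mdpgl)$ in low perversity, and the multiplicativity of the perverse filtration over the elliptic locus $\baseogood$ (Proposition \ref{mult}, applicable thanks to Corollary \ref{pglvabene}).

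For Part 1, I would exploit the fact that $H^d_w$ is spanned by monomials $m=\alpha^{a}\beta^{b}\psi_{i_1}\cdots \psi_{i_n}$ of degree $d$ and weight $w$, so that $a+b+n=w/2$ and $2a+4b+3n=d$. By Theorem \ref{eccolo} each factor satisfies $\alpha \in H^2_{\leq 2}$, $\beta \in H^4_{\leq 2}$, $\psi_i \in H^3_{\leq 2}$, and hence contributes $d_i - p_i$ equal to $0$, $2$, $1$ respectively. Summing over the factors of $m$ gives $\sum(d_i - p_i) = 2b+n = d-w = \Delta(u)$ and $\sum p_i = 2(a+b+n) = w$. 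Applying Proposition \ref{testpr} with $Y = \baseo \setminus \baseogood$, we obtain $m \in H^d_{\leq w}$ as soon as $\Delta(u) - 1 < \mathrm{codim}(Y)$, which is precisely the hypothesis. Linearity then yields $u \in H^d_{\leq w}$.

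For Part 2, Part 1 already places $u$ in $H^d_{\leq w}$. The primitivity of $u$ in weight $w$ within the $\slt$-module $\mathbb{H}$ translates into $\alpha^{w_0-w+1}u=0$, where $w_0 = g-1+\deg D$. The key identification here is $w_0 = f$, with $f$ the relative dimension of $\hitmaph$: indeed, the Hitchin fibers over $\baseogood$ are quotients or torsors of $\prym(\curv_s)$, whose dimension is $g(\curv_s)-g = g-1+\deg D = w_0$. Consequently $\alpha^{f-w+1}u=0$, and Lemma \ref{nomix}, applied with $\eta = \alpha$ (which is $\hitmaph$-ample by Corollary \ref{alphample}), gives $u = u^{w,0} \in Q^{w,0}$; tracking the cohomological degree, $u \in Q^{w,0;\,d}$.

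Once the preparatory results are in hand, the argument is essentially an index-chasing exercise. The genuine difficulty lies upstream: the bound $\beta \in H^4_{\leq 2}$ in Theorem \ref{eccolo} requires $g>2$ or $\deg D>2g-2$ (the remaining case $g=2$, $D=K_\curv$ is covered by Remark \ref{genus2}), and the multiplicativity feeding Proposition \ref{testpr} is a nontrivial consequence of the support and monodromy analysis carried out in \S \ref{prfmaintm}. Granted these, the proof of Proposition \ref{luck} proceeds as sketched.
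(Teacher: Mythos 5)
Your proof is correct, and for Part~1 it follows the paper's strategy: reduce to the monomial generators and apply the cup-product estimate of Proposition~\ref{testpr} with $Y=\base^0\setminus\basegood^0$, using the perversity bounds on $\alpha,\beta,\psi_i$ from Theorem~\ref{eccolo}. The only cosmetic difference is that the paper first strips the $\alpha$-powers by Lemma~\ref{fax} before invoking Proposition~\ref{testpr}, whereas you feed the whole monomial into Proposition~\ref{testpr} directly; since $\alpha$ contributes zero to $\Delta$, both computations produce the same hypothesis $\Delta(u)\le\mathrm{codim}$.

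For Part~2 you take a genuinely shorter route than the paper. The paper reduces to the generators $\Pi(\psi^{\underline t}\beta^s)$ of $\P$ and then carries out an induction on $s+t$, feeding Proposition~\ref{projprim}~(4) into Part~1 and Lemma~\ref{fax} to establish $\Pi(\psi^{\underline t}\beta^s)\in\Ha_{\le 2(s+t)}$ before applying the non-mixing Lemma~\ref{nomix}. You observe that this detour is unnecessary: since $\Pi$ preserves bidegree and $\Delta$ depends only on $(d,w)$, any primitive $u\in\P^d_w$ with $\Delta(u)\le\mathrm{codim}$ already satisfies the hypothesis of Part~1 (which, by linearity over the spanning monomials, holds for \emph{all} of $\Ha^d_w$), giving $u\in\Ha^d_{\le w}$ in one step. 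The remaining ingredients — the identification $w_0=g-1+\deg D=f$ (the Prym dimension), the translation of $\slt$-primitivity into $\alpha^{f-w+1}u=0$, and the application of the non-mixing Lemma~\ref{nomix} with $\eta=\alpha$ ($\hitmaph$-ample by Corollary~\ref{alphample}) — are exactly as in the paper, and you track the cohomological degree correctly to land in $Q^{w,0;\,d}$. What your version buys is the elimination of the induction and of the explicit use of Proposition~\ref{projprim}~(4); what the paper's version buys is that it makes the structure of $\Pi$ and the additive basis of $\P$ fully explicit, which is perhaps pedagogically useful. Your concluding remarks on the upstream dependencies (the $\beta$-bound in Theorem~\ref{eccolo}, the $g=2$, $D=K_\curv$ case via Remark~\ref{genus2}, and the support analysis underlying Proposition~\ref{mult}) are accurate and show you have the whole logical chain in view.
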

\n{\em Proof.}
Since the monomials $u=\alpha^r \psi^{\underline{t}}\beta^s$ are additive generators,
it is enough to prove 1. for these monomials.
By virtue of  Lemma \ref{fax}, we are  further reduced to  the case  $u=\psi^{\underline{t}}\beta^s$. 
Keeping in mind
the upper bound $p \leq 2$ on the perversity of $\beta$ and $\psi$
given by Theorem \ref{eccolo}, we can apply
the perversity test given by  Proposition \ref{testpr} (where the set $Y$ in loc.cit.
is the present set  $\base^0 \setminus \basegood^0$) and obtain
that 
$u \in \Ha^{4s+3t}_{\leq 2(s+t)}$, as soon as
$
(4s+3t)-(2s+2t) -1= \Delta (\psi^{\underline{t}}\beta^s )-1 < {\rm codim}\base^0 \setminus \basegood^0 .$ This proves 1.

\medskip
In view of Proposition \ref{projprim},
in order to prove the second statement, it is enough to prove that $\Pi(\psi^{\underline{t}}\beta^s) \in Q^{2(s+t),0;\, 3t+4s}$. 

\n
By using induction on $r:=s+t$, we first show that $ \Pi(\psi^{\underline{t}}\beta^s) \in \Ha_{\leq 2(s+t)}$:

\begin{itemize}
\item
For $r=0$ there is nothing to prove.
\item
Suppose we know that $\Pi(\psi^{\underline{t'}}\beta^{s'}) \in \Ha_{\leq 2(s'+t')}$ for all $(s', \underline{t'})$ 
with $s'+t'\leq r-1.$ If $(s, \underline{t})$ is such that $s+t=r$, we may write, see again Proposition \ref{projprim} part 4.,
\begin{equation}
\Pi(\psi^{\underline{t}}\beta^s) = 
  \psi^{\underline{t}} \beta^s + \sum_{(s_i,\underline{t}_i)} C_{s_i,{\underline{t}}_i}
\alpha^{j_i} \Pi(\psi^{{\underline{t}}_i}\beta^{s_i}), \hbox{ with }  j_i=r-s_i-t_i>0.
\end{equation}
By 1., we have that
$ \psi^{ \underline{t}} \beta^s \in \Ha_{\leq 2(s+t)}$. By the inductive hypothesis,
we have that $\Pi(\psi^{{\underline{t}}_i}\beta^{s_i})  \in \Ha_{\leq 2(s_i+t_i)}$ so that,
 by Lemma \ref{fax}, we have that $\alpha^{j_i}\Pi(\psi^{{\underline{t}}_i}\beta^{s_i}) \in \Ha_{\leq 2r}$.
The conclusion  on the sum $\Pi(\psi^{\underline{t}}\beta^s) \in \Ha_{\leq 2r}$ follows.
\end{itemize}
\n
By definition of primitivity,
$\alpha^{w_0-2(s+ t)+1}\Pi(\psi^{\underline{t}}\beta^s)=0.$
Since $ \Pi(\psi^{\underline{t}}\beta^s) \in \Ha_{\leq 2(s+t)}$,
the non-mixing lemma \ref{nomix}, coupled with the equality above, implies that $\Pi(\psi^{\underline{t}}\beta^s) \in Q^{2(s+t),0}$. 
\blacksquare

\subsection{The case  $\deg D>2g-2$, $\G=\PGL_2, \GL_2$ }
\label{dbiggerchi}

\bigskip
In this section we assume $n:=\deg D+2-2g>0$.

Recall the relations (\ref{relaht}) between the generators of the cohomology ring.  
Theorem \ref{hauthaddmntm} readily implies the following:

\begin{proposition}
\label{betapsi}
For nonnegative integers $t_1, \ldots, t_{2g}$,
let us write\footnote{As the $\psi$ classes have degree $3$ 
they anticommute, so we could assume $t_i=0,1$.}  
$\psi^{\underline{t}}$ 
for $\psi_1^{t_1} \ldots \psi_{2g}^{t_{2g}}$ and  let us  set $t:= \sum_{i=1}^{2g} t_i$ .
Then: 
\begin{enumerate}
\item
If $n:=\deg D+2-2g>0,$  then $ \beta^s \, \psi^{\underline{t}}=0$  for $ 2s+t\geq \deg D.$
\item
If $D=K_{\curv},$  then $ \beta^s \, \psi^{\underline{t}}=0$ for  $2s+t\geq 2g-2$, 
unless $\psi^{\underline{t}}=\gamma^r$,  with $r+s=g-1.$ 
\end{enumerate}
\end{proposition}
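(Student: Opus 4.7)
The plan is to translate the proposition directly into a combinatorial statement about the ideals $I^a_b$ of Theorem \ref{hauthaddmntm} and then verify it using the explicit generators $\rho^c_{r,s,t}$.

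First I would decompose $\psi^{\underline{t}}$ via the symplectic-type decomposition $\bigwedge^t \Psi = \bigoplus_i \gamma^i \Lambda_0^{t-2i}$, writing $\psi^{\underline{t}} = \sum_i \gamma^i \lambda_{t-2i}$ with $\lambda_{t-2i} \in \Lambda_0^{t-2i}$. Each summand $\beta^s \gamma^i \lambda_k$ (with $k = t-2i$) lives in the tensor factor $\Lambda_0^k(\psi) \otimes \rat[\alpha,\beta,\gamma]/I^{g-k}_{n+k}$, where $n := \deg D + 2 - 2g \geq 0$. Hence the proposition reduces to showing that, for each $k$ appearing in the decomposition, $\beta^s \gamma^i \in I^{g-k}_{n+k}$.

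Set $a := g-k$ and $b := n+k$. The case $i > a$ is immediate, since $\gamma^{a+1}$ is already a generator of $I^a_b$. For $i \leq a$, I would invoke the generator $\rho^c_{r,s,t}$ of (\ref{relaht}) with the substitution $r = 0$, the $s$ of the proposition, and $t = i$. Because $r = 0$, the sum in (\ref{relaht}) collapses to its single $i=0$ term, yielding $\rho^c_{0,s,i} = \beta^s (2\gamma)^i / s!$ (up to sign) --- precisely the element one wants in the ideal. It therefore suffices to verify the two inequalities in (\ref{indexrange}) together with $c \geq 0$. Direct substitution shows that the second inequality in (\ref{indexrange}) reduces to $2s + 2i \geq 2a - 2 + b$, equivalently (using $t = k + 2i$) to $2s + t \geq 2g - 2 + n = \deg D$, which is exactly our hypothesis. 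The condition $c \geq 0$ becomes $3s + 2i + k \geq 2g - 2 + n$, and follows from the previous inequality by adding $s \geq 0$. By Remark \ref{verytrivial}, whenever $b = n + k > 0$ the first inequality in (\ref{indexrange}) is automatic. In Part~1, where $n > 0$, this happens for every $k$, so every component vanishes and the proposition follows.

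The main obstacle, such as it is, appears in Part~2 with $k = 0$: here $b = 0$, so Remark \ref{verytrivial} no longer applies, and the strictly stronger first inequality $3(s+i) > 3g - 3$, i.e.\ $s + i \geq g$, must be checked independently. The hypothesis $2s + t \geq 2g - 2$ with $t = 2i$ only yields $s + i \geq g - 1$, and the strict bound fails exactly when $s + i = g - 1$ --- which is precisely the borderline case $\psi^{\underline{t}} = \gamma^r$ with $r + s = g - 1$ singled out in the statement. For all other components (either $k \geq 1$, or $k = 0$ with $s + i \geq g$) the argument of Part~1 applies verbatim, so the vanishing holds with exactly the stated exception.
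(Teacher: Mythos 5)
Your argument is correct and follows essentially the same route as the paper's own proof: both decompose $\psi^{\underline{t}}$ via $\bigwedge^t\Psi = \bigoplus_i \gamma^i\Lambda_0^{t-2i}$, reduce each component to membership of $\gamma^i\beta^s$ in $I^{g-k}_{n+k}$, instantiate the generator $\rho^c_{r,s,t}$ with $r=0$, invoke Remark~\ref{verytrivial}, and isolate the borderline $b=0$, $s+i=g-1$ case when $D=K_{\curv}$. You are somewhat more explicit than the paper in verifying $c\geq 0$ and in handling $i>a$ via the generator $\gamma^{a+1}$, but the approach is the same.
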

\n{\em Proof.}
Let us assume $n>0$. The monomial $\beta^s \, \psi^{\underline{t}}$ is a sum of terms of the form
$A_i \gamma^i\beta^s$ with $A_i \in \Lambda_0^{t-2i}.$ From the relations 
in Theorem \ref{hauthaddmntm}, it follows  that 
$$
\hbox{ if } \gamma^i\beta^s \in I^{g+2i-t}_{n+t-2i}, \hbox{ then }A_i \gamma^i\beta^s=0.
$$
By the inequalities (\ref{indexrange}) and Remark \ref{verytrivial}, 
$\gamma^i\beta^s \in I^{g+2i-t}_{n+t-2i}$  if
$ 2i+2s\geq 2(g-t+2i)-2+n+t-2i$, that is, if $ 2s+t\geq 2g-2+n=d$.
If $n=0$, we proceed as above, noting that Remark \ref{verytrivial} fails exactly in 
the case $i=0$, in which case we find that $\beta^{g-1}, \gamma \beta^{g-2},\ldots, \gamma^{g-1} \neq 0$. 
\blacksquare

\begin{theorem}
\label{w=pwithpoles}
The  abstract weight filtration $W'_{\bullet}$ on $H^*(\mdpgl)$ (Definition \ref{abstrwfiltr}) coincides with the
perverse Leray filtration associated with the Hitchin map $\hitmaph$: 

\n
For every integer $i$, we have
\begin{equation}
\label{w=pwithpoleseq}
H^*_{\leq i}(\mdpgl)=W'_iH^*(\mdpgl) =\langle  \alpha^r \psi^{\underline{t}}\beta^s   \rangle_{2(r+s+t)\leq i}.
\end{equation}
More precisely, the isobaric decomposition (\ref{isobbaric}) 
coincides with the Deligne decomposition (\ref{qdec}) in \S \ref{delspl}
associated to $\alpha$: for every $w$, we have $\P_w = Q^{w,0}$. 
\end{theorem}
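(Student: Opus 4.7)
The strategy is to establish the inclusion $W'_i H^*(\mdpgl) \subseteq H^*_{\leq i}(\mdpgl)$ for all $i$ and then upgrade it to equality by a dimension count driven by Hard Lefschetz symmetries. Since the monomials $\alpha^r \psi^{\underline{t}} \beta^s$ span $H^*(\mdpgl)$ by Theorem \ref{hauthaddmntm} and are $W'$-homogeneous of weight $2(r+s+t)$ by Definition \ref{abstrwfiltr}, the first inclusion reduces to the perversity bound $p(\alpha^r \psi^{\underline{t}} \beta^s) \leq 2(r+s+t)$ for every nonzero such monomial.

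To secure this bound, I compute
\[
\Delta(\alpha^r \psi^{\underline{t}} \beta^s) \;=\; (2r + 3t + 4s) - 2(r+s+t) \;=\; 2s + t.
\]
Proposition \ref{betapsi}(1) ensures that every nonzero such monomial satisfies $2s + t \leq \deg D - 1$, while Lemma \ref{smingo} gives ${\rm codim}\,(\baseo \setminus \baseogood) = \deg D$ in the regime $\deg D > 2g - 2$. Thus the hypothesis $\Delta \leq {\rm codim}$ of Proposition \ref{luck}(1) is satisfied, and the bound $p(u) \leq w(u) = 2(r+s+t)$ follows, establishing $W'_i \subseteq H^*_{\leq i}$ for every $i$.

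The reverse inclusion stems from a Poincar\'e-type dimension symmetry available on both sides. Since $\alpha$ is $\hitmaph$-ample (Corollary \ref{alphample}), relative hard Lefschetz (\ref{rhlg}) yields isomorphisms on the perverse graded pieces, and the curious hard Lefschetz Theorem \ref{chlwithpoles} provides the analogous isomorphisms for $\mathrm{Gr}^{W'}_{\bullet}$. In both cases these are implemented by the same operator $\alpha$ and share the same center $f$ (the relative dimension of $\hitmaph$); they imply the symmetry $\dim F_i + \dim F_{2f - 1 - i} = \dim H^*(\mdpgl)$ for either filtration $F_\bullet = W'_\bullet$ or $F_\bullet = P_\bullet$. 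Applying this symmetry to the already-established inclusion $W'_{2f - 1 - i} \subseteq H^*_{\leq 2f - 1 - i}$, I obtain $\dim W'_i \geq \dim H^*_{\leq i}$, hence equality of the two filtrations.

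For the refined identification $\P_w = Q^{w, 0}$, Proposition \ref{luck}(2) shows that each nonzero primitive projection $\Pi(\psi^{\underline{t}} \beta^s)$ lies in $Q^{2(s+t), 0}$, and by Proposition \ref{projprim}(1) these projections span $\P$; hence $\P_w \subseteq Q^{w, 0}$ for every $w$. Both Lefschetz decompositions then yield
\[
\dim H^*(\mdpgl) \;=\; \sum_w (f - w + 1)\,\dim \P_w \;=\; \sum_w (f - w + 1)\,\dim Q^{w, 0},
\]
and the pointwise inclusion combined with equality of totals forces $\dim \P_w = \dim Q^{w, 0}$ and thus equality of subspaces. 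The technical heart of the argument is already concentrated in Proposition \ref{luck}, which itself rests on the deep support Theorem \ref{final}; the remaining step here is essentially a clean assembly, and the most delicate point is verifying that the vanishing range of Proposition \ref{betapsi} is exactly tight enough to activate Proposition \ref{luck}.
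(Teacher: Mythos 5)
Your proof is correct, and the technical heart is identical to the paper's: in both cases the engine is Proposition~\ref{luck}, activated because Proposition~\ref{betapsi}(1) guarantees that every nonzero monomial $\psi^{\underline{t}}\beta^s$ satisfies $\Delta = 2s+t < \deg D = \mathrm{codim}(\baseo\setminus\baseogood)$. The difference is one of organization. The paper only proves the refined identification $\P_w = Q^{w,0}$ (using Proposition~\ref{luck}(2) and Propositions~\ref{projprim}, \ref{primideco}) and then deduces the equality of filtrations as an immediate corollary, since equality of internal splittings forces equality of the induced filtrations. You instead give two separate arguments: first an inclusion $W'_i \subseteq P_i$ from Proposition~\ref{luck}(1) upgraded to equality by pairing the curious and relative hard Lefschetz symmetries about the common center $f = g-1+\deg D$, and then the refined identification by a weighted dimension count. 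Your Lefschetz-symmetry step is correct (the verification of $\dim F_i + \dim F_{2f-1-i} = \dim \mathbb H$ for both filtrations goes through because the graded pieces of each are symmetric about $f$), but it is rendered redundant by your own step 3. What it buys, and this is the one genuine extra content, is a proof of the weaker filtration-level statement that consumes only Proposition~\ref{luck}(1) rather than the sharper part (2); if one cared only about $W'=P$ and not about matching the isobaric and Deligne decompositions, the dimension-symmetry route would be the minimal argument. In the end both proofs land, and you could streamline yours by simply noting, as the paper does, that $\P_w = Q^{w,0}$ already implies the filtration equality.
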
 
\n{\em Proof.}
Since the statement about the equality of the filtrations follows at once from the second on
the equality of the internal direct sum decompositions, we prove the latter one. 
By  Proposition \ref{betapsi}, we have that $\psi^{\underline{t}} \beta^s =0$ as soon as
$\Delta(\psi^{\underline{t}}\beta^s )=t+2s \geq \deg D$.  It follows that
we only need to consider the monomials 
$ \psi^{\underline{t}}\beta^s$ with $t+s<\deg D$. By  Lemma \ref{smingo}, 
${\rm codim}\, \base^0 \setminus \basegood^0=\deg D$. 
We can then  apply Proposition \ref{luck} and deduce that 
\begin{equation}
\label{mesa}
\Pi ( \psi^{\underline{t}}\beta^s )\in Q^{2(s+t),0;\, 4s+3t}.
\end{equation}
Since  $\{\Pi ( \psi^{\underline{t}}\beta^s)\}$ is a set of generators for the 
primitive space   $\P$,
and since $\Pi$ strictly preserves the weights by construction,
we deduce that  $\P_w \subseteq Q^{w,0}$ for all $w$. 
Since, by (\ref{primideco}),  $\langle \P \rangle_Y ={\mathbb H}$ and, by (\ref{lafa}), $\langle Q \rangle_Y ={\mathbb H}$, 
it follows that $\P_w$ and $Q^{w,0}$ coincide for all $w$.
\blacksquare

\begin{theorem}
The abstract weight filtration $W'_{\bullet}$ on $H^*(\mdgl)$ coincides with the 
perverse filtration associated with the Hitchin map. 
More precisely, the isobaric decomposition coincides with the Deligne decomposition
associated to 
$\tilde{\alpha}=\alpha \otimes 1 + 1\otimes (\sum \epsilon_i \epsilon_{i+g})$.
\end{theorem}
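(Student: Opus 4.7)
\noindent
The plan is to reduce the $\GL_2$-statement to the $\PGL_2$-statement of Theorem \ref{w=pwithpoles} via the tensor product structure, and then use the non-mixing lemma together with a dimension count to upgrade the filtration equality to an equality of splittings.

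\medskip
\noindent
\textbf{Step 1 (Filtration equality $W'_\bullet = P_\bullet$).} First I would invoke the splitting
\begin{equation*}
H^k_{\leq p}(\mdgl) \;=\; \bigoplus_{j\geq 0} H^{k-j}_{\leq p-j}(\mdpgl)\otimes \bigwedge^j H^1(C),
\end{equation*}
established in (\ref{fe1}). Since by definition $w(\epsilon_i)=1$, the abstract weight grading on $H^*(\mdgl)$ decomposes compatibly with the K\"unneth factor $H^*(\Jac^0_C)=\bigwedge^* H^1(C)$: a class in $\bigwedge^j H^1(C)$ has $w$-weight exactly $j$, so
\begin{equation*}
W'_p H^k(\mdgl) \;=\; \bigoplus_{j\geq 0} W'_{p-j}H^{k-j}(\mdpgl)\otimes \bigwedge^j H^1(C).
\end{equation*}
The identity $W'_i=H^*_{\leq i}$ on $H^*(\mdpgl)$, proved in Theorem \ref{w=pwithpoles}, then gives the desired equality of filtrations on $H^*(\mdgl)$.

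\medskip
\noindent
\textbf{Step 2 (Curious hard Lefschetz for $\tilde\alpha$).} The class $\tilde\alpha=\alpha\otimes 1+1\otimes L$, with $L=\sum_i \epsilon_i\epsilon_{i+g}$, acts as a sum of two commuting bi-homogeneous operators of type $(2,2)$ on the tensor product $H^*(\mdpgl)\otimes H^*(\Jac^0_C)$. The first factor $\alpha$ satisfies the curious hard Lefschetz on $H^*(\mdpgl)$ (Theorem \ref{chlwithpoles}), while $L$ is the cup product with the symplectic form on $\bigwedge^* H^1(C)$ and thus satisfies the standard hard Lefschetz, with the weight grading agreeing with the degree grading. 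The tensor product of $\mathfrak{sl}_2$-modules each satisfying hard Lefschetz again satisfies hard Lefschetz; hence the bi-graded vector space $\mathbb H=H^*(\mdgl)$ with operator $Y=\tilde\alpha$ fits into the setting of \S \ref{bigsm}, with central weight $w_0$ equal to the relative dimension $f$ of $\hitmap$.

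\medskip
\noindent
\textbf{Step 3 (Isobaric $\subseteq$ Deligne).} Let $u\in\P_w$ be a bi-homogeneous element of the $\mathfrak{sl}_2$-primitive space, i.e. $\tilde\alpha^{f-w+1}u=0$. By Step 1, $u\in W'_w H^*(\mdgl)=H^*_{\leq w}(\mdgl)$. The non-mixing Lemma \ref{nomix}, applied with $p=w$, then forces $u\in Q^{w,0}$. Thus $\P_w\subseteq Q^{w,0}$ for every $w$, and consequently $\tilde\alpha^j\P_w\subseteq \tilde\alpha^jQ^{w,0}=Q^{w,j}$ for $0\leq j\leq f-w$ by (\ref{12w}).

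\medskip
\noindent
\textbf{Step 4 (Equality from dimension count).} The isobaric decomposition (\ref{isobbaric}) gives
\begin{equation*}
H^*_v(\mdgl)\;=\;\bigoplus_{w+2j=v}\tilde\alpha^j\P_w \qquad\text{(weight-}v\text{ piece)},
\end{equation*}
while the Deligne $Q$-decomposition (\ref{qdecc}) gives
\begin{equation*}
H^*_v(\mdgl)\;=\;\bigoplus_{i+2j=v}Q^{i,j}\qquad\text{(perversity-}v\text{ piece)}.
\end{equation*}
Both are internal direct sums whose summands are indexed by the same pairs, and the term-wise inclusion $\tilde\alpha^j\P_w\subseteq Q^{w,j}$ from Step 3, combined with the equality of the two ambient spaces from Step 1, forces term-wise equality $\tilde\alpha^j\P_w=Q^{w,j}$. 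Taking $j=0$ yields $\P_w=Q^{w,0}$, proving that the isobaric decomposition coincides with the Deligne decomposition.

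\medskip
\noindent
The only potential difficulty is to verify that the $\mathfrak{sl}_2$-hard Lefschetz property for $\tilde\alpha$ in Step 2 is available also when $\deg D>2g-2$: this is handled by the general fact that a sum of two commuting $\mathfrak{sl}_2$-Lefschetz operators on a tensor product of modules is itself Lefschetz, together with the curious hard Lefschetz of Theorem \ref{chlwithpoles}, which is valid in that generality. All remaining steps are formal once Theorem \ref{w=pwithpoles} and the non-mixing lemma are in hand.
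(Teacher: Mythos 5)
Your proof is correct and follows the same route as the paper---the isomorphism (\ref{fe1}) is the key ingredient in both---but you work out the comparison of the isobaric and Deligne splittings in far more detail. The paper's proof is a one-liner that only cites (\ref{fe1}) and Lemma \ref{fax}, leaving implicit the fact that the splitting identification then follows because both splittings are obtained by tensoring the $\PGL_2$ decomposition with the K\"unneth factor $H^*(\Jac^0_C)$, on which degree, weight and perversity all coincide. Your Steps 2--4 make this explicit: you establish curious hard Lefschetz for $\tilde\alpha$ by tensoring $\mathfrak{sl}_2$-Lefschetz structures, with the check that the central weight $w_0$ equals the relative dimension $f$ of $\hitmap$ (a point worth verifying, since for general $D$ the paper states curious hard Lefschetz only for $\mdpgl$ in Theorem \ref{chlwithpoles}, not for $\mdgl$ and $\tilde\alpha$); you then apply the non-mixing Lemma \ref{nomix} to get $\P_w\subseteq Q^{w,0}$ and finish with a dimension count. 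In effect you are re-running the argument behind Theorem \ref{w=pwithpoles}---Proposition \ref{luck}, the non-mixing lemma, and the index-set comparison---one level up for the tensor product, which is a perfectly valid and self-contained alternative to invoking directly the identity $\phi_{\tilde\alpha}=\phi_\alpha\otimes\phi_J$ established in \S\ref{comp3gra}. Nothing is missing.
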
 
\n{\em Proof.}
The statement follows from the isomorphism (\ref{fe1}) 
and Lemma \ref{fax}, as cupping with $\epsilon_i$ increases the perversity exactly by one.
\blacksquare

\subsection{The case  $D=K_{\curv}$, $\G=\PGL_2, \GL_2$}
\label{dequalschi}

In this section, we set
$$
\Ha=\bigoplus_{d\geq 0}H^d(\mdpgld).
$$
\begin{lemma}
\label{betapsieasy} 
We have the following
$$
\alpha^r \psi^{\underline{t}}\beta^s  \in \Ha^{2r+3t+4s}_{\leq 2(r+ t+s)}, 
\,\, \Pi(\psi^{\underline{t}}\beta^s)\in Q^{2(t+s),0;\, 3t+4s},
$$ 
unless $\psi^{\underline{t}}\beta^s= \gamma^v\ \beta^s$ with $v+s=g-1$ (cfr. 2. in Proposition \ref{betapsi}).
\end{lemma}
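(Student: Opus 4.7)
The plan is to mirror the argument of Theorem \ref{w=pwithpoles} from the previous subsection, replacing the codimension bound $\deg D$ (which was large enough to cover all nonzero monomials) with the weaker bound available for $D = K_{\curv}$. By Lemma \ref{smingo}, for $D = K_{\curv}$ we have ${\rm codim}(\base^0 \setminus \basegood^0) = 2g-3$. Next, I would invoke Proposition \ref{betapsi}(2): a nonzero monomial $\psi^{\underline{t}}\beta^s$ that is \emph{not} of the excluded form $\gamma^v\beta^s$ with $v+s = g-1$ must satisfy $2s+t \leq 2g-3$. For such a monomial, $\Delta(\psi^{\underline{t}}\beta^s) = (3t+4s) - 2(t+s) = t + 2s \leq 2g-3 = {\rm codim}(\base^0 \setminus \basegood^0)$, so the hypothesis of Proposition \ref{luck} is met.

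Applying Proposition \ref{luck}(1) directly yields $\psi^{\underline{t}}\beta^s \in \Ha^{3t+4s}_{\leq 2(t+s)}$. To upgrade this to the full statement involving the factor $\alpha^r$, I would then apply Lemma \ref{fax}: since $\alpha^r \in H^{2r}(\mdpgld)$, cupping with it carries $\Ha^*_{\leq p}$ into $\Ha^{*+2r}_{\leq p+2r}$, giving the desired bound $\alpha^r \psi^{\underline{t}}\beta^s \in \Ha^{2r+3t+4s}_{\leq 2(r+t+s)}$.

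For the second assertion, recall that $\Pi(\psi^{\underline{t}}\beta^s)$ lies in $\P^{3t+4s}_{2(t+s)}$ by construction (the projection onto the primitive space preserves bi-degree), and in particular $\Delta(\Pi(\psi^{\underline{t}}\beta^s)) = t+2s \leq 2g-3$ still holds. Proposition \ref{luck}(2) then delivers $\Pi(\psi^{\underline{t}}\beta^s) \in Q^{2(t+s),0;\, 3t+4s}$, completing the proof.

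The lemma itself presents no real obstacle once Proposition \ref{luck} and the vanishing in Proposition \ref{betapsi}(2) are in hand: the key numerical coincidence is that the codimension $2g-3$ of the non-elliptic locus in $\baseo$ matches exactly the threshold $2s+t \leq 2g-3$ dictated by the ring relations for non-exceptional monomials. The genuinely delicate work is not in this lemma but in handling the excluded family $\gamma^v\beta^s$ with $v+s = g-1$: these classes have $\Delta = 2g-2$, so Proposition \ref{luck} is unavailable, and one expects to control them by a separate \emph{ad hoc} argument exploiting Corollary \ref{nozeroperv} (the vanishing of the middle-dimensional perversity $\leq 3g-3$ piece) together with the non-mixing Lemma \ref{nomix} applied within the top-weight part of the Deligne decomposition.
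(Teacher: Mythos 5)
Your proof is correct and matches the paper's argument essentially verbatim: Lemma \ref{smingo} gives ${\rm codim}(\base^0\setminus\basegood^0)=2g-3$, Proposition \ref{betapsi}(2) shows that any nonzero non-exceptional monomial satisfies $2s+t\leq 2g-3=\Delta(\psi^{\underline{t}}\beta^s)$, so Proposition \ref{luck} applies, and Lemma \ref{fax} absorbs the factor $\alpha^r$. Your closing remark about the excluded family $\gamma^v\beta^s$ with $v+s=g-1$ also correctly anticipates the paper's strategy in Proposition \ref{betaisfine}.
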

\n{\em Proof.} 
Since, by Lemma \ref{fax}, cupping with $\alpha$ increases the perversity by at most $2$,
we may suppose $r=0$. Note that we are excluding precisely the classes
in the statement of Proposition \ref{betapsi} (case $D=K_{\curv}$).
By this same proposition, we may thus assume that $2s+t<2g-2= {\rm codim }\,\base^0 \setminus \basegood^0 +1$, where 
the last equality results from Lemma \ref{smingo}.
The result  follows from Proposition \ref{luck}.
\blacksquare

\medskip
\begin{remark}
\label{mancasolouno}
{\rm The argument above breaks when dealing with the classes $\gamma^r \beta^{g-1-r}$
that we have excluded from the statement.
To check that $\gamma^r \beta^{g-1-r} \in \Ha^{2r+4g-4}_{\leq 2g-2+2r}$ we should 
consider a linear subspace of dimension $(2r+4g-4)-(2g-2+2r)-1=2g-3$, which is exactly 
the codimension of the ``bad locus'' $\base^0 \setminus \basegood^0$. 
On the other hand, a  general linear subspace of dimension one less, i.e. $2g-4$, 
misses the bad locus,   and thus yields, by Theorem \ref{gdpf}, 
the
following  upper bound on the perversity
\begin{equation}
\label{mancauno}
\gamma^r \beta^{g-1-r} \in \Ha^{2r+4g-4}_{\leq 2g-1+2r}.
\end{equation}
While this  upper bound is not sufficient  for our purposes,  it is used in what follows.
}
\end{remark}
\medskip
\begin{remark}
\label{relabeta}
{\rm
The relations 
$\rho_{1,s,g-1-s}^c \in I^g_0$  in 
(\ref{relaht}) show that 
for all $r$, the class $\gamma^r \beta^{g-1-r}$ is a multiple of $\alpha^r \beta^{g-1}$.
}
\end{remark}

\bigskip
As pointed out several times, in view of Lemma 
\ref{fax}, 
cupping with $\alpha$ is harmless for us and
we are reduced to prove that $\beta^{g-1} \in \Ha^{4g-4}_{\leq 2g-2}.$
The remainder of the analysis is devoted to improve
the upper bound  (\ref{mancauno}),  by one unit,
i.e. to proving that  $\beta^{g-1} \in \Ha^{4g-4}_{\leq 2g-1}$.

\begin{lemma}
\label{getsmart}
For every $s$ in the range $0 \leq  s \leq g-1$ we have
$$
\beta^s \in \P^{4s}_{2s}.
$$
In particular the classes $\beta^s$ are not divisible by $\alpha$.
\end{lemma}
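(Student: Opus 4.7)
The plan is to deduce $\beta^s\in\P^{4s}_{2s}$ directly from the vanishing of $H^*(\mdpgld)$ above the complex dimension, combined with the standard $\slt$-characterization of primitives recalled in \S\ref{bigsm}.

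First, I note that $\beta^s$ is bi-homogeneous of cohomological degree $4s$ and abstract weight $w(\beta^s)=2s$ (Definition~\ref{abstrwfiltr}), so $\beta^s\in\Ha^{4s}_{2s}$. To verify primitivity, I will use the elementary $\slt$-fact that an element of $\Ha_w$ with $w\leq w_0$ is primitive if and only if $Y^{w_0-w+1}$ annihilates it, where $Y=\alpha\cup$ and $w_0$ is the center of the hard Lefschetz symmetry. Theorem~\ref{chlwithpoles}, reindexed into the abstract grading (where $w(\alpha)=2$ corresponds to mixed Hodge weight $4$), identifies this center as $w_0=(\dim\mdpgld)/2=3g-3$. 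For $s\leq g-1$ and $g\geq 2$ one has $2s\leq 2g-2\leq 3g-3=w_0$, so the criterion applies.

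The primitivity condition thus reduces to checking $\alpha^{3g-2-2s}\beta^s=0$. This class sits in cohomological degree $2(3g-2-2s)+4s=6g-4$, strictly exceeding $\dim_{\comp}\mdpgld=6g-6$. By Remark~\ref{topdimns} the cohomology of $\mdpgld$ vanishes above degree $6g-6$, so the product is zero and $\beta^s\in\P^{4s}_{2s}$, settling the main claim.

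The indivisibility by $\alpha$ then follows from the primitive decomposition $\Ha=\bigoplus_{j\geq 0}\alpha^j\P$, which presents $\P$ as a direct complement of $\alpha\cdot\Ha=\bigoplus_{j\geq 1}\alpha^j\P$, so $\P\cap\alpha\cdot\Ha=\{0\}$. Since $\beta^s\neq 0$ in the claimed range (Proposition~\ref{betapsi} forces $\beta^s=0$ only for $s\geq g$, with $\beta^{g-1}$ surviving as the listed exception), a nonzero primitive element cannot be written as $\alpha\cdot u$. I anticipate no real technical difficulty; the only care required is the translation between the mixed Hodge weight indexing of Theorem~\ref{chlwithpoles} and the abstract grading $w$ of Definition~\ref{abstrwfiltr}, which pins down $w_0=3g-3$.
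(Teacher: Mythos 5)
Your proof is correct and uses essentially the same argument as the paper: the bi-graded $\slt$-structure with $w_0 = 3g-3$, the vanishing $\Ha^{6g-4} = 0$ from Remark~\ref{topdimns}, and the standard $\slt$ characterization of primitives. One small imprecision: the nonvanishing $\beta^s \neq 0$ for $s \leq g-1$ is not literally a consequence of Proposition~\ref{betapsi} (which only asserts relations, i.e.\ vanishing), but rather of the presentation in Theorem~\ref{hauthaddmntm}; the paper's proof implicitly relies on the same fact when it asserts $\alpha^{3g-3-2s}\beta^s \neq 0$.
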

\n{\em Proof.} 
Recall that  $\Ha^{d} = 0,$ for every $d > 6g-6$ (see Remark \ref{topdimns}).
Clearly, since weights $w$ are strictly multiplicative, $\beta^s \in \Ha^{4s}_{2s}$. 
Since, in the terminology of \S\ref{bigsm}, $\Ha$ is a bi-graded $\slt$-module
with $w_0=3g-3$, we have that $\alpha^{3g-3 -2s} \beta^s \neq 0$.
On the other hand $\alpha^{3g-3 -2s +1} \beta^s \in \Ha^{6g-4} = \{0\}$. 
These are precisely the conditions defining  primitivity.
\blacksquare

\bigskip
\bigskip

\n Set 
\begin{equation}
\label{defijj}
\JJ:= \{ \beta^s \psi^{\underline{t}}\in \Ha \, \hbox{ such that } 
\, \Delta(\beta^s \psi^{\underline{t}}) \leq 2g-3 \,\hbox{ if } t \hbox{ is odd, and }\, 
\Delta(\beta^s \psi^{\underline{t}}) \leq 2g-4 \,\hbox{ if } t \hbox{ is even} \}, 
\end{equation}
and
\[ 
\widetilde{\Ha }: =  \langle  \, \JJ \, \rangle _{\alpha}=  \langle  \, \Pi(\JJ)  \, \rangle _{\alpha}.
\]

By Remark \ref{relabeta}, if $r\geq 1$, then  $\gamma^r \beta^{g-1-r}$ is divisible by $\alpha$. 
In this case, the projection to the primitive space $\Pi(\gamma^r \beta^{g-1-r})=0$. By Lemma
\ref{getsmart} we have  $\Pi(\beta^{g-1})=\beta^{g-1}$, 
and $\gamma^r \beta^{g-1-r} \in  \langle  \beta^{g-1} \rangle_{\alpha}$.
Since $\Delta (\beta^s \psi^{\underline{t}})=2s+t$, 
point 2 of the statement of Proposition \ref{betapsi} can be rephrased by saying that,
unless $\beta^s \psi^{\underline{t}}\in \langle  \beta^{g-1} \rangle_{\alpha}$,
we have that $\beta^s \psi^{\underline{t}}\in \widetilde{\Ha}$, so that
we have an {\em $\slt$-invariant decomposition}
\begin{equation}
\label{isobetastrng}
\Ha\,=\,\widetilde{\Ha} \, \bigoplus\, \langle  \beta^{g-1} \rangle_{\alpha}.
\end{equation}

\begin{lemma}
\label{ndosta}
The following facts hold:
\begin{enumerate}
\item
$\Ha ^d = \widetilde{\Ha}^d $ unless  $d+4-4g$ is  even non-negative. 

\item
$\dim \Ha ^{4g-4+2k} = \dim \widetilde{\Ha}^{4g-4+2k} +1 $ for  $0 \leq  k \leq g-1,$

\item
In the range  of point 1., $\P^d_w=Q^{w,0;\,d}$. 
In particular, all the non-zero summands $Q^{i,j,d}$ in the Deligne decomposition  satisfy
$d-i-2j \leq 2g-3$ if $d$ is odd, and $d-i-2j \leq 2g-4$ if $d$ is even.

\item
In the range of point 2., there is at most one 
non-zero, necessarily one-dimensional, 
summand $Q^{i,j,d}$ satisfying $d-i-2j>2g-4$. 
\end{enumerate}
\end{lemma}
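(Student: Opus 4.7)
The plan is to derive points~1 and~2 by analyzing the $\slt$-string complement in the decomposition $\Ha = \widetilde{\Ha} \oplus \langle\beta^{g-1}\rangle_\alpha$ of~(\ref{isobetastrng}). By Lemma~\ref{getsmart}, $\beta^{g-1}$ is $\slt$-primitive of weight $2g-2$; since $w_o = 3g-3$, this irreducible summand has basis $\{\alpha^k\beta^{g-1} : 0 \leq k \leq g-1\}$, with $\alpha^k\beta^{g-1}$ sitting in degree $4g-4+2k$. Hence $\langle\beta^{g-1}\rangle_\alpha$ contributes exactly one extra dimension to $\Ha^d$ precisely when $d = 4g-4+2k$ for some $0 \leq k \leq g-1$, and nothing otherwise; points~1 and~2 follow.

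For point~3, I would first establish the inclusion $\P^d_w \subseteq Q^{w,0;d}$ in the range of point~1 and then promote it to equality via a dimension count. In this range $\Ha^d = \widetilde{\Ha}^d$, so $\P^d = (\P \cap \widetilde{\Ha})^d$; by Proposition~\ref{projprim} applied to the $\alpha$-generating set $\{\beta^s\psi^{\underline{t}} : (s,\underline{t})\in\JJ\}$, this space is spanned by the bi-homogeneous projections $\Pi(\beta^s\psi^{\underline{t}})$ with $3t+4s = d$, each of which lies in $Q^{2(s+t),0;d}$ by Lemma~\ref{betapsieasy}. This gives $\P^d_w \subseteq Q^{w,0;d}$ termwise. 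For equality, I would invoke the identity $\dim\P^d = \dim(\Ha^d/\alpha\Ha^{d-2}) = \sum_i \dim Q^{i,0;d}$, valid because both the cup-$\alpha$ and the Deligne $\slt$-structures on $\Ha$ share the same operator $\alpha$ and hence give the direct-sum decompositions $\Ha^d = \P^d \oplus \alpha\Ha^{d-2} = \bigoplus_i Q^{i,0;d} \oplus \alpha\Ha^{d-2}$. Summing the termwise inclusions against this identity forces $\P^d_w = Q^{w,0;d}$ for every~$w$. The ``in particular'' claim propagates from the bounds $\Delta(\beta^s\psi^{\underline{t}}) = 2s+t \leq 2g-3$ (odd $t$, odd $d$) or $\leq 2g-4$ (even $t$, even $d$) built into the definition of $\JJ$, together with the $\Delta$-preserving identification $Q^{i,j;d} = \alpha^j Q^{i,0;d-2j}$.

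For point~4, I would show that $\widetilde{\Ha}^d$ lies in the low-$\Delta$ part of the Deligne decomposition and extract the claimed bound from a dimension count. Expanding any generator $\alpha^r\beta^s\psi^{\underline{t}}$ with $(s,\underline{t})\in\JJ$ via the primitive decomposition $\beta^s\psi^{\underline{t}} = \sum_j \alpha^j v_j$, each $v_j \in \P \cap \widetilde{\Ha}$ is bi-homogeneous of degree $3t+4s-2j$ and weight $2(s+t)-2j$, hence a linear combination of projections $\Pi(\beta^{s'}\psi^{\underline{t'}})$ at the matching bi-degree; Lemma~\ref{betapsieasy} then places each such $v_j$ in $Q^{2(s+t)-2j,0;3t+4s-2j}$. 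Consequently, whenever non-zero, $\alpha^{r+j}v_j \in Q^{2(s+t)-2j,\,r+j;\,3t+4s+2r}$ with invariant $\Delta = 2s+t$ preserved, giving $\widetilde{\Ha}^d \subseteq \bigoplus_{d-i-2j\,\leq\,2g-4} Q^{i,j;d}$ for even $d$. Since in the range of point~2 one has $\dim\Ha^d = \dim\widetilde{\Ha}^d + 1$, the complementary region $\bigoplus_{d-i-2j\,>\,2g-4} Q^{i,j;d}$ has total dimension at most one, forcing at most one non-zero, necessarily one-dimensional, Deligne summand with $\Delta > 2g-4$. The main obstacle throughout is controlling the Deligne decomposition under cupping with $\alpha$ near the non-mixing boundary in~(\ref{novel}): when $j = \ff - i$, $\alpha Q^{i,\ff-i}$ could a priori spread across multiple $Q^{k,l}$ and raise $\Delta$; this is resolved by $\slt$-primitivity of each $v_j$, which forces $\alpha^{\ff-w'+1}v_j = 0$ for $v_j$ of weight $w'$, so the boundary case produces only the zero element and all non-vanishing $\alpha^{r+j}v_j$ remain in the expected $Q$-summand with preserved $\Delta$.
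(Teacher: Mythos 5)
Your argument follows the paper's route: points~1 and~2 via the explicit $\alpha$-string $\{\alpha^k\beta^{g-1}\}_{0\le k\le g-1}$ in degrees $4g-4,4g-2,\dots,6g-6$, and points~3 and~4 by pushing the termwise inclusions coming from Lemma~\ref{betapsieasy} and Proposition~\ref{projprim} against the isobaric decomposition~(\ref{isobbaric}) and the Deligne decomposition~(\ref{qdec}), using points~1 and~2 to close the dimension count. Your extra care about the $\alpha$-boundary case via $\slt$-primitivity (i.e.\ $\alpha^{\ff-w'+1}v_j=0$ forcing the escaping terms to vanish outright) is correct and is implicitly the same observation that keeps the paper's termwise inclusions well posed.

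The one place where your stated justification doesn't quite go through as written is the identity you invoke in point~3: you assert $\Ha^d=\bigoplus_i Q^{i,0;d}\oplus\alpha\Ha^{d-2}$ in order to read off $\dim\P^d=\sum_i\dim Q^{i,0;d}$. That direct-sum claim is not automatic: by the second equation in~(\ref{novel}), $\alpha Q^{i,\ff-i}$ can spread across several $Q^{k,l}$, possibly including summands with $l=0$, so $\sum_i Q^{i,0;d}+\alpha\Ha^{d-2}$ is not a priori a direct sum, and the Deligne $Q$-splitting is not an $\slt$-module decomposition for $\alpha$. The dimension identity you need is still true, but should be obtained differently — either as the paper does, by comparing the two direct-sum decompositions $\Ha^d=\bigoplus_{w,j}\alpha^j\P^{d-2j}_w=\bigoplus_{i,j}Q^{i,j;d}$ and noting that your termwise inclusions send distinct isobaric summands into distinct Deligne summands, so equal total dimension forces equality in each slot; or by a double count with induction on $d$, using $\dim Q^{i,j;d}=\dim Q^{i,0;d-2j}$ from~(\ref{12w}) together with $\dim\Ha^d=\sum_{j\ge 0}\dim\P^{d-2j}$. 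Either repair is short, and the rest of your argument then goes through unchanged.
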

\n{\em Proof.}
Point 1. and 2. follow immediately from the fact that
the $\alpha$-string  $\langle  \beta^{g-1} \rangle_{\alpha}$ 
contains the classes $\beta^{g-1}, \alpha\beta^{g-1}, 
\ldots, \alpha^{g-1}\beta^{g-1}$ whose cohomological degrees are 
$4g-4, 4g-2\ldots,  6g-6$.

Notice that the monomials $\beta^s \psi^{\underline{t}}$ in $\JJ$ are precisely those to which
Lemma \ref{betapsieasy} applies, hence
$$
\P^d_w \cap \,\widetilde{\Ha}^d_w \subseteq Q^{w,0;\, d},
$$
and, for $j\geq 0$,
$$
(\alpha^j \,\P^{d-2j}_{w-2j}) \cap \widetilde{\Ha}^{d}_w \subseteq \alpha^jQ^{w-2j,0;\,d-2j} =Q^{w-2j,j;\, d}.
$$
Combining this fact with the $\alpha$-decompositions 
(\ref{isobbaric}) and (\ref{qdec}) in \S\ref{delspl}, 
and with the points 1. and 2. which we just proved,
we immediately obtain  points 3. and 4.
\blacksquare

\begin{lemma}
\label{dfer}
Either $\beta^{g-1} \in  Q^{2g-2,0;\, 4g-4}$ or 
$\beta^{g-1} \in  Q^{2g-1,0;\, 4g-4}$.
\end{lemma}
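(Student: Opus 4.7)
The plan is a short case analysis on the perversity $p:=p(\beta^{g-1})$. First one pins down $p$: Proposition~\ref{ndomett} applied in degree $d=4g-4$ gives $p\geq\lceil d/2\rceil=2g-2$, while the improved upper bound (\ref{mancauno}) of Remark~\ref{mancasolouno} yields $p\leq 2g-1$. So $p\in\{2g-2,\,2g-1\}$.

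In the case $p=2g-2$, the first formula of Corollary~\ref{ndomett2} immediately gives
$$
H^{4g-4}_{\leq 2g-2}(\mdpgld)=Q^{2g-2,0;\,4g-4},
$$
so $\beta^{g-1}\in Q^{2g-2,0;\,4g-4}$ and the first alternative of the lemma holds.

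In the case $p=2g-1$, one expands $\beta^{g-1}$ in the Deligne decomposition (\ref{qdec}). Combining the primitive decomposition (\ref{inuno}) with the vanishing analysis in the proof of Corollary~\ref{ndomett2}, the only Deligne summands that can contribute to $H^{4g-4}_{\leq 2g-1}(\mdpgld)$ are $Q^{2g-2,0;\,4g-4}$, $Q^{2g-1,0;\,4g-4}$, and $Q^{2g-3,1;\,4g-4}=\alpha\cdot Q^{2g-3,0;\,4g-6}$; the issue is to rule out the $\alpha$-divisible summand and land $\beta^{g-1}$ in the RHL-primitive slot $Q^{2g-1,0;\,4g-4}$. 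The non-mixing Lemma~\ref{nomix}, in the refined form (\ref{predth}) that distinguishes RHL-primitive classes by a one-unit drop in the perversity of $\eta^{f-p+1}u$, reduces this to establishing
$$
\alpha^{g-1}\beta^{g-1}\in H^{6g-6}_{\leq 4g-4}(\mdpgld),
$$
one unit stronger than the a priori bound $\leq 4g-3$ obtained by applying Remark~\ref{mancasolouno} to $\gamma^{g-1}$, which by Remark~\ref{relabeta} is a nonzero scalar multiple of $\alpha^{g-1}\beta^{g-1}$.

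The main obstacle is precisely this one-unit sharpening. The plan is to combine the lower bound $p(\alpha^{g-1}\beta^{g-1})\geq 3g-2$ furnished by Corollary~\ref{nozeroperv} (ultimately Hausel's vanishing of the intersection form, Theorem~\ref{tamasinterform}) with the relative hard Lefschetz isomorphisms (\ref{rhlg}) and the $\mathfrak{sl}_2$-primitivity of $\beta^{g-1}$ under the weight action (Lemma~\ref{getsmart} and \S\ref{bigsm}), tracking the class $\alpha^{g-1}\beta^{g-1}$ through the irreducible $\mathfrak{sl}_2$-submodule generated by $\beta^{g-1}$ and its compatibility with the Deligne grading. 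This pins down the perversity graded piece in which $\alpha^{g-1}\beta^{g-1}$ sits and yields the upper bound $4g-4$; then non-mixing delivers the RHL-primitivity of the perversity-$(2g-1)$ component of $\beta^{g-1}$, completing the second alternative.
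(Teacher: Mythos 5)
Your case split $p(\beta^{g-1})\in\{2g-2,2g-1\}$ is correct, and $p=2g-2$ is disposed of properly; the gap is in the case $p=2g-1$. You propose to eliminate the summand $Q^{2g-3,1;\,4g-4}$ by sharpening the perversity bound on $\alpha^{g-1}\beta^{g-1}$ from $\leq 4g-3$ to $\leq 4g-4$, and you propose to get this sharpening from Corollary~\ref{nozeroperv}, relative hard Lefschetz, and the $\slt$-module structure. But these are precisely the ingredients of the paper's Proposition~\ref{betaisfine}, whose proof \emph{begins} by citing Lemma~\ref{dfer}: the route is circular, and you give no independent argument producing the bound $\alpha^{g-1}\beta^{g-1}\in\Ha^{6g-6}_{\leq 4g-4}$. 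Also, ``non-mixing delivers RHL-primitivity'' is unsound as stated: Lemma~\ref{nomix} requires $\eta^{\ff-p+1}u=0$, while $\alpha^{g-1}\beta^{g-1}\neq0$ (it is the top of the $\alpha$-string on $\beta^{g-1}$). What a perversity bound on $\alpha^{g-1}\beta^{g-1}$ can at most kill are the $\alpha$-divisible pieces $Q^{2g-1-2j,j}$ of $\beta^{g-1}$ with $j\geq1$; it cannot place $\beta^{g-1}$ in a single $Q^{i,0}$-slot, so even granting the sharpening you would only reach $\beta^{g-1}\in Q^{2g-2,0;\,4g-4}\oplus Q^{2g-1,0;\,4g-4}$, not the stated disjunction.

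The tool you are missing is Lemma~\ref{ndosta}(3)--(4), which makes the argument short and independent of Proposition~\ref{betaisfine}. If $Q^{i_0,j_0;\,4g-4}\neq0$ with $i_0+2j_0\leq 2g-1$ and $j_0\geq1$, then (\ref{12w}) gives $Q^{i_0,j_0;\,4g-4}=\alpha^{j_0}Q^{i_0,0;\,4g-4-2j_0}$, hence $Q^{i_0,0;\,4g-4-2j_0}\neq0$; since $4g-4-2j_0$ is even and strictly less than $4g-4$, Lemma~\ref{ndosta}(3) forces $(4g-4-2j_0)-i_0\leq 2g-4$, i.e.\ $i_0+2j_0\geq 2g$, contradicting $i_0+2j_0\leq 2g-1$. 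Thus every nonzero summand contributing to $\Ha^{4g-4}_{\leq 2g-1}$ has $j=0$, so $\beta^{g-1}\in Q^{2g-2,0;\,4g-4}\oplus Q^{2g-1,0;\,4g-4}$; both of these satisfy $(4g-4)-i-2j>2g-4$, so Lemma~\ref{ndosta}(4) says at most one of them is nonzero, and that one contains $\beta^{g-1}$, which is the disjunction you want.
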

\n{\em Proof.}
By  (\ref{mancauno}), with $r=0$,   we have that  
$\beta^{g-1} \in \Ha^{4g-4}_{\leq 2g-1}$. 
There is hence at least one non-zero summand  
$Q^{i_0,j_0;\, 4g-4}$ in the Deligne decomposition, 
satisfying 
\begin{equation}
\label{inequa}
i_0+2j_0 \leq 2g-1. 
\end{equation}
Suppose $j_0\neq 0$; by  (\ref{12w}), we have 
$$
Q^{i_0,j_0;\, 4g-4}=\alpha^{j_0}Q^{i_0,0;\, 4g-4-2j_0}.    
$$   
Since $Q^{i_0,0;\, 4g-4-2j_0}\neq \{0\}$,
we have, by point 3. of Lemma \ref{ndosta}, 
$(4g-4-2j_0)-i_0-2j_0  \leq 2g-4$, which contradicts the inequality 
(\ref{inequa}) above, showing that $j_0=0$.  By Corollary \ref{ndomett2}, we then have  
\[
\beta \in \Ha^{4g-4}_{\leq 2g-1} = Q^{2g-2, 0;4g-4} \oplus Q^{2g-1,0; 4g-4} 
\]
\blacksquare

\begin{proposition}
\label{betaisfine} We have that
\[
\beta^{g-1} \in Q^{2g-2,0; \, 4g-4}.
\]
\end{proposition}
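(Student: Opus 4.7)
Combining Lemma \ref{dfer} with Lemma \ref{ndosta}(4), $\beta^{g-1}$ lies in exactly one of the two at-most-one-dimensional summands $Q^{2g-2, 0; 4g-4}$ and $Q^{2g-1, 0; 4g-4}$, so the plan is to rule out $\beta^{g-1} \in Q^{2g-1, 0; 4g-4}$ by contradiction. The argument splits according to the value of $g$.

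\emph{Case $g \geq 3$.} I would exploit the Hausel--Thaddeus relation $\rho^g_{1, g-1, 0} \in I^g_0$ (see Remark \ref{relabeta} and equation (\ref{relaht})), which unpacks to
$$
\alpha\beta^{g-1} \;=\; -2(g-1)\,\gamma\beta^{g-2} \qquad \text{in } H^*(\mdpgld).
$$
Since $g \geq 3$, Theorem \ref{eccolo} furnishes the perversity bounds $\psi_i \in H^3_{\leq 2}$ and $\beta \in H^4_{\leq 2}$; iterated application of Lemma \ref{fax} then yields $\gamma = -2\sum_i \psi_i \psi_{i+g} \in H^6_{\leq 4}$ and $\beta^{g-2} \in H^{4(g-2)}_{\leq 2(g-2)}$, hence $\alpha\beta^{g-1} \in H^{4g-2}_{\leq 2g}$. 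On the other hand, the assumption together with (\ref{12w}) (valid since $1 \leq g-2 = f-(2g-1)$) places $\alpha\beta^{g-1}$ in $Q^{2g-1, 1; 4g-2}$, of perversity exactly $2g+1$, and the inequality $2g+1 > 2g$ yields the contradiction.

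\emph{Case $g = 2$.} Theorem \ref{eccolo} does not apply here: the bound $\beta \in H^4_{\leq 2}$ for $g = 2$, $D = K_{\curv}$ is part of what Proposition \ref{betaisfine} itself asserts. Instead, $\beta \in Q^{3, 0; 4}$ sits at the extremal index $0 = f - 3$ of (\ref{12w}), so equation (\ref{novel}) places $\alpha\beta$ in $\bigoplus_{0 \leq k \leq 3} Q^{k, 0; 6}$, a subspace of perversity at most $3$. By Corollary \ref{nozeroperv} one has $H^6_{\leq 3}(\mdpgld) = 0$, which forces $\alpha\beta = 0$, contradicting $\alpha\beta \neq 0$ from Lemma \ref{getsmart}.

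The principal obstacle is that Lemma \ref{betapsieasy} does not directly cover either side of the key relation, since both $\alpha\beta^{g-1}$ and $\gamma\beta^{g-2}$ fall in the excluded $\gamma^v\beta^s$ family (in fact $2s+t = 2g-2$ is precisely the boundary case the lemma's proof excludes). For $g \geq 3$ one circumvents this by bounding the perversities of the generators $\psi_i$ and $\beta$ individually via Theorem \ref{eccolo} and propagating through Lemma \ref{fax}; for $g = 2$ the required bound on $\beta$ is not yet available, and one must instead combine the extremal behavior of (\ref{novel}) with the middle-degree vanishing provided by Corollary \ref{nozeroperv}.
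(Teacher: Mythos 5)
Your $g=2$ argument is correct, and it is in fact the same contradiction the paper reaches: for $g=2$ the range $1\leq r\leq g-2$ in the paper's proof is empty, so the argument terminates at exactly the step you describe, namely $\alpha\beta\in H^6_{\leq 3}$ (by the second half of (\ref{novel})) against $H^6_{\leq 3}=0$ (Corollary \ref{nozeroperv}) and $\alpha\beta\neq 0$ (Lemma \ref{getsmart}); cf.\ Remark \ref{gequals2}.

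The $g\geq 3$ case, however, has a genuine gap in the step asserting $\alpha\beta^{g-1}\in H^{4g-2}_{\leq 2g}$. First, the intermediate bounds $\gamma\in H^6_{\leq 4}$ and $\beta^{g-2}\in H^{4(g-2)}_{\leq 2(g-2)}$ do not come from Lemma \ref{fax}, which only gives the crude additive-in-degree estimates $\gamma\in H^6_{\leq 5}$ and $\beta^{g-2}\in H^{4(g-2)}_{\leq 4g-10}$; you need Proposition \ref{luck} or Lemma \ref{betapsieasy} (and for $\gamma$ this requires $\Delta(\gamma)=2\leq 2g-3$, i.e.\ $g\geq 3$, which you have). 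But even with the sharp intermediate bounds in hand, the product estimate fails: Lemma \ref{fax} gives $\gamma\beta^{g-2}\in H^{4g-2}_{\leq 2g+2}$ at best, and the only tool for the additive perversity bound is Proposition \ref{testpr}, which requires $d-p-1<\mathrm{codim}\,(\base^0\setminus\basegood^0)=2g-3$, whereas here $d-p-1=(4g-2)-2g-1=2g-3$, so the test $\Lambda^{2g-3}$ necessarily meets the bad locus. This is not an accident: under the contradiction hypothesis $\beta^{g-1}\in Q^{2g-1,0;4g-4}$, the relation $\alpha\beta^{g-1}=-2(g-1)\gamma\beta^{g-2}$ together with (\ref{12w}) places $\alpha\beta^{g-1}$ sharply in $Q^{2g-1,1;4g-2}$, of perversity exactly $2g+1$, so the claim $\alpha\beta^{g-1}\in H^{4g-2}_{\leq 2g}$ is precisely the contradiction you need --- you cannot extract it from the lemmas you cite without first knowing the proposition. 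The paper's route for $g\geq 3$ avoids computing the perversity of $\alpha\beta^{g-1}$ directly: it instead propagates the hypothesis up to $\alpha^{g-1}\beta^{g-1}\in H^{6g-6}$, uses the extremal inclusion from the second half of (\ref{novel}) together with the vanishing $H^{6g-6}_{\leq 3g-3}=0$ of Corollary \ref{nozeroperv} to force $\alpha^{g-1}\beta^{g-1}$ into a single summand $Q^{3g-3-r,r;6g-6}$ with $1\leq r\leq g-2$, and then deduces a contradiction by factoring through (\ref{12w}) and observing that the corresponding $Q^{3g-3-r,0;6g-6-2r}$ vanishes.
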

\n{\em Proof.}
Suppose the statement is false. By Lemma \ref{dfer},  the space $ Q^{2g-2,0; \, 4g-4}=0$
and  the class $\beta^{g-1} \in Q^{2g-1,0; \, 4g-4}$.
From the property  of the Deligne decomposition expressed by  (\ref{12w}), 
it follows that, for $j\leq g-2$, we have 
$0\neq \alpha^{j}\beta^{g-1} \in   Q^{2g-1,j; \, 4g-4+2j}.$  
By using the decomposition (\ref{isobetastrng}), and Lemma \ref{ndosta},
it follows that,  for every even non-negative integer $d<6g-6$,
\begin{equation}
\label{prcs1}
\Ha^{d}=\widetilde{\Ha}^{d} =\bigoplus_{d-i-2j\leq 2g-4} Q^{i,j; \, d}\, \hbox{ if }\, d<4g-4,
\end{equation}
and
\begin{equation}
\label{prcs2}
\Ha^{d}=\left( \bigoplus_{d-i-2j\leq 2g-4} Q^{i,j; \, d}\right)
\bigoplus Q^{2g-1, j_0;\, d}, \hbox{ with }j_0= d/2-2g+2, \hbox{ if }\, d\geq 4g-4.
\end{equation}
In this latter case
$Q^{2g-1,j_0 ; \, d}=\langle \, \alpha^{j_0}\beta^{g-1} \,\rangle_{\rat}$.

Applying one of the defining properties of the Deligne decomposition, 
i.e. the second equation in (\ref{novel}), (with $f=3g-3$ and $i=2g-1$), 
to $\alpha^{g-2}\beta^{g-1}\in   Q^{2g-1, g-2; \, 6g-8}$,
we have the following upper bound for the perversity
\[
\alpha^{g-1}\beta^{g-1} = \alpha (\alpha^{g-2} \beta^{g-1}) \in \Ha^{6g-6}_{\leq 4g-5}.
\]
From Corollary \ref{nozeroperv} it follows that
\[
\Ha^{6g-6}_{\leq 3g-3}=\{0\}.
\]
It follows that there exists  $1 \leq r \leq  g-2$ such that 
\begin{equation}
\label{prvst}
\alpha^{g-1} \beta^{g-1} \in \Ha^{6g-6}_{\leq 3g-3+r} \hbox{ and }  
\alpha^{g-1} \beta^{g-1} \notin \Ha^{6g-6}_{\leq 3g-4+r}. 
\end{equation}

From this and from point 4. of Lemma \ref{ndosta} it follows
that $\alpha^{g-1} \beta^{g-1}$ must belong
to the unique summand $Q^{3g-3-r,r;\,6g-6}$ with  $1 \leq r \leq  g-2$. 

\medskip
Since $r\geq 1$, the relation (\ref{12w}) gives 
$$
Q^{3g-3-r,r;\,6g-6}=\alpha^r Q^{3g-3-r,0;\,6g-6-2r}.
$$
On the other hand, (\ref{prcs2}), 
with $d=6g-6-2r$ shows  that $ Q^{3g-3-r,0;\, 6g-6-2r}=0$.
\blacksquare

\begin{remark}
\label{gequals2}
{\rm For $g=2$ the previous argument shows that $\beta \in \Ha_{\leq 2}^4$,
as anticipated in Remark \ref{genus2}.
}
\end{remark}

\bigskip 
Proposition \ref{betaisfine} allows us to complete point 3. in Lemma \ref{ndosta}:
\[
\P^d_w=Q^{w,0;\,d} \hbox{ for all } d,w. 
\]
\n
We finally summarize what we proved in the following theorem, which is the main result of this paper: 
\begin{theorem}
\label{maintm}
The non-Abelian Hodge theorem for $\PGL_2$, (resp. $\GL_2$)  identifies

\begin{itemize}
\item
the perverse Leray  filtration with the weight filtration: for every integer $i$, we have 
$$
H^*_{\leq i}(\mdpgl_{\rm Dol})\simeq W_{2i}H^*(\pglM_\B)=W_{2i+1}H^*(\pglM_\B), \qquad H^*_{\leq i}(\higgsbu_{\rm Dol})\simeq W_{2i}H^*(\M_\B)=W_{2i+1}H^*(\M_\B).
$$

\item the relative hard Lefschetz theorem (\ref{rhlg}) relative to the Hitchin map $\hitmaph$ (resp. $\hitmap$) and to the relatively ample class $\alpha$
(resp. $\tilde{\alpha}=\alpha \otimes 1 + 1\otimes (\sum \epsilon_i \epsilon_{i+g})$ ) 
with the curious hard Lefschetz theorem~\ref{curioushrdlf}: 
$$
\xymatrix{
H^*_{3g-3 -i}(\mdpgl_{\rm Dol}) \ar[r]^(.4){\simeq}\ar[d]^{\alpha^i}_{\simeq}   & {\rm Gr}^W_{6g-6-2i}H^*(\pglM_\B) \ar[d]^{\alpha^i}_{\simeq}  & {\rm and } &  H^*_{4g-3 -i}(\higgsbu_{\rm Dol}) \ar[r]^(.4){\simeq}\ar[d]^{\tilde{\alpha}^i}_{\simeq}   & {\rm Gr}^W_{8g-6-2i}H^*(\M_\B) \ar[d]^{\tilde{\alpha}^i}_{\simeq}      \\
H^{*+2i}_{3g-3 + i}(\mdpgl_{\rm Dol}) \ar[r]^(.4){\simeq}                                  & {\rm Gr}^W_{6g-6+2i}H^{*+2i}(\pglM_\B)  &  & H^{*+2i}_{4g-3+i}(\higgsbu_{\rm Dol}) \ar[r]^(.4){\simeq}                                  & {\rm Gr}^W_{8g-6+2i}H^{*+2i}(\M_\B) 
}
$$
\item
the  Deligne $Q$-splitting (\ref{qdec}) associated with the relatively ample class $\alpha$
(resp. $\tilde{\alpha}$) with the isobaric  splitting (\ref{isobbaric}).
\end{itemize}
\end{theorem}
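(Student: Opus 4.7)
\medskip
\noindent\textbf{Proof proposal for Theorem \ref{maintm}.}

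The plan is to assemble the work of Sections \ref{WequalsP} into the three equivalent claims (filtration identification, hard Lefschetz identification, splitting identification), treating first the $\PGL_2$ case and then reducing the $\GL_2$ case to it.

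First I would establish the refined statement $\P^d_w = Q^{w,0;\,d}$ for all $(d,w)$ in the $\PGL_2$ setting. Lemma \ref{betapsieasy} already gives $\Pi(\psi^{\underline{t}}\beta^s) \in Q^{2(t+s),0;\,3t+4s}$ for every monomial that is not of the exceptional form $\gamma^v \beta^s$ with $v+s=g-1$. By Remark \ref{relabeta} all such exceptional monomials with $v \geq 1$ lie in the $\alpha$-string generated by $\beta^{g-1}$, so the only primitive contribution outside the range covered by Lemma \ref{betapsieasy} is $\Pi(\beta^{g-1})=\beta^{g-1}$ itself (Lemma \ref{getsmart}). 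This class is placed in $Q^{2g-2,0;\,4g-4}$ exactly by Proposition \ref{betaisfine}. Since the set $\{\Pi(\psi^{\underline{t}}\beta^s)\}$ generates the primitive space $\P$, Proposition \ref{projprim} combined with these placements implies $\P_w \subseteq Q^{w,0}$ for all $w$. The reverse inclusion is automatic from $\langle \P\rangle_\alpha = \Ha = \langle Q\rangle_\alpha$, giving equality.

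Next I would derive the identification of filtrations and splittings for $\PGL_2$. The equality $\P^d_w=Q^{w,0;\,d}$ means the isobaric decomposition (\ref{isobbaric}) coincides term by term with the Deligne $Q$-decomposition (\ref{qdec}) associated to the relatively ample class $\alpha$; this is the third bullet of the theorem. Summing weights then yields $W'_{2k}H^*(\mdpgl_\Dol) = \bigoplus_{w \leq 2k}\P_w = \bigoplus_{i+2j \leq 2k}Q^{i,j} = H^*_{\leq k}(\mdpgl_\Dol)$, i.e.\ the abstract weight filtration $W'$ of Definition \ref{abstrwfiltr} equals the perverse Leray filtration associated to $\hitmaph$. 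In the case $D=K_\curv$, the results of \cite{hausel-villegas} recalled in \S\ref{modhiggsbnd} and the Hodge--Tate property (Theorem \ref{hdgttin}) identify $W'$ on $H^*(\mdpgl_\Dol)$, via non-Abelian Hodge (\ref{naht}), with the weight filtration on $H^*(\pglM_\B)$ up to the renumbering $W_{2k}=W_{2k+1}$; this gives the first bullet. The second bullet is then immediate: under the already-established identifications, the relative hard Lefschetz isomorphism (\ref{rhlg}) for the Hitchin map and $\alpha$ translates term by term into the curious hard Lefschetz isomorphism (\ref{cchhll2}) of Theorem \ref{curioushrdlf}, since both are the $\slt$-action on $\Ha$ from \S\ref{bigsm} with $w_0=3g-3$.

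For the $\GL_2$ statement, I would reduce to $\PGL_2$ via the canonical isomorphism (\ref{fe1}),
\[
H^k_{\leq p}(\higgsbu) \;=\; \bigoplus_{j\geq 0} H^{k-j}_{\leq p-j}(\mdpgl)\otimes \textstyle\bigwedge^j H^1(\curv),
\]
which respects the Deligne splitting associated to $\tilde\alpha=\alpha\otimes 1 + 1\otimes\sum \epsilon_i\epsilon_{i+g}$ (as verified in the discussion following Theorem \ref{identify}, where $sq'_*p^*(\phi_\alpha\otimes\phi_J)$ is identified with $\phi_{\tilde\alpha}$). On the character-variety side the splitting $H^*(\M_\B)=H^*(\GL_1^{2g})\otimes H^*(\pglM_\B)$ of (\ref{gl2-split}) is compatible with the weight filtration, the classes $\epsilon_i$ having homogeneous weight $1$. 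The three bullets for $\GL_2$ therefore follow from those for $\PGL_2$ by tensoring with the standard $\slt$-module $H^*(\GL_1^{2g})$ with generator $\sum \epsilon_i\epsilon_{i+g}$.

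The main obstacle in the whole chain is the single class $\beta^{g-1}$, handled by Proposition \ref{betaisfine}: the transversality argument of Proposition \ref{luck} just fails by one dimension since $\Delta(\beta^{g-1})=2g-2$ exceeds $\operatorname{codim}(\base^0\setminus\basegood^0)=2g-3$ by one. The proof of Proposition \ref{betaisfine} uses the vanishing $H^{6g-6}_{\leq 3g-3}(\mdsld)=0$ of Corollary \ref{nozeroperv}, which in turn rests on Hausel's vanishing of the intersection form (Theorem \ref{tamasinterform}) and the de Cataldo--Migliorini refined intersection form result (Theorem \ref{dcminterform}); everything else in the proof of the theorem is formal rearrangement of results already in place.
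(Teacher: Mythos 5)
Your proposal is correct and follows essentially the same route as the paper: establish the primitive/Deligne identity $\P^d_w = Q^{w,0;\,d}$ via Lemma \ref{betapsieasy} for the generic monomials, Lemma \ref{getsmart} and Proposition \ref{betaisfine} for the exceptional class $\beta^{g-1}$ and its $\alpha$-chain (Remark \ref{relabeta}), then deduce all three bullets for $\PGL_2$ and pass to $\GL_2$ via (\ref{fe1}). Your slightly more explicit use of the verified compatibility $\phi_{\tilde\alpha}=sq'_{*}p^*(\phi_\alpha\otimes\phi_J)$ is a valid substitute for the paper's appeal to Lemma \ref{fax}, and your closing paragraph correctly identifies $\beta^{g-1}$ as the one genuinely non-formal obstacle, resolved by Corollary \ref{nozeroperv} resting on Theorems \ref{tamasinterform} and \ref{dcminterform}.
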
 
\n{\em Proof.}
It follows from Lemma \ref{betapsieasy} that every cohomology class of the additive basis, 
with the possible exception of those in the $\alpha$-chain of $\beta^{g-1}$ (see Remark \ref{relabeta}), satisfies ``W=P". Lemma \ref{betaisfine}
show that also $\beta^{g-1}$ satisfies the condition "W=P", and so do the classes in its $\alpha$-chain, thus proving the first statement for $\PGL_2$. 
The two other statements follow similarly. 
The extension to $\GL_2$ 
follows immediately from the isomorphism \ref{fe1}
and Lemma \ref{fax}, as cupping with $\epsilon_i$ increases the perversity exactly by one.
\blacksquare

\begin{remark}
\label{othersplit}
{\rm
We have made a heavy use and made explicit
Deligne's  splitting of the direct image complex via the use of a
relatively-ample line bundle. This general splitting mechanism
 is described  in \ci{deligneseattle}.
The same paper details the construction of two additional splittings.
As the simple example of  the ruled surface $\pn{1}\times \pn{1} \to \pn{1}$
already shows, in general, the three splittings differ. It is possible to show
that, in the case of all three  Hitchin maps 
considered in this paper, all three splittings coincide when viewed in cohomology.
}
\end{remark}

\begin{remark}
\label{ptow}
{\rm
Let $u \in Q^{i,j}$. Such a class has perversity $p:=i +2j$,
when viewed as a cohomology class for the Higgs  moduli space.
The main result of this paper, i.e. $P=W$, shows that the non Abelian Hodge theorem
turns this class into  a $(p,p)$-class for the split
Hodge-Tate mixed Hodge structure on the associated character variety.
}
\end{remark}

\subsection{$\SL_2$}
\label{SL2}

\rm 
In this section, for the sake of notational simplicity, we will denote simply by  $\slM$ the moduli space $\slM_{\rm Dol }$ of stable Higgs  bundles on $C$ of rank $2$ and fixed determinant of degree $1$. Let $\check{\chi}:\slM\to {{\base}^0}$ the Hitchin map.  
The action of  $\Gamma=\Pic^0_C[2] \simeq \Z_2^{2g}$  on $\slM$ by tensorization preserves the map $\hitmapc$, and, as discussed in \S \ref{comp3gra}, we have a direct sum decomposition  according to the characters of $\Gamma$
\begin{equation}
\label{decdirim}
\hitmapc_*\rat_{\slM}\simeq \bigoplus_{\kappa \in \hat{\Gamma}}(\hitmapc_*\rat_{\slM} )_{\kappa}= \left(\hitmapc_*\rat_{\slM}\right)^{\Gamma} \bigoplus  \left(\hitmapc_*\rat_{\slM}\right)_{{\rm var}} ,
\end{equation}
where we set $\left(\hitmapc_*\rat_{\slM}\right)_{{\rm var}} =  \bigoplus_{0\neq \kappa \in \hat{\Gamma}}(\hitmapc_*\rat_{\slM} )_{\kappa} .$    Taking cohomology,  (\ref{decdirim}) gives  
\beq\label{decdirim2}
H^*(\slM)=\bigoplus_{\kappa\in \hat{\Gamma}} H^*(\slM)_\kappa = H^*(\slM)^{\Gamma} \bigoplus H_{{\rm var}}^*(\slM),
\eeq
where $H^*(\slM)_\kappa=H^*({{\base}^0},(\hitmap_*\rat_{\slM} )_{\kappa})$, is the subspace of $H^*(\slM)$ where $\Gamma$ acts via the character $\kappa$,
and $H_{{\rm var}}^*(\slM):=\bigoplus_{0 \neq \kappa \in \hat{\Gamma}} H^*(\slM)_{\kappa}= H^*(\base^0,(\hitmap_*\rat_{\slM} )_{{\rm var}})$ is the {\em variant} part of $H^*(\slM)$.

Recall from \cite{hit}, formula after (7.13), that 
\beq\label{dimension}
\dim H_{{\rm var}}^{4g+2d-5}(\slM)=\left\{\begin{array}{ll}
 (2^{2g}-1)\bino{2g-2}{2g-2d-1}    & \hbox{ if }d=1,\dots,g-1 , \\
0   &  \hbox{ otherwise }.
\end{array}\right. 
\eeq

\medskip
\n
For $\gamma\in \Gamma\subseteq \Pic^0_{\curv}$, let $L_\gamma $ be the corresponding order $2$ line bundle, and let $i_\gamma$ be 
the ``squaring'' map 

\begin{equation}
 i_\gamma: H^0(\curv, K_{C}\otimes L_\gamma) \lorw H^0(\curv, 2K_C)={\base}^0, \qquad  i_\gamma(a)=a\otimes a,
\end{equation}
with image ${\base}^0_\gamma:={\rm Im}(i_\gamma)\subset {\base}^0.$ 
By the Riemann-Roch theorem,
$\dim(\base^0_0)=g$ and,  if 
$\gamma\in \Gamma^*=\Gamma\setminus\{0\},$ then
$\dim(\base^0_\gamma)=g-1$. 
Points in $\cup_{\gamma\in \Gamma^*}\base^0_\gamma$ are called {\em endoscopic points}. Set
$$\base^0_{ne}:=\base^0\setminus \cup_{\gamma\in \Gamma\setminus 0} \base^0_\gamma , \hbox{ and } \slM_{ne}:=\check{\chi}^{-1}(\base^0_{ne}).$$ 

Our goal is to prove the following: 
\begin{proposition} 
\label{trivial}
Let $s\in\base_{ne}^0$, and $\slM_s:=\hitmapc^{-1}(s)$ the fiber of the Hitchin fibration over $s$.
The group $\Gamma$ acts trivially on $H^*(\slM_s)$.
\end{proposition}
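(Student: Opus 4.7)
The group $\Gamma$ acts on $\slM$ via $\gamma\cdot(E,\phi) = (E \otimes L_\gamma, \phi \otimes 1_{L_\gamma})$; this commutes with $\hitmapc$ and preserves each fiber $\slM_s$. Under the spectral correspondence of Theorem~\ref{fibres}, the induced action on $\slM_s$ becomes translation by the pulled-back line bundle $\pi_s^* L_\gamma$ on the (compactified) Prym variety. My strategy exploits the elementary fact that translation by an element of a connected algebraic group is homotopic to the identity and so acts trivially on cohomology; thus it suffices to locate, for each $s \in \base^0_{ne}$, a connected group containing $\pi_s^* L_\gamma$ that acts on $\slM_s$ by translations.

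The case $s \in \baseosm$ is immediate: the fiber is a torsor under the Prym variety $\Prym(C_s/C)$, and since the double cover $\pi_s$ is ramified (its branch divisor $s$ being nonzero on $\baseosm$), $\ker({\rm Nm})$ equals the connected Prym. Because ${\rm Nm}(\pi_s^*L_\gamma) = L_\gamma^{\otimes 2} = \mathcal{O}_C$, the translation by $\pi_s^*L_\gamma$ lies in $\Prym(C_s/C)$ and the conclusion follows. Consequently the $\Gamma$-action on the local system $R^l\hitmapc_{\mathrm{reg},*}\rat \simeq \bigwedge^l \mathcal{V}^-$ over $\baseosm$ is trivial. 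The analog of Corollary~\ref{pglvabene} for $\hitmapc$ in place of $\hitmaph$, established by the same proof (Theorem~\ref{final} applies verbatim to $\hitmapc$, with compactified Pryms replacing compactified Jacobians), gives $(\hitmapc_*\rat)_{|\baseogood} \simeq \bigoplus_l (R^0 j_* \bigwedge^l \mathcal{V}^-)[-l]$, on which the $\Gamma$-action is trivial by functoriality of $R^0 j_*$. Taking stalks proves the proposition for all $s \in \baseogood$, covering in particular $\baseogood \cap \base^0_{ne}$.

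The remaining case is $s \in \base^0_{ne} \setminus \baseogood$, namely $s = u^2$ with $u \in H^0(\curv, K_{\curv})\setminus\{0\}$ non-endoscopic. Here $C_s$ is reducible, consisting of two copies of $\curv$ glued along the zeros of $u$, and the fiber $\slM_s$ admits an explicit description: generically $\phi$ has eigenvalues $\pm u$, forcing an eigen-decomposition $E = L_+ \oplus L_-$ with $\phi|_{L_\pm} = \pm u$ and $L_+ \otimes L_- = \Lambda$. Away from a lower-dimensional sublocus, this exhibits $\slM_s$ as a family over $\Pic^d(\curv)$ on which $\Gamma$ acts by translation $L \mapsto L \otimes L_\gamma$; connectedness of $\Pic^d(\curv)$ again forces triviality on cohomology. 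The main obstacle will be making this last step rigorous: the geometry of the Hitchin fibers over reducible spectral curves is more delicate than the Prym-torsor picture, and correctly handling the stability conditions and the fibers over the zero locus of $u$ requires care to verify that the $\Gamma$-action indeed falls within the translation-by-a-connected-group framework.
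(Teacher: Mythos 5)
Your strategy coincides with the paper's at the conceptual level: exhibit the $\Gamma$-action on $\slM_s$ as the restriction of a connected group action, so that it must be trivial on cohomology. The execution, however, has two genuine gaps, and the second one is the heart of the matter.

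First, the passage from $\baseosm$ to all of $\baseogood$ via the decomposition theorem is not the quick step you suggest. The claim that Theorem~\ref{final} ``applies verbatim'' to $\hitmapc$ is not justified: Theorem~\ref{final} compares Betti numbers of compactified \emph{Jacobians} against monodromy invariants of $\bigwedge^l H^1(C_s)$, and the upper-/lower-bound machinery of \S\ref{cpctjcb}--\S\ref{lbebncj} would have to be re-run for compactified \emph{Pryms} against invariants of $\bigwedge^l \mathcal{V}^-$. The paper does not do this; Corollary~\ref{pglvabene} is deduced from the $\GL_2$ result via Theorem~\ref{identify}, which only sees the $\Gamma$-\emph{invariant} part $\hitmaph_*\rat=(\hitmapc_*\rat)^\Gamma$. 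It therefore says nothing a priori about the supports of the variant isotypic pieces of $\hitmapc_*\rat$ over $\baseogood$, which is precisely what you would need. (This gap is repairable: for integral $C_s$ the paper's direct argument works without the decomposition-theorem detour, once connectedness of $\Prym_{C_s/C}$ and its action on the whole compactified fiber are in place.)

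Second, and more seriously, you explicitly leave the reducible case $s\in\base^0_{ne}\setminus\baseogood$ as an unfinished sketch, and that case is an essential part of the statement. Your informal picture of $\slM_s$ as a family over $\Pic^d(\curv)$ of eigen-decompositions $E=L_+\oplus L_-$ breaks down over the zeros of $u$ and does not account for the compactification of the fiber. The paper avoids a case split entirely: it defines the norm map for an arbitrary reduced spectral curve via the normalization, $\mathrm{Nm}_{C_s/C}:=\mathrm{Nm}_{\widetilde{C}_s/C}\circ\nu^*$, sets $\Prym_{C_s/C}:=\ker\mathrm{Nm}_{C_s/C}$, and then supplies three ingredients you are missing: (i) connectedness of $\Prym_{C_s/C}$ for non-endoscopic $s$ (Lemma~\ref{prymcnnct}, which in the reducible case requires its own argument, cf.\ \cite{hausel-pauly}); (ii) the fact that $\Prym_{C_s/C}$ acts on the \emph{entire} compactified fiber --- i.e.\ that tensoring a pure rank-$1$ torsion-free sheaf $\mathcal E$ by $\mathcal L\in\Prym_{C_s/C}$ preserves the fixed-determinant condition --- which rests on Lemma~\ref{dettensor}, $\det(\pi_*(\mathcal{L}\otimes\mathcal{E}))=\det(\pi_*\mathcal{E})\otimes\mathrm{Nm}_{C_s/C}(\mathcal L)$, proved via the classification of torsion-free sheaves on $A_k$-curves by partial normalizations; and (iii) preservation of stability, which for reducible $C_s=C_1\cup C_2$ requires showing that a destabilizing sub-Higgs-bundle must come from a component, and that $\deg(\mathcal L|_{C_1})=0$ for $\mathcal L\in\Prym_{C_s/C}\subset\Pic^0_{C_s}$ so that the degrees of sub-Higgs-bundles are unchanged by the twist. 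Supplying (i)--(iii) is exactly the content of the paper's proof, and exactly what your proposal acknowledges is missing.
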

\medskip
\n
Proposition~\ref{trivial} immediately implies:
\begin{corollary}
The variant complex $\left(\hitmapc_*\rat_{\slM}\right)_{{\rm var}} $
is supported on $\cup_{\gamma\in \Gamma^*}\base^0_\gamma=\base^0 \setminus \base^0_{ne}$.
\end{corollary}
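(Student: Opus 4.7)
The plan is to show that for $s \in \base^0_{ne}$, the action of $\Gamma$ on the fiber $\slM_s$ factors through translation by an element of a connected algebraic group, since such translations act as the identity on singular cohomology. Explicitly, $\gamma \in \Gamma$ acts on $(E,\phi) \in \slM_s$ by $(E,\phi) \mapsto (E \otimes L_\gamma, \phi \otimes \id)$; this preserves $\hitmapc$ because $L_\gamma^{\otimes 2} \cong \mathcal{O}_C$, so the characteristic polynomial of $\phi \otimes \id$ equals that of $\phi$, and $\det \pi_{s*}(\mathcal{F} \otimes \pi_s^*L_\gamma) = \det \pi_{s*}\mathcal{F} \otimes L_\gamma^{\otimes 2} = \det \pi_{s*}\mathcal{F}$.

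The main case is $s \in \base^0_{ell}$, where the spectral curve $C_s$ is integral. By the BNR correspondence (Theorem \ref{fibres}), $\slM_s$ is identified with a torsor over the Prym variety $\Prym(C_s/C) := \ker(\mathrm{Nm} : \Pic^0(C_s) \to \Pic^0(C))$ (or its appropriate generalization for integral singular $C_s$). The projection formula $\pi_{s*}(\mathcal{F} \otimes \pi_s^*L_\gamma) \cong \pi_{s*}\mathcal{F} \otimes L_\gamma$ shows that the action of $\gamma$ corresponds to translation by $\pi_s^*L_\gamma$, which lies in $\Prym(C_s/C)$ since $\mathrm{Nm}(\pi_s^*L_\gamma) = L_\gamma^{\otimes 2} = \mathcal{O}_C$. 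Because $C_s$ is integral, Theorem \ref{genfctspic} guarantees that $\Pic^0(C_s)$ is a connected commutative algebraic group, hence so is $\Prym(C_s/C)$. Any element of this connected group can be joined to the identity by a continuous path, and the induced isotopy shows that the translation is homotopic to the identity on $\slM_s$, which forces the action on $H^*(\slM_s)$ to be the identity.

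For the remaining points $s \in \base^0_{ne} \setminus \base^0_{ell}$, Remark \ref{goodisirre} shows that $s$ has only zeros of even order, and the condition $s \notin \base^0_\gamma$ for every $\gamma \neq 0$ forces $s = a^2$ with $a \in H^0(C, K_C)$, so $C_s$ decomposes as the union of two copies $C_\pm$ of $C$ meeting transversally at the zeros of $a$. The same strategy applies: the pullback $\pi_s^*L_\gamma$ restricts to $L_\gamma$ on each component $C_\pm \cong C$ and so has multi-degree $(0,0)$, placing it in the connected component of the generalized Picard scheme of $C_s$. Translation by it is again homotopic to the identity on the (compactified) fiber, so acts trivially on $H^*(\slM_s)$.

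The main obstacle will be making the reducible case rigorous: one must set up the BNR-style description of $\slM_s$ for non-integral $C_s$, verify that translation by $\pi_s^*L_\gamma$ preserves $\slM_s$ through the compactification, and check that the connected component of the generalized Picard scheme acts by continuous families of automorphisms on the relevant piece of the fiber. In the integral case, the argument is essentially a direct application of the projection formula together with the connectedness statement in Theorem \ref{genfctspic}.
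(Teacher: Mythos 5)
Your overall strategy is the same as the paper's: show that for $s\in\base^0_{ne}$ the $\Gamma$-action on $\slM_s$ is the restriction of an action of a connected group (the relevant Prym), hence acts trivially on $H^*(\slM_s)$, and then pass to cohomology sheaves of $\hitmapc_*\rat_{\slM}$ to conclude that the variant summand vanishes over $\base^0_{ne}$. That reduction is exactly the content of Proposition~\ref{trivial}, which is what the paper's own one-line proof of the corollary cites, so the logical skeleton is right.

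However, the central step in your argument has a genuine gap. You write that since $C_s$ is integral, Theorem~\ref{genfctspic} shows $\Pic^0(C_s)$ is connected, ``hence so is $\Prym(C_s/C)$.'' That implication is false: the kernel of a homomorphism of connected algebraic groups need not be connected (consider $z\mapsto z^2$ on $\comp^\times$, whose kernel is $\mmu_2$). The connectedness of $\Prym_{C_s/C}$ over $\base^0_{ne}$ is precisely Lemma~\ref{prymcnnct}, which is a nontrivial theorem requiring the non-endoscopic hypothesis (the paper attributes it to Ng\^o's \cite{ngo2}~\S11 in the integral case, and to \cite{hausel-pauly} in general). Indeed, your reasoning never uses $s\notin\base^0_\gamma$ for $\gamma\neq 0$, so if it were correct it would prove that $\Gamma$ acts trivially on $H^*(\slM_s)$ for \emph{every} integral $C_s$, including the integral endoscopic spectral curves $s\in\base^0_\gamma\cap\baseogood$ where $\pi_0(\Prym_{C_s/C})$ is in fact nontrivial; that would force $H^*_{\rm var}(\slM)=0$, contradicting Hitchin's dimension formula \eqref{dimension}. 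The connectedness of the Prym over non-endoscopic points is therefore the real content, and it must be imported as an external input rather than derived from the connectedness of $\Pic^0$. Separately, you correctly flag but do not close two further steps that the paper does carry out: the preservation of stability under tensoring by elements of $\Prym_{C_s/C}$ (Lemma~\ref{dettensor} plus the explicit check in the proof of Proposition~\ref{trivial}), and the reducible case $C_s=C_1\cup C_2$.
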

\n{\em Proof.}
Taking the cohomology sheaves of the decomposition \ref{decdirim}
we have that $\Gamma$ acts as multiplication by the character $\kappa \in \hat{\Gamma}$
on ${\mathcal H}^i(\left(\hitmap_*\rat_{\slM} \right)_{\kappa})$.
By Proposition \ref{trivial} 
$$
\hbox{ if } s \in \base^0_{ne} \hbox{ and }   \kappa \neq 0, \hbox{ then }   {\mathcal H}^i(\left(\hitmap_*\rat_{\slM} \right)_{\kappa})_s=0 \hbox{ for all }i, 
$$
therefore the restriction of 
$\bigoplus_{\kappa \in \hat{\Gamma}\setminus \{0\}}\left(\hitmap_*\rat_{\slM} \right)_{\kappa}$
to $\base^0_{ne}$ vanishes. 
\blacksquare

\medskip
\n
In order to  prove  Proposition~\ref{trivial}   
we show that the action of $\Gamma$ on $\slM_s$, for $s\in\base_{ne}^0$, 
is the restriction to $\Gamma$ of an action of a connected group  $\prym_{C_s/C}$;
as such $\Gamma$ acts trivially on $H^*(\slM_s)$. We
begin with some preliminary considerations on the norm map.

Fix $s\in\base_{ne}^0$, and let $\pi:C_s\to C$ be the corresponding spectral
cover. When $s\in \base^0_0$ we have that $C_s=C_1\cup C_2$ is reducible, 
otherwise $C_s$ is an integral curve. We denote by 
$\Pic^0_{C_s}$ the connected component of the identity of $\Pic_{C_s}$. 
Denote by $\nu:\widetilde{C}_s\to C_s$  the normalization. 
Define the norm map $ {\rm Nm}_{C_s/C}:\Pic^0_{C_s}\to {\rm \Pic}^0_C $
by \beq\label{normnorm}  {\rm Nm}_{C_s/C}:={\rm Nm}_{\widetilde{C}_s/C}\circ \nu^*\eeq
where, for a divisor $D$ on the non-singular curve $\widetilde{C}_s$, the norm map ${\rm Nm}_{\widetilde{C}_s/C}
({\mathcal O}(D))={\mathcal O}((\pi\circ \nu)_* D )$ is the classical one. Consequently ${\rm Nm}_{C_s/C}:\Pic^0(C_s)\to \Pic^0(C)$ is a group homomorphism.

By Proposition 3.8 in \cite{hausel-pauly} we have the following alternative formula for the norm map:
\beq  \label{normap}{\rm Nm}_{C_s/C}({\mathcal L})=\det(\pi_*({\mathcal L}))\otimes \det(\pi_*({\mathcal O}))^{-1}.\eeq

\medskip
As ${\rm Nm}_{C_s/C}:\Pic^0_{C_s}\to\Pic^0_C$ is a group homomorphism, the kernel $\Prym_{C_s/C}:={\rm Nm}_{C_s/C}^{-1}(\mathcal O_C)$ is a subgroup of $\Pic^0_{C_s}$. 

\begin{lemma}
\label{prymcnnct}
If $s\in\base_{ne}^0$, then the group $\Prym_{C_s/C}={\rm Nm}_{C_s/C}^{-1}(\mathcal O_C)$ is connected.
\end{lemma}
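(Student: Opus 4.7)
\medskip
\noindent
\emph{Proof plan.} The strategy is to use the factorization (\ref{normnorm}) to reduce the question of connectedness of $\Prym_{C_s/C}$ to that of the Prym variety of the induced double cover of smooth curves $\widetilde\pi := \pi\circ\nu : \widetilde C_s \to C$, for which classical results apply. Throughout I tacitly assume $s \in \base^0_{ne}\cap \basegood^0$, so that $C_s$ is integral and $\widetilde C_s$ is its smooth normalization.

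First, I would observe that (\ref{normnorm}) identifies $\Prym_{C_s/C}$ with the preimage $(\nu^*)^{-1}(\Prym_{\widetilde C_s/C})$ inside $\Pic^0_{C_s}$. By Theorem \ref{genfctspic}, item 1, applied to the integral curve $C_s$ with $A_k$-singularities, the pullback $\nu^* : \Pic^0_{C_s} \to \Pic^0_{\widetilde C_s}$ is surjective with kernel $\mathcal{P}$, a connected commutative affine algebraic group (a product of copies of $\comp^\times$ and $\comp$). Consequently, the restriction of $\nu^*$ gives a surjection $\Prym_{C_s/C} \to \Prym_{\widetilde C_s/C}$ whose kernel is again the connected group $\mathcal{P}$. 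It follows that $\Prym_{C_s/C}$ is connected if and only if $\Prym_{\widetilde C_s/C}$ is.

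Second, I would invoke the classical dichotomy for the Prym variety of a double cover of smooth projective curves (Mumford--Beauville): for $\widetilde\pi: \widetilde C_s \to C$ of degree $2$, the kernel of ${\rm Nm}_{\widetilde C_s/C}$ is connected whenever $\widetilde\pi$ is connected and ramified, and has exactly two connected components when $\widetilde\pi$ is connected and \'etale. Thus it remains to exclude, under the non-endoscopic assumption, the possibility that $\widetilde\pi$ is an \'etale connected double cover (the disconnected case $s\in\base^0_0$ does not arise since $C_s$ is integral).

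The principal task is therefore the following ramification analysis, which I expect to be the main obstacle. Using Proposition \ref{singspcrv}, the branch locus of $\widetilde\pi$ consists of those zeros of the discriminant $s_1^2 - 4s_2 \in H^0(C,2K_C)$ whose multiplicity is odd; equivalently, it is the image under $\pi$ of the singular points of $C_s$ of type $A_k$ with $k$ even, together with the simple ramification points of $\pi$ itself. If $\widetilde\pi$ is \'etale, then every zero of $s_1^2-4s_2$ has even order, so $s_1^2-4s_2$ is a perfect square in the Picard group: there exists a line bundle $M$ on $C$ with $M^{\otimes 2}\simeq \mathcal{O}_C(2K_C)$ and a section $a\in H^0(C,M)$ with $a^{\otimes 2}=s_1^2-4s_2$. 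Writing $M = K_C\otimes L_\gamma$ for a uniquely determined $\gamma\in \Pic^0_C[2]=\Gamma$ (existence of some such square root is automatic, and any two differ by a $2$-torsion line bundle), and completing the square in (\ref{equaspectcur}), one identifies $s$ with a point of $\base^0_\gamma$. The hypothesis $s\in\base^0_{ne}$ forces $\gamma=0$; but then $s_1^2-4s_2$ is the square of a section of $K_C$, and a direct check (e.g.\ via Remark \ref{gluefamily}) shows that the associated $\widetilde C_s\to C$ is the \emph{trivial} \'etale double cover, so $\widetilde C_s$ is disconnected and $C_s$ is reducible, contradicting $s\in\basegood^0$. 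Hence $\widetilde\pi$ must be ramified, $\Prym_{\widetilde C_s/C}$ is connected, and so is $\Prym_{C_s/C}$. \blacksquare
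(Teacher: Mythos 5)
Your argument for the integral case is sound and is essentially the route the cited references (Ng\^o, Hausel--Pauly) take: factor ${\rm Nm}_{C_s/C}$ through the normalization as in (\ref{normnorm}), invoke the exact sequence of Theorem \ref{genfctspic} to observe that $\nu^*\colon \Pic^0_{C_s}\to\Pic^0_{\widetilde C_s}$ is surjective with connected (affine) kernel, reduce to the smooth double cover $\widetilde\pi$, and then apply the classical Mumford--Beauville dichotomy together with a ramification analysis showing that $\widetilde\pi$ \'etale and connected would force $s$ into some endoscopic $\base^0_\gamma$, $\gamma\neq 0$. That ramification analysis is correct, and you are also right that Remark \ref{therareoddpoints} is the relevant sanity check.

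The genuine gap is at the very beginning: you restrict to $s\in\base^0_{ne}\cap\basegood^0$, and later assert parenthetically that ``the disconnected case $s\in\base^0_0$ does not arise since $C_s$ is integral.'' But the lemma is stated for \emph{all} $s\in\base^0_{ne}$, and $\base^0_{ne}=\base^0\setminus\bigcup_{\gamma\neq 0}\base^0_\gamma$ does contain $\base^0_0\setminus\{0\}$ (indeed $\base^0_0\cap\base^0_\gamma=\{0\}$ for $\gamma\neq 0$, since $a\in H^0(K_C)$ and $b\in H^0(K_C\otimes L_\gamma)$ with $a^2$ proportional to $b^2$ would force $L_\gamma\simeq\mathcal O_C$). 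For $s\in\base^0_0\setminus\{0\}$ the spectral curve $C_s=C_1\cup C_2$ is reduced but reducible, and this locus is precisely what the paper has in mind when it writes ``the argument is easily adapted to the case of a reducible and reduced curve''; it is also used explicitly in the stability discussion in the proof of Proposition \ref{trivial}. You should add this case: Theorem \ref{genfctspic}.2 still gives a surjection $\nu^*$ with connected kernel $\mathcal P/(\comp^\times)^{\#-1}$, and here $\widetilde C_s\simeq C\sqcup C$ with ${\rm Nm}_{\widetilde C_s/C}(L_1,L_2)=L_1\otimes L_2$, whose kernel $\{(L,L^{-1})\}\simeq\Pic^0_C$ is connected; the Mumford dichotomy (which concerns \emph{connected} smooth double covers) is not what you want here, and your parenthetical remark inadvertently rules out a case that the lemma does require you to handle.
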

\n
{\em Proof.}
The case  of an integral spectral curve $C_s$ is treated in \cite{ngo2} \S11.
 The argument is easily adapted to the case of a reducible and reduced curve. For a proof more in the spirit of the present paper 
see \cite{hausel-pauly}, Theorem 1.2.
\blacksquare

\begin{lemma} 
\label{dettensor}
Let $\pi:X\to Y$ be a degree two map from a reduced projective curve $X$ to a non-singular projective curve $Y$. Let ${\mathcal E}$ be a rank $1$ torsion free sheaf on $X$ and $\mathcal L$ an invertible one. Then \beq \label{determinant}\det(\pi_*({\mathcal L}\otimes{\mathcal E}))=\det(\pi_*({\mathcal E}))\otimes {\rm Nm}_{X/ Y}({\mathcal L}).\eeq
\end{lemma}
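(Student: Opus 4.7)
The plan is to prove the identity by a direct \v{C}ech-theoretic computation after trivializing $\mathcal{L}$ locally on $Y$. First I observe that both sides are honest line bundles on the smooth curve $Y$: since $\pi$ is finite of degree $2$ and $\mathcal{E}$, $\mathcal{L}\otimes\mathcal{E}$ are rank-$1$ torsion-free on $X$, their push-forwards are coherent torsion-free $\mathcal{O}_Y$-modules of generic rank $2$, hence locally free of rank $2$, so their determinants make sense.

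I would then cover $Y$ by affine opens $\{V_i\}$ such that each $\pi^{-1}(V_i)$ is affine (possible because $\pi$ is finite) and $\mathcal{L}|_{\pi^{-1}(V_i)}$ is trivialized by a section $\sigma_i$, and set $g_{ij}:=\sigma_j/\sigma_i\in\Gamma(\pi^{-1}(V_{ij}),\mathcal{O}_X^\times)$. Multiplication by $\sigma_i$ is an $\mathcal{O}_X$-linear isomorphism $\mathcal{E}|_{\pi^{-1}(V_i)}\to (\mathcal{L}\otimes\mathcal{E})|_{\pi^{-1}(V_i)}$; pushing forward and taking determinants yields local trivializations
$$
\det\psi_i : \det(\pi_*\mathcal{E})|_{V_i}\;\longrightarrow\;\det(\pi_*(\mathcal{L}\otimes\mathcal{E}))|_{V_i},
$$
and on overlaps $(\det\psi_j)\circ(\det\psi_i)^{-1}$ is the determinant of the $\mathcal{O}_{V_{ij}}$-linear automorphism ``multiplication by $g_{ij}$'' of the rank-$2$ locally free module $\pi_*\mathcal{E}|_{V_{ij}}$.

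The crux is to identify this determinant with the algebra norm $N_{X/Y}(g_{ij})$, namely the determinant of multiplication by $g_{ij}$ on the rank-$2$ $\mathcal{O}_Y$-algebra $\pi_*\mathcal{O}_X$. At the generic point $\eta$ of $Y$, the sheaf $\mathcal{E}$ is invertible on $X$ (a rank-$1$ torsion-free sheaf on a reduced curve is locally free at every generic point of $X$), so $\pi_*\mathcal{E}$ is locally free of rank $1$ over $\pi_*\mathcal{O}_X$ at $\eta$; for such a module the determinant of multiplication by a unit equals the algebra norm, by a standard linear-algebra fact (pick a $\pi_*\mathcal{O}_X$-basis and compute). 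Since both quantities are regular elements of $\mathcal{O}_Y^\times(V_{ij})$ agreeing at $\eta$, they agree on all of $V_{ij}$ because $Y$ is integral. Applying exactly the same argument with $\mathcal{E}=\mathcal{O}_X$ shows that the cocycle $\{N_{X/Y}(g_{ij})\}$ represents $\det(\pi_*\mathcal{L})\otimes\det(\pi_*\mathcal{O}_X)^{-1}$, which equals ${\rm Nm}_{X/Y}(\mathcal{L})$ by formula \eqref{normap}, concluding the proof.

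The main obstacle is the independence of $\det(\cdot g_{ij})$ on $\pi_*\mathcal{E}$ from the choice of $\mathcal{E}$; this is handled, as outlined, by reduction to the generic point of $Y$, where $\mathcal{E}$ is invertible and the standard ``determinant equals norm'' identity applies, with extension to all of $V_{ij}$ by regularity. A secondary case to monitor is when $X$ is reducible (as in the endoscopic situation $X=X_1\cup X_2$): there $\pi_*\mathcal{O}_X\cong\mathcal{O}_Y\oplus\mathcal{O}_Y$ generically and the norm of $g=(g_1,g_2)$ is $g_1g_2$, so the argument goes through without modification.
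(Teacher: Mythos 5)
Your proof is correct, and it takes a genuinely different route from the paper's. The paper invokes Theorem~\ref{genfctscj}(3) to write $\mathcal{E}=\pi'_*\mathcal{L}'$ for an invertible sheaf $\mathcal{L}'$ on a partial normalization $\pi':X'\to X$, then applies the projection formula (so $\pi_*(\mathcal{L}\otimes\mathcal{E})=(\pi\circ\pi')_*(\pi'^*\mathcal{L}\otimes\mathcal{L}')$), the multiplicativity of the norm, formula~(\ref{normap}) for line bundles on $X'$, and the compatibility ${\rm Nm}_{X'/Y}\circ\pi'^*={\rm Nm}_{X/Y}$. Your \v{C}ech/transition-cocycle computation is more elementary and self-contained: it uses only that $\pi_*\mathcal{E}$ is locally free of rank $2$ on the smooth curve $Y$, together with the identity ``determinant of multiplication equals algebra norm'' for a free rank-one module over $\pi_*\mathcal{O}_X$, checked at the generic point of $Y$ (where $\mathcal{E}$ is invertible at each generic point of $X$) and propagated to all of $V_{ij}$ by integrality of $Y$. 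The payoffs differ: the paper's reduction is shorter once the structure theorem for rank-one torsion-free sheaves is granted, but that theorem is cited only for integral curves with $A_k$-singularities, whereas the lemma is stated (and used in the endoscopic case) for possibly reducible $X$; your direct argument sidesteps the classification entirely and handles the reducible case without extra input. One minor point worth stating explicitly in your write-up is why $\pi_*\mathcal{E}$ is torsion-free over $\mathcal{O}_Y$: since each component of $X$ dominates $Y$, the image $\pi^\#(f)$ of a nonzero $f\in\mathcal{O}_Y$ is a nonzerodivisor on the reduced curve $X$, so it acts injectively on the torsion-free sheaf $\mathcal{E}$.
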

\begin{proof} First we note that by Theorem~\ref{genfctscj} we have that there is a unique partial normalization $\pi^\prime:X^\prime\to X$ and an invertible sheaf ${\mathcal L}^\prime$ on $X^\prime$ such that ${\mathcal E}=\pi_*({\mathcal L}^\prime)$. The multiplicativity of
the norm map and \eqref{normap} imply $$\det((\pi\circ \pi^\prime)_*((\pi^\prime)^*({\mathcal L})\otimes{\mathcal L}^\prime))=\det((\pi\circ \pi^\prime)_*({\mathcal L}^\prime))\otimes {\rm Nm}_{X^\prime/ Y}((\pi^\prime)^*({\mathcal L})).$$ This together with ${\rm Nm}_{X^{\prime}/Y}\circ ({\pi^\prime})^*={\rm Nm}_{X/Y}$ yield the result.
\end{proof}

\n{\em Proof of Proposition~\ref{trivial}}. We first prove that $\Prym_{C_s/C}$ acts on the fiber $\slM_{s}$. 
Recall that  $\slM_{s}$ can be identified with pure rank $1$  torsion-free sheaves ${\mathcal E}$ on $C_s$ for which the corresponding Higgs bundle $(\pi_*({\mathcal E}),\phi_{\mathcal E})$ is stable, and $\det(\pi_*({\mathcal E}))\cong\Lambda$. (Note that $\tr(\phi_{\mathcal E})=0$ is automatic as $s\in H^0(C;2K_{\curv})$.) Now, ${\mathcal L}\in\Prym_{C_s/C}$ acts on ${\mathcal E}$ as ${\mathcal E} \mapsto {\mathcal L}\otimes{\mathcal E}$, where the result  is again a pure
  rank $1$ torsion-free sheaf on $C_s$. It follows that
$(\pi_*({\mathcal L}\otimes{\mathcal E}),\phi_{{\mathcal L}\otimes{\mathcal E}})$ is a rank 
$2$ Higgs bundle. By Lemma~\ref{dettensor}, we have that \beq \label{fixedet}\det(\pi_*({\mathcal L}\otimes{\mathcal E}))=\Lambda.\eeq 

\smallskip
\n
Finally, we prove that tensoring with an element of  $\Prym_{C_s/C}$ preserves stability.
Assume there is a rank $1$ Higgs subbundle
$(L_{\mathcal F},\phi_{\mathcal F})$ of $(\pi_*({\mathcal L}\otimes{\mathcal E}),\phi_{{\mathcal L}\otimes{\mathcal E}})$  corresponding to the torsion-free sheaf ${\mathcal F}\subset {\mathcal L}\otimes{\mathcal E}$ on $C_s$.
Then the spectral curve $C_{\mathcal F}={\rm supp}({\mathcal F})$ of $(L_{\mathcal F},\phi_{\mathcal F})$ must be a subscheme  of $C_s$, so that $C_{\mathcal F}\to C$ is degree one. Thus $C_s=C_1\cup C_2$ must be reducible and $\mathcal F$ be supported on one of the components, say  $C_{\mathcal F} =C_1$; with $\pi_1:=\pi|_{C_1}:C_1\stackrel{\cong}{\to} C$ an isomorphism. As ${\mathcal L}_{\mathcal F}=\pi_*({\mathcal F})$ we can identify it with ${\mathcal F}|_{C_1}$. Finally since ${\mathcal L}\in \Prym_{C_s/C}\subset {\rm Pic}^0_{C_s}$ we have that $\deg({\mathcal L}|_{C_1})=0$ and so \bes \deg(L_{{\mathcal L}^{-1}\otimes{\mathcal F}})=\deg({\mathcal L}^{-1}|_{C_1}\otimes{\mathcal F}|_{C_1})=\deg({\mathcal F}|_{C_1})=\deg(L_{\mathcal F}).\ees
To summarize $(\pi_*({\mathcal E}),\phi_{\mathcal E})$ and $(\pi_*({\mathcal L}\otimes{\mathcal E}),\phi_{{\mathcal L}\otimes{\mathcal E}})$ have the same degree sub Higgs-bundles. It follows that if $(\pi_*({\mathcal E}),\phi_{\mathcal E})$ is stable so is
$(\pi_*({\mathcal L}\otimes{\mathcal E}),\phi_{{\mathcal L}\otimes{\mathcal E}})$.

\smallskip
\n
We thus proved that the connected group scheme $\Prym_{C_s/C}$ acts on $\slM_{s}$. For any 
order $2$ line bundle $L\in \Gamma={\rm Pic}^0_{C}[2]$, we have ${\rm Nm}_{C_s/C}(\pi_s^*(L))=L^2={\mathcal O}_{C}$, therefore
$\pi^*(L) \in \Prym_{C_s/C}$ and consequently $\Gamma\subset \Prym_{C_s/C}$ acts trivially on $H^*(\slM_{s})$. 

\blacksquare

\begin{remark} For a  more detailed calculation of the group of components of Prym varieties of spectral covers see \cite{hausel-pauly}.
\end{remark}

\medskip
\n
We introduce the notation $H^k_{\leq p, {\rm var}}:=P_p\,\cap H^k_{{\rm var}}(\slM)$ and $H^k_{p,{\rm var}}:={\rm Gr}^P_{p}(H^k_{{\rm var}}(\slM)).$
\begin{theorem}\label{detpfvar} 
	The perverse Leray filtration on $H_{{\rm var}}^*(\slM)$ satisfies
	\[
	0  = H_{\leq k -2g+1, {\rm var}}^k(\slM) \subseteq H_{ \leq k -2g+2, {\rm var}}^k(\slM)
	=H_{{\rm var}}^k(\slM).
	\]
\end{theorem}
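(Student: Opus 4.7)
The plan is to prove the two equalities separately: the right-hand equality $H^k_{\rm var}(\slM) = H^k_{\leq k-2g+2,\,{\rm var}}(\slM)$ follows from a support argument combined with Theorem~\ref{gdpf}, while the left-hand equality $H^k_{\leq k-2g+1,\,{\rm var}}(\slM) = 0$ requires a finer analysis using the decomposition theorem together with the contractibility of the endoscopic components.

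For the upper bound, the corollary to Proposition~\ref{trivial} gives that $(\hitmapc_*\rat_\slM)_{\rm var}$ is supported on the closed subvariety
\[
Z := \bigcup_{\gamma \in \Gamma^*} \base^0_\gamma \;\subseteq\; \base^0.
\]
By Riemann--Roch, $\base^0_\gamma \cong H^0(C, K_C \otimes L_\gamma)$ has dimension $g-1$ for $\gamma \neq 0$ (using $h^0(L_\gamma) = 0$ for a non-trivial $2$-torsion bundle), so $\dim Z = g-1$ and $\mathrm{codim}_{\base^0}Z = 2g-2$. By transversality a general linear subspace $\Lambda^{2g-3} \subseteq \base^0$ is disjoint from $Z$; proper base change for $i : \Lambda^{2g-3} \hookrightarrow \base^0$ then gives $i^*(\hitmapc_*\rat_\slM)_{\rm var} = 0$, whence $H^*(\slM_{\Lambda^{2g-3}})_{\rm var} = 0$. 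Because the restriction map to $\slM_{\Lambda^{2g-3}}$ is $\Gamma$-equivariant, every $u \in H^k_{\rm var}(\slM)$ pulls back to zero, and Theorem~\ref{gdpf} (applied with $k-p-1 = 2g-3$) yields $u \in H^k_{\leq k-2g+2}(\slM)$.

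For the lower bound, apply the decomposition theorem to write $(\hitmapc_*\rat_\slM)_{\rm var}[\dim \base^0] \simeq \bigoplus_p \mathcal{P}^p_{\rm var}[-p]$ with each $\mathcal{P}^p_{\rm var}$ a direct sum of intersection cohomology complexes $IC_{Z_i}(L_i)$ supported on subvarieties $Z_i \subseteq Z$. A support-theorem step, following the strategy of Theorem~\ref{final} applied to the spectral-curve family over each endoscopic stratum, forces each $Z_i$ to be a full component $\base^0_\gamma$ of dimension $g-1$, and forces the coefficient local systems to be (middle extensions of) constant local systems. The decisive geometric fact is then that $\base^0_\gamma \cong \mathbb{C}^{g-1}$ is an affine space, in particular contractible, so the intersection cohomology of such a constant coefficient is concentrated in degree zero. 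Combining these, $IC_{\base^0_\gamma}(L^p_\gamma) = i_\gamma{}_* L^p_\gamma[g-1]$, and
\[
H^k_{p,\,{\rm var}}(\slM) \;=\; H^{k-3g+3-p}\bigl(\base^0, \mathcal{P}^p_{\rm var}\bigr) \;=\; \bigoplus_\gamma H^{k-2g+2-p}\bigl(\base^0_\gamma, L^p_\gamma\bigr),
\]
which vanishes unless $k - 2g + 2 - p = 0$. Hence the variant cohomology is concentrated in perversity exactly $k - 2g + 2$, giving $H^k_{\leq k-2g+1,\,{\rm var}}(\slM) = 0$.

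The main obstacle is the support-theorem step in the lower bound, namely excluding intersection cohomology summands supported on proper subvarieties of the $\base^0_\gamma$ and verifying that the coefficient local systems on the open endoscopic strata are trivial. Morally this amounts to interpreting the variant fibration over $\base^0_\gamma$ as a Prym-variety fibration associated with the $2$-torsion class $L_\gamma$ and adapting the monodromy and Betti-number analysis of Section~\ref{prfmaintm} to this endoscopic setting; it is the analog for the variant part of the arguments leading to Corollary~\ref{icareshfongl}.
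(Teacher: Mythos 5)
Your upper-bound argument (every $u\in H^k_{\rm var}(\slM)$ has perversity $\leq k-2g+2$) is essentially the paper's: both restrict to a general $\Lambda^{2g-3}$ avoiding the endoscopic locus $\base^0\setminus\base^0_{ne}$, invoke the Corollary to Proposition~\ref{trivial} and Theorem~\ref{gdpf}, and the codimension count is correct. No issues there.

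Your lower bound, however, follows a fundamentally different route from the paper and contains a genuine gap that you yourself flag. You propose to apply the decomposition theorem to $(\hitmapc_*\rat_{\slM})_{\rm var}$ and establish (i) a support theorem forcing every summand $IC_{Z_i}(L_i)$ to be supported on an entire endoscopic component $\base^0_\gamma$, (ii) that the local systems $L^p_\gamma$ are (essentially) constant, and (iii) that then $IC_{\base^0_\gamma}(L^p_\gamma)=i_{\gamma*}L^p_\gamma[g-1]$. Steps (i) and (ii) are not reductions to something already proved in the paper: the support-theorem machinery of Section~2 is carried out for the family of compactified Jacobians of integral spectral curves over $\basegood$, and transporting it to compactified Prym fibrations over the endoscopic strata (where the spectral curves are forced to be of a special type and the Pryms may be disconnected) would be a substantial new piece of analysis, not a matter of ``adapting.'' As written, the crux of your lower bound is assumed rather than demonstrated. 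There is also a small but real geometric error: since $i_\gamma(a)=a^{\otimes 2}$ identifies $a$ and $-a$, the endoscopic component is $\base^0_\gamma \cong H^0(C,K_C\otimes L_\gamma)/\{\pm 1\}$ rather than $\mathbb{C}^{g-1}$; it is still contractible and a $\mathbb{Q}$-homology manifold, so the conclusion you want would survive, but the claim ``$\base^0_\gamma\cong\mathbb{C}^{g-1}$ is an affine space'' is false and the quotient singularity at the origin deserves at least a remark.

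The paper sidesteps all of this with a short indirect argument: after the upper bound, take the smallest $k$ with $H^k_{\leq k-2g+1,{\rm var}}(\slM)\neq 0$; for all $k'<k$ the variant cohomology then sits in a single perversity, the class $\alpha$ is $\Gamma$-invariant so relative hard Lefschetz respects the variant decomposition, and comparing dimensions using Hitchin's formula~\eqref{dimension} (which gives $\dim H^{k'}_{\rm var}=\dim H^{10g-10-k'}_{\rm var}$) produces a contradiction. This avoids any support theorem over the endoscopic locus and any analysis of the local systems there. If you want to salvage your approach you would need to actually prove the endoscopic support and triviality statements; otherwise you should replace the lower-bound argument with the RHL-plus-dimension-count contradiction that the paper uses.
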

\begin{proof}
Since $\dim{ \base^0 \setminus  {\base}^0_{ne}} =g-1$, a general $(2g-3)$-dimensional
linear subspace $\Lambda^{2g-3}$ of the $(3g-3)$-dimensional affine base $\base^0$
lies entirely inside $\base^0_{ne}$. 
By Proposition~\ref{trivial}, the restriction of  a  class in $H_{{\rm var}}^*(\slM)$   to $H^*(\slM_{ne})$,
and thus to $H^*(\slM|_{\Lambda^{2g-3} })$,
is trivial. This fact, coupled with the test for perversity given by Theorem \ref{gdpf}, implies the inclusion $H_{{\rm var}}^k(\slM) \subseteq 
H_{\leq k -2g+2,{\rm var}}^k(\slM)$.

We are left with proving that
\beq \label{perv2} H_{\leq k-2g+1,{\rm var}}^k(\slM)=0.
	\eeq
Let $k$ be the smallest integer such that $H_{\leq k-2g+
1,{\rm var}}^{k}(\slM)\neq 0.$ We thus have that
\[
H_{\leq k^\prime-2g+
1, {\rm var}}^{k^\prime}(\slM)= 0, \qquad \forall \; k^\prime < k.\]
By combining this vanishing with the equality   established above, we deduce that
\beq\label{nograde}
H_{k^\prime-2g+2, {\rm var}}^{k^\prime}(\slM)\cong H_{{\rm var}}^{k^\prime}(\slM).\eeq 
The class $\alpha$ is $\Gamma$-invariant, so that cupping with the powers of $\alpha$
respects the $\Gamma$-decomposition. In particular, 
by relative hard Lefschetz, we see that
cupping with the appropriate power of $\alpha$ yields an isomorphism
of graded groups \beq\label{hardl}
H_{k^\prime-2g+2, {\rm var}}^{k^\prime}(\slM)\cong 
H_{8g-8-k^\prime, {\rm var}}^{10g-10-k^\prime}(\slM).\eeq 

\smallskip
\n
In view of  \eqref{dimension}, we have that $\dim H_{{\rm var}}^{k^\prime}(\slM)=\dim H_{{\rm var}}^{10g-10-k^\prime}(\slM)$ so that,  by \eqref{hardl} 
and by \eqref{nograde},  we have that
$ H_{8g-8-k^\prime, {\rm var}}^{10g-10-k^\prime} (\slM) \cong  H^{10g-10-k^\prime}(\slM)$, and
consequently 
\beq \label{contr} 
 H_{\leq 8g-9-k^\prime,
{\rm var}}^{10g-10-k^\prime}(\slM)= 0.\eeq 
By our choice of $k$, we have that 
$
H_{\leq k-2g+
1, {\rm var}}^k(\slM)\neq 0,$
so that there is $l <k$ such that 
\[
H_{l-2g+2, {\rm var}}^{k}(\slM)\neq 0.\]
 
 As above, the 
 relative hard Lefschetz yields 
\[ H_{8g-8-k^\prime, {\rm var}}^{10g-10+k-2l}(\slM)\neq 0.\]
 In view of the fact that $k>l>2l-k$,
 this contradicts (\ref{contr}) with the choice of $k^\prime=2l-k$, and  (\ref{perv2}) follows.\end{proof} 
 
 Theorem \ref{detpfvar} determines the perverse Leray filtration
 on the $\Gamma$-variant  part $H_{{\rm var}}^*(\slM)$.
 In the course of the proof we have proved that  for $2d<g$ the Lefschetz map   
\beq \label{lefschetz}   
\cup \alpha^{g-2d} :  H_{{\rm var}}^{4g+2d-5}(\slM) \lorw H_{{\rm var}}^{6g-5-2d}(\slM)
\eeq 
is  an isomorphism.
  
Now we determine the mixed Hodge structure 
on the cohomology of the character variety $\slM_\B$. Notice that the direct sum decomposition 
$H^*(\slM_\B) \simeq H^*(\slM_\B)^{\Gamma}\oplus H_{{\rm var}}^*(\slM_\B)$, being 
associated with the algebraic action of a group, 
is a decomposition into a direct sum of Mixed Hodge structures.

\begin{theorem} 
	\beq\label{weight}0=W_{2k-4g+3}H_{{\rm var}}^k(\slM_\B)\subset W_{2k-4g+4}H_{{\rm var}}^k(\slM_\B)=H_{{\rm var}}^k(\slM_\B)\eeq
and \beq\label{hodge}0=F^{k-2g+2}H_{{\rm var}}^k(\slM_\B)\subset F^{k-2g+1}H_{{\rm var}}^k(\slM_\B)=H_{{\rm var}}^k(\slM_\B)\eeq
\end{theorem}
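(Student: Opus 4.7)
The plan is to transport Theorem \ref{detpfvar} from the Dolbeault side to the Betti side via non-Abelian Hodge, and to extend the $P=W$ phenomenon of Theorem \ref{maintm} from the $\Gamma$-invariant part (covered by the $\PGL_2$ case) to the $\Gamma$-variant summand. The fact that both filtration claims have a single non-trivial step mirrors the fact that, by Theorem \ref{detpfvar}, the perverse Leray filtration on $H^k_{\rm var}(\slM)$ is concentrated at a single perversity $p = k-2g+2$.

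First, I would observe that the non-Abelian Hodge diffeomorphism $\slM_{\rm Dol} \cong \slM_\B$ is $\Gamma$-equivariant. Since the $\Gamma$-action on $\slM_\B$ is algebraic, the decomposition $H^*(\slM_\B) = H^*(\slM_\B)^\Gamma \oplus H^*_{\rm var}(\slM_\B)$ is a splitting of mixed Hodge structures, and it corresponds under non-Abelian Hodge to the analogous splitting on the Dolbeault side. Hence Theorem \ref{detpfvar} furnishes the perverse filtration on the variant part of $H^*(\slM)$, and this variant part is precisely what needs to be controlled.

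For the weight filtration \eqref{weight}, the decisive point is that $\alpha \in H^2(\slM)$ is $\Gamma$-invariant (it is pulled back from $\mdpgl$), so the Deligne splitting of $\hitmapc_*\rat_{\slM}$ associated with $\alpha$ respects the character decomposition \eqref{decdirim}. One may therefore repeat the argument of Theorem \ref{maintm} on the variant summand alone: the cup product structure on the cohomology of the Higgs moduli is compatible with the $\Gamma$-action, and Theorem \ref{detpfvar} together with Proposition \ref{luck}-type perversity estimates (applied character by character) forces $W_{2p}H^k_{\rm var}(\slM_\B) = P_pH^k_{\rm var}(\slM)$. Combined with the concentration at perversity $k-2g+2$, this yields exactly $W_{2k-4g+3}H^k_{\rm var} = 0$ and $W_{2k-4g+4}H^k_{\rm var} = H^k_{\rm var}$.

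For the Hodge filtration \eqref{hodge}, an additional Hodge-theoretic input is required, since the variant part is not Hodge-Tate and so the weight alone does not pin down the Hodge types. The approach is to combine the weight result above with the explicit formula for the $E$-polynomial (or mixed Hodge polynomial) of $\slM_\B$ from Hausel--Rodriguez-Villegas-type computations of character varieties, to force the Hodge types of the variant summand into the position prescribed by \eqref{hodge}. The main obstacle is precisely this last step: establishing $P=W$ on the variant summand is natural once the $\Gamma$-equivariance of the Deligne splitting is in place, but controlling the Hodge filtration requires input from outside the perverse machinery of this paper, and careful bookkeeping is needed to ensure the Hodge numbers, read off from known character-variety formulas, match the predicted concentration at $F^{k-2g+1}$.
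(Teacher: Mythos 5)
Your proposed route inverts the logical order of the paper and in fact becomes circular. The paper first proves the weight/Hodge statement by a \emph{direct} mixed Hodge computation, and only \emph{then} deduces ``$P=W$ on the variant part'' as a corollary by comparing with Theorem~\ref{detpfvar}. You are attempting the reverse: compute $P$ via Theorem~\ref{detpfvar}, argue that ``$P=W$'' must hold on the variant summand, and read off $W$. But the only mechanism the paper has for establishing ``$P=W$'' in any case is to compute the two filtrations independently and check that they agree. The argument you invoke (``Proposition~\ref{luck}-type perversity estimates applied character by character'') uses the weight of a class as an \emph{input} to bound its perversity; it cannot produce the weight filtration, and in any case its applicability hinges on the explicit multiplicative-generator description of $H^*(\mdpgl)$, which is confined to the $\Gamma$-invariant part and says nothing about the variant classes. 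So the proposed proof of~\eqref{weight} has a genuine gap.

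What the paper actually does is compute the $E$-polynomial of the variant part directly, $E_{\rm var}(\slM_\B)=E(\slM_\B)-E(\pglM_\B)$, with the two terms on the right taken from the explicit formulas of Mereb and Hausel--Rodriguez-Villegas. The result is a polynomial in $xy$ alone, so the mixed Hodge structure on $H^*_{\rm var}(\slM_\B)$ is of Hodge-Tate type; since by~\eqref{dimension} the variant cohomology lives in odd degrees only, there is no cancellation of signs, so the nonzero weight graded pieces can be read off from the coefficients, and the curious-hard-Lefschetz isomorphism~\eqref{lefschetz} (by an argument parallel to the proof of Theorem~\ref{detpfvar}) pins down which cohomological degree carries each weight. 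Statement~\eqref{hodge} then follows immediately from~\eqref{weight} and the Hodge-Tate property. You do correctly sense that the $E$-polynomial is needed and that it is ``the main obstacle''; what you miss is that this computation is not a cleanup step for~\eqref{hodge} but the engine driving the entire proof, and that it yields~\eqref{weight} directly, without any detour through $P=W$.
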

\begin{proof}

 As the invariant part $H^*(\slM_\B)^\Gamma\cong H^*(\pglM_\B)$ we have
\begin{equation}\label{evar}\begin{split} 
E_{{\rm var}}(\slM_\B;x,y):=\sum_{d,i,j} x^iy^j (-1)^k \dim \left({\rm Gr}^W_{i+j}H_{c,{\rm var}}^k(\slM_\B)\right)_{\comp}^{ij}= E(\slM_\B;x,y)-E(\pglM_\B;x,y)=\\
=(2^{2g}-1)(xy)^{2g-2}\left( \frac{(xy-1)^{2g-2}-(xy+1)^{2g-2}}{2}\right)=\sum_{i=1}^{g-1}(2^{2g}-1)
\bino{2g-2}{2i-1}(xy)^{2g-3+2i}, \end{split}\end{equation}
where $E(\pglM_\B;x,y)$  is given by the right hand side of \cite[(1.1.3)]{hausel-villegas}  and $E(\slM_\B;x,y)$ is given by
\cite[(4.6)]{mereb} with $q=xy$. 

 We first observe that 
$$E_{{\rm var}}(\slM_\B;1/x,1/y)=(xy)^{6g-6}E_{{\rm var}}(\slM_\B;x,y)$$ is palindromic. Consequently by Poincar\'e duality
the corresponding expression on ordinary cohomology $$\sum_{d,i,j} x^iy^j (-1)^k \dim({\rm Gr}^W_{i+j}H_{{\rm var}}^k(\slM_\B)_{\comp})^{ij}=E_{{\rm var}}(\slM_\B;x,y)$$ is thus also given
by \eqref{evar}. 
Now we note that \eqref{evar} only depends on $xy$, i.e. every term is of the form $x^py^p$. Additionally, by \eqref{dimension}, if $H_{{\rm var}}^k(\slM_\B)\neq 0$, then $k$ is odd, and every non-trivial $\dim\left({\rm Gr}_{i+j}^WH^{d}(\slM_\B)_{\comp}\right)^{i,j}$ will contribute with a negative coefficient, thus there is no cancellation and the only non-trivial terms are of the form $({\rm Gr}_{2p}^WH_{{\rm var}}^{d}(\slM_\B)_{\comp})^{p,p}$. It follows 
that the mixed Hodge structure on $H^*_{{\rm var}}(\slM_\B;x,y)$ is of Hodge-Tate type, thus \eqref{hodge} follows from \eqref{weight}.

We now  determine the weights on $H^*_{{\rm var}}(\slM_\B)$. Again as $H^*_{{\rm var}}(\slM_\B)$ is only non-trivial in odd cohomology weights cannot cancel each other as they all contribute with a negative coefficient. The possible weights therefore are ${4g-2}, {4g+2},\dots,{8g-10}$ (twice the degrees in $xy$  of the monomials  appearing in \eqref{evar}), with multiplicities which turn out to be equal to  
$$\dim \,H_{{\rm var}}^{4g-3}(\slM_\B) ,\; \dim \,H_{{\rm var}}^{4g-1}(\slM_\B))
\;,\ldots, \;  \dim \,H_{{\rm var}}^{6g-7}(\slM_\B),$$ respectively (these appear as the coefficients in \eqref{evar}). To conclude we need to show that they will be the weights on the cohomologies $H_{{\rm var}}^{4g-3}(\slM_\B),H_{{\rm var}}^{4g-1}(\slM_\B),\dots, H_{{\rm var}}^{6g-7}(\slM_\B)$ respectively. This follows from
\eqref{lefschetz} by an argument  similar to the proof of 
\eqref{detpfvar}.
\end{proof}
\begin{corollary}
We have that $P=W$ on $H^*_{{\rm var}}(\slM)$ and consequently on $H^*(\slM)$. 
\end{corollary}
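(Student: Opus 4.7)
The plan is to deduce $P=W$ on $H^*_{\rm var}(\slM)$ by direct comparison of the two preceding theorems, and then to bootstrap to all of $H^*(\slM)$ by splitting off the $\Gamma$-invariant part, where the result is already known.

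First, I would observe that Theorem \ref{detpfvar} and equation \eqref{weight} together say exactly that both filtrations jump in the same single place and at the same level. More precisely, Theorem \ref{detpfvar} gives the two-step filtration
\[
0 \;=\; H^k_{\leq k-2g+1,\,{\rm var}}(\slM) \;\subsetneq\; H^k_{\leq k-2g+2,\,{\rm var}}(\slM) \;=\; H^k_{\rm var}(\slM),
\]
while \eqref{weight}, transported through the non-Abelian Hodge isomorphism $H^*(\slM_\B)\simeq H^*(\slM)$, gives
\[
0 \;=\; W_{2k-4g+3}H^k_{\rm var}(\slM) \;\subsetneq\; W_{2k-4g+4}H^k_{\rm var}(\slM) \;=\; H^k_{\rm var}(\slM).
\]
Since, by Theorem \ref{hdgttin}, the mixed Hodge structure on $H^*(\slM_\B)$ is of Hodge-Tate type so that odd weights carry no classes, the identification $W_{2p}=W_{2p+1}$ holds and the sole jump of $W$ on $H^k_{\rm var}$ occurs in passing from $W_{2(k-2g+1)}$ to $W_{2(k-2g+2)}$, matching the sole jump of $P$ in passing from $P_{k-2g+1}$ to $P_{k-2g+2}$. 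This yields the desired equality $W_{2p}H^k_{\rm var}(\slM)=P_pH^k_{\rm var}(\slM)$ for every $k$ and $p$.

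Next, for the full statement on $H^*(\slM)$, I would invoke the $\Gamma$-isotypic decomposition \eqref{decdirim2},
\[
H^*(\slM) \;=\; H^*(\slM)^{\Gamma} \,\oplus\, H^*_{\rm var}(\slM),
\]
and note that this splitting is compatible with both filtrations. On the perverse side this follows from the decomposition \eqref{decdirim} of $\hitmapc_*\rat_{\slM}$ into $\Gamma$-isotypic summands in the derived category, which induces a splitting at the level of perverse cohomology and hence of the filtration $P_{\bullet}$. On the weight side the decomposition is a splitting of mixed Hodge structures because $\Gamma$ acts algebraically on $\slM_\B$. Since $H^*(\slM)^{\Gamma}\simeq H^*(\pglM)$ and Theorem \ref{maintm} already establishes $P=W$ for $\pglM$, the previous paragraph now completes the proof on the variant summand, and therefore on the entire cohomology.

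There is no real obstacle; the genuine work has already been done in Theorem \ref{detpfvar} (the perverse side), in the computation of the $E$-polynomial via \eqref{evar} together with the relative hard Lefschetz argument (the weight side), and in Theorem \ref{maintm} (the invariant summand). The only point requiring care is verifying that the $\Gamma$-isotypic decomposition is strictly respected by both $P_\bullet$ and $W_\bullet$, which is immediate from functoriality once one recalls that $\Gamma$ acts by algebraic automorphisms commuting with $\hitmapc$.
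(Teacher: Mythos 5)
Your proposal is correct and takes essentially the route the paper intends: the corollary carries no written proof precisely because it is the direct comparison of Theorem \ref{detpfvar} (the single jump of $P_\bullet$ on $H^k_{\rm var}$ at $k-2g+2$) with \eqref{weight} (the single matching jump of $W_\bullet$ at $2(k-2g+2)$), combined with the $\Gamma$-isotypic splitting \eqref{decdirim2} and Theorem \ref{maintm} for the invariant summand $H^*(\slM)^\Gamma\simeq H^*(\mdpgl)$. One small imprecision worth fixing: you cite Theorem \ref{hdgttin} to dispose of odd weights, but that theorem concerns the $\GL_2$ character variety $\M_\B$, not $\slM_\B$. The fact you actually need --- that $W_{2p}=W_{2p+1}$ on $H^*_{\rm var}(\slM_\B)$, i.e.\ that the mixed Hodge structure there is of Hodge--Tate type --- is established in the proof of the theorem immediately preceding the corollary (from the palindromy of $E_{\rm var}$ and the odd-degree concentration \eqref{dimension}), and that is what should be cited; with that substitution your argument is complete.
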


\begin{remark} \label{description} Note also that this implies a complete description of the ring $H^*(\slM)$. We already know the ring structure on the invariant part. Now,
$\alpha$ acts on $H^*_{{\rm var}}(\slM)$ as described in (\ref{lefschetz}), $\beta$ and $\psi_i$ act trivially as their weights are such that when multiplied with any class in $H^*_{{\rm var}}(\slM_\B)$ it would provide a class with a degree and weight which does not exist in $H^*_{{\rm var}}(\slM_\B)$. Finally by degree reasons
variant classes multiply to $0$. This implies a complete description of the ring structure on $H^*(\slM)$. 
\end{remark}

\begin{remark} The determination of the perverse filtration
	on $H^*_{{\rm var}}(\slM)$ using the symmetry provided by the group scheme $Prym$ is inspired by Ng\^o's approach in \cite{ngo2, ngo}. More connections to his work
	is discussed in \cite[\S 5.2]{hausel-survey}.\end{remark}

\section{Appendix}
\stepcounter{subsection}

In this appendix we recall a result of M. Thaddeus',  Proposition~\ref{restrunivcls} below, concerning the restriction 
of the generators $\alpha, \psi_i, \beta$ defined in \S \ref{modhiggsbnd} 
to a general fibre of the Hitchin fibration for $\mdsl$. 
In view of Theorem \ref{gdpf}, this  yields an upper bound for their perversity, which is used in Theorem~\ref{eccolo}. 
Since  these results, contained in Thaddeus' Master thesis (\ci{thaddeusmt}) have not been published, we report here 
the original proof.

Let $s \in \basesm^0 $, let  $\pro:=\pro_s: \curv_s \to \curv$
be the corresponding spectral curve covering and let
$\iota:= \iota_s: {\curv}_{s} \lorw {\curv}$  be the involution
exchanging the two sheets of the covering (see  \S \ref{spctrcrv}).
The following easy to prove facts are used in the course of the proof of Proposition~\ref{restrunivcls}:
\begin{itemize}
\item 
The involution $\iota$ induces  $\iota_*$ on $H_1(\curv_s)$, which splits 
into the $\pm 1$-eigenspaces 
$$
H_1(\curv_s)=H_1(\curv_s)^+\oplus H_1(\curv_s)^-.
$$
We denote by $\Pi^{\pm}$ the corresponding projections.

Analogously, 
$$
H^1(\curv_s)=H^{1}(\curv_s)^+\oplus H^{1}(\curv_s)^-,
$$
and, via Poincar\'e duality, 
$H^{1}(\curv_s)^+\simeq (H_1(\curv_s)^+)^{\vee}, \, \, 
 H^{1}(\curv_s)^- \simeq (H_1(\curv_s)^-)^{\vee}.$
The projections are still denoted by $\Pi^{\pm}$.

\item
We let $[\curv] \in H_2(\curv)\simeq H^0(\curv), \, 
\,[\curv_s] \in H_2(\curv_s)\simeq H^0(\curv_s)$ be the fundamental classes,
and 
$[c] \in H_0(\curv)\simeq H^2(\curv), \, \,[c_s] \in H_0(\curv_s)\simeq H^2(\curv_s)$ 
the classes of a point in $\curv$, resp. $\curv_s$.
 
We have 
\begin{equation}
\label{pst}
\pi_*([c_s])=[c], \, \,  
\pi_*([\curv_s])=2[\curv],\, \,  \ke \, \pi_*=H^1(\curv_s)^-,
\end{equation}
the last equality  due to the fact that $\pi \circ \iota=\pi$ and $\pi_*$ is surjective. 

\item
In terms of the identification $H^1(\curv_s)^+ \simeq H^1(\curv)$
given by the pull-back map  $\pi^*$, the map 
\begin{equation}
\label{pstone1} 
\pi_*: H^1(\curv_s) \lorw H^1(\curv) \simeq H^1(\curv_s)^+ 
\end{equation}
is identified with the projection $\Pi^+$.

\end{itemize}

\begin{remark}
\label{restrpicpry}
{\rm 
We have isomorphisms
$\hitmap^{-1}(s) \simeq \jac{\curv_s}^0$, 
$\hitmapc^{-1}(s) \simeq \prym_{\curv_s}$ and 
$\hitmaph^{-1}(s) \simeq \prym_{\curv_s}/\Gamma$, 
(see Theorem \ref{fibres} for $\hitmap$ and 
\S\ref{comp3gra} for $\hitmapc$ and $\hitmaph$). 
We have the canonical isomorphisms 
$$ 
H^1(\jac{\curv_s}^0)\simeq H_1(\curv_s), \qquad
H^1(\prym_{\curv_s})\simeq H_1(\curv_s)^{-},
$$
in terms of which the restriction map 
$H^1(\jac{\curv_s}^0) \lorw  H^1(\prym_{\curv_s})$
is identified with the projection
$\Pi^-:H_1(\curv_s) \lorw  H_1(\curv_s)^{-}$.
}
\end{remark}

\begin{proposition}
\label{restrunivcls}
$\,$

\begin{enumerate}
\item
The restrictions of the classes $\psi_i, \beta$ to a general 
fibre of the Hitchin maps  $\hitmap, \hitmaph,$ and $\hitmapc$ vanish. 
\item
The restriction of the class $\alpha$ to a general
fibre of the Hitchin maps  $\hitmap$ is non-zero. 
When restricted to a general fibre
of $\hitmaph$ and $\hitmapc$, the class $\alpha$ is an ample class. 
\end{enumerate}
\end{proposition}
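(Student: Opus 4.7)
The plan is to work directly with the spectral curve description: by Theorem~\ref{fibres} the fibre $J := \hitmap^{-1}(s)$ is identified with $\jac{\curv_s}^a$ (for $a=0$ or $1$), and $\mathbb{E}|_{J \times \curv} \cong (\mathrm{id}_J \times \pi_s)_* \mathcal{L}$ (up to a twist by a line bundle pulled back from $J$, which does not affect $c_2(\mathrm{End}\,\mathbb{E})$) for a universal line bundle $\mathcal{L}$ on $J \times \curv_s$. I would then compute the K\"unneth components of $-c_2(\mathrm{End}\,\mathbb{E}|_{J \times \curv}) = 4\,\mathrm{ch}_2(\mathbb{E}|_{J \times \curv}) - c_1(\mathbb{E}|_{J \times \curv})^2$ and match them against $\alpha|_J \otimes \omega + \sum_i \psi_i|_J \otimes e_i + \beta|_J \otimes 1$.

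The vanishing of $\beta|_J$ is the easiest piece, obtained by restricting to a single unramified point $pt \in \curv$: since $\pi_s^{-1}(pt) = \{p, \iota_s p\}$ consists of two distinct points, $\mathbb{E}|_{J \times \{pt\}} \cong \mathcal{L}|_{J\times\{p\}} \oplus \mathcal{L}|_{J\times\{\iota_s p\}}$, so $-c_2(\mathrm{End}) = (\lambda_p - \lambda_{\iota_s p})^2$ with $\lambda_q := c_1(\mathcal{L}|_{J \times \{q\}})$. But by K\"unneth, $\lambda_q$ coincides with the single $H^2(J) \otimes H^0(\curv_s)$-component of $c_1(\mathcal{L})$ --- independent of $q$ --- so $\beta|_J = 0$.

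The $\psi_i$ and $\alpha$ claims will come from a Grothendieck--Riemann--Roch computation for the finite map $\mathrm{id} \times \pi_s$, whose relative Todd class is $1 - \tilde R/2$ with $\tilde R$ the ramification divisor on $\curv_s$. Writing the K\"unneth decomposition $c_1(\mathcal{L}) = \kappa \otimes 1 + \eta_s + 1 \otimes a\omega_s$ with $\eta_s \in H^1(J) \otimes H^1(\curv_s)$ the Poincar\'e class, GRR and the projection formula yield explicit expressions for $c_1(\mathbb{E}|_{J\times\curv})$ and $\mathrm{ch}_2(\mathbb{E}|_{J\times\curv})$ in terms of $\kappa$, $\eta_s$, and the pushforwards $\pi_{s*} e_i$ and $\pi_{s*}\omega_s = \omega$. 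Extracting the $H^3(J) \otimes H^1(\curv)$-contributions from $4\,\mathrm{ch}_2$ and $c_1^2$: both equal $4\sum_i (\kappa \cdot f_i) \otimes \pi_{s*} e_i$ and thus cancel, giving $\psi_i|_J = 0$. The $H^2(J) \otimes H^2(\curv)$-contribution yields $\alpha|_J = 4\Theta_{\curv_s} - 2\Theta_\pi$, where $\Theta_{\curv_s} \in H^2(J)$ is the theta class on $J$ (arising from $\eta_s^2$) and $\Theta_\pi := \sum_{i<j} \langle \pi_{s*} e_i, \pi_{s*} e_j \rangle_\curv f_i f_j$ lies in $\wedge^2 H^1(\curv_s)^+ \subseteq H^2(J)$, since only $e_i \in H^1(\curv_s)^+$ contribute.

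For the non-vanishing and ampleness, I would decompose $H^2(J) = \wedge^2 H^1(\curv_s)^+ \oplus (H^1(\curv_s)^+ \otimes H^1(\curv_s)^-) \oplus \wedge^2 H^1(\curv_s)^-$ via the $\iota_s$-action; by Remark~\ref{restrpicpry}, restriction to $\prym_{\curv_s}$ is projection onto the last summand. This kills $\Theta_\pi$ entirely, leaving $\alpha|_{\prym} = 4\,\Theta_{\curv_s}|_\prym$, which is a positive multiple of the Prym polarization and hence ample; ampleness descends to $\prym/\Gamma$ since $\Gamma$ is finite, settling the $\SL_2$ and $\PGL_2$ statements. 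For $\GL_2$, the class $\alpha|_J$ is non-zero because $\Theta_{\curv_s}$ has a non-trivial $\wedge^2 H^1(\curv_s)^-$-component while $\Theta_\pi$ lives entirely in $\wedge^2 H^1(\curv_s)^+$, so cancellation is impossible. The main obstacle will be executing the GRR computation cleanly --- correctly handling the Todd correction $1 - \tilde R/2$ and the projection-formula evaluation of $(\mathrm{id} \times \pi_s)_*(c_1(\mathcal{L})^2)$ --- while keeping careful track of the interplay between the K\"unneth decomposition on $J \times \curv$ and the $\iota_s$-eigenspace decomposition on $H^*(J)$.
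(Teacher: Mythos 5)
Your plan is essentially the paper's: Grothendieck--Riemann--Roch for $(\pi_s \times \mathrm{id})_*\mathcal{L}$ (with the relative Todd correction for the double cover), followed by a K\"unneth analysis of $c_2(\mathrm{End}\,\mathbb{E})$ on a general fibre. The paper's execution is somewhat leaner --- using the normalized Poincar\'e bundle so that $c_1(\mathcal{L})=\sum_i\delta_i\otimes\delta_i^\vee$ is purely of K\"unneth type $(1,1)$, GRR shows at once that both $c_1^2(\mathbb{E}|)$ and $c_2(\mathbb{E}|)$ land in the $(2,2)$ summand, so $c_2(\mathrm{End}\,\mathbb{E}|)$ has no $\psi_i$- or $\beta$-component, and for $\SL_2$ the identity $\pi_{s*}\bigl(\sum_i\delta_i\otimes\Pi^-(\delta_i^\vee)\bigr)=\sum_i\Pi^+(\delta_i)\otimes\Pi^-(\delta_i^\vee)=0$ yields $\alpha|_{\prym}=4\theta$ directly, the $\GL_2$ nonvanishing then following by restriction along $\prym\hookrightarrow\hitmap^{-1}(s)$ --- whereas your point-restriction trick for $\beta$ (a genuine shortcut) and your explicit decomposition of $\alpha$ on the full Jacobian fibre into $4\Theta_{\curv_s}$ minus an $\iota$-invariant correction are both correct and arrive at the same conclusions.
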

\n{\em Proof.} 
For notational simplicity,  
we denote the fundamental classes 
$[\curv] \in H^0(\curv), \, [\curv_s] \in H^0(\curv_s)$,  
$[\jac{\curv_s}^0] \in H^{0}(\jac{\curv_s}^0)$ and 
$[\prym_{\curv_s}] \in H^{0}(\prym_{\curv_s})$
by 1, so that, for example, 
the second equality in (\ref{pst}) above reads $\pi_*(1)=2$.

Let $s \in \baseosm$. As discussed in \S \ref{hitchmap}, 
the map $\hitmapc$ is the restriction of $\hitmap$ to $\mdsl$, 
and $\hitmaph$ is derived from $\hitmapc$ by passing to the quotient $\mdpgl=\mdsl/\Gamma$. 
It is therefore enough to prove the first statement for the map $\hitmap$
We  denote the product map 
$\pro \times \id: \curv_s \times \jac{\curv_s}^0 \lorw \curv \times \jac{\curv_s}^0$
simply by  $\pro$.

Let  $\mathcal L$ be the Poincar\'e line bundle on $\curv_s \times \jac{\curv_s}^0$. 
The restriction of  ${\mathbb E}$ to $\curv \times \hitmap^{-1}(s)\simeq \curv \times \jac{\curv_s}^0$
is isomorphic to $\pro_*{\mathcal L}$ (see \S \ref{spctrcrv}). The Grothendieck-Riemann-Roch theorem 
(see \ci{fultonit}, Theorem 15.2) gives the following equality in $H^*(\curv \times \jac{\curv_s}^0)$: 
\begin{equation}
\label{grrtm}
{\rm ch}({\mathbb E}_{|\curv \times \hitmap^{-1}(s)}){\rm td}(\curv \times \jac{\curv_s}^0)=
{\rm ch}(\pro_*{\mathcal L }){\rm td}(\curv \times \jac{\curv_s}^0)=
\pro_*( {\rm ch}({\mathcal L}){\rm td}(\curv_s \times \jac{\curv_s}^0)).
\end{equation}

\medskip
\n By the K\"unneth formula   and Poincar\'e duality
$$
H^2(\curv_s  \times \jac{\curv_s}^0)
\simeq \left( H^0(\curv_s)\otimes H^2(\jac{\curv_s}^0)\right) 
\oplus \left(H^1(\curv_s)\otimes H_1(\curv_s) \right)\oplus
\left(H^2(\curv_s)\otimes H^0(\jac{\curv_s}^0)\right), 
$$
and we have the natural isomorphism
\begin{equation}
\label{scndchm} 
{\rm End} \, H^1(\curv_s) \simeq H^1(\curv_s)\otimes H_1(\curv_s) \simeq
H^1(\curv_s)\otimes H^1(\jac{\curv_s}^0 )  
\subseteq H^2(\curv_s  \times \jac{\curv_s}^0).
\end{equation}
We say that a class in  $H^*(\curv \times \jac{\curv_s}^0)$  
has type $(a,b)$ if it is in the
K\"unneth summand $H^a(\curv)\otimes H^b(\jac{\curv_s}^0)$, and similarly for 
classes in $H^*(\curv_s \times \jac{\curv_s}^0)$. 
Clearly the cup product of a class of type $(a,b)$ with one of type $(c,d)$
is of type $(a+c,b+d)$, in particular it vanishes if $a+c>2$. 

We set, for simplicity, $g':=g(\curv_s)$.
We fix a  symplectic basis $\delta_1, \ldots, \delta_{2g'}$ for $H^1(\curv_s)$, and 
we identify its dual basis, $\delta^{\vee}_1, \ldots, \delta^{\vee}_{2g'}$ with a basis for 
$H^1(\jac{\curv_s}^0)$.
In terms of the isomorphism (\ref{scndchm}) 
above, the first Chern class of ${\mathcal L}$  is represented by 
the identity in 
${\rm End} \,H^1(\curv_s)$ (see \ci{acgh}, VIII.2), namely
$c_1({\mathcal L})=\sum_i \delta_i \otimes \delta^{\vee}_i,$
hence  it is of type (1,1). 
Let $[c_s] \in H^2(\curv_s), \, [c] \in H^2(\curv)$ be as defined above.

\n A direct computation gives
$
c_1^2({\mathcal L})=2[c_s]  \otimes (\sum_i \delta^{\vee}_i \wedge \delta^{\vee}_{i+g'}) =2[c_s] \otimes \theta ,
$
where $\theta:= \sum_i \delta^{\vee}_i \wedge \delta^{\vee}_{i+g'} \in H^2(\jac{\curv_s}^0)$ denotes the cohomology class of the 
theta divisor on $\jac{\curv_s}^0$. 

\medskip
\n Since $c_1({\mathcal L})$ is of type $(1,1)$, we have
$c_1^r({\mathcal L})=0 \hbox{ if }r\geq 3$, hence
$$
{\rm ch}({\mathcal L})=1+c_1({\mathcal L})+\frac{c_1^2({\mathcal L})}{2}=1+
\sum_i \delta_i \otimes \delta^{\vee}_i + [c_s]  \otimes \theta . 
$$
\medskip
\n
Since the tangent bundle of a torus is trivial,
we have  that ${\rm td}(\jac{\curv_s}^0)= 1$, while  
${\rm td}(\curv_s)=1+(1-g')[c_s]$ and 
${\rm td}(\curv)=1+(1-g)[c]$. 
By  the multiplicativity properties
of the Todd class, we have that 
$$
{\rm td}(\curv_s \times \jac{\curv_s}^0)=
1 \otimes 1 +(1-g')[c_s]\otimes 1, \qquad
{\rm td}(\curv \times \jac{\curv_s}^0)=
1\otimes 1+(1-g)[c] \otimes 1,
$$
so that 
$$
{\rm ch}({\mathcal L}){\rm td}(\curv_s \times \jac{\curv_s}^0)=
1\otimes 1+ \sum_i \delta_i \otimes \delta^{\vee}_i + [c_s] \otimes \theta +   (1-g')[c_s] \otimes 1.
$$
Plugging this 
into the Grothendieck-Riemann-Roch theorem Formula (\ref{grrtm}), we get:
\begin{equation}
\label{eq1}
{\rm ch}({\mathbb E}_{|\curv \times \hitmap^{-1}(s) }) (1+(1-g)[c])=
\pro_*\left( 1\otimes 1 +\sum_i \delta_i \otimes \delta^{\vee}_i + [c_s] \otimes \theta +   (1-g' )[c_s] \otimes 1 \right).
\end{equation}
Applying  (\ref{pst}) gives $\pro_*(1 \otimes 1)=2\otimes 1,$ 
$\pro_*( [c_s] \otimes \theta )=[c] \otimes \theta$ and   $\pro_*([c_s] \otimes 1)=[c]\otimes 1$. 
Combining   the third equality of (\ref{pst}) with the isomorphism 
$H^1(\curv) \simeq H^1(\curv_s)^+$,
we get
$\pro_*(\sum_i \delta_i \otimes \delta^{\vee}_i)=\sum_i \Pi^+(\delta_i) \otimes 
\delta^{\vee}_i.$

\n
By plugging the above equalities in Equation (\ref{eq1}), and equating the components of degree 2, we deduce that
\begin{equation}
\label{grr1res}
c_1({\mathbb E}_{|\curv \times \hitmap^{-1}(s) })+2(1-g)[c]\otimes 1= 
\sum_i \Pi^+(\delta_i) \otimes \delta^{\vee}_i + (1-g')[c] \otimes 1. 
\end{equation}
Since the product of $[c]\otimes 1$ with a class not of type $(0,2)$ vanish, we obtain
\begin{equation}
\label{grr2res}
c^2_1({\mathbb E}_{|\curv \times \hitmap^{-1}(s) })= 
\left(\sum_i \Pi^+(\delta_i) \otimes \delta^{\vee}_i\right)^2
\end{equation}
which has type $(2,2)$. 
Equating the components of degree 4 in Equation (\ref{eq1}), 
and using  the fact that, by  type consideration, the product
$c_1({\mathbb E}_{|\curv \times \hitmap^{-1}(s) })([c]\otimes 1 )=0$, 
we have 
\begin{equation}
\label{grr3res}
\frac{1}{2}\left(c_1^2({\mathbb E}_{|\curv \times \hitmap^{-1}(s) })-2c_2({\mathbb E}_{|\curv \times \hitmap^{-1}(s) })\right)= [c] \otimes \theta,
\end{equation}
from which we deduce that also $c_2({\mathbb E}_{|\curv \times \hitmap^{-1}(s) })$
has type $(2,2)$. From the equality, true in general for rank two vector bundles,
$$
c_2({\rm End}\, {\mathbb E}_{|\curv \times \hitmap^{-1}(s) } )=
4c_2({\mathbb E}_{|\curv \times \hitmap^{-1}(s) }) -
c_1^2({\mathbb E}_{|\curv \times \hitmap^{-1}(s)}),
$$ it follows that  
$c_2({\rm End}\, {\mathbb E}_{|\curv \times \hitmap^{-1}(s) } )$
has type $(2,2)$, which, by the very definition (see the defining Equation \ref{defabps} in \S\ref{modhiggsbnd}) of $\beta$ and $\psi_i$, 
means that these classes vanish on $\hitmap^{-1}(s)$.

\bigskip
\n
The proof of the second statement is immediately reduced to the case of $\hitmapc$.  By Remark \ref{restrpicpry}, the  restriction of  
the first Chern class of ${\mathcal L}$ 
to $\curv_s \times \prym_{\curv_s}$ 
is $\sum_i \delta_i \otimes \Pi^-(\delta^{\vee}_i)$,
and the Grothendieck-Riemann-Roch  theorem now reads:
\begin{equation}
{\rm ch}({\mathbb E}_{|\curv \times \hitmapc^{-1}(s) }) (1+(1-g)[c])=
\pro_*\left( 1\otimes 1 +\sum_i \delta_i \otimes \Pi^-(\delta^{\vee}_i) + [c_s] \otimes \theta +   (1-g')[c_s] \otimes 1 \right),
\end{equation}
where $\theta$ denotes now the cohomology class of the restriction of the theta divisor to $\prym_{\curv_s}$.
As in the first part of the proof, we apply Formul\ae\  \ref{pst}, giving
$\pro_*(1 \otimes 1)=2\otimes 1,$
$\pro_*( [c_s] \otimes \theta )=[c] \otimes \theta$, $\pro_*([c_s] \otimes 1)=[c]\otimes 1$, and 
$\pro_*(\sum_i \delta_i \otimes \delta^{\vee}_i)=\sum_i \Pi^+(\delta_i) \otimes 
\Pi^-(\delta^{\vee}_i)=0.$

\n
By plugging the above equalities in equation (\ref{eq1}), we deduce that
\begin{equation}
\label{grr1res1}
c_1({\mathbb E}_{|\curv \times \hitmapc^{-1}(s) })+2(1-g)[c]=  
(1-g')[c], 
\end{equation}
so that
$c^2_1({\mathbb E}_{|\curv \times \hitmapc^{-1}(s) })= 0$ and
$\left(c_1^2({\mathbb E}_{|\curv \times \hitmapc^{-1}(s) })-2c_2({\mathbb E}_{|\curv \times \hitmapc^{-1}(s) })\right)= 2[c] \otimes \theta.$

\smallskip
\n
Hence, $c_2({\mathbb E}_{|\curv \times \hitmapc^{-1}(s) }))= -[c] \otimes \theta,$ and finally,
\begin{equation}
\label{grr4res1}
c_2({\rm End} \,{\mathbb E}_{|\curv \times \hitmapc^{-1}(s) } )=4c_2({\mathbb E}_{|\curv \times \hitmapc^{-1}(s) })
- 4c_1^2({\mathbb E}_{|\curv \times \hitmapc^{-1}(s) })=
-4[c] \otimes \theta,
\end{equation}
which shows (cfr. (\ref{defabps}) in \S \ref{modhiggsbnd})
that the restriction of $\alpha$ to $\hitmapc^{-1}(s)$
equals $4 \theta$.
\blacksquare

\begin{corollary}
\label{alphample}
The class $\alpha$ is  ample on $\mdpgl$.
\end{corollary}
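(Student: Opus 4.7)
The plan is to reduce the claim to the fiberwise criterion for relative ampleness: since $\hitmaph:\mdpgl\to \baseo$ is proper (the Hitchin map is projective by Nitsure), a line bundle $\mathcal L_\alpha$ with $c_1(\mathcal L_\alpha)=\alpha$ is $\hitmaph$-ample if and only if its restriction to every fiber of $\hitmaph$ is an ample class. The class $\alpha$ is integral, being the appropriate K\"unneth component of $-c_2({\rm End}\,\mathbb E)$, so such a line bundle $\mathcal L_\alpha$ (well defined up to torsion, which we can absorb by passing to a tensor power) exists. Note that the statement ``$\alpha$ is ample on $\mdpgl$'' is naturally understood in the relative-ample sense, consistent with the way the corollary is invoked earlier in the paper.

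The computation needed to verify fiberwise ampleness on smooth fibers is already in hand. For $s\in\baseosm$, Proposition~\ref{restrunivcls} gives
\[
\alpha\big|_{\hitmapc^{-1}(s)}=4\theta \in H^2(\prym_{C_s}),
\]
where $\theta$ is the restriction to the Prym of the principal theta polarization on $\jac{C_s}^{\,0}$. Because $\prym_{C_s}\subseteq \jac{C_s}^{\,0}$ is an abelian subvariety of a principally polarized abelian variety, $\theta$ is ample. Ampleness then descends to the quotient $\hitmaph^{-1}(s)=\prym_{C_s}/\Gamma$ via the finite \'etale covering $\check q$ of \eqref{diagdifgrps}: the line bundle $\mathcal L_\alpha$ pulls back under $\check q$ to the line bundle representing $\alpha$ on $\mdsl$, which restricts to a $\Gamma$-equivariant ample line bundle on $\prym_{C_s}$; after passing to a $\Gamma$-linearizable tensor power, it descends to an ample line bundle on $\prym_{C_s}/\Gamma$, proving the relative-ample criterion on the entire open set $\baseosm$.

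Next I would upgrade fiberwise ampleness to all of $\baseo$. Over $\baseogood\setminus\baseosm$ the fibers of $\hitmaph$ are quotients by $\Gamma$ of compactified Pryms of integral spectral curves $C_s$; these carry an extension of the theta polarization, and the classical results on the generalized theta line bundle on compactified Jacobians of integral curves (Soucaris, Esteves) show that this extension remains ample. The identification with (a positive rational multiple of) $\alpha$ follows by flatness of the family and base-change, since the restriction of $\alpha$ agrees with $4\theta$ on the dense open locus of smooth fibers. Over $\baseo\setminus\baseogood$ (non-integral spectral curves), I would invoke the fact that $\mathcal L_\alpha$ can be identified, up to positive rational multiple, with the determinant-of-cohomology line bundle associated to the universal pair $(\mathbb E,\Phi)$, whose $\hitmaph$-ampleness on the full moduli space is the content of the Drezet--Narasimhan--Faltings--Simpson theory applied to the Hitchin fibration.

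The main obstacle is the last paragraph: making the identification of $\mathcal L_\alpha$ with the generalized theta line bundle precise over the non-elliptic locus, so as to control ampleness on the degenerate fibers where the spectral curve is reducible or non-reduced. On the elliptic locus and on the smooth locus, the theta-theoretic picture suffices and follows cleanly from Proposition~\ref{restrunivcls}. Once the identification with a determinant-of-cohomology line bundle is in place, the fiberwise criterion assembles the argument into a proof that $\mathcal L_\alpha$ is $\hitmaph$-ample, which is the content of Corollary~\ref{alphample}.
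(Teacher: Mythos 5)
Your approach and the paper's are genuinely different, and the paper's is both shorter and yields a stronger conclusion. You aim to prove relative ampleness by verifying the fiberwise criterion over every fiber of $\hitmaph$, which forces you to control the restriction of $\alpha$ over the singular, the elliptic--but--nodal, and the non-elliptic loci separately; as you acknowledge, the identification of $\alpha$ with a determinant-of-cohomology line bundle over the non-elliptic locus is a genuine unfilled gap, and even over $\baseogood\setminus\baseosm$ the invocation of theta-divisor theory on compactified Jacobians requires setup the paper never undertakes. The paper's proof instead establishes \emph{absolute} ampleness of $\alpha$ on $\mdpgl$ via a soft argument: $\mdpgl$ is quasiprojective (Nitsure, Proposition 7.4), so it carries some ample class; by Theorem~\ref{hauthaddmntm} one has $\dim H^2(\mdpgl)=1$, so any such class is a rational multiple of $\alpha$; and the sign of that multiple is pinned down by the fiber computation of Proposition~\ref{restrunivcls}, which shows $\alpha$ restricts to $4\theta$ (an ample class) on the generic fiber. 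The point you should take away is that the one-dimensionality of $H^2$ together with quasiprojectivity completely obviates the fiber-by-fiber analysis: you only ever need to test the sign on a single smooth fiber, a computation you already have. Your approach, if the last step were completed, would prove only $\hitmaph$-ampleness; the paper's gives honest ampleness, which in particular implies the relative statement actually used in \S\ref{comp3gra}.
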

\n{\em Proof.} The variety $\mdpgl=\higgsbu^0/\Jac^0_C$ is quasiprojective as $\higgsbu$ is by \cite[Proposition 7.4]{nitsure}. Additionally it follows from Theorem~\ref{hauthaddmntm} that $\dim H^2(\mdpgl)=1$. As we proved above that $\alpha$ is ample on the generic fiber of $\hitmaph$ it must be ample on $\mdpgl$ as well. \blacksquare


\begin{thebibliography}{}


{\small


\bibitem{aik}{\sc A. Altman, A. Iarrobino {\rm and} S. Kleiman}: ``Irreducibility of the compactified Jacobian,'' in 
Proc. of the Nordic Summer school N.A.V.F. Oslo, August 1976.

\bibitem{ak} {\sc A. Altman {\rm and} S. Kleiman}:``Compactifying the Picard scheme,''
{\em Adv. in Math.} {\bf 35}(1980), 50-112. 

\bibitem{acgh}{\sc E. Arbarello, M. Cornalba, P. Griffiths {\rm and} J. Harris}: {\em Geometry of Algebraic curves, vol.I,}
Grundl.Math.Wiss. 267, Springer-Verlag, New York, 1985

\bibitem{ar} 
{\sc V.~I.~Arnol'd, S.M. Gusein-Zade, {\rm and }A.N. Varchenko}:
\newblock  {\em Singularities of differentiable maps, vol.II, Monodromy and Asymptotics of Integrals},
Monographs in Mathematics {\bf 83} Birkh\"auser, (1988). 


\bibitem{beauvi}{\sc A. Beauville}:``Counting rational curves on a $K3$ surface,'' {\em Duke Math. J.}{\bf 97},(1999) 99-108.

\bibitem{bnr} {\sc A. Beauville, M.S. Narasimhan {\rm and} S. Ramanan}:``Spectral curves and the generalised theta divisor,''
{\em J. Reine Angew. Math.} {\bf 398}(1989),169-179.


\bibitem{bbd}{\sc A.A. Beilinson, J.N. Bernstein {\rm and} P. Deligne}:
{\em Faisceaux pervers}, Ast\'erisque {\bf 100}, Pa\-ri\-s, Soc. Math. 
Fr. 1982.



\bibitem{copeland}{\sc D. J. Copeland}: \newblock ``Monodromy of the Hitchin map over hyperelliptic curves,'' {\em I.M.R.N.} {\bf 29} (2005), 1743-1785.

\bibitem{corlette} {\sc K. Corlette}: \newblock ``Flat $G$-bundles with canonical metrics,'' \newblock
{\em J. Diff. Geom.} {\bf 28} (1988), no. 3, 361--382.

\bibitem{cortihana} {\sc A. Corti {\rm and} M. Hanamura}:\newblock 
``Motivic decomposition and intersection Chow groups. I,'' {\em Duke Math. J.}  {\bf 103}  (2000),  no. 3, 459--522. 


\bibitem{dec1} {\sc M.A. de Cataldo}:
\newblock
``The perverse filtration and the Lefschetz
Hyperplane Section Theorem, II," to appear in J.A.G.

\bibitem{dec2}
{\sc M.A. de Cataldo}:
\newblock
``The standard filtration on cohomology with compact supports with an appendix on the base change map and the Lefschetz hyperplane theorem," 
Contemporary Mathematics, 496 (2009), Interactions of Classical and Numerical Algebraic Geometry, Bates et al. Eds., 199-220.

\bibitem{decberkeley}
{\sc M.A. de Cataldo}:
\newblock
``The Hodge theory of character varieties," to appear in
the proceedings of the January 2009 MSRI conference ``Classical Algebraic Geometry Today."


\bibitem{dehami}
{\sc M.A. de Cataldo, T. Hausel {\rm and} L. Migliorini}: \newblock 
``Exchange between  perverse and weight filtration for the Hilbert schemes of points of two surfaces'', preprint, 
arXiv:1012.2583.


\bibitem{htam}
{\sc M.A. de Cataldo {\rm and} L. Migliorini}: \newblock ``The 
Hodge Theory of Algebraic maps,''
{\em Ann. Scient. \'Ec. Norm. Sup.}, $4^e$ s\'erie, t. {\bf 38}, (2005), 693-750.



\bibitem{decmigseattle}
{\sc M.A. de Cataldo {\rm and} L. Migliorini}:  \newblock ``Hodge-theoretic aspects of the 
decomposition theorem,''
 {\em Algebraic geometry---Seattle 2005}. Part 2,  489--504, Proc. Sympos. Pure Math., 80, Part 2, Amer. Math. Soc., Providence, RI, 2009.

\bibitem{pflht}{\sc M.A. de Cataldo {\rm and} L. Migliorini}: \newblock ``The perverse filtration and the Lefschetz
Hyperplane Section Theorem,"   {\em Ann. of Math.}, {\bf 171} n.3 (2010) 2089-2113.

\bibitem{bams}
{\sc M.A. de Cataldo {\rm and} L. Migliorini}: \newblock
 ``The Decomposition Theorem and the topology of algebraic maps," 
            {\em Bull. of the Amer. Math. Soc.}, {\bf 46}, n.4, (2009), 535-633.  


\bibitem{diaconescu-etal} {\sc W.Y. Chuang, D.E. Diaconescu {\rm and} G. Pan}: BPS states and cohomology of moduli spaces of sheaves
(in preparation)


\bibitem{deligneseattle}{\sc P. Deligne}: \newblock ``D\'ecompositions dans la cat\'egorie 
D\'eriv\'ee'', 
Motives (Seattle, WA, 1991), 115--128, Proc. Sympos. Pure Math., {\bf 
55}, Part 1, Amer. Math. Soc., Providence, RI, 1994.




\bibitem{dimca}{\sc A. Dimca}: {\em Sheaves in topology,} 
Universitext. Springer-Verlag, Berlin, 2004.

\bibitem{donaldson} {\sc S.K. Donaldson}:
\newblock ``Twisted harmonic maps and the self-duality equations,''
{\em Proc. London Math. Soc.} (3) {\bf 55} (1987), no. 1, 127--131.


\bibitem{ds} {\sc C. D'Souza}: \newblock``Compactification of the generalized Jacobian,'' {\em Proc. Indian Acad. Sci. }{\bf 88}(1979),
419-457.

\bibitem{donagimarkman} {\sc R. Donagi {\rm and} E. Markman}: 
\newblock ''Spectral covers, algebraically completely integrable 
Hamiltonian systems and moduli of bundles'' in 
{\em Integrable Systems and Quantum groups''} LNM 1620, 1-119.




\bibitem{fultonit}{\sc W. Fulton}:\newblock{\em Intersection Theory},
2nd edition, Springer-Verlag, 1998.



\bibitem{gagne}  {\sc M. Gagn\'e}:\newblock
{\em Compactified Jacobians of integral curves with double points.}
Ph.D.Thesis, MIT 1997.

\bibitem{groth} {\sc A. Grothendieck}:\newblock
``Sur le m\'emoire de Weil: g\'en\'eralisation des fonctions ab\'eliennes,''  
S\'eminaire Bourbaki, Vol. 4,  Exp. No. 141, 57--71, Soc. Math. France, Paris, 1995. 




\bibitem{hausel-compact}
{\sc T.~Hausel}:
\newblock "Compactification of moduli of {H}iggs bundles,"
\newblock {\em J. Reine Angew. Math.}, {\bf 503} (1998) 169-192.

\bibitem{hauselint}  {\sc T. Hausel}: \newblock ``Vanishing of intersection numbers on the moduli space of Higgs bundles,"
\newblock {} {\em Adv. Theor. Math. Phys. }{\bf 2} 1011-1040.
 
\bibitem{hausel} {\sc T. Hausel }: \newblock ``Mirror symmetry and Langlands
duality in the non-Abelian Hodge theory of a curve,"\newblock  in  ``{\em Geometric Methods in Algebra and Number Theory}'',
Series: Progress in Mathematics, Vol. {\bf 235} 
Fedor Bogomolov, Yuri Tschinkel (Eds.), Birkh\"auser 2005. 

\bibitem{hausel-survey}{\sc T. Hausel}: \newblock  "Global topology of the Hitchin system", preprint arXiv:1102.1717



\bibitem{hausel-pauly}{\sc T. Hausel {\rm and} C. Pauly}: ``Prym varieties of spectral covers,''
Preprint, arXiv:1012.4748.



\bibitem{hausel-thaddeus2} {\sc T. Hausel {\rm and} M. Thaddeus }: \newblock ``Relations in the cohomology ring of the moduli space of rank 2 Higgs bundles," \newblock {\em Journal of the Amer. Mathem. Soc.}, {\bf 16} (2003), 303-329.

\bibitem{hausel-thaddeus1} {\sc T. Hausel  {\rm and} M. Thaddeus}: \newblock  ``Generators for the cohomology ring of the moduli space of rank 2 Higgs bundles," \newblock {\em Proc. of the London Mathem. Soc.} {\bf 88} (2004) 632-658.
 
\bibitem{hausel-villegas}
{\sc  T. Hausel {\rm and}  F. Rodriguez-Villegas}: {``Mixed Hodge
polynomials
of character varieties,"}, {\em Invent. Math.}   { \bf 174}, No. 3, (2008), 555--624.

\bibitem{hit} {\sc  N. Hitchin}: {``The self-duality equations on a Riemann 
surface,''} {\em Proc. London Math. Soc.} (3) {\bf 55} (1987) 59-126.

\bibitem{hitduke} {\sc  N. Hitchin}: {``Stable bundles and integrable systems ,''} {\em Duke Math. J.}  {\bf 54} (1987) 91-114.



\bibitem{iv} {\sc B. Iversen}: {\em Cohomology of Sheaves}, Universitext, 
Springer-Verlag, Berlin Heidelberg 1986.


\bibitem{k-s} {\sc M. Kashiwara {\rm and} P. Schapira}: {\em Sheaves on manifolds},
Grundlehren der mathematischen Wissenschaften. Vol. 292, 
Springer-Verlag, Berlin Heidelberg 1990.



\bibitem{laumonngo}{\sc G. Laumon {\rm and } B.C. Ng\^o}:{``Le lemme fondamental pour les groupes unitaires,"}
{\em Ann. of Math. }{\bf 168}(2008), 477-573.


\bibitem{liu} {\sc Q. Liu}: {\em Algebraic geometry and arithmetic curves}, 
Oxford Graduate Texts in Mathematics 6, Oxford University Press, Oxford 2002.




\bibitem{mereb} {\sc M. Mereb}: {" On the E-polynomials of a family of Character Varieties"}, PhD thesis University of Texas
at Austin, 2010; arXiv:1006.1286v1

\bibitem{narasesha} {\sc M.S. Narasimhan {\rm and} C.S. Seshadri}: 
{``Stable and unitary vector bundles on a compact Riemann surface,''}
{\em Ann. of Math. }{\bf 82}(1965), 540-567.

\bibitem{newstead}{\sc P.E. Newstead}:{``Characteristic classes of stable bundles of rank $2$ over an algebraic curve,''}  {\em Trans. Amer. Math. Soc.}  {\bf 169 } (1972), 337--345. 

\bibitem{nitsure}{\sc N. Nitsure}: {``Moduli space of semistable pairs on a curve''}
{\em Proc. London Math. Soc. }(3)  {\bf 62}  (1991),  no. 2, 275--300.

\bibitem{ngo2} {\sc B.C. Ng\^o}:{``Fibration de Hitchin et endoscopie"}{\em Invent. Math. }{\bf 164} (2006), no. 2, 399-453.

\bibitem{ngo} {\sc B.C. Ng\^o}:{``Le lemme fondamental pour les alg\`ebres de Lie''}
{\em  Publ. Math. Inst. Hautes \'Etudes Sci} {\bf 111} (2010) 1-169.


\bibitem{petstee} {\sc C. A. M. Peters,  J. H. M. Steenbrink}: {\em Mixed Hodge Structures} 
Ergebnisse der Mathematik und ihrer Grenzgebiete 3. folge, Vol. 52 
Springer-Verlag, Berlin Heidelberg 2008.


\bibitem{rego}{\sc C.J. Rego}:{``The compactified Jacobian''} {\em Ann. Sci. de l'E.N.S }{\bf 13}(1980) 211-223.


\bibitem{simpson-ubiquity} {\sc C. Simpson}: 
``The ubiquity of variations of Hodge structure,"
{\em Complex geometry and Lie theory (Sundance, UT, 1989)}, 
{\em Proc. Sympos. Pure Math.}, {\bf 53}, Amer. Math. Soc.,
Providence, 
RI, 1991, 329-348

\bibitem{simpson}
{\sc C.T. Simpson}:
``Higgs bundles and local systems,'' {\em  Publ. Math. Inst. Hautes \'Etudes Sci.} {\bf 75} (1992) 5--95.

\bibitem{thaddeusmt}{\sc M. Thaddeus}: ``Topology of the moduli space of stable vector bundles 
over a compact Riemann surface'' 
Master's Thesis, University of Oxford, September 1989.


\bibitem{weil} {\sc A. Weil}: ``G\'en\'eralisation des fonctions ab\'eliennes,''
{\em J. Math. Pures Appl. }{\bf 17} 47-87.

}
\end{thebibliography}
\end{document}